\newtheorem{theorem}{Theorem}[section]
\newtheorem{lemma}[theorem]{Lemma}
\newtheorem{corollary}[theorem]{Corollary}
\newtheorem{remark}[theorem]{Remark}
\newtheorem{proposition}[theorem]{Proposition}
\newtheorem{definition}[theorem]{Definition}
\newtheorem{example}[theorem]{Example}
\newtheorem{problem}[theorem]{Problem}
\newproof{proof}{Proof}
\numberwithin{equation}{section}
\numberwithin{theorem}{section}
\newcommand{\e}{\varepsilon}
\newcommand{\w}{\omega}
\newcommand{\IR}{\mathbb{R}}
\newcommand{\ff}{\mathbb{F}}
\newcommand{\IF}{\mathbb{F}}
\newcommand{\xxx}{\mathbf{x}}
\newcommand{\TTT}{\mathcal{T}}
\newcommand{\EE}{{\mathcal E}}
\newcommand{\Nn}{\mathcal{N}}
\newcommand{\LL}{\mathcal{L}}
\newcommand{\supp}{\mathrm{supp}}
\newcommand{\Ra}{\Rightarrow}
\newcommand{\cacx}{\overline{\mathrm{acx}}}
\newcommand{\Bo}{\mathsf{Bo}}
\newcommand{\Id}{\mathsf{id}}
\newcommand{\ind}{\mathsf{ind}}
\newcommand{\spn}{\mathsf{span}}
\newcommand{\cspn}{\overline{\mathsf{span}}}
\newcommand{\SI}{\underrightarrow{\mbox{ s-$\mathsf{ind}$}}_n\,}
\newcommand{\SM}{{\setminus}}
\begin{document}

\begin{frontmatter}

\title{Gelfand--Phillips type properties of locally convex spaces}

\author{Saak Gabriyelyan}
\ead{saak@math.bgu.ac.il}
\address{Department of Mathematics, Ben-Gurion University of the Negev, Beer-Sheva, P.O. 653, Israel}

\begin{abstract}
Let $1\leq p\leq q\leq\infty.$ Being motivated by the classical notions of the Gelfand--Phillips property and the (coarse) Gelfand--Phillips property of order $p$ of Banach spaces, we introduce and study different types of the Gelfand--Phillips property of order $(p,q)$ (the $GP_{(p,q)}$ property) and the  coarse Gelfand--Phillips property of order $p$ in the realm of all locally convex spaces.  We compare these classes and show that they are stable under taking direct product, direct  sums and closed subspaces. It is shown that any locally convex space is a quotient space of a locally convex space with the $GP_{(p,q)}$ property. Characterizations of locally convex spaces with the introduced Gelfand--Phillips type properties are given.
\end{abstract}

\begin{keyword}
Gelfand--Phillips property of order $(p,q)$  \sep coarse Gelfand--Phillips property of order $p$ \sep $p$-Gelfand--Phillips sequentially compact property of order $(q',q)$

\MSC[2010] 46A3 \sep 46E10

\end{keyword}

\end{frontmatter}


\section{Introduction}


All locally convex spaces  are assumed to be Hausdorff and over the field $\IF$ of real or complex numbers. We denote by $E'$ the topological dual of a locally convex space (lcs for short) $E$. The topological dual space $E'$ of $E$ endowed with the weak$^\ast$ topology $\sigma(E',E)$ and the strong topology $\beta(E',E)$ is denoted by $E'_{w^\ast}$ and $E'_\beta$, respectively.  For a bounded subset $B\subseteq E$ and a functional $\chi\in E'$, we put
\[
\|\chi\|_B:= \sup\big\{ |\chi(x)|:x\in B\cup\{0\}\big\}.
\]
If $E$ is a Banach space, we denote by $B_E$ the closed unit ball of $E$.

\begin{definition} \label{def:limited-Banach} {\em
A bounded subset $B$ of a Banach space $E$ is called {\em limited} if each weak$^\ast$ null sequence $\{ \chi_n\}_{n\in\w}$ in $E'$ converges to zero uniformly on $B$, that is $\lim_{n\to\infty} \|\chi_n\|_B =0$.}
\end{definition}

Let us recall the classical notion of Gelfand--Phillips spaces.
\begin{definition} \label{def:GP} {\em
A Banach space $E$ is said to have the {\em Gelfand--Phillips property} or is a {\em Gelfand--Phillips space} if every limited set in $E$ is relatively compact.}
\end{definition}

In \cite{Gelfand} Gelfand  proved that  every separable Banach space is Gelfand--Phillips. On the other hand, Phillips \cite{Phillips} showed that the non-separable Banach space $\ell_\infty$ is not Gelfand--Phillips.

Following Bourgain and Diestel  \cite{BourDies}, a bounded linear operator $T$ from a Banach space $L$ into $E$ is called {\em limited} if $T(B_L)$ is a limited subset of $E$. 
In \cite{Drewnowski}, Drewnowski noticed the next characterization of Banach spaces with the Gelfand--Phillips property.

\begin{theorem}[\cite{Drewnowski}] \label{t:Drew-GP}
For a Banach space $E$ the following assertions are equivalent:
\begin{enumerate}
\item[{\rm (i)}] $E$ is Gelfand--Phillips;
\item[{\rm (ii)}] every limited weakly null sequence in $E$ is norm null;
\item[{\rm (iii)}] every limited operator with range in $E$ is compact.
\end{enumerate}
\end{theorem}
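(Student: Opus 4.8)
The plan is to establish the cycle of implications (i)$\Rightarrow$(ii)$\Rightarrow$(iii)$\Rightarrow$(i). Two elementary facts will be used repeatedly. First, since $E$ is a Banach space, every weak$^\ast$ null sequence in $E'$ is norm bounded (Banach--Steinhaus), and consequently the closed absolutely convex hull of a limited set is limited: if $B$ is limited and $\{\chi_n\}$ is weak$^\ast$ null then $\|\chi_n\|_{\overline{\mathrm{aco}}(B)}=\|\chi_n\|_B\to 0$. Second, a relatively norm compact weakly null sequence is norm null --- a norm-convergent subsequence must converge to the weak limit $0$, and then the standard ``every subsequence has a further subsequence norm-converging to $0$'' argument applies. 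Granting these, (i)$\Rightarrow$(ii) is immediate: a limited weakly null sequence $(x_n)$ has, by definition, a limited set of terms $\{x_n:n\in\w\}$, which is relatively compact by (i), so $(x_n)$ is norm null.

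For (iii)$\Rightarrow$(i), let $B\subseteq E$ be limited; it suffices to show that every sequence $(x_n)$ in $B$ has a norm-convergent subsequence. Define $T\colon\ell_1\to E$ by $T\big((a_n)_n\big)=\sum_n a_nx_n$; the series converges absolutely and $T$ is bounded because $B$ is bounded. Then $T(B_{\ell_1})\subseteq\overline{\mathrm{aco}}\{x_n:n\in\w\}$, which is limited by the first fact, so $T$ is a limited operator; by (iii) it is compact, and hence $\{x_n:n\in\w\}=\{Te_n:n\in\w\}\subseteq T(B_{\ell_1})$ is relatively compact. Since $(x_n)$ was arbitrary, $B$ is relatively compact.

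The substantial implication is (ii)$\Rightarrow$(iii). Let $T\colon L\to E$ be a limited operator and suppose, towards a contradiction, that $T$ is not compact: there is a bounded sequence $(l_n)$ in $L$ such that $(Tl_n)$ has no norm-convergent subsequence, so, passing to a subsequence, we may assume $\|Tl_n-Tl_m\|\geq\delta>0$ for all $n\neq m$. Apply Rosenthal's $\ell_1$-theorem to the bounded sequence $(Tl_n)$ in $E$. If some subsequence $(Tl_{n_k})_k$ is weakly Cauchy, then the consecutive differences $\big(Tl_{n_{k+1}}-Tl_{n_k}\big)_k$ form a weakly null sequence in $E$ that lies in the limited set $2\,T(B_L)$ (a scalar multiple of a limited set is limited) and has all norms $\geq\delta$ --- contradicting (ii). Otherwise some subsequence of $(Tl_n)$, which lies in the limited set $T(B_L)$, is equivalent to the unit vector basis of $\ell_1$.

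Excluding this last possibility is the step I expect to be the main obstacle: one needs the fact that \emph{a limited subset of a Banach space contains no sequence equivalent to the $\ell_1$-basis} (equivalently, limited sets are conditionally weakly compact). The route I would take uses the characterization that a bounded set $B\subseteq E$ is limited if and only if $S(B)$ is relatively compact in $c_0$ for every bounded operator $S\colon E\to c_0$ --- one direction because $S^\ast$ maps the unit vectors of $(c_0)'=\ell_1$ to a weak$^\ast$ null sequence in $E'$, the other by reading a given weak$^\ast$ null sequence in $E'$ as the sequence of coordinate functionals of such an $S$. Now if $(x_n)\subseteq B$ were equivalent to the $\ell_1$-basis, with closed span $Y\cong\ell_1$, then, since $c_0$ is a quotient of $\ell_1$, one could choose a bounded $q\colon Y\to c_0$ with $\{q(x_n):n\in\w\}$ dense in $B_{c_0}$; extending $q$ to a bounded operator $S\colon E\to c_0$ would make $S(\{x_n:n\in\w\})$ dense in the noncompact ball $B_{c_0}$, hence not relatively compact, so $B$ would fail to be limited. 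The genuinely delicate point is this extension of $q$ to all of $E$: it is immediate from Sobczyk's theorem when $E$ is separable, and in the non-separable case it needs an additional reduction argument, which I expect to be the technical heart of the matter. Since the first case of the dichotomy contradicts (ii) and the second cannot occur, $T$ is compact, and the cycle is complete.
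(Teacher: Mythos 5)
Your overall architecture is sound and matches the standard route: the implication (i)$\Rightarrow$(ii) via the fact that a precompact weakly null sequence is norm null (this is Lemma \ref{l:null-seq} of the paper), the implication (iii)$\Rightarrow$(i) by turning a limited set into a limited operator on $\ell_1$ (the paper's Theorem \ref{t:GPpq-property-oper} runs the same construction, using the normed space generated by the closed absolutely convex hull instead of $\ell_1$), and the Rosenthal dichotomy for (ii)$\Rightarrow$(iii). The paper does not reprove Drewnowski's theorem itself, but all three ingredients appear there in generalized form (Theorems \ref{t:GPpq-property-oper}, \ref{t:char-coarse-prGPp} and \ref{t:inf-V*-wsc}).

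The genuine gap is exactly where you suspect it: the claim that a limited set contains no sequence equivalent to the unit basis of $\ell_1$. Your proposed proof --- build a quotient $q\colon Y\cong\ell_1\to c_0$ carrying $\{x_n\}$ onto a dense subset of $B_{c_0}$ and extend it to $S\colon E\to c_0$ --- only works when $E$ is separable, and the ``additional reduction argument'' you hope for in the non-separable case does not exist in that form. Two obstructions: $c_0$ is only separably injective (it is not complemented in $\ell_\infty$, so operators into $c_0$ defined on a subspace of a non-separable space need not extend), and limitedness is \emph{not} inherited by closed subspaces (the unit ball of $c_0$ is limited in $\ell_\infty$ but not in $c_0$), so you cannot pass to the separable subspace $\overline{\spn}\{x_n\}$ and apply Sobczyk there. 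The correct repair is the Odell--Stegall factorization, which is precisely what the paper carries out in Theorem \ref{t:inf-V*-wsc}: the inclusion $J\colon\ell_1\to\ell_2$ is $2$-summing by Grothendieck's theorem, hence factors as $Q\circ J_2^\infty\circ R$ through an injective space $L_\infty(\mu)$; the injectivity of $L_\infty(\mu)$ (not of $c_0$) is what lets you extend $R\circ P$ from $\overline{\spn}\{x_n\}$ to all of $E$, producing an operator $T\colon E\to\ell_2$ with $T(x_n)=e_n$; since the coordinate functionals $\{e_n^\ast\}$ are weak$^\ast$ null in $\ell_2$ while $\sup_i|\langle e_n^\ast,e_i\rangle|=1$, the set $\{e_n\}$, and hence $\{x_n\}$, cannot be limited. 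With that lemma in place, your Case 2 is excluded and the cycle closes; without it, the proof of (ii)$\Rightarrow$(iii) (and equally of (ii)$\Rightarrow$(i)) is incomplete for general Banach spaces.
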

\noindent This characterization of Gelfand--Phillips Banach spaces plays a crucial role in many arguments for establishing the Gelfand--Phillips  property in Banach spaces. 
The Gelfand--Phillips property was intensively studied in particular in \cite{CGP,Drewnowski,DrewEm,Schlumprecht-Ph,Schlumprecht-C,SinhaArora}. It follows from results of Schlumprecht \cite{Schlumprecht-Ph,Schlumprecht-C} that the Gelfand--Phillips property is not a three space property (see also Theorem 6.8.h in \cite{CG}). In our recent paper \cite{BG-GP-Banach}, we give several new characterizations of  Gelfand--Phillips Banach spaces.

Another direction for studying the  Gelfand--Phillips property is to characterize Gelfand--Phillips spaces that belong to some important classes of Banach spaces. In the next proposition, whose short proof is given in Corollary~2.2 of \cite{BG-GP-Banach}, we provide some of the most important and general results in this direction (for all relevant definitions used in what follows see Section \ref{sec:pre}).

\begin{theorem} \label{t:Banach-GP}
A Banach space $E$ is Gelfand--Phillips if  one of the following conditions holds:
\begin{enumerate}
\item[{\rm (i)}] {\rm(\cite[Cor.~2.2]{BG-GP-Banach})} the closed unit ball  $B_{E'}$ of the dual space $E'$ endowed with the weak$^\ast$ topology is selectively sequentially pseudocompact;
\item[{\rm (ii)}]  {\rm(\cite{Gelfand})} $E$ is separable;
\item[{\rm (iii)}] {\rm(\cite[Prop.~2]{CGP})} $E$ is  separably weak$^\ast$-extensible; 
\item[{\rm (iv)}]  {\rm(cf. \cite[Th.~2.2]{Drewnowski} and \cite[Prop.~2]{Schlumprecht-C})} the space $E'_{w^\ast}$ is selectively sequentially pseudocompact at some $E$-norming set $S\subseteq E'$;
\item[{\rm(v)}]  {\rm(\cite[Th.~4.1]{DrewEm})} $E=C(K)$ for some  compact selectively sequentially pseudocompact space $K$.
\end{enumerate}
\end{theorem}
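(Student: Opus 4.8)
The plan is to reduce all five implications to Drewnowski's criterion, Theorem~\ref{t:Drew-GP}\,(ii): it is enough to show that, in each case, every limited $\sigma(E,E')$-null sequence in $E$ is norm null. Throughout I would fix a limited, weakly null sequence $(x_n)_{n\in\w}$ in $E$ and assume, towards a contradiction, that $\|x_n\|\geq\e$ for all $n$ and some $\e>0$; the aim is to contradict limitedness by producing a weak$^\ast$-null sequence in $E'$ that does not converge to $0$ uniformly on $\{x_n:n\in\w\}$.

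The common core handling (i), (iv) and (v) is the claim: \emph{if $S\subseteq E'$ is $E$-norming and $E'_{w^\ast}$ is selectively sequentially pseudocompact at $S$, then $E$ is Gelfand--Phillips.} To prove it, note that since $S$ is $E$-norming and $\|x_n\|\geq\e$ there is a single $\delta>0$ with $\{\chi\in S:|\chi(x_n)|>\delta\}\neq\emptyset$ for every $n$; call this set $U_n$, a nonempty relatively weak$^\ast$-open subset of $S$. Selective sequential pseudocompactness at $S$ then provides $\chi_n\in U_n$ and a subsequence with $\chi_{n_k}\to\chi$ in $E'_{w^\ast}$. Now $\chi_{n_k}-\chi\to 0$ weak$^\ast$, so limitedness of $\{x_m:m\in\w\}$ gives $\sup_m|(\chi_{n_k}-\chi)(x_m)|\to 0$, whence $(\chi_{n_k}-\chi)(x_{n_k})\to 0$; since $(x_n)$ is weakly null also $\chi(x_{n_k})\to 0$, so $\chi_{n_k}(x_{n_k})\to 0$, contradicting $|\chi_{n_k}(x_{n_k})|>\delta$. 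This would prove the claim, and with it: (iv) directly; (i), since $S:=B_{E'}$ is $E$-norming by Hahn--Banach and weak$^\ast$-closed, so ``$B_{E'}$ with the weak$^\ast$ topology is selectively sequentially pseudocompact'' means exactly ``$E'_{w^\ast}$ is selectively sequentially pseudocompact at $B_{E'}$''; and (v), since for $E=C(K)$ the map $t\mapsto\delta_t$ embeds $K$ homeomorphically into $C(K)'_{w^\ast}$ onto a set $S=\{\delta_t:t\in K\}$ that is $E$-norming (as $\|f\|_\infty=\sup_{t\in K}|\delta_t(f)|$) and on which selective sequential pseudocompactness is inherited from $K$.

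Case (ii) is then a corollary of (i): when $E$ is separable, $B_{E'}$ with the weak$^\ast$ topology is compact and metrizable, hence sequentially compact, hence selectively sequentially pseudocompact. Case (iii) requires one extra move. I would set $Y:=\cspn\{x_n:n\in\w\}$, a separable closed subspace of $E$, hence Gelfand--Phillips by (ii); in $Y$ the sequence $(x_n)$ is weakly null and not norm null, so by Theorem~\ref{t:Drew-GP} it is not limited in $Y$, i.e.\ there is a weak$^\ast$-null sequence $(y_n^\ast)$ in $Y'$ with $\sup_m|y_n^\ast(x_m)|\not\to 0$; after thinning out, $\sup_m|y_n^\ast(x_m)|\geq\delta'$ for all $n$ and some $\delta'>0$. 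Since $E$ is separably weak$^\ast$-extensible, there are a subsequence $(y_{n_k}^\ast)$ and a weak$^\ast$-null sequence $(x_k^\ast)$ in $E'$ with $x_k^\ast|_Y=y_{n_k}^\ast$; as all $x_m$ lie in $Y$, $\sup_m|x_k^\ast(x_m)|=\sup_m|y_{n_k}^\ast(x_m)|\geq\delta'$, so $(x_k^\ast)$ witnesses that $(x_m)$ is not limited in $E$ — the required contradiction.

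I expect the main obstacle to be definitional bookkeeping rather than any new idea. First, one must pin down the precise meaning of ``$E'_{w^\ast}$ is selectively sequentially pseudocompact at the $E$-norming set $S$'' — which families of (relatively) open sets are quantified over, and where the extracted subsequence is permitted to converge — so that the extraction of $(\chi_n)$ and its weak$^\ast$-convergent subsequence in the core claim is genuinely licensed inside the non-metrizable space $E'_{w^\ast}$; this is also where one verifies that (i) and (v) really are instances of (iv). Second, for (iii) one must apply separable weak$^\ast$-extensibility in its correct ``up to a subsequence of $(y_n^\ast)$, with the exact restriction identity $x_k^\ast|_Y=y_{n_k}^\ast$'' form, and combine it with the preliminary thinning-out so that the non-uniform-convergence constant $\delta'$ survives. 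Once these are settled, (i), (iv), (v) all collapse to the three-line computation in the second paragraph, (ii) is a metrizability remark, and (iii) is a single lifting argument.
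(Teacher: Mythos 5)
The paper itself offers no proof of this theorem: it is a compilation of known results, with each item dispatched to the cited reference (the ``short proof'' being delegated to Corollary~2.2 of \cite{BG-GP-Banach}). Your reconstruction is correct and follows exactly the standard route of those references: the lemma ``$E'_{w^\ast}$ selectively sequentially pseudocompact at an $E$-norming set $S$ implies Gelfand--Phillips'' subsumes (i), (iv) and (v) via the three-line computation you give, (ii) reduces to metrizability and hence sequential compactness of $(B_{E'},w^\ast)$, and (iii) is precisely the localization-to-a-separable-subspace argument of \cite[Prop.~2]{CGP}. The only point that needs to be pinned down is the one you already flag, namely the convention for ``selectively sequentially pseudocompact at $S$'' (weak$^\ast$-open sets of $E'$ meeting $S$, with the selected subsequence required to converge only in $E'_{w^\ast}$); under that convention every step of your core computation is licensed and the argument is complete.
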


The notion of a limited set in locally convex spaces was introduced by Lindstr\"{o}m and Schlumprecht in \cite{Lin-Schl-lim} and independently by Banakh and Gabriyelyan in  \cite{BG-GP-lcs}. Since limited sets in the sense of \cite{Lin-Schl-lim} are defined using {\em equicontinuity}, to distinguish both notions  we  called them in \cite{BG-GP-lcs} by $\EE$-limited sets:
\begin{definition} \label{def:limited-lcs} {\em
A subset   $A$ of a locally convex space $E$ is called
\begin{enumerate}
\item[{\rm(i)}] {\em $\EE$-limited } if $\|\chi_n\|_A\to 0$ for every equicontinuous weak$^\ast$ null sequence $\{\chi_n\}_{n\in\w}$ in  $E'$ (\cite{Lin-Schl-lim});
\item[{\rm(ii)}] {\em limited }  if $\|\chi_n\|_A\to 0$ for every weak$^\ast$ null sequence $\{\chi_n\}_{n\in\w}$ in  $E'$ (\cite{BG-GP-lcs}).
\end{enumerate} }
\end{definition}
\noindent It is clear that if $E$ is a $c_0$-barrelled space, then $A$ is limited  if and only if it is $\EE$-limited, and for Banach spaces both these  notions of limited sets coincide.

Lindstr\"{o}m and Schlumprecht  \cite{Lin-Schl-lim} proposed  the following extention of the Gelfand--Phillips property to the class of all locally convex spaces.
\begin{definition}[\cite{Lin-Schl-lim}] \label{def:EGP} {\em
A locally convex space $E$ is called an {\em $\EE$-Gelfand--Phillips space} or it has the {\em  $\EE$-Gelfand--Phillips property} if every $\EE$-limited set in $E$ is precompact.}
\end{definition}
\noindent Also in Definition \ref{def:EGP} we use the notation the ``$\EE$-Gelfand--Phillips property'' instead of  the ``Gelfand--Phillips property'' as in \cite{Lin-Schl-lim} to emphasize the {\em  equicontinuity} of weak$^\ast$ null sequences in $E'$ (as we noticed above this extra-condition can be omitted for $c_0$-barrelled spaces but not in general) and the fact that this notion can be naturally extended from the case of Banach spaces to the general case not in a unique  way.

Fr\'{e}chet spaces with the $\EE$-Gelfand--Phillips property were thoroughly studied by Alonso in \cite{Alonso}. The following operator characterization of $\EE$-Gelfand--Phillips spaces was obtained by Ruess in \cite{Ruess-F}.

\begin{theorem}[\cite{Ruess-F}] \label{t:EGP-operator}
A locally convex space $E$ is an $\EE$-Gelfand--Phillips space if and only if for each locally convex {\rm(}equivalently, Fr\'{e}chet or Banach{\rm)} space $L$, a subset $H$ of the $\epsilon$-product $E\epsilon L$ of $E$ and $L$ is precompact if and only if
\begin{enumerate}
\item[{\rm (i)}] $\{T(\chi): T\in H\}$ is precompact in $L$ for all $\chi\in E'$;
\item[{\rm (ii)}] for any equicontinuous, weak$^\ast$ null sequence $\{\chi_n\}_{n\in\w}$ in $E'$, it follows that $T(\chi_n)\to 0$ in $L$ uniformly over all $T\in H$.
\end{enumerate}
\end{theorem}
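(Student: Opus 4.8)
The plan is to isolate the single place where the $\EE$-Gelfand--Phillips property is really used and to treat everything else as bookkeeping in the $\epsilon$-product. Throughout I use the identification of $E\epsilon L$ with $\mathcal{L}_{e}(E'_{c},L)$, the space of continuous linear maps from $E'_{c}$ (that is, $E'$ with the topology of uniform convergence on absolutely convex compact subsets of $E$) into $L$, equipped with the topology of uniform convergence on the equicontinuous subsets of $E'$, together with the canonical symmetry $E\epsilon L\cong L\epsilon E$; recall that on an equicontinuous subset of $E'$ the weak$^\ast$ topology coincides with the $E'_{c}$-topology.

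First I would dispose of the parts that need nothing special about $E$. If $H\subseteq E\epsilon L$ is precompact then, since evaluation at a fixed $\chi\in E'$ is continuous from $E\epsilon L$ into $L$, (i) holds; and (ii) holds because for an equicontinuous weak$^\ast$ null sequence $\{\chi_{n}\}_{n\in\w}$ the set $\{\chi_{n}:n\in\w\}\cup\{0\}$ is compact in $E'_{c}$, so a finite-net argument applied to the continuous seminorm $T\mapsto\sup_{n}q(T\chi_{n})$ on $E\epsilon L$ gives $\sup_{T\in H}q(T\chi_{n})\to0$. For the converse direction of the outer equivalence I would simply take $L=\IF$: then $E\epsilon\IF=E$, condition (i) becomes ``$H$ is bounded'', condition (ii) becomes ``$H$ is $\EE$-limited'', and precompactness in $E\epsilon\IF$ is precompactness in $E$, so the validity of the (nontrivial half of the) inner equivalence for $L=\IF$ says exactly that every $\EE$-limited set in $E$ is precompact, i.e. that $E$ is an $\EE$-Gelfand--Phillips space.

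The substance is the remaining implication: assuming $E$ is an $\EE$-Gelfand--Phillips space and that $H\subseteq E\epsilon L$ satisfies (i) and (ii), show that $H$ is precompact. I would first reduce to $L$ a Banach space by the local-Banach-space device --- for a continuous seminorm $q$ on $L$ there is a canonical map $E\epsilon L\to E\epsilon\widehat{L_{q}}$, these maps embed $E\epsilon L$ into $\prod_{q}E\epsilon\widehat{L_{q}}$, and precompactness as well as conditions (i) and (ii) are detected coordinatewise; this is also why it is enough to test only with Fr\'{e}chet, or even Banach, $L$. With $L$ Banach, the symmetry turns each $T\in H$ into a continuous map $T^{\flat}\colon L'_{c}\to E$, and since $B_{L'}$ is compact in $L'_{c}$ one has $T^{\flat}(B_{L'})$ compact in $E$ and $\|T\chi\|_{L}=\|\chi\|_{T^{\flat}(B_{L'})}$ for every $\chi\in E'$. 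Put
\[
A:=\bigcup_{T\in H}T^{\flat}(B_{L'})\subseteq E.
\]
Rereading the last identity, condition (ii) (with $q=\|\cdot\|_{L}$) says precisely that $\|\chi_{n}\|_{A}\to0$ for every equicontinuous weak$^\ast$ null sequence, so $A$ is $\EE$-limited; and $A$ is (weakly, hence) bounded because for each $\chi\in E'$ one has $\sup_{a\in A}|\chi(a)|=\sup_{T\in H}\|T\chi\|_{L}<\infty$ by (i). This is the one use of the hypothesis: as $E$ is an $\EE$-Gelfand--Phillips space, $A$ is precompact, so $K:=\cacx(A)$ is a precompact absolutely convex set with $T^{\flat}(B_{L'})\subseteq K$ for all $T\in H$.

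Finally I would conclude by Ascoli's theorem applied to the family $\{T^{\flat}:T\in H\}$ of continuous maps from the compact space $(B_{L'},w^\ast)$ into $E$: this family is pointwise precompact (each orbit lies in $K$), and it is equicontinuous because, by (i), $\{T\chi:T\in H\}$ is relatively compact in $L$ for every $\chi\in E'$, so that a weak$^\ast$-small change of $\psi\in B_{L'}$ moves $T^{\flat}\psi$ by an amount that is small uniformly over $T\in H$ (here one also uses $T^{\flat}(B_{L'})\subseteq K$ and that $E'$ separates the points of $E$, or of its completion). Since the topology of $E\epsilon L\cong L\epsilon E$ is exactly uniform convergence of the maps $T^{\flat}$ on $B_{L'}$, this yields that $H$ is precompact. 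The main obstacle is precisely this $\epsilon$-product bookkeeping --- fixing the identifications, the symmetry, the transpose, the reduction to Banach $L$ --- culminating in the equicontinuity verification for Ascoli's theorem, in which condition (i) and the precompactness of $A$ (this last coming from the hypothesis) do the work; by contrast the conceptual input is slight, the $\EE$-Gelfand--Phillips property being invoked just once, to turn the $\EE$-limited set $A$ into a precompact one.
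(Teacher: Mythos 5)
The paper offers no proof of this statement: it is quoted as background and attributed to Ruess \cite{Ruess-F}, so there is nothing in-paper to compare your argument against. Judged on its own terms, your reconstruction is correct and follows the route one would expect for this result. The two cheap implications are handled properly: precompactness of $H$ yields (i) and (ii) for any $E$ whatsoever, and specializing the inner equivalence to $L=\IF$ recovers the $\EE$-Gelfand--Phillips property, since there (i) says that $H$ is bounded and (ii) that $H$ is $\EE$-limited. The substantive implication is organized sensibly: reduction to Banach $L$ through the local Banach spaces $\widehat{L_q}$ (precompactness and conditions (i)--(ii) are all detected coordinatewise under the canonical embedding of $E\epsilon L$ into the product), transposition, formation of $A=\bigcup_{T\in H}T^\flat(B_{L'})$, the observation that (ii) says exactly that $A$ is $\EE$-limited while (i) gives its boundedness, a single application of the hypothesis to make $A$ precompact, and Ascoli. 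The one step you compress too much is the equicontinuity verification: the finite-net argument based on (i) only yields equicontinuity of $\{T^\flat:T\in H\}$ with respect to the \emph{weak} uniformity of $E$ (it controls $|\langle\chi,T^\flat\psi-T^\flat\psi_0\rangle|=|\langle\psi-\psi_0,T\chi\rangle|$ one $\chi$ at a time), and upgrading this to the original uniformity requires the fact that on the compact set $\overline{\cacx(A)}$ (closure taken in the completion) the uniformities induced by $\sigma(E,E')$ and by the original topology coincide, because the identity from the compact space $(\overline{\cacx(A)},\tau)$ onto the Hausdorff space $(\overline{\cacx(A)},\sigma(E,E'))$ is a homeomorphism. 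So the precompactness of $A$ --- i.e. the conclusion of the $\EE$-Gelfand--Phillips hypothesis --- is needed twice in the Ascoli step, for pointwise precompactness and for this uniformity upgrade; your parenthetical remark points at the right ingredients, but this is the crux of the argument and should be written out.
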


A generalization of the Gelfand--Phillips property to the class of all locally convex spaces can be done not in a unique way as in Definition \ref{def:EGP}. Being motivated by the notion of the Josefson--Nissenzweig property introduced and studied in \cite{BG-JNP} and as it is explained in \cite{BG-JNP}, the following notion can be considered as a version of the Josefson--Nissenzweig property which holds {\em everywhere}. We shall say that a subset $A$ of a locally convex space $E$ is {\em barrel-bounded}  or {\em barrel-precompact} if $A$ is  bounded or, respectively, precompact in the strong topology $\beta(E,E')$ on the space $E$.
\begin{definition} \label{def:b-GP-lcs} {\em
A locally convex space $E$ is said to have {\em the $b$-Gelfand--Phillips property} or else $E$ is a {\em $b$-Gelfand--Phillips space} if every limited barrel-bounded subset of $E$ is barrel-precompact.}
\end{definition}
It is easy to see that a Banach space $E$ has the Gelfand--Phillips property if and only if it is $b$-Gelfand--Phillips in the sense of Definition \ref{def:b-GP-lcs}
(where the prefix ``$b$-'' is added to emphasize the ``barreled'' nature of the notion);  and if $E$ is a barrelled space, then $E$ has the $b$-Gelfand--Phillips property if and only if it is an $\EE$-Gelfand--Phillips space or a Gelfand--Phillips space in the sense of Lindstr\"{o}m and  Schlumprecht \cite{Lin-Schl-lim}.

Let $E$ and $H$ be locally convex spaces. Denote by $\LL(E,H)$ the family of all operators (= continuous linear maps) from $E$ to $H$. Let $p\in[1,\infty]$. Recall that a sequence $\{x_n\}_{n\in\w}$ in $E$ is called {\em weakly $p$-summable} if  for every $\chi\in E'$ it follows that $(\langle\chi,x_n\rangle)\in \ell_p$ if $p\in[1,\infty)$ and $(\langle\chi,x_n\rangle)\in c_0$ if $p=\infty$. The families $\ell_p^w(E)$ and $c_0^w(E)$  of all weakly $p$-summable sequences in $E$ are vector spaces which admit natural locally convex vector topologies such that they are complete if so is $E$, for details see Section 19.4 in \cite{Jar} or Section 4 in \cite{Gab-Pel}.

Unifying the notion of unconditional convergent (u.c.) operator and the notion of completely continuous operators (i.e., they transform weakly null sequences into norm null), Castillo and S\'{a}nchez selected in \cite{CS} the class of $p$-convergent operators. An operator $T:X\to Y$ between Banach spaces $X$ and $Y$ is called {\em $p$-convergent} if it transforms weakly $p$-summable sequences into norm null sequences. Using this notion they introduced and study Banach spaces with the Dunford--Pettis property of order $p$ ($DPP_p$ for short)  for every $p\in[1,\infty]$. A Banach space $X$ is said to have the $DPP_p$ if every weakly compact operator from $X$ into a Banach space $Y$ is $p$-convergent.

The influential article  of Castillo and S\'{a}nchez \cite{CS} inspired an intensive study of $p$-versions of numerous geometrical properties of Banach spaces. In particular, the following $p$-versions of limitedness were introduced by Karn and Sinha \cite{KarnSinha} and Galindo and Miranda \cite{GalMir}.

\begin{definition} \label{def:small-bounded-p} {\em
Let $p\in[1,\infty]$, and let $X$ be a Banach space. A  bounded subset $A$ of $X$ is called
\begin{enumerate}
\item[{\rm(i)}] a {\em $p$-limited set} if
\[
\big(\sup_{a\in A} |\langle\chi_n,a\rangle|\big)\in \ell_p \;\; \Big(\mbox{or } \big(\sup_{a\in A} |\langle\chi_n,a\rangle|\big)\in c_0 \; \mbox{ if } p=\infty\Big)
\]
for every $(\chi_n)\in \ell_p^w \big(X^\ast,\sigma(X^\ast,X)\big)$ (or $(\chi_n)\in c_0^w\big(X^\ast,\sigma(X^\ast,X)\big)$ if $p=\infty$) (\cite{KarnSinha});
\item[{\rm(ii)}]  a {\em coarse $p$-limited set} if for every $T\in\LL(E,\ell_p)$ $($or $T\in\LL(E,c_0)$ if $p=\infty$$)$, the set $T(A)$ is relatively compact (\cite{GalMir}).
\end{enumerate}}
\end{definition}

Now,  it is natural to define the (coarse) $p$-version of the Gelfand--Phillips property of Banach spaces as follows.

\begin{definition} \label{def:pGP-Banach} {\em
Let $p\in[1,\infty]$. A Banach space $X$ is said to have
\begin{enumerate}
\item[{\rm(i)}] the {\em $p$-Gelfand--Phillips property} (the {\em $GP_p$ property}) if every limited weakly $p$-summable sequence in $X$ is norm null (\cite{FZ-p});
\item[{\rm(ii)}] the {\em coarse $p$-Gelfand--Phillips property} (the {\em coarse $GP_p$ property}) if every coarse $p$-limited set in $E$ is relatively compact (\cite{GalMir}).
\end{enumerate} }
\end{definition}
\noindent Taking into account Theorem \ref{t:Drew-GP} it is easy to see that a Banach space $X$ has the $GP_\infty$ property if and only if it has the $GP$ property. Numerous results concerning the Gelfand--Phillips type properties were obtained by Deghhani at al. \cite{DMD}, Fourie and Zeekoei \cite{FZ-16}, Ghenciu \cite{Ghenciu-pGP,Ghenciu-20,Ghenciu-23}, and  Galindo and Miranda \cite{GalMir}.

The classes of limited, $\EE$-limited, $p$-limited and coarse $p$-limited sets can be defined in any locally convex space as follows.
\begin{definition}[\cite{Gab-limited}] \label{def:p-limit-coarse-set} {\em
Let $1\leq p\leq q\leq\infty$. A non-empty subset $A$ of a locally convex space $E$  is called
\begin{enumerate}
\item[{\rm(i)}]  a  {\em $(p,q)$-limited set} (resp., {\em $(p,q)$-$\EE$-limited set}) if
\[
\Big(\|\chi_n\|_A\Big)\in \ell_q \; \mbox{ if $q<\infty$, } \; \mbox{ or }\;\; \|\chi_n\|_A\to 0 \; \mbox{ if $q=\infty$},
\]
for every (resp., equicontinuous) weak$^\ast$ $p$-summable sequence $\{\chi_n\}_{n\in\w}$ in  $E'$;
\item[{\rm(ii)}]  a {\em coarse $p$-limited set} if for every $T\in\LL(E,\ell_p) $ $($or $T\in\LL(E,c_0)$ if $p=\infty$), the set $T(A)$ is relatively compact.
\end{enumerate}}
\end{definition}
\noindent We denote by $\mathsf{L}_{(p,q)}(E)$ and $\mathsf{EL}_{(p,q)}(E)$ the family of all $(p,q)$-limited subsets and all $(p,q)$-$\EE$-limited subsets of $E$, respectively. $(p,p)$-($\EE$-)limited sets and $(\infty,\infty)$-($\EE$-)limited sets will be called simply {\em $p$-{\rm($\EE$-)}limited sets} and  ($\EE$-){\em limited sets}, respectively. The family of all coarse $p$-limited sets is denoted by $\mathsf{CL}_p(E)$. 



Taking into consideration that the classes of precompact, sequentially precompact, relatively sequentially compact and  relatively compact subsets of a locally convex space are distinct in general, one can naturally define the following types of the Gelfand--Phillips property generalizing the corresponding notions  from Definitions \ref{def:GP}, \ref{def:EGP}, \ref{def:b-GP-lcs} and \ref{def:pGP-Banach}.

\begin{definition}\label{def:property-pGP}{\em
Let $1\leq p\leq q\leq\infty$. A locally convex space $E$ is said to have
\begin{enumerate}
\item[{\rm(i)}] a {\em $(p,q)$-Gelfand--Phillips property } (a {\em precompact $(p,q)$-Gelfand--Phillips property}, a {\em sequential $(p,q)$-Gelfand--Phillips property} or a {\em sequentially precompact $(p,q)$-Gelfand--Phillips property}) if every $(p,q)$-limited set in $E$ is relatively compact (resp., precompact, relatively sequentially compact or sequentially precompact); for short, the {\em $GP_{(p,q)}$ property} (resp.,  the {\em $prGP_{(p,q)}$ property}, the {\em $sGP_{(p,q)}$ property} or the {\em $spGP_{(p,q)}$ property});
\item[{\rm(ii)}] a {\em $(p,q)$-equicontinuous Gelfand--Phillips property } (a {\em precompact $(p,q)$-equicontinuous Gelfand--Phillips property}, a {\em sequential $(p,q)$-equicontinuous Gelfand--Phillips property} or a {\em sequentially precompact $(p,q)$-equicontinuous Gelfand--Phillips property}) if every $(p,q)$-$\EE$-limited set in $E$ is relatively compact (resp., precompact, relatively sequentially compact or sequentially precompact); for short, the {\em $EGP_{(p,q)}$ property} (resp.,  the {\em $prEGP_{(p,q)}$ property}, the {\em $sEGP_{(p,q)}$ property} or the {\em $spEGP_{(p,q)}$ property});
\item[{\rm(iii)}] a {\em $b$-$(p,q)$-Gelfand--Phillips property } (a {\em $b$-$(p,q)$-equicontinuous Gelfand--Phillips property }) if every $(p,q)$-limited (resp., $(p,q)$-$\EE$-limited) set in $E$ is barrel-precompact; for short, the {\em $b$-$GP_{(p,q)}$ property} or the {\em $b$-$EGP_{(p,q)}$ property}, respectively;
\item[{\rm(iv)}] a {\em coarse $p$-Gelfand--Phillips property } (a {\em coarse precompact $p$-Gelfand--Phillips property}, a {\em coarse  sequential $p$-Gelfand--Phillips property} or a {\em coarse sequentially precompact $p$-Gelfand--Phillips property}) if every coarse $p$-limited set in $E$ is relatively compact (resp., precompact, relatively sequentially compact or sequentially precompact); for short, the {\em coarse  $GP_{p}$ property} (resp.,  the {\em coarse $prGP_{p}$ property}, the {\em coarse $sGP_p$ property} or the {\em coarse $spGP_p$ property}).
\end{enumerate}
In the case $q=p$  we shall write only one subscript $p$, and if $q=p=\infty$ the subscripts will be omitted (so that we  say that $E$ has the $GP_p$ {\em property} or the $GP$ {\em property} etc., respectively).   }
\end{definition}

The main purpose of the article is to study locally convex spaces with Gelfand--Phillips type properties introduced in Definition \ref{def:property-pGP}. Now we describe the content of the article.

In Section \ref{sec:pre}  we fix basic notions and prove some necessary results used in the article. It is a well known result due to Odell and Stegall (see \cite[p.~377]{Rosen-94}) that any limited set in a Banach space is weakly sequentially precompact. In Theorem \ref{t:inf-V*-wsc} we essentially generalize this result using the same idea. By the celebrated Rosenthal $\ell_1$-theorem, any bounded sequence in a Banach space $E$ has a weakly Cauchy subsequence if and only if  $E$ has no an isomorphic copy of $\ell_1$. The remarkable result of Ruess \cite{ruess} shows that an analogous statement holds true for much wider classes of locally convex spaces. These facts motive to introduce the {\em weak Cauchy subsequence property of order $p$} ($wCSP_p$ for short) for any $p\in[1,\infty]$, see Definition \ref{def:wCSPp}. In Proposition \ref{p:Lp-wCSPp} we prove that if $1<p<\infty$, then $\ell_p$ has the $wCSP_p$ if and only if $p\geq 2$. We also show that the class of locally convex spaces with the $wCSP_p$ is stable under taking countable products and arbitrary direct sums, see Proposition \ref{p:sum-wCSP}.

In Section \ref{sec:perm-GP-property} we select natural relationships between Gelfand--Phillips type properties introduced in Definition \ref{def:property-pGP}, see Lemmas \ref{l:GPp=EGPp}, \ref{l:GPp=EGPp-2} and \ref{l:GPp=EGPp-3} and Propositions \ref{p:pq-lim-coarse-p-lim} and \ref{p:pq-lim-coarse-p-lim}. As a corollary we show in Proposition \ref{p:equal-GP-strict-LF} that if  $E$ is an angelic, complete  and barrelled space  (for example, $E$ is a strict $(LF)$-space), then all Gelfand--Phillips type properties from (i)-(iii) of Definition \ref{def:property-pGP} coincide.
If the space $E$ has some of Schur type properties, then $V^\ast$ type properties of Pe{\l}czy\'{n}ski' imply corresponding Gelfand--Phillips type properties, see Proposition \ref{p:V*pq=>GPpq}. It is easy to see (Lemma \ref{l:GPp=EGPp}) that the Gelfand--Phillips type properties of order $(1,\infty)$ are the strongest ones in the sense that if $E$ has for example the $spGP_{(1,\infty)}$ property, then it has the $spGP_{(p,q)}$ property for all $1\leq p\leq q\leq \infty$. Therefore one can expect that spaces with the $spGP_{(1,\infty)}$ property have some additional properties. This is indeed so, in  Proposition  \ref{p:wsGP-Schur-con} we show that if $E$ is a sequentially complete barrelled locally convex space whose bounded subsets are weakly sequentially precompact, then $E$ has the $spGP_{(1,\infty)}$ property if and only if it has the Schur property.
In Proposition \ref{p:product-sum-GP} we show that the classes of locally convex spaces with Gelfand--Phillips type properties are stable under taking direct products and direct sums, and Proposition \ref{p:subspace-pGP} stands  heredity properties of Gelfand--Phillips type properties. In Theorem \ref{t:Cp-EGPp} we characterize spaces $C_p(X)$ which have one of the equicontinuous $GP$ properties from (ii) of Definition \ref{def:property-pGP}. In Theorem \ref{t:L(X)-GPpq} we show that any lcs is a quotient space of an lcs with the $GP_{(p,q)}$ property; consequently, the class of locally convex spaces with the $GP_{(p,q)}$ property is sufficiently rich.

Characterizations of locally convex spaces with Gelfand--Phillips type properties are given in Section \ref{sec:char-GP-property}.
In Theorem \ref{t:sequential-GPpq-property} we obtain an operator characterization of locally convex spaces with the sequential $GP_{(p,q)}$ property and the sequentially precompact $GP_{(p,q)}$ property; our results generalize a characterization of Banach spaces with the Gelfand--Phillips property of order $p$ obtained by Ghenciu in  Theorem 15 of \cite{Ghenciu-pGP}. In Theorem \ref{t:sGPp-limited-p-conv} we characterize locally convex spaces with the sequentially precompact $GP_{(p,q)}$ property in an important class of locally convex spaces which includes all strict $(LF)$-spaces. In Theorem \ref{t:char-coarse-prGPp} we essentially generalize the equivalence (i)$\Leftrightarrow$(ii) in Theorem \ref{t:Drew-GP}. A natural direct extension of the Gelfand--Phillips property for Banach spaces is the precompact $GP_{(p,\infty)}$ property with $p\in[1,\infty]$ (so that the precompact Gelfand--Phillips property and the precompact $GP_{(1,\infty)}$ property are two extreme cases). Locally convex spaces with the precompact $GP_{(p,\infty)}$ property are characterized in Theorem \ref{t:GPpq-characterization}. Sufficient conditions to have the precompact $GP_{(p,\infty)}$ property are given in Theorem \ref{t:wGP-Mackey-c0} and Corollary \ref{c:precompact-GP}. Banach spaces with the coarse $p$-Gelfand--Phillips property are characterized by Galindo and Miranda  in Proposition 13 of \cite{GalMir}. In Theorem \ref{t:coarse-GPp-char} we generalize their result to all locally convex spaces.

Recall that a Banach space $E$ has the Gelfand--Phillips property if and only if every limited weakly null sequence in $E$ is norm null, see Theorem \ref{t:Drew-GP} (or the more general Theorem \ref{t:char-coarse-prGPp} below). This characterization motivates to introduce the following type of Gelfand--Phillips property which is studied in Section \ref{sec:GPscP}.
\begin{definition} \label{def:p-GP-q} {\em
Let $p,q,q'\in[1,\infty]$,  $q'\leq q$. A locally convex space $(E,\tau)$ is said to have the {\em $p$-Gelfand--Phillips sequentially compact property of order $(q',q)$} ($p$-$GPscP_{(q',q)}$ for short) if every weakly $p$-summable sequence in $E$ which is also a $(q',q)$-limited set is $\tau$-null. If $q'=q$, $q'=q=\infty$, $p=\infty$ or $p=q'=q=\infty$, we shall say simply that $E$ has the $p$-$GPscP_{q}$, the $p$-$GPscP$, $GPscP_{(q',q)}$ or the $GPscP$, respectively.}
\end{definition}
We characterize locally convex spaces $E$ with the $p$-$GPscP_{(q',q)}$ in Theorem \ref{t:char-p-GP}. This theorem extends and generalizes characterizations of Banach spaces with the Gelfand--Phillips property obtained in Theorem 2.2 of \cite{SaMo} and Corollary 13(ii) of \cite{Ghenciu-20}.
Spaces with the $GPscP_{(q,\infty)}$ are characterized in Theorem \ref{t:char-p-GPsc}.

In Section \ref{sec:strong-V*} we introduce strong versions of $V^\ast$ type properties of Pe{\l}czy\'{n}ski, which usually are stronger than the corresponding Gelfand--Phillips type properties. These new classes of locally convex spaces have nice stability properties, see Propositions \ref{p:product-sum-V*-strong} and \ref{p:subspace-V*-strong}. Locally convex spaces with the strong (resp., sequentially) precompact $V^\ast_p$ property are characterized in Theorem \ref{t:char-strong-V*p}.


\section{Preliminaries results} \label{sec:pre}



We start with some necessary definitions and notations used in the article. Set $\w:=\{ 0,1,2,\dots\}$. The closed unit ball of the field $\ff$ is denoted by $\mathbb{D}$. All topological spaces are assumed to be Tychonoff (= completely regular and $T_1$). The closure of a subset $A$ of a topological space $X$ is denoted by $\overline{A}$. The space $C(X)$ of all continuous functions on $X$ endowed with the pointwise topology is denoted by $C_p(X)$.
A Tychonoff space $X$  is called {\em Fr\'{e}chet--Urysohn} if for any cluster point $a\in X$ of a subset $A\subseteq X$ there is a sequence $\{ a_n\}_{n\in\w}\subseteq A$ which converges to $a$. A Tychonoff space $X$ is called an {\em angelic space} if (1) every relatively countably compact subset of $X$ is relatively compact, and (2) any compact subspace of $X$ is Fr\'{e}chet--Urysohn. Note that any subspace of an angelic space is angelic, and a subset $A$ of an angelic space $X$ is compact if and only if it is countably compact if and only if $A$ is sequentially compact, see Lemma 0.3 of \cite{Pryce}.


Let $E$ be a locally convex space. The span of a subset $A$ of $E$ and its closure are denoted by $E_A:=\spn(A)$ and $\cspn(A)$, respectively. We denote by $\Nn_0(E)$ (resp., $\Nn_{0}^c(E)$) the family of all (resp., closed absolutely convex) neighborhoods of zero of $E$. The family of all bounded subsets of $E$ is denoted by $\Bo(E)$. The value of $\chi\in E'$ on $x\in E$ is denoted by $\langle\chi,x\rangle$ or $\chi(x)$. A sequence $\{x_n\}_{n\in\w}$ in $E$ is said to be {\em Cauchy} if for every $U\in\Nn_0(E)$ there is $N\in\w$ such that $x_n-x_m\in U$ for all $n,m\geq N$. If $E$ is a normed space, $B_E$ denotes the closed unit ball of $E$. The family of all operators from $E$ to an lcs $L$ is denoted by $\LL(E,L)$.

For an lcs $E$, we denote by $E_w$ and $E_\beta$ the space $E$ endowed with the weak topology $\sigma(E,E')$ and with the strong topology $\beta(E,E')$, respectively. The topological dual space $E'$ of $E$ endowed with weak$^\ast$ topology $\sigma(E',E)$ or with the strong topology $\beta(E',E)$ is denoted by $E'_{w^\ast}$ or $E'_\beta$, respectively. The {\em polar} of a subset $A$ of $E$ is denoted by
$
A^\circ :=\{ \chi\in E': \|\chi\|_A \leq 1\}. 
$
A subset $B$ of $E'$ is {\em equicontinuous} if $B\subseteq U^\circ$ for some $U\in \Nn_0(E)$.

A subset $A$ of a locally convex space $E$ is called
\begin{enumerate}
\item[$\bullet$] {\em precompact} if for every $U\in\Nn_0(E)$ there is a finite set $F\subseteq E$ such that $A\subseteq F+U$;
\item[$\bullet$] {\em sequentially precompact} if every sequence in $A$ has a Cauchy subsequence;
\item[$\bullet$] {\em weakly $($sequentially$)$ compact} if $A$ is (sequentially) compact in $E_w$;
\item[$\bullet$] {\em relatively weakly compact} if its weak closure $\overline{A}^{\,\sigma(E,E')}$ is compact in $E_w$;
\item[$\bullet$] {\em relatively weakly sequentially compact} if each sequence in $A$ has a subsequence weakly converging to a point of $E$;
\item[$\bullet$] {\em weakly sequentially precompact} if each sequence in $A$ has a weakly Cauchy subsequence.
\end{enumerate}
Note that each sequentially precompact subset of $E$ is precompact, but the converse is not true in general, see Lemma 2.2 of \cite{Gab-Pel}. 
We shall use repeatedly the next lemma, see Lemma 4.4 in \cite{Gab-DP}.
\begin{lemma} \label{l:null-seq}
Let $\tau$ and $\TTT$ be two locally convex vector topologies on a vector space $E$ such that $\tau\subseteq \TTT$. If $S=\{x_n\}_{n\in\w}$ is $\tau$-null and $\TTT$-precompact, then $S$ is $\TTT$-null. Consequently, if $S$ is weakly $\TTT$-null and $\TTT$-precompact, then $S$ is $\TTT$-null.
\end{lemma}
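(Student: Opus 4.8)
The plan is to prove the first assertion and obtain the ``consequently'' as the special case $\tau=\sigma(E,(E,\TTT)')$: that weak topology is coarser than $\TTT$ and has the same continuous dual, so it fits the hypotheses. For the first assertion it suffices to fix an arbitrary closed absolutely convex $\TTT$-neighbourhood $U$ of $0$ and produce $N\in\w$ with $x_n\in U$ for every $n\geq N$; the key is to fuse the ``finitely many centres'' information given by $\TTT$-precompactness with the $\tau$-convergence, with the bipolar theorem as the bridge.

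First I would use $\TTT$-precompactness to pick a finite $F\subseteq E$ with $S\subseteq F+\tfrac12 U$. Call $a\in F$ \emph{essential} if $x_n\in a+\tfrac12 U$ for infinitely many $n$, and let $F_0$ be the set of essential centres. Any $n$ for which $x_n\notin\bigcup_{a\in F_0}(a+\tfrac12 U)$ lies in $a+\tfrac12 U$ for some $a\in F\setminus F_0$, and for each such $a$ this happens for only finitely many $n$; hence there is $N$ with $x_n\in\bigcup_{a\in F_0}(a+\tfrac12 U)$ for all $n\geq N$. The crucial step is to show that every essential centre $a$ already lies in $\tfrac12 U$. Fix an infinite $M\subseteq\w$ with $a-x_n\in\tfrac12 U$ for $n\in M$. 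For each $\chi\in U^\circ$ one has $\langle\chi,x_n\rangle\to 0$ --- this is exactly where $\tau$-nullity of $S$ is used; note $U^\circ$ is $\TTT$-equicontinuous, and for the weak topology of the ``consequently'' every $\chi\in E'$ is automatically $\tau$-continuous --- so $|\langle\chi,a\rangle|=\lim_{n\in M}|\langle\chi,a-x_n\rangle|\leq\tfrac12$. As this holds for every $\chi\in U^\circ$, we get $a\in\tfrac12\,U^{\circ\circ}=\tfrac12 U$ by the bipolar theorem, since $U$ is absolutely convex and $\TTT$-closed. Consequently, for $n\geq N$ we have $x_n\in a+\tfrac12 U\subseteq\tfrac12 U+\tfrac12 U=U$ for a suitable essential $a$, i.e.\ $x_n\in U$. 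Since $U$ was arbitrary, $x_n\to 0$ in $\TTT$.

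The step with real content is this passage from ``$a$ is an essential centre of the $\TTT$-precompact, $\tau$-null sequence'' to ``$a\in\tfrac12 U$'': the pigeonhole reduction to finitely many centres and the final assembly $a+\tfrac12 U\subseteq U$ are routine. What is being exploited is that the $\TTT$-equicontinuous functionals in $U^\circ$ annihilate the $\tau$-limit $0$, so that $\TTT$-precompactness together with $\tau$-convergence upgrade, via the bipolar theorem, to genuine $\TTT$-convergence. A harmless routine check is that adjoining $0$ to $S$ does not affect $\TTT$-precompactness, so the statement is insensitive to whether $0\in S$ and to replacing $(x_n)$ by a subsequence.
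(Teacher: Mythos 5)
The paper itself offers no proof of this lemma (it is imported from Lemma~4.4 of \cite{Gab-DP}), so your argument has to be judged on its own terms. The combinatorial reduction to finitely many ``essential'' centres, the bipolar step $a\in\tfrac12 U^{\circ\circ}=\tfrac12 U$, and the final assembly $x_n\in a+\tfrac12U\subseteq U$ are all correct, and your proof is complete for the ``consequently'' clause: there $\tau=\sigma(E,(E,\TTT)')$, so every $\chi\in U^\circ\subseteq (E,\TTT)'$ really is $\tau$-continuous and the passage from $\tau$-nullity to $\langle\chi,x_n\rangle\to0$ is legitimate. That clause is the only form of the lemma actually invoked anywhere in this paper.

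For the first assertion as stated, however, there is a genuine gap at precisely the step you single out as carrying the real content: from ``$S$ is $\tau$-null'' you infer $\langle\chi,x_n\rangle\to0$ for every $\chi\in U^\circ$. Elements of $U^\circ$ are $\TTT$-continuous, but nothing in the hypotheses makes them $\tau$-continuous, and $\tau$-nullity of $(x_n)$ controls only $\tau$-continuous functionals; the remark that $U^\circ$ is $\TTT$-equicontinuous does not bridge this, since equicontinuity is again with respect to $\TTT$. Moreover the gap cannot be closed, because the first assertion is false in the stated generality: on $E=\ell_1$ take $\tau=\sigma(\ell_1,c_0)$ and $\TTT=\sigma(\ell_1,\ell_\infty)$, so that $\tau\subseteq\TTT$; the sequence $\{e_n\}_{n\in\w}$ of unit vectors is $\tau$-null and $\TTT$-precompact (every bounded set is totally bounded in a weak topology), yet for $y=(1,1,1,\dots)\in\ell_\infty=(E,\TTT)'$ one has $\langle y,e_n\rangle=1$ for all $n$, so $\{e_n\}_{n\in\w}$ is not $\TTT$-null. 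The statement, and your proof verbatim, are correct exactly under the additional hypothesis $(E,\TTT)'\subseteq(E,\tau)'$ (for instance when $\tau$ and $\TTT$ are compatible), which holds in the ``consequently'' case and in every application of the lemma made in this paper; that hypothesis should be stated explicitly rather than left implicit in the phrase ``this is exactly where $\tau$-nullity is used''.
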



In what follows we shall actively use the following classical completeness type properties and weak barrelledness conditions. A locally convex space  $E$ is
\begin{enumerate}
\item[$\bullet$]  {\em quasi-complete} if each closed bounded subset of $E$ is complete;
\item[$\bullet$]  {\em sequentially complete} if each Cauchy sequence in $E$ converges;
\item[$\bullet$]  {\em locally complete} if the closed absolutely convex hull of a null sequence in $E$ is compact;
\item[$\bullet$] ({\em quasi}){\em barrelled} if every $\sigma(E',E)$-bounded (resp., $\beta(E',E)$-bounded) subset of $E'$ is equicontinuous;
\item[$\bullet$] {\em $c_0$-}({\em quasi}){\em barrelled} if every $\sigma(E',E)$-null (resp., $\beta(E',E)$-null) sequence is equicontinuous.
\end{enumerate}
It is well known that $C_p(X)$ is quasibarrelled for every Tychonoff space $X$.

Denote by $\bigoplus_{i\in I} E_i$ and $\prod_{i\in I} E_i$  the locally convex direct sum and the topological product of a non-empty family $\{E_i\}_{i\in I}$ of locally convex spaces, respectively. If $0\not= \xxx=(x_i)\in \bigoplus_{i\in I} E_i$, then the set $\supp(\xxx):=\{i\in I: x_i\not= 0\}$ is called the {\em support} of $\xxx$. The {\em support}  of a subset $A$, $\{0\}\subsetneq A$, of $\bigoplus_{i\in I} E_i$ is the set $\supp(A):=\bigcup_{a\in A} \supp(a)$. We shall also consider elements $\xxx=(x_i) \in \prod_{i\in I} E_i$ as functions on $I$ and write $\xxx(i):=x_i$.

Denote by $\ind_{n\in \w} E_n$  the inductive limit of a (reduced) inductive sequence $\big\{(E_n,\tau_n)\big\}_{n\in \w}$  of locally convex spaces. If in addition $\tau_m{\restriction}_{E_n} =\tau_n$ for all $n,m\in\w$ with $n\leq m$, the inductive limit $\ind_{n\in \w} E_n$ is called {\em strict} and is denoted by $\SI E_n$. In the partial case when all spaces $E_n$ are Fr\'{e}chet, the strict inductive limit is called a {\em strict $(LF)$-space}.

Below we recall some of the basic classes of compact-type operators.
\begin{definition} \label{def:operators} {\em
Let $E$ and $L$ be locally convex spaces. An operator $T\in \LL(E,L)$ is called {\em compact} (resp., {\em sequentially compact, precompact, sequentially precompact, weakly compact, weakly sequentially compact, weakly sequentially precompact, bounded}) if there is $U\in \Nn_0(E)$ such that $T(U)$ a relatively compact (relatively sequentially compact,  precompact, sequentially precompact,  relatively weakly compact, relatively weakly sequentially compact,  weakly sequentially precompact or bounded) subset of $E$.}
\end{definition}

Let $p\in[1,\infty]$. Then $p^\ast$ is defined to be the unique element of $ [1,\infty]$ which satisfies $\tfrac{1}{p}+\tfrac{1}{p^\ast}=1$. For $p\in[1,\infty)$, the space $\ell_{p^\ast}$  is the dual space of $\ell_p$. We denote by $\{e_n\}_{n\in\w}$ the canonical basis of $\ell_p$, if $1\leq p<\infty$, or the canonical basis of $c_0$, if $p=\infty$. The canonical basis of $\ell_{p^\ast}$ is denoted by $\{e_n^\ast\}_{n\in\w}$. In what follows we usually identify $\ell_{1^\ast}$ with $c_0$. Denote by  $\ell_p^0$ and $c_0^0$ the linear span of $\{e_n\}_{n\in\w}$  in  $\ell_p$ or in $c_0$ endowed with the induced norm topology, respectively. We shall use also the following well known description of relatively compact subsets of $\ell_p$ and $c_0$,  see \cite[p.~6]{Diestel}.
\begin{proposition} \label{p:compact-ell-p}
{\rm(i)} A bounded subset $A$ of $\ell_p$, $p\in[1,\infty)$, is relatively compact if and only if
\[
\lim_{m\to\infty} \sup\Big\{ \sum_{m\leq n} |x_n|^p : x=(x_n)\in A\Big\} =0.
\]
{\rm(ii)} A bounded subset $A$ of $c_0$ is relatively compact if and only if
$
\lim_{n\to\infty} \sup\{ |x_n|: x=(x_n)\in A\} =0.
$
\end{proposition}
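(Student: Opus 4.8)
The statement to prove is Proposition~\ref{p:compact-ell-p}, the classical description of relatively compact subsets of $\ell_p$ ($1\le p<\infty$) and of $c_0$. I will sketch the proof of part~(i); part~(ii) is entirely analogous (and in fact slightly simpler).

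\textbf{Necessity.} Suppose $A\subseteq\ell_p$ is relatively compact, hence totally bounded. Fix $\varepsilon>0$. Cover $A$ by finitely many balls $B(y^{(1)},\varepsilon),\dots,B(y^{(N)},\varepsilon)$ with centers $y^{(j)}\in\ell_p$. Since each $y^{(j)}\in\ell_p$, its tail $\big(\sum_{n\ge m}|y^{(j)}_n|^p\big)^{1/p}$ tends to $0$ as $m\to\infty$, so there is $m_0$ with $\sum_{n\ge m_0}|y^{(j)}_n|^p<\varepsilon^p$ for all $j=1,\dots,N$. Given $x\in A$, pick $j$ with $\|x-y^{(j)}\|_p<\varepsilon$; then by the triangle inequality in the tail space $\ell_p(\{n\ge m_0\})$,
\[
\Big(\sum_{n\ge m_0}|x_n|^p\Big)^{1/p}\le \|x-y^{(j)}\|_p+\Big(\sum_{n\ge m_0}|y^{(j)}_n|^p\Big)^{1/p}<2\varepsilon.
\]
Since this holds uniformly over $x\in A$ and for all $m\ge m_0$, we get $\limsup_{m\to\infty}\sup_{x\in A}\sum_{n\ge m}|x_n|^p\le (2\varepsilon)^p$; letting $\varepsilon\to 0$ gives the displayed limit.

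\textbf{Sufficiency.} Assume $A$ is bounded, say $\|x\|_p\le M$ for all $x\in A$, and that $\lim_{m\to\infty}\sup_{x\in A}\sum_{n\ge m}|x_n|^p=0$. I will show $A$ is totally bounded (completeness of $\ell_p$ then gives relative compactness). Fix $\varepsilon>0$ and choose $m_0$ with $\sup_{x\in A}\sum_{n\ge m_0}|x_n|^p<(\varepsilon/2)^p$. Let $P\colon\ell_p\to\ell_p$ be the projection onto the first $m_0$ coordinates. Then $P(A)$ is a bounded subset of the finite-dimensional space $\IF^{m_0}$, hence totally bounded, so there is a finite set $F\subseteq\ell_p$ (supported on the first $m_0$ coordinates) with $P(A)\subseteq F+(\varepsilon/2)B_{\ell_p}$. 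For any $x\in A$, write $x=Px+(x-Px)$; the first summand is within $\varepsilon/2$ of some $f\in F$ and $\|x-Px\|_p=\big(\sum_{n\ge m_0}|x_n|^p\big)^{1/p}<\varepsilon/2$, so $\|x-f\|_p<\varepsilon$. Thus $A\subseteq F+\varepsilon B_{\ell_p}$, proving total boundedness.

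\textbf{Part (ii).} For $c_0$ the argument is the same with $\ell_p$-norms replaced by sup-norms: relative compactness (= total boundedness, since $c_0$ is complete) forces the tails $\sup_{n\ge m}|x_n|$ to be uniformly small because finitely many $\varepsilon$-centers, each a $c_0$-sequence, have uniformly small tails; conversely, boundedness together with uniformly vanishing tails lets one approximate $A$ by the projection onto the first $m_0$ coordinates, which lives in a finite-dimensional space and is therefore totally bounded. Note that no real obstacle arises here: the only point requiring a little care is the use of the triangle inequality on the tail coordinates (valid since $\|\cdot\|_p$ restricted to a coordinate block is again a norm) and the appeal to completeness of $\ell_p$ and $c_0$ to pass from total boundedness to relative compactness.
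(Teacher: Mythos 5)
Your proof is correct and is the standard total-boundedness argument. Note, however, that the paper does not prove this proposition at all: it is stated as a well-known fact with a reference to \cite[p.~6]{Diestel}, so there is no proof in the paper to compare against. Your argument (uniformly small tails via a finite $\varepsilon$-net of centers for necessity; finite-dimensional projection plus small tails for sufficiency, then completeness of $\ell_p$ and $c_0$) is exactly the classical one found in the cited source.
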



Let $p\in[1,\infty]$. A sequence  $\{x_n\}_{n\in\w}$ in a locally convex space $E$ is called
\begin{enumerate}
\item[$\bullet$] {\em weakly $p$-convergent to $x\in E$} if  $\{x_n-x\}_{n\in\w}$ is weakly $p$-summable;
\item[$\bullet$] {\em weakly $p$-Cauchy} if for each pair of strictly increasing sequences $(k_n),(j_n)\subseteq \w$, the sequence  $(x_{k_n}-x_{j_n})_{n\in\w}$ is weakly $p$-summable.
\end{enumerate}
A sequence  $\{\chi_n\}_{n\in\w}$ in $E'$ is called  {\em weak$^\ast$ $p$-summable} (resp., {\em weak$^\ast$ $p$-convergent to $\chi\in E'$} or  {\em weak$^\ast$ $p$-Cauchy})  if it is weakly $p$-summable (resp., weakly $p$-convergent to $\chi\in E'$ or weakly $p$-Cauchy) in $E'_{w^\ast}$.
Following \cite{Gab-Pel}, $E$ is called  {\em $p$-barrelled } (resp.,  {\em $p$-quasibarrelled}) if every weakly $p$-summable sequence in $E'_{w^\ast}$ (resp., in  $E'_\beta$) is equicontinuous.

Generalizing the corresponding notions in the class of Banach spaces introduced in \cite{CS} and \cite{Ghenciu-pGP}, the following $p$-versions of weakly compact-type properties are defined in \cite{Gab-Pel}. Let $p\in[1,\infty]$. A subset   $A$ of a locally convex space $E$ is called
\begin{enumerate}
\item[$\bullet$] ({\em relatively}) {\em weakly sequentially $p$-compact} if every sequence in $A$ has a weakly $p$-convergent  subsequence with limit in $A$ (resp., in $E$);
\item[$\bullet$] {\em weakly  sequentially $p$-precompact} if every sequence from $A$ has a  weakly $p$-Cauchy subsequence.
\end{enumerate}
An operator $T:E\to L$ between locally convex spaces $E$ and $L$ is called {\em weakly sequentially } $p$-({\em pre}){\em compact} if there is $U\in\Nn_0(E)$ such that $T(U)$ is relatively weakly sequentially $p$-compact (resp., weakly  sequentially $p$-precompact) subset of $L$.

The following class of subsets of an lcs $E$ was introduced and studied in \cite{Gab-Pel}, it generalizes the notion of $p$-$(V^\ast)$ subsets  of Banach spaces defined in \cite{CCDL}.
\begin{definition}\label{def:tvs-V*-subset} {\em
Let $p,q\in[1,\infty]$. A non-empty subset   $A$ of a locally convex space $E$ is called a  {\em $(p,q)$-$(V^\ast)$ set} (resp., a {\em $(p,q)$-$(EV^\ast)$ set}) if
\[
\Big(\sup_{a\in A} |\langle \chi_n, a\rangle|\Big)\in \ell_q \; \mbox{ if $q<\infty$, } \; \mbox{ or }\;\; \Big(\sup_{a\in A} |\langle \chi_n, a\rangle|\Big)\in c_0 \; \mbox{ if $q=\infty$},
\]
for every  (resp., equicontinuous) weakly $p$-summable sequence $\{\chi_n\}_{n\in\w}$ in  $E'_\beta$. $(p,\infty)$-$(V^\ast)$ sets and $(1,\infty)$-$(V^\ast)$ sets will be called simply {\em $p$-$(V^\ast)$ sets} and {\em $(V^\ast)$ sets}, respectively.} 
\end{definition}

Let us recall $V^\ast$ type properties of  Pe{\l}czy\'{n}ski defined in \cite{Gab-Pel}.
\begin{definition}\label{def:property-Vp*}{\em
Let $1\leq p\leq q\leq\infty$. A locally convex space $E$ is said to have
\begin{enumerate}
\item[$\bullet$] the {\em property $V^\ast_{(p,q)}$} (a {\em property $EV^\ast_{(p,q)}$}) if every $(p,q)$-$(V^\ast)$ (resp., $(p,q)$-$(EV^\ast)$) set in $E$ is relatively weakly compact;
\item[$\bullet$] the {\em property $sV^\ast_{(p,q)}$} (a {\em property $sEV^\ast_{(p,q)}$}) if every $(p,q)$-$(V^\ast)$ (resp., $(p,q)$-$(EV^\ast)$) set in $E$ is relatively weakly sequentially compact;
\item[$\bullet$] the {\em weak property $sV_{(p,q)}^\ast$} or a {\em property $wsV_{(p,q)}^\ast$}  (resp., a {\em weak property $sEV_{(p,q)}^\ast$} or a {\em property $wsEV_{(p,q)}^\ast$}) if each $(p,q)$-$(V^\ast)$-subset (resp.,  $(p,q)$-$(EV^\ast)$-subset) of $E$ is weakly sequentially precompact.
\end{enumerate}
In the case when $q=\infty$ we shall omit the subscript $q$ and say that $E$ has the  {\em property $V^\ast_{p}$} etc., and in the case when $q=\infty$ and $p=1$ we shall say that $E$ has the  {\em property $V^\ast$} etc. }
\end{definition}

Let $p\in[1,\infty]$. The $p$-Schur property of Banach spaces was defined in \cite{DM} and  \cite{FZ-pL}. Generalizing this notion and following \cite{Gab-Pel}, an lcs $E$ is said to have a {\em $p$-Schur property} if every weakly $p$-summable sequence is a null-sequence. In particular, $E$ has the Schur property if and only if it is an $\infty$-Schur space. Following \cite{Gab-DP}, $E$ is called a {\em weakly sequentially $p$-angelic space} if the family of all relatively weakly sequentially $p$-compact sets in $E$ coincides with the family of all relatively weakly compact subsets of $E$. The space $E$ is a {\em weakly $p$-angelic space}  if it is  a weakly sequentially $p$-angelic space and each weakly compact subset  of $E$ is Fr\'{e}chet--Urysohn.

Following \cite{Gabr-free-resp}, a sequence $A=\{ a_n\}_{n\in\w}$ in an lcs $E$ is said to be {\em equivalent to the standard unit basis $\{ e_n: n\in\w\}$ of $\ell_1$} if there exists a linear topological isomorphism $R$ from $\cspn(A)$ onto a subspace of $\ell_1$ such that $R(a_n)=e_n$ for every $n\in\w$ (we do not assume that the closure $\cspn(A)$ of the  $\spn(A)$ of $A$ is complete or that $R$ is onto). We shall say also that $A$ is an {\em $\ell_1$-sequence}.
Following \cite{GKKLP}, a locally convex space $E$ is said to have the {\em Rosenthal property} if every bounded sequence in $E$ has a subsequence which either (1) is Cauchy in the weak topology, or (2) is equivalent to the unit basis of $\ell_1$. The following remarkable extension of the celebrated Rosenthal $\ell_1$-theorem  was proved by Ruess \cite{ruess}: {\em each locally complete locally convex space $E$ whose every separable bounded set is metrizable has the Rosenthal property.} Thus every strict $(LF)$-space has the Rosenthal property.

Recall that an lcs $X$ is called {\em injective} if for every subspace $H$ of a locally convex space $E$, each operator $T:H\to X$ can be extended to an operator ${\bar T}: E\to X$.

In \cite[p.~377]{Rosen-94} Rosenthal pointed out a theorem of Odell and Stegall which states that any $\infty$-$(V^\ast)$ set of a Banach space is weakly sequentially precompact. In what follows we use the following generalization of this remarkable result which is of independent interest (our detailed proof follows the Odell--Stegall idea, cf. also Theorem 
5.21 of \cite{Gab-limited}).

\begin{theorem} \label{t:inf-V*-wsc}
Let $2\leq p\leq q\leq\infty$, and let $E$ be a locally convex space with the Rosenthal property. Then every $(p,q)$-$(V^\ast)$ subset of $E$ is  weakly sequentially precompact. Consequently, each $(p,q)$-limited subset of $E$ is  weakly sequentially precompact.
\end{theorem}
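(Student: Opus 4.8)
The plan is to adapt the classical Odell--Stegall argument to this locally convex, order-$p$ setting. Suppose $A \subseteq E$ is a $(p,q)$-$(V^\ast)$ set which is \emph{not} weakly sequentially precompact; then there is a sequence $\{a_n\}_{n\in\w}$ in $A$ with no weakly Cauchy subsequence. Since $E$ has the Rosenthal property, after passing to a subsequence we may assume that $\{a_n\}_{n\in\w}$ is an $\ell_1$-sequence, i.e. there is a linear topological isomorphism $R$ from $\cspn(\{a_n\})$ onto a subspace of $\ell_1$ with $R(a_n) = e_n$ for all $n$. The goal is to manufacture from $R$ an equicontinuous-free, weakly $p$-summable sequence $\{\chi_n\}_{n\in\w}$ in $E'_\beta$ witnessing that $A$ fails the defining condition of a $(p,q)$-$(V^\ast)$ set, namely one for which $\big(\sup_{a\in A}|\langle\chi_n,a\rangle|\big)\notin\ell_q$ (or $\notin c_0$ when $q=\infty$). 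Concretely, one pulls back suitable functionals from $\ell_1^\ast = \ell_\infty$ — more precisely, using a Rademacher-type or block construction inside $\ell_\infty$ whose images under $R^\ast$ are controlled.

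Here is the step I would carry out in detail. Because $R:\cspn(\{a_n\})\to\ell_1$ is a topological isomorphism onto its range, its adjoint $R^\ast$ maps $\ell_\infty = (\ell_1)'$ \emph{onto} the dual of $\cspn(\{a_n\})$, and by Hahn--Banach each such functional extends to an element of $E'$; crucially, the extension can be chosen so that the resulting sequence in $E'$ is weak$^\ast$ $p$-summable and even weakly $p$-summable in $E'_\beta$. The key point exploiting the hypothesis $p\geq 2$: the canonical basis $\{e_n^\ast\}_{n\in\w}$ of $c_0 \subseteq \ell_\infty$ is weakly $2$-summable in $\ell_\infty$ (indeed $(\langle\mu, e_n^\ast\rangle)_n = (\mu_n)_n \in \ell_1 \subseteq \ell_2$ for every $\mu\in(\ell_\infty)' $ supported appropriately; more carefully one uses that $\{e_n^\ast\}$ is weakly $p$-summable in $c_0$ for all $p\geq 1$ and transfers this, or one uses that in a space not containing $c_0$ weakly summable sequences behave well), hence weakly $p$-summable for every $p \geq 2$. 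Transporting $\{e_n^\ast\}$ through $R^\ast$ and a Hahn--Banach extension gives functionals $\chi_n \in E'$ that are weakly $p$-summable in $E'_\beta$ (here one must check the transported sequence remains $\beta(E',E)$-weakly $p$-summable, not merely weak$^\ast$; this uses that $R$ is a topological — not just algebraic — isomorphism, so $R^\ast$ is strongly continuous on bounded sets), while $\langle\chi_n, a_n\rangle = \langle e_n^\ast, R(a_n)\rangle = \langle e_n^\ast, e_n\rangle = 1$ for all $n$. Then $\sup_{a\in A}|\langle\chi_n,a\rangle| \geq 1$ for every $n$, so $\big(\sup_{a\in A}|\langle\chi_n,a\rangle|\big)$ lies neither in $\ell_q$ (for $q<\infty$) nor in $c_0$ (for $q=\infty$), contradicting $A \in \mathsf{L}_{(p,q)}(E)$'s weaker cousin, the $(p,q)$-$(V^\ast)$ property. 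This contradiction proves the first assertion.

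For the final sentence, I would note that every $(p,q)$-limited set is automatically a $(p,q)$-$(V^\ast)$ set: a weakly $p$-summable sequence in $E'_\beta$ is, in particular, weak$^\ast$ $p$-summable (since $\sigma(E',E)\subseteq\beta(E',E)$), and the defining inequalities are literally the same; so the ``Consequently'' clause is immediate from the already-proved first part. (One should double-check the direction of this inclusion of sequence classes against Definitions~\ref{def:p-limit-coarse-set} and \ref{def:tvs-V*-subset}: limited sets test against weak$^\ast$ $p$-summable sequences in $E'$, while $(V^\ast)$ sets test against the \emph{a priori smaller} class of $\beta$-weakly $p$-summable sequences, so every limited set is indeed a $(V^\ast)$ set.)

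The main obstacle I anticipate is the transfer step: ensuring the functionals produced from $R^\ast$ and Hahn--Banach extension form a sequence that is weakly $p$-summable \emph{in $E'_\beta$} rather than merely weak$^\ast$ $p$-summable in $E'$. In the Banach case this is automatic because the adjoint of an isomorphism is an isomorphism and norm-bounded sets are the ``right'' sets; in a general lcs one must argue that $R:\cspn(\{a_n\})\to\ell_1$ being a topological embedding forces $R^\ast$ to carry norm-bounded (hence relatively weakly compact) subsets of $\ell_\infty$ to $\beta$-equicontinuous-behaving families, and then invoke local completeness (available since $E$ has the Rosenthal property via Ruess's hypotheses, or one simply assumes it as part of ``Rosenthal property'') to control the extensions. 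A clean way around this is to first replace $E$ by $\cspn(\{a_n\})$, note that $A\cap\cspn(\{a_n\})$ restrictions of the $\chi_n$ suffice to derive the contradiction purely on the subspace, and only at the end extend by Hahn--Banach — since we only need the \emph{existence} of one bad weakly $p$-summable test sequence, and weak $p$-summability in a subspace's strong dual can be checked against the subspace's own bounded sets. I would also double-check the precise sense in which $\{e_n^\ast\}$ is weakly $p$-summable for $p\geq 2$ and confirm that $p\geq 2$ (not merely $p\geq 1$) is exactly what makes $\ell_1 \ni (\mu_n) \mapsto (\mu_n)$ land in $\ell_{p}$ — this is the single place the hypothesis $2\leq p$ is used, mirroring its role in Proposition~\ref{p:Lp-wCSPp}.
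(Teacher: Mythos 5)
Your outline (argue by contradiction, use the Rosenthal property to extract an $\ell_1$-sequence, then contradict the $(p,q)$-$(V^\ast)$ property) matches the paper's, and your justification of the final ``Consequently'' clause is correct. But the heart of the argument --- producing a weakly $p$-summable sequence in $E'_\beta$ that witnesses the failure of the $(V^\ast)$ condition for $A$ --- has a genuine gap at exactly the step you flagged, and your proposed workaround does not close it. Extending the functionals $e_n^\ast\circ R\in H'$ (where $H=\cspn(\{a_n\})$ and $R$ is your isomorphism, the paper's $P$) one at a time by Hahn--Banach gives no control over $\langle\chi_n,x\rangle$ for $x\in E\setminus H$, so there is no reason for $(\langle\Phi,\chi_n\rangle)_n$ to lie in $\ell_p$ for every $\Phi\in(E'_\beta)'$ --- not even for every $\Phi=x\in E$. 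The fallback of ``deriving the contradiction purely on the subspace'' fails for the same reason in disguise: the hypothesis is that $A$ is a $(p,q)$-$(V^\ast)$ set \emph{in $E$}, tested against weakly $p$-summable sequences in $E'_\beta$. Since the restriction map $E'_\beta\to H'_\beta$ is continuous, the hereditary implication runs the opposite way ($(V^\ast)$ in $H$ implies $(V^\ast)$ in $E$); a weakly $p$-summable sequence in $H'_\beta$ need not be the restriction of one in $E'_\beta$, so exhibiting a bad test sequence over $H$ contradicts nothing. A telling symptom is that your argument never genuinely uses $p\geq 2$: the basis $\{e_n^\ast\}$ is already weakly $1$-summable in $\ell_\infty=(\ell_1)'$ (for $\mu\in(\ell_\infty)'$ one has $(\langle\mu,e_n^\ast\rangle)_n=(\langle\mu{\restriction}_{c_0},e_n^\ast\rangle)_n\in\ell_1$ because $\mu{\restriction}_{c_0}\in(c_0)'=\ell_1$), so if the transfer worked your proof would establish the statement for all $p\geq1$.

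The paper solves the extension problem by extending an \emph{operator} rather than individual functionals, and this is where $p\geq2$ actually enters. Since $p\geq2$, the natural inclusion $J:\ell_1\to\ell_p$ factors through $\ell_2$; it is $1$-summing by Grothendieck's theorem, hence $2$-summing, and therefore admits a Pietsch factorization $J=Q\circ J_2^\infty\circ R$ through $L_\infty(\mu)\to L_2(\mu)$. Because $L_\infty(\mu)$ is injective (also as a locally convex space), the map $R\circ P:H\to L_\infty(\mu)$ extends to an operator $T_\infty:E\to L_\infty(\mu)$, yielding a globally defined $T=Q\circ J_2^\infty\circ T_\infty\in\LL(E,\ell_p)$ with $T(x_n)=e_n$. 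The adjoint $T^\ast$ then carries the weakly $p$-summable sequence $\{e_n^\ast\}\subseteq\ell_{p^\ast}=(\ell_p)'_\beta$ to a weakly $p$-summable sequence in $E'_\beta$ (equivalently, $T(A)$ is a $(p,q)$-$(V^\ast)$ set in $\ell_p$), and $\sup_{i}|\langle e_n^\ast,e_i\rangle|=1$ gives the contradiction. This factorization through an injective space is the missing idea in your proposal; without it, or some substitute that produces a single continuous operator defined on all of $E$, the pulled-back functionals cannot be certified weakly $p$-summable in $E'_\beta$.
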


\begin{proof}
Suppose for a contradiction that there is a $(p,q)$-$(V^\ast)$ subset $A$ of $E$ which is not weakly sequentially precompact. So there is a sequence $S=\{x_n\}_{n\in\w}$ in $A$ which does not have a weakly Cauchy subsequence. By the Rosenthal property of $E$ and passing to a subsequence if needed, we can assume that $S$ is an $\ell_1$-sequence. Set $H:=\cspn(S)$, and let $P:H\to \ell_1$ be a topological isomorphism of $H$ onto a subspace of $\ell_1$ such that $P(x_n)=e_n$ for every $n\in\w$ (where $\{e_n\}_{n\in\w}$ is the standard unit basis of $\ell_1$).  Let $J:\ell_1\to \ell_p$, $I_1:\ell_1\to \ell_2$, and $I_2:\ell_2\to \ell_p$ be the natural inclusions; so  $J=I_2\circ I_1$. By the Grothendieck Theorem 1.13 of \cite{DJT}, the operator $I_1$ is $1$-summing. By the Ideal Property 2.4 of \cite{DJT}, $J$ is also $1$-summing, and hence, by the Inclusion Property 2.8 of \cite{DJT}, the operator $J$ is $2$-summing.
By the discussion after Corollary 2.16 of \cite{DJT}, the operator $J$ has a factorization
\[
\xymatrix{
J: \; \ell_1  \ar[r]^R  & L_\infty(\mu) \ar[r]^{J_2^\infty}  & L_2(\mu) \ar[r]^Q & \ell_p },
\]
where $\mu$ is a regular probability measure on some compact space $K$ and $J_2^\infty:L_\infty(\mu) \to L_2(\mu)$  is the natural inclusion. By Theorem 4.14 of \cite{DJT}, the Banach space $L_\infty(\mu)$ is injective. Therefore, by Lemma 
5.20 of \cite{Gab-limited}, $L_\infty(\mu)$ is  an injective locally convex space. In particular, the operator $R\circ P:H\to L_\infty(\mu) $ can be extended to an operator $T_\infty:E\to L_\infty(\mu) $. Set $T:=Q\circ  J_2^\infty \circ T_\infty$. Then $T$ is an operator from $E$ to $\ell_p$ such that
\[
T(x_n)=Q\circ  J_2^\infty \circ R\circ P(x_n)=J\circ P(x_n)=e_n \quad \mbox{ for every $n\in\w$},
\]
where $\{e_n\}_{n\in\w}$ is the standard unit basis of $\ell_p$. Since $A$ and hence also $S$ are $(p,q)$-$(V^\ast)$ sets, it follows that the canonical basis $\{e_n\}_{n\in\w}$ of $\ell_p$ is also a $(p,q)$-$(V^\ast)$ set. However this is impossible because the standard unit basis $\{e_n^\ast\}_{n\in\w}$ of the dual $\ell_{p^\ast}$ is weakly $p$-summable (see Example 
4.4 of \cite{Gab-Pel}), however since $\sup\{|\langle e_n^\ast,e_i\rangle|:i\in\w\}=1$ for all $n\in\w$, it follows that  $\{e_n\}_{n\in\w}$ is not a $(p,q)$-$(V^\ast)$ set.

The last assertion follows from the easy fact that any $(p,q)$-limited set is a $(p,q)$-$(V^\ast)$ set. \qed
\end{proof}

Being motivated by the celebrated Rosenthal's $\ell_1$ theorem, we introduce the following class of locally convex spaces.
\begin{definition} \label{def:wCSPp} {\em
Let $p\in[1,\infty]$. A locally convex space $E$ is said to have the {\em weak Cauchy subsequence property of order $p$} (the $wCSP_p$ for short) if every bounded sequence in $E$ has a weakly $p$-Cauchy subsequence. If $p=\infty$, we shall say simply that $E$ has the $wCSP$. }
\end{definition}

\begin{remark} \label{rem:wCSPp} {\em
(i) By the Rosenthal $\ell_1$ theorem, a Banach space $E$ has the $wCSP$ if and only if it has no an isomorphic copy of $\ell_1$.

(ii) It is known (see Corollary 7.3.8 of \cite{Talagrand}) that a Banach space $E$ has no an isomorphic copy of $\ell_1$ if and only if the dual Banach space $E'_\beta$ has the weak Radon--Nikodym property, and hence  if and only if $E$ has the $wCSP$.

(iii) If $1\leq p<q\leq\infty$ and an lcs $E$ has the $wCSP_p$, then $E$ has the $wCSP_q$.

(iv) If an lcs $E$ has the $wCSP$, then $E$ has the Rosenthal property.

(v) A  Banach space $E$ has the $wCSP_p$ if and only if the identity map $\Id_E:E\to E$ is weakly sequentially $p$-precompact.\qed}
\end{remark}

\begin{proposition} \label{p:Lp-wCSPp}
If $1<p<\infty$, then $\ell_p$ has the $wCSP_p$ if and only if $p\geq 2$.
\end{proposition}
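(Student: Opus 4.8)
The plan is to analyze separately the two directions, with the substantive content concentrated in showing that $\ell_p$ with $1<p<2$ fails the $wCSP_p$, and the converse ($p\ge 2$) following quickly from general principles. For the easy direction, suppose $2\le p<\infty$ and let $\{x_n\}_{n\in\w}$ be a bounded sequence in $\ell_p$. Since $\ell_p$ is reflexive, it is weakly sequentially compact, so after passing to a subsequence we may assume $x_n\to x$ weakly for some $x\in\ell_p$. I claim such a weakly convergent sequence is automatically weakly $p$-Cauchy: for any two strictly increasing sequences $(k_n),(j_n)$ the differences $x_{k_n}-x_{j_n}$ tend weakly to $0$, and I must check $(\langle\chi, x_{k_n}-x_{j_n}\rangle)_n\in\ell_p$ for every $\chi\in\ell_{p^\ast}=\ell_{p'}$ with $p'\le 2\le p$. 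Here I would invoke the standard fact (the ``$\ell_p$-version'' used already in the excerpt, cf. Example~4.4 of \cite{Gab-Pel} and Proposition~\ref{p:Lp-wCSPp}'s surrounding material) that in $\ell_p$, $p\ge 2$, a weakly null sequence is automatically weakly $p$-summable — this is essentially Bessel's inequality together with the containment $\ell_2\subseteq\ell_p$: the functional $\chi$ lies in $\ell_{p'}\subseteq\ell_2$, the coordinates $(x_{k_n}-x_{j_n})$ form a weakly null, norm-bounded sequence, and one estimates $\sum_n|\langle\chi,x_{k_n}-x_{j_n}\rangle|^p \le (\sup_n\|x_{k_n}-x_{j_n}\|)^{p-2}\sum_n|\langle\chi,x_{k_n}-x_{j_n}\rangle|^2<\infty$ via the Bessel-type bound for weakly null sequences in Hilbert space after embedding. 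Thus $\ell_p$ has the $wCSP_p$ when $p\ge 2$.

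For the hard direction, assume $1<p<2$; I want to exhibit a bounded sequence in $\ell_p$ with no weakly $p$-Cauchy subsequence. The natural candidate is the canonical unit basis $\{e_n\}_{n\in\w}$ of $\ell_p$, which is bounded (indeed norm $1$) and weakly null. Any subsequence $\{e_{n_k}\}$ is again essentially the unit basis, so it suffices to show $\{e_n\}$ itself is not weakly $p$-Cauchy, equivalently that some weakly null difference sequence built from it fails to be weakly $p$-summable. Take $k_n=2n$, $j_n=2n+1$; then $y_n:=e_{2n}-e_{2n+1}$, and I must produce $\chi\in\ell_{p'}$ (where $p'=p^\ast>2$) with $(\langle\chi,y_n\rangle)_n\notin\ell_p$. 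Writing $\chi=(c_m)$, one has $\langle\chi,y_n\rangle=c_{2n}-c_{2n+1}$, so I need $(c_m)\in\ell_{p'}$ with $\big(|c_{2n}-c_{2n+1}|\big)_n\notin\ell_p$ — equivalently $(a_n):=(c_{2n}-c_{2n+1})$ should lie in $\ell_{p'}\setminus\ell_p$ while being realizable as such a difference, which is trivially arranged by setting $c_{2n}=a_n$, $c_{2n+1}=0$. Since $p<p'$, the inclusion $\ell_p\subsetneq\ell_{p'}$ is strict, so such a sequence $(a_n)$ exists (e.g. $a_n=n^{-1/p}$, which lies in $\ell_{p'}$ as $p'/p>1$ but not in $\ell_p$). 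This $\chi$ witnesses that $(\langle\chi,y_n\rangle)_n\notin\ell_p$, hence $\{e_n\}$ is not weakly $p$-Cauchy, and since every subsequence is unitarily equivalent to it, $\ell_p$ fails the $wCSP_p$.

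The main obstacle, such as it is, lies in the easy direction rather than the hard one: one must be slightly careful that ``weakly convergent $\Rightarrow$ weakly $p$-Cauchy'' really does hold in $\ell_p$ for $p\ge 2$, i.e.\ that the relevant Bessel-type inequality is being applied correctly to differences along arbitrary pairs of increasing index sequences (not just to the original sequence), and that the norm-boundedness of $\{y_n\}$ is uniform. This is routine but is the only place where the hypothesis $p\ge2$ is genuinely used, via $p^\ast\le 2$ and $\ell_2\subseteq\ell_p$. I would also remark that by Remark~\ref{rem:wCSPp}(iii) the statement is consistent with the general monotonicity in $p$ — indeed for $p\ge 2$ one even gets the stronger $wCSP_2$ by the same argument — and that the case $p=1$ is excluded precisely because $\ell_1$ has no weakly null normalized sequences at all, so the unit-basis argument degenerates; but $1<p<\infty$ is the stated range.
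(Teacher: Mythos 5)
Your argument for $1<p<2$ is correct and is a nice, fully explicit alternative to the paper's route (the paper instead invokes the $p$-Schur property of $\ell_p$ for $p<p^\ast$ to turn a hypothetical weakly $p$-Cauchy subsequence of $\{e_n\}$ into a norm-Cauchy one). Your choice $\chi_{n_{2k}}=k^{-1/p}$, $\chi_{n_{2k+1}}=0$ does witness that no subsequence of the unit basis is weakly $p$-Cauchy, since $(k^{-1/p})\in\ell_{p^\ast}\setminus\ell_p$ exactly when $p<2$.

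The direction $p\geq 2$, however, has a genuine gap, and it is precisely at the step you flagged as the ``only place'' needing care. It is \emph{not} true that a weakly convergent bounded sequence in $\ell_p$, $p\geq 2$, is automatically weakly $p$-Cauchy, and the ``Bessel-type bound'' you invoke does not exist for general weakly null sequences: Bessel's inequality requires orthonormality. Concretely, in $\ell_2$ take $y_m:=e_{\lfloor\log_2 m\rfloor}$ (so the value $e_k$ is repeated $2^k$ times). This is a normalized weakly null sequence, but for $\chi=(2^{-k/2})_k\in\ell_2$ one has $\sum_m|\langle\chi,y_m\rangle|^2=\sum_k 2^k\cdot 2^{-k}=\infty$, and by choosing the two index sequences so that the first runs through every index while the second jumps far ahead, the difference sequence $y_{k_n}-y_{j_n}$ fails to be weakly $2$-summable; so $\{y_m\}$ is weakly convergent but not weakly $2$-Cauchy. (It of course \emph{has} a weakly $2$-Cauchy subsequence, namely $y_{2^n}=e_n$ — but that is the whole point: a further, carefully chosen subsequence extraction is required, and your argument skips it.) The correct repair is exactly what the paper does: cite that $B_{\ell_p}$ is weakly sequentially $p^\ast$-compact (Proposition 1.4 of \cite{CS}, or Corollary 13.11 of \cite{Gab-Pel}) — a statement whose proof rests on extracting a subsequence equivalent to the unit vector basis, Bessaga--Pe{\l}czy\'{n}ski style — and then observe that $p^\ast\leq 2\leq p$ makes weakly $p^\ast$-convergent sequences weakly $p$-convergent. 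Your closing side remark that the same argument gives $wCSP_2$ survives this repair (indeed one gets $wCSP_{p^\ast}$), but not as you derived it.
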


\begin{proof}
Assume that $1< p<2$. Then $p<p^\ast$, and hence, by Proposition 
6.5 of \cite{Gab-Pel}, $\ell_p$ has the $p$-Schur property. Let $S=\{e_n\}_{n\in\w}$ be the canonical unit basis of $\ell_p$. Assuming that  $S$ has a weakly $p$-Cauchy subsequence $\{e_{n_k}\}_{k\in\w}$, the $p$-Schur property implies that it is Cauchy in $\ell_p$ which is impossible. Thus $\ell_p$ does not have the $wCSP_p$.

Assume that $2\leq p<\infty$.   Then, by Proposition 1.4 of \cite{CS} (or by Corollary 
13.11 of \cite{Gab-Pel}), $B_{\ell_p}$ is weakly sequentially $p^\ast$-compact. Since $1<p^\ast\leq 2$, we have $p^\ast\leq p$. Therefore any weakly $p^\ast$-convergent sequence is also weakly $p$-convergent. Thus $B_{\ell_p}$ is weakly sequentially $p$-compact, and hence $\ell_p$ has the $wCSP_p$.\qed
\end{proof}

\begin{proposition} \label{p:sum-wCSP}
Let $p\in[1,\infty]$, and let $\{E_i\}_{i\in I}$ be a non-empty family of locally convex spaces.
\begin{enumerate}
\item[{\rm(i)}] If $I=\w$, then $E=\prod_{i\in\w} E_i$ has the $wCSP_p$ if and only if each factor $E_i$ has the $wCSP_p$.
\item[{\rm(ii)}] The space $E=\bigoplus_{i\in I} E_i$ has the $wCSP_p$ if and only if each summand $E_i$ has the $wCSP_p$.
\end{enumerate}
\end{proposition}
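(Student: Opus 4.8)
The plan is to reduce both statements to two elementary facts: that the $wCSP_p$ passes to linear subspaces (via Hahn--Banach), and that for a \emph{finite} index set the property is stable under products (by finitely many successive subsequence extractions, using that the dual of a finite product is the product of the duals). First I would record the subspace heredity. If $H$ is a subspace of a locally convex space $F$ with the $wCSP_p$ and $\{y_k\}_{k\in\w}$ is a bounded sequence in $H$, then it is bounded in $F$, hence has a subsequence $\{y_{k_m}\}_{m\in\w}$ which is weakly $p$-Cauchy in $F$; given $\psi\in H'$, extend it by Hahn--Banach to $\chi\in F'$ and note that $\big(\langle\psi, y_{k_{a_m}}-y_{k_{b_m}}\rangle\big)_m=\big(\langle\chi, y_{k_{a_m}}-y_{k_{b_m}}\rangle\big)_m\in\ell_p$ (or $c_0$) for every pair of strictly increasing $(a_m),(b_m)$, so $\{y_{k_m}\}$ is weakly $p$-Cauchy in $H$. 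Conversely, a sequence contained in a subspace $H$ of $F$ which is weakly $p$-Cauchy in $H$ is automatically weakly $p$-Cauchy in $F$, simply by restricting functionals. Since each $E_i$ is topologically isomorphic to a subspace of both $\prod_{i\in I}E_i$ and $\bigoplus_{i\in I}E_i$, the ``only if'' directions of (i) and (ii) follow at once.

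For the ``if'' direction of (i), let $\{x^{(k)}\}_{k\in\w}$ be a bounded sequence in $E=\prod_{n\in\w}E_n$; then each coordinate sequence $\{\pi_n(x^{(k)})\}_{k\in\w}$ is bounded in $E_n$. Using the $wCSP_p$ of $E_0,E_1,\dots$ successively, I would build a decreasing chain of infinite sets $\w\supseteq A_0\supseteq A_1\supseteq\cdots$ such that $\{\pi_n(x^{(k)})\}_{k\in A_n}$ is weakly $p$-Cauchy in $E_n$, and then pass to a diagonal subsequence $\{x^{(k_m)}\}_{m\in\w}$ with $k_m\in A_m$ and $k_0<k_1<\cdots$. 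For each fixed $n$ the tail $\{\pi_n(x^{(k_m)})\}_{m\geq n}$ is a subsequence of a weakly $p$-Cauchy sequence, hence weakly $p$-Cauchy; since being weakly $p$-Cauchy is a property of a tail of the sequence and is unaffected by altering finitely many terms, $\{\pi_n(x^{(k_m)})\}_{m\in\w}$ is weakly $p$-Cauchy in $E_n$ for every $n$. Now every $\chi\in E'$ has the form $\chi=\sum_{n\in F}\chi_n\circ\pi_n$ for a finite $F\subseteq\w$ and $\chi_n\in E_n'$ (the dual of a countable product being the direct sum of the duals), so for any strictly increasing $(a_m),(b_m)$ the sequence $\big(\langle\chi, x^{(k_{a_m})}-x^{(k_{b_m})}\rangle\big)_m=\big(\sum_{n\in F}\langle\chi_n,\pi_n(x^{(k_{a_m})})-\pi_n(x^{(k_{b_m})})\rangle\big)_m$ is a finite sum of sequences in $\ell_p$ (or $c_0$); hence $\{x^{(k_m)}\}$ is weakly $p$-Cauchy and $E$ has the $wCSP_p$.

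For the ``if'' direction of (ii), the extra ingredient is that every bounded subset of a locally convex direct sum has finite support: given a bounded sequence $\{x^{(k)}\}_{k\in\w}$ in $E=\bigoplus_{i\in I}E_i$, there is a finite $J\subseteq I$ with $x^{(k)}\in\bigoplus_{i\in J}E_i=\prod_{i\in J}E_i$ for all $k$. By the finite case of the argument above (equivalently, by $|J|$ successive extractions, invoking again that the dual of a finite product is the sum of the duals), this sequence has a subsequence which is weakly $p$-Cauchy in $\prod_{i\in J}E_i$, and by the subspace remark of the first paragraph it is then weakly $p$-Cauchy in $E$; thus $E$ has the $wCSP_p$.

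I expect the only mildly delicate point to be the bookkeeping in the diagonal argument for (i): one must be comfortable with the fact that being weakly $p$-Cauchy depends only on a tail of the sequence and is stable under finite perturbations, so that a coordinate sequence which is only \emph{eventually} the chosen subsequence still qualifies. Everything else is a routine application of the standard descriptions of the duals of products and direct sums together with the finite-support property of bounded sets in a locally convex direct sum.
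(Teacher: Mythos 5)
Your proof is correct and follows essentially the same route as the paper's: the ``only if'' directions by viewing each $E_i$ as a subspace (the paper embeds and pulls back functionals, which amounts to your Hahn--Banach heredity remark), the ``if'' direction of (i) by the same nested-subsequence diagonal argument combined with $E'=\bigoplus_{i}E_i'$, and (ii) by reducing to (i) via the finite support of bounded sets in a locally convex direct sum. Your explicit treatment of the tail/finite-perturbation issue in the diagonal step and of arbitrary pairs of increasing index sequences is, if anything, slightly more careful than the paper's.
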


\begin{proof}
(i) Assume that $E$ has the $wCSP_p$. Fix $j\in \w$, and let $S=\{x_{n,j}\}_{n\in\w}$ be a bounded sequence in $E_j$. If $\pi_j$ is the natural embedding of $E_j$ into $E$, then the sequence $\pi_j(S)$ is bounded in $E$ and hence, by the $wCSP_p$, it has a weakly $p$-Cauchy subsequence $\{\pi_j(x_{n_k,j})\}_{k\in\w}$. It is clear that $\{x_{n_k,j}\}_{k\in\w}$ is  weakly $p$-Cauchy in $E_j$. Thus $E_j$  has the $wCSP_p$.

Conversely, assume that all spaces $E_i$ have the $wCSP_p$, and let $\{x_n=(x_{n,i})_{i\in\w}\}_{n\in\w}$ be a sequence in $E$. We proceed by induction on $i\in\w$. For $i=0$,  the $wCSP_p$ of $E_0$ implies that there is a sequence $I_0$ in $\w$ such that the sequence $\{x_{j,0}\}_{j\in I_0}$ is  weakly $p$-Cauchy in $E_0$. For $i=1$,  the $wCSP_p$ of $E_1$ implies that there is a subsequence $I_1$ of $I_0$ such that the sequence $\{x_{j,1}\}_{j\in I_1}$ is  weakly $p$-Cauchy in $E_1$. Continuing this process we find a sequence $\w\supseteq I_0\supseteq I_1\supseteq\cdots$ such that the sequence $\{x_{j,i}\}_{j\in I_i}$ is  weakly $p$-Cauchy in $E_i$ for every $i\in\w$. For every $i\in\w$, choose $m_i\in I_i$ such that $m_i<m_{i+1}$ for all $i\in\w$. We claim that the subsequence $\{x_{m_i}\}_{i\in\w}$ of $\{x_n\}_{n\in\w}$ is weakly $p$-Cauchy in $E$. Indeed, since $E'=\bigoplus_{i\in\w} E'_i$, each $\chi\in E'$ has a form $\chi=(\chi_0,\cdots,\chi_k,0,\dots)$. Then for every strictly increasing sequence $(i_n)\subseteq \w$, we have
\begin{equation} \label{equ:wCSPp-1}
\big|\langle \chi, x_{m_{i_n}}-x_{m_{i_{n+1}}}\rangle\big|\leq \sum_{t=0}^k \big|\langle \chi_t, x_{m_{i_n},t}-x_{m_{i_{n+1}},t}\rangle\big|
\end{equation}
By the choice of $I_t$ and $(m_i)\in\w$, each sequence $\big\{  x_{m_{i_n},t}-x_{m_{i_{n+1}},t}\big\}_{n\in\w}$ is weakly $p$-summable. This and (\ref{equ:wCSPp-1}) imply that the sequence $\big\{  x_{m_{i_n}}-x_{m_{i_{n+1}}}\big\}_{n\in\w}$ is weakly $p$-summable, and hence  $\{x_{m_i}\}_{i\in\w}$  is weakly $p$-Cauchy in $E$. Thus $E$ has the $wCSP_p$.

(ii) The necessity can be proved repeating word for word the necessity in (i). The sufficiency follows from (i) because any bounded sequence in $E$ has finite support.\qed
\end{proof}

\begin{remark} {\em
The countability of the index set in (i) of Proposition \ref{p:sum-wCSP} is essential. Indeed, by the example in Lemma 
2.2 of \cite{Gab-Pel}, the product $\IR^\mathfrak{c}$ has a bounded sequence without weak Cauchy subsequences.\qed}
\end{remark}


\section{Permanent properties of Gelfand--Phillips types properties} \label{sec:perm-GP-property}



In this section we study relationships between different Gelfand--Phillips type  properties, stability under taking direct products and direct sums, and a connection of these properties with the Schur property. We also show that the class of locally convex spaces with the Gelfand--Phillips property is sufficiently large.


Recall that a locally convex space  $E$ is {\em von Neumann complete} if every precompact subset of $E$ is relatively compact. Following general Definition 11.12 of \cite{Gab-Pel}, 
$E$ is {\em $\mathsf{L}_{(p,q)}$-von Neumann complete} (resp., {\em $\mathsf{CL}_{(p,q)}$-von Neumann complete}) if every closed precompact set in $\mathsf{L}_{(p,q)}$ (resp., in $\mathsf{CL}_{(p,q)}$) is compact. Recall also that a locally convex space $E$ is called {\em semi-Montel} if every bounded subset of $E$ is relatively compact, and $E$ is a {\em Montel space} if it is a barrelled semi-Montel space.

\begin{lemma} \label{l:GPp=EGPp}
Let  $1\leq p\leq q\leq\infty$, and let $(E,\tau)$ be a  locally convex space.
\begin{enumerate}
\item[{\rm(i)}] If $E$ has the $EGP_{(p,q)}$ $($resp., $sEGP_{(p,q)}$$)$ property, then $E$ has the $GP_{(p,q)}$ $($resp., $sGP_{(p,q)}$$)$ property. The converse is true for $p$-barrelled spaces.
\item[{\rm(ii)}] If $1\leq p'\leq p$ and $q\leq q'\leq \infty$ and $E$ has the $GP_{(p',q')}$ $($resp., $sGP_{(p',q')}$, $EGP_{(p',q')}$, $sEGP_{(p',q')}$, $prGP_{(p',q')}$ or $spGP_{(p',q')}$$)$ property, then $E$ has  the $GP_{(p,q)}$ $($resp., $sGP_{(p,q)}$, $EGP_{(p,q)}$, $sEGP_{(p,q)}$, $prGP_{(p,q)}$ or $spGP_{(p,q)}$$)$ property.
\item[{\rm(iii)}]  If the class of relatively compact sets in $E$ coincides with class of relatively sequentially compact sets, then $E$ has the property $GP_{(p,q)}$ $($resp., $EGP_{(p,q)}$$)$ if and only if it has the property $sGP_{(p,q)}$ $($resp., $sEGP_{(p,q)}$$)$.
\item[{\rm(iv)}]  If $E$ is an angelic $p$-barrelled  space, then all the properties $GP_{(p,q)}$, $EGP_{(p,q)}$, $sGP_{(p,q)}$ and $sEGP_{(p,q)}$ are coincide for $E$.
\item[{\rm(v)}]  If $E$ has the $GP_{(p,q)}$ property then it has the $prGP_{(p,q)}$ property. The converse is true if $E$ is $\mathsf{L}_{(p,q)}$-von Neumann complete. 
\item[{\rm(vi)}] If $E$ is sequentially complete, then $E$ has the $spGP_{(p,q)}$  {\rm(}resp., $spEGP_{(p,q)}${\rm)} property if and only if it has the $sGP_{(p,q)}$  {\rm(}resp., $sEGP_{(p,q)}${\rm)} property.
\item[{\rm(vii)}] If $E$ is semi-Montel, then $E$ has the $EGP_{(p,q)}$ property.
\end{enumerate}
\end{lemma}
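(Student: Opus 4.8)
The plan is to treat the seven items essentially independently, since most of them are soft consequences of the definitions and of elementary facts about (pre)compactness. First I would dispose of the easy implications. For (i), note that every equicontinuous weak$^\ast$ $p$-summable sequence in $E'$ is in particular a weak$^\ast$ $p$-summable sequence, so every $(p,q)$-limited set is $(p,q)$-$\EE$-limited; hence $\mathsf{EL}_{(p,q)}(E)\subseteq \mathsf{L}_{(p,q)}(E)$... wait, the inclusion goes the other way: a $(p,q)$-limited set must control \emph{all} weak$^\ast$ $p$-summable sequences, hence in particular the equicontinuous ones, so $\mathsf{L}_{(p,q)}(E)\subseteq \mathsf{EL}_{(p,q)}(E)$. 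Thus the $EGP$-type hypothesis (every $(p,q)$-$\EE$-limited set is relatively (sequentially) compact) applies to the smaller family and yields the $GP$-type conclusion. For the converse under $p$-barrelledness, observe that by definition of $p$-barrelled every weak$^\ast$ $p$-summable sequence in $E'_{w^\ast}$ is equicontinuous, so the two families coincide and the two properties coincide. For (ii), the key is the monotonicity of the limited-set families in the parameters: if $1\le p'\le p$ then every weak$^\ast$ $p$-summable sequence is weak$^\ast$ $p'$-summable (since $\ell_{p'}\subseteq\ell_p$, and $\ell_p\subseteq c_0$), and if $q\le q'$ then $\ell_q\subseteq\ell_{q'}$ (and $\ell_q\subseteq c_0$ when $q'=\infty$); chasing these inclusions through Definition \ref{def:p-limit-coarse-set} gives $\mathsf{L}_{(p,q)}(E)\subseteq\mathsf{L}_{(p',q')}(E)$ and likewise for the $\EE$-versions, so a property holding for the larger family $(p',q')$ passes to the smaller family $(p,q)$. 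This simultaneously handles all six listed properties since the target class of ``good'' sets (relatively compact, precompact, etc.) is the same on both sides.

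Next the bookkeeping items. Item (iii) is immediate: if ``relatively compact'' $=$ ``relatively sequentially compact'' as classes of subsets of $E$, then demanding that every set in a fixed family be relatively compact is the same as demanding it be relatively sequentially compact. Item (iv) then follows by combining (i) and (iii): an angelic space has the property that relatively countably compact, relatively compact and relatively sequentially compact sets coincide (by Lemma 0.3 of \cite{Pryce} / the angelic lemma quoted in Section \ref{sec:pre}), so (iii) collapses $GP_{(p,q)}$ with $sGP_{(p,q)}$ and $EGP_{(p,q)}$ with $sEGP_{(p,q)}$; $p$-barrelledness then collapses the $GP$ pair with the $EGP$ pair via (i). Item (v): relatively compact obviously implies precompact, giving the forward implication; the converse is exactly the definition of $\mathsf{L}_{(p,q)}$-von Neumann completeness, after noting that one may pass to closures — a $(p,q)$-limited set $A$ has $(p,q)$-limited (indeed $\EE$-... no, limited) closure $\overline A$, since $\|\chi_n\|_A=\|\chi_n\|_{\overline A}$ for continuous $\chi_n$, and $\overline A$ is then closed precompact, hence compact, hence $A$ is relatively compact.

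For (vi), the forward direction is trivial (every Cauchy subsequence is a subsequence of the required type — wait, rather: $spGP$ says every $(p,q)$-limited set is sequentially precompact, i.e. every sequence has a Cauchy subsequence, which in a sequentially complete space converges, so the set is relatively sequentially compact, giving $sGP$; conversely relatively sequentially compact trivially implies sequentially precompact with no completeness needed). So in fact under sequential completeness the two are equivalent, and without it only one implication is automatic. I would spell out the equivalence using that in a sequentially complete space a Cauchy sequence converges, so ``every sequence has a Cauchy subsequence'' upgrades to ``every sequence has a convergent subsequence''. Finally (vii): in a semi-Montel space every bounded set is relatively compact; a $(p,q)$-$\EE$-limited set is bounded (take a single equicontinuous weak$^\ast$-null, even eventually-zero, sequence, or simply note that limitedness of any flavour presupposes boundedness in the definitions via $\|\chi_n\|_A<\infty$ — more carefully, a $(p,q)$-$\EE$-limited set is absorbed by every barrel / is weakly bounded hence bounded), so it is relatively compact, which is the $EGP_{(p,q)}$ property. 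The main obstacle, if any, is purely notational care in (v) and (vi) — making sure that closures of limited sets are limited and that ``barrel-precompact'' versus ``precompact'' is not being conflated — but no item here requires a genuinely new idea beyond the inclusions among the relevant classes of sequences and sets.
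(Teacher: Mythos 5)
Your proposal is correct and matches the paper's proof in structure and substance: the paper likewise reduces (i) and (ii) to inclusions between the families of $(p,q)$-($\EE$-)limited sets, derives (iv) from (i) plus the angelicity collapse of compactness notions, proves (vi) by upgrading Cauchy subsequences to convergent ones via sequential completeness, and dismisses (iii), (v), (vii) as following from the definitions. The one point to repair is in (ii): you assert that for $1\le p'\le p$ every weak$^\ast$ $p$-summable sequence is weak$^\ast$ $p'$-summable, but the inclusion $\ell_{p'}\subseteq\ell_p$ that you cite gives exactly the reverse implication --- every weak$^\ast$ $p'$-summable sequence is weak$^\ast$ $p$-summable --- and it is this reverse implication that produces the inclusion $\mathsf{L}_{(p,q)}(E)\subseteq\mathsf{L}_{(p',q')}(E)$ which you then (correctly) state: given a $(p,q)$-limited set $A$ and a weak$^\ast$ $p'$-summable sequence $\{\chi_n\}$, that sequence is weak$^\ast$ $p$-summable, whence $\big(\|\chi_n\|_A\big)\in\ell_q\subseteq\ell_{q'}$. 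With that direction fixed the argument is sound; your extra care in (v) (passing to the closure, which is again $(p,q)$-limited because $\|\chi\|_A=\|\chi\|_{\overline{A}}$ for continuous $\chi$) and in (vii) (boundedness of $\EE$-limited sets, e.g.\ via finite equicontinuous weak$^\ast$ $p$-summable sequences and Mackey's theorem) supplies details the paper leaves implicit.
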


\begin{proof}
(i) follows from the easy fact that every $(p,q)$-limited set is $(p,q)$-$\EE$-limited, see Lemma 
3.1(i) of \cite{Gab-limited}, and from the definition of $p$-barrelled spaces.

(ii) follows from the fact that every $(p,q)$-($\EE$-)limited set is $(p',q')$-($\EE$-)limited, see Lemma 
3.1(vi) of \cite{Gab-limited}.

(iii), (v) and (vii) follow from the corresponding definitions.

(iv) follows from (i) and the fact that for angelic spaces, relatively compact sets are exactly relatively sequentially compact sets.
\smallskip

(vi) It suffices to prove the necessity. Assume that $E$ has the $spGP_{(p,q)}$ (resp., $spEGP_{(p,q)}$) property. Let $A$ be a $(p,q)$-limited  (resp., $(p,q)$-$\EE$-limited) set in $E$. Then $A$ is sequentially precompact. We show that $A$ is even relatively sequentially compact. Indeed, let $S=\{x_n\}_{n\in\w}$ be a sequence in $A$. Since $A$ is sequentially precompact, $S$ has a Cauchy subsequence $\{x_{n_k}\}_{k\in\w}$. Since $E$ is sequentially complete, there is $x\in E$ such that $x_{n_k}\to x$. Therefore $A$ is relatively sequentially compact. Thus $E$ has the $sGP_{(p,q)}$ (resp., $sEGP_{(p,q)}$) property.\qed
\end{proof}

If the space $E$ carries its weak topology (for example, $E=C_p(X)$), we have the following result.
\begin{lemma} \label{l:GPp=EGPp-2}
Let  $1\leq p\leq q\leq\infty$, and let $(E,\tau)$ be a  locally convex space such that $E=E_w$. Then:
\begin{enumerate}
\item[{\rm(i)}]  $E$ has the  $prEGP_{(p,q)}$ property and hence the  $prGP_{(p,q)}$ property;
\item[{\rm(ii)}] every bounded subset of $E$ is $(p,q)$-$\EE$-limited; consequently, if $E$ is von Neumann complete, then $E$ has the property  $EGP_{(p,q)}$.
\end{enumerate}
\end{lemma}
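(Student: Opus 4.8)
The plan is to prove the two assertions of Lemma \ref{l:GPp=EGPp-2} directly from the definitions, exploiting that when $E=E_w$ the only operators, equicontinuous sets, and precompact sets are the ``weak'' ones, which are extremely restrictive.

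First I would recall the structure of $E=E_w$: a neighborhood base at $0$ consists of sets of the form $U=\{x\in E:|\chi_1(x)|\le 1,\dots,|\chi_k(x)|\le 1\}$ for finitely many $\chi_1,\dots,\chi_k\in E'$, so every $U\in\Nn_0(E)$ contains a finite-codimensional subspace $L_U:=\bigcap_{i=1}^k\ker\chi_i$. Consequently every equicontinuous subset of $E'$ is contained in $U^\circ=\spn\{\chi_1,\dots,\chi_k\}\cap U^\circ$, a bounded subset of a \emph{finite-dimensional} subspace of $E'$. This is the key observation for part (ii): if $\{\chi_n\}_{n\in\w}$ is an equicontinuous weak$^\ast$ $p$-summable sequence in $E'$, then all the $\chi_n$ lie in a fixed finite-dimensional space $F=\spn\{\chi_1,\dots,\chi_k\}$, and on $F$ the weak$^\ast$ topology is just the norm topology, so weak$^\ast$ $p$-summability forces $(\chi_n)\in\ell_p(F)$ (or $c_0(F)$ if $p=\infty$); in particular $\chi_n\to 0$ in norm on $F$. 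Then for any bounded subset $A$ of $E$, the set $\{\chi_n|_A\}$ is uniformly bounded in absolute value by $\sup_n\|\chi_n\|_F\cdot\sup_{x\in A}(\text{something finite})$ — more precisely, writing $\chi_n=\sum_{i=1}^k c_{n,i}\chi_i$ one gets $\|\chi_n\|_A\le\sum_i|c_{n,i}|\,\|\chi_i\|_A$, and since $(c_{n,i})_n\in\ell_q$ for each fixed $i$ (being the coordinates of an $\ell_q$-valued, indeed $\ell_p\subseteq\ell_q$-valued, sequence in a finite-dimensional space) we conclude $(\|\chi_n\|_A)\in\ell_q$ (or $\to 0$ if $q=\infty$). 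Hence $A$ is $(p,q)$-$\EE$-limited. Then the ``consequently'' part is immediate: if $E$ is von Neumann complete, every $(p,q)$-$\EE$-limited set, being bounded, has relatively compact closure iff it is precompact; but here it is automatically relatively weakly compact when bounded only if $E$ is... — actually the cleaner route is: a $(p,q)$-$\EE$-limited set is bounded by definition, hence precompact? No. Let me instead argue via (i).

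For part (i): I must show every $(p,q)$-$\EE$-limited subset $A$ of $E=E_w$ is precompact in $E$. Since $A$ is bounded (part of the definition of $(p,q)$-$\EE$-limited), and since in the weak topology $E=E_w$ a bounded set need not be precompact in general, the real content is: a bounded set in $E_w$ is precompact in $E_w$ iff its image under every $\chi\in E'$ is bounded in $\IF$, which is automatic — wait, that is false (e.g. the unit ball of an infinite-dimensional Hilbert space with its weak topology is bounded but not precompact). So part (i) genuinely uses that $A$ is $(p,q)$-$\EE$-limited, not merely bounded. The argument: given $U\in\Nn_0(E)$ with $U\supseteq L_U=\bigcap_{i=1}^k\ker\chi_i$, the quotient $E/L_U$ is finite-dimensional, the canonical map $q_U:E\to E/L_U$ is continuous, and $A$ is precompact iff $q_U(A)$ is precompact (equivalently bounded, since $E/L_U$ is finite-dimensional) for every such $U$. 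But $q_U(A)$ is bounded because $A$ is bounded and $q_U$ is continuous linear. Hence $A$ is precompact in $E=E_w$. So in fact I realize part (i) holds for every bounded set, and thus for every $(p,q)$-$\EE$-limited set (which is bounded); the point is simply that in $E_w$ every continuous-linear image into a finite-dimensional space of a bounded set is bounded hence precompact, and precompactness is checked on such images. With part (i) in hand, part (ii)'s ``consequently'' is trivial: a von Neumann complete space turns precompact into relatively compact, so the $prEGP_{(p,q)}$ property of (i) upgrades to the $EGP_{(p,q)}$ property.

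The main obstacle to state carefully is the first sentence of the argument for (i) — namely that precompactness in $E_w$ can be tested on the finite-dimensional quotients $E/L_U$ — and the reduction in (ii) that an equicontinuous weak$^\ast$ $p$-summable sequence lives in a finite-dimensional subspace of $E'$ on which weak$^\ast$ and norm topologies agree. Both are elementary but deserve one explicit sentence each; everything after that is bookkeeping with finitely many functionals, and I would not grind through the constants.

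\begin{proof}
(i) Let $A$ be a $(p,q)$-$\EE$-limited subset of $E$; in particular $A$ is bounded. We show $A$ is precompact in $E=E_w$. Fix $U\in\Nn_0(E)$. Since $E$ carries its weak topology, there are $\chi_1,\dots,\chi_k\in E'$ with $\{x\in E:|\chi_i(x)|\leq 1,\ i=1,\dots,k\}\subseteq U$; put $L:=\bigcap_{i=1}^k\ker\chi_i$, a closed subspace of finite codimension, and let $q_L:E\to E/L$ be the quotient map, which is continuous. Since $A$ is bounded and $q_L$ is linear and continuous, $q_L(A)$ is a bounded subset of the finite-dimensional space $E/L$, hence relatively compact, hence precompact. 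Thus there is a finite set $F\subseteq E$ with $q_L(A)\subseteq q_L(F)+q_L(U)$, and therefore $A\subseteq F+L+U\subseteq F+U+U$. As $U$ was arbitrary, $A$ is precompact. Hence $E$ has the $prEGP_{(p,q)}$ property, and \emph{a fortiori} the $prGP_{(p,q)}$ property.

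(ii) Let $A$ be a bounded subset of $E$ and let $\{\chi_n\}_{n\in\w}$ be an equicontinuous weak$^\ast$ $p$-summable sequence in $E'$. By equicontinuity there is $U\in\Nn_0(E)$ with $\chi_n\in U^\circ$ for all $n$; since $E=E_w$, we may take $U=\{x:|\psi_i(x)|\leq 1,\ i=1,\dots,k\}$ for some $\psi_1,\dots,\psi_k\in E'$, and then $U^\circ\subseteq \spn\{\psi_1,\dots,\psi_k\}=:G$, a finite-dimensional subspace of $E'$. On $G$ the weak$^\ast$ topology $\sigma(E',E)$ coincides with the norm topology (all Hausdorff vector topologies on a finite-dimensional space agree), so the weak$^\ast$ $p$-summability of $\{\chi_n\}$ means exactly that $\big(\|\chi_n\|_G\big)\in\ell_p$ if $p<\infty$ and $\|\chi_n\|_G\to 0$ if $p=\infty$, where $\|\cdot\|_G$ is any norm on $G$. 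Writing each $\chi_n=\sum_{i=1}^k c_{n,i}\psi_i$, the coordinate functionals $c_{n,i}$ satisfy $\big(c_{n,i}\big)_{n\in\w}\in\ell_p\subseteq\ell_q$ for each fixed $i$ (resp. $\to 0$ if $p=\infty$), and
\[
\|\chi_n\|_A\leq \sum_{i=1}^k |c_{n,i}|\,\|\psi_i\|_A ,
\]
with each $\|\psi_i\|_A<\infty$ because $A$ is bounded. Hence $\big(\|\chi_n\|_A\big)\in\ell_q$ if $q<\infty$, and $\|\chi_n\|_A\to 0$ if $q=\infty$. Therefore $A$ is $(p,q)$-$\EE$-limited. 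Finally, if $E$ is von Neumann complete, then every $(p,q)$-$\EE$-limited set, being precompact by part (i), is relatively compact; thus $E$ has the property $EGP_{(p,q)}$. \qed
\end{proof}
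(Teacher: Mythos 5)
Your proof is correct and follows essentially the same route as the paper: part (i) is the paper's one-line observation that every bounded subset of $E=E_w$ is precompact (which you rightly verify via the finite-dimensional quotients $E/L$, after correctly abandoning the false aside in your plan that the weak unit ball of a Hilbert space is bounded but not precompact --- in a space carrying its weak topology, bounded always implies precompact), and part (ii) is the paper's computation almost verbatim: the equicontinuous sequence lives in a finite-dimensional span $\spn\{\psi_1,\dots,\psi_k\}$, one writes $\chi_n=\sum_i c_{n,i}\psi_i$ with $(c_{n,i})_n\in\ell_p\subseteq\ell_q$ and bounds $\|\chi_n\|_A\le\sum_i|c_{n,i}|\,\|\psi_i\|_A$. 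The only cosmetic point is that both you and the paper should take the $\psi_i$ linearly independent so the coordinates $c_{n,i}$ are well-defined continuous functionals of $\chi_n$.
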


\begin{proof}
(i) is trivial because every bounded subset of $E$ is precompact.

(ii) Let $A$ be a  bounded subset of $E$, and let $S=\{\chi_n\}_{n\in\w}$ be an equicontinuous weak$^\ast$ $p$-summable sequence in $E'$. Since $E$ carries its weak topology it follows that $S\subseteq F^{\circ\circ}$ for some finite subset $F=\{\eta_1,\dots,\eta_k\}\subseteq E'$. Therefore $S\subseteq \spn(F)$, and hence  for every $n\in\w$,  there  are scalars $c_{1,n},\dots,c_{k,n}\in\IF$ such that
\[
\chi_n =c_{1,n} \eta_1 +\cdots +  c_{k,n} \eta_k.
\]
Since $S$ is   weak$^\ast$ $p$-summable, we obtain that $(c_{i,n})_n\in \ell_p$ (or $\in c_0$ if $p=\infty$) for every $i=1,\dots,k$. As $A$ is bounded, for every $i=1,\dots,k$, one can define $C_i:= \sup_{a\in A}  |\langle\eta_i,a\rangle|$. Then
\[
\sup_{a\in A} |\langle\chi_n,a\rangle|\leq \sum_{i=1}^k \sup_{a\in A} |c_{i,n} | |\langle\eta_i,a\rangle|= \sum_{i=1}^k  C_i |c_{i,n} |.
\]
Therefore $\big(\sup_{a\in A} |\langle\chi_n,a\rangle|\big)_n \in\ell_p\subseteq \ell_q$  (or $\in c_0$ if $p=q=\infty$).  Thus $A$ is a $(p,q)$-$\EE$-limited set.\qed
\end{proof}
Note that, by Theorem 
3.13 of \cite{Gab-Pel}, $C_p(X)$ is von Neumann complete if and only if $X$ is discrete.

\begin{lemma}  \label{l:tau-TTT-GP}
Let  $1\leq p\leq q\leq\infty$, and let $\tau$ be a locally convex compatible topology on a locally convex space $(E,\TTT)$ such that $\tau\subseteq\TTT$. If $(E,\TTT)$ has the $prGP_{(p,q)}$ {\rm(}resp., $spGP_{(p,q)}$, $GP_{(p,q)}$ or $sGP_{(p,q)}${\rm)} property, then also $(E,\tau)$ has the same property.
\end{lemma}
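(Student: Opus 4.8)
The plan is to reduce everything to two elementary observations: that the notion of a $(p,q)$-limited set depends only on the dual pair $(E,E')$, not on the particular compatible topology; and that the relevant smallness notions (relative compactness, relative sequential compactness, precompactness, sequential precompactness) for a $\tau$-object are \emph{weaker} than the corresponding $\TTT$-objects when $\tau\subseteq\TTT$. Concretely, first I would fix a $(p,q)$-limited subset $A$ of $(E,\tau)$. Since $\tau$ and $\TTT$ are compatible, they have the same topological dual $E'$, and the weak$^\ast$ topology $\sigma(E',E)$ is the same in both cases; hence the family of weak$^\ast$ $p$-summable sequences in $E'$ is the same, and the quantity $\|\chi_n\|_A$ does not refer to the topology on $E$ at all. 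Therefore $A$ is also a $(p,q)$-limited subset of $(E,\TTT)$.

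Next I would invoke the hypothesis on $(E,\TTT)$. Suppose $(E,\TTT)$ has the $GP_{(p,q)}$ property; then $A$ is relatively compact in $(E,\TTT)$. Since $\tau\subseteq\TTT$, the identity map $(E,\TTT)\to(E,\tau)$ is continuous, so the image of the $\TTT$-compact set $\overline{A}^{\,\TTT}$ is $\tau$-compact and contains $A$; hence $A$ is relatively $\tau$-compact. The same one-line continuity argument handles the other three cases: a $\TTT$-convergent subsequence is $\tau$-convergent (giving $sGP_{(p,q)}$ from $sGP_{(p,q)}$), a $\TTT$-precompact set is $\tau$-precompact since every $\tau$-neighborhood of zero is a $\TTT$-neighborhood of zero (giving $prGP_{(p,q)}$), and a $\TTT$-Cauchy subsequence is $\tau$-Cauchy for the same reason (giving $spGP_{(p,q)}$). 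In each case the conclusion is exactly that the $(p,q)$-limited set $A$ has the required $\tau$-smallness, so $(E,\tau)$ has the corresponding property.

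I do not anticipate a genuine obstacle here: the only point requiring a moment's care is the compatibility hypothesis, which guarantees $E'$ and $\sigma(E',E)$ are unchanged, so that "$(p,q)$-limited" is literally the same condition for $(E,\tau)$ and $(E,\TTT)$; without compatibility the dual could shrink and the statement would fail. One could also phrase the whole argument via Lemma \ref{l:null-seq} or simply cite that the identity $(E,\TTT)\to(E,\tau)$ carries compact (resp.\ sequentially compact, precompact, sequentially precompact) sets to sets of the same kind, but writing the four cases out explicitly in one short paragraph seems cleanest.
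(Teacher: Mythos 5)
Your proposal is correct and follows essentially the same route as the paper: the paper likewise first notes that compatible topologies have the same $(p,q)$-limited sets (citing Lemma 3.1(vii) of \cite{Gab-limited}, where you instead unpack the easy direct argument that the dual pair and hence the defining condition are unchanged) and then transfers the four smallness properties from $\TTT$ to $\tau$ via the inclusion $\tau\subseteq\TTT$. No gaps.
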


\begin{proof}
Since $\tau$ and $\TTT$ are compatible, the spaces  $(E,\tau)$  and $(E,\TTT)$ have the same $(p,q)$-limited subsets by Lemma 
3.1(vii) of \cite{Gab-limited}.  As by assumption all $(p,q)$-limited sets in  $(E,\TTT)$ are precompact (resp., sequentially precompact, relatively compact or relatively sequentially compact), the inclusion $\tau\subseteq\TTT$ implies that so are all $(p,q)$-limited sets also in $(E,\tau)$.\qed
\end{proof}

\begin{remark} \label{rem:tau-TTT-GP} {\em
The converse in Lemma \ref{l:tau-TTT-GP} is not true in general. Indeed, let $E=\ell_\infty$. Then $E_w$ trivially has the precompact $GP$ property. However, the space $E$ is not a Gelfand--Phillips space by \cite{Phillips}. Consequently, the property of being a Gelfand--Phillips space is not a property of the dual pair $(E,E')$. \qed}
\end{remark}

Below we consider  relationships between $b$-version of the Gelfand--Phillips properties and the Gelfand--Phillips properties.
\begin{lemma} \label{l:GPp=EGPp-3}
Let  $1\leq p\leq q\leq\infty$, and let $(E,\tau)$ be a  locally convex space. Then:
\begin{enumerate}
\item[{\rm(i)}]  if $E$ has the $b$-$GP_{(p,q)}$ property, then it has the $prGP_{(p,q)}$ property;
\item[{\rm(ii)}] if $E$ is barrelled, then $E$ has the $b$-$GP_{(p,q)}$ $($resp., $b$-$EGP_{(p,q)}$$)$ property if and only if it has the $prGP_{(p,q)}$ $($resp., $prEGP_{(p,q)}$$)$ property;
\item[{\rm(iii)}] if $E$ is barrelled and von Neumann complete, then $E$ has the $b$-$GP_{(p,q)}$ $($resp., $b$-$EGP_{(p,q)}$$)$ property if and only if it has the $GP_{(p,q)}$ $($resp., $EGP_{(p,q)}$$)$ property.
\end{enumerate}
\end{lemma}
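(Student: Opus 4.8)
The plan is to deduce all three items from the definitions together with the two elementary facts that (a) for a barrelled space every bounded subset is strongly bounded, i.e. the strong topology $\beta(E,E')$ agrees with $\tau$ on the bornology, and in fact for barrelled $E$ the original topology $\tau$ coincides with $\beta(E,E')$; and (b) for barrelled $E$ the notions of ``precompact in $\tau$'' and ``barrel-precompact'' (= precompact in $\beta(E,E')$) coincide, simply because $\tau=\beta(E,E')$ when $E$ is barrelled. I would state (b) at the outset as the single observation driving (ii) and (iii).

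For (i): let $A$ be a $(p,q)$-limited set in $E$. First note $A$ is automatically bounded (every $(p,q)$-limited set is bounded, by the remark following Definition \ref{def:p-limit-coarse-set}, since a constant weak$^\ast$ $p$-summable sequence forces $\sup_{a\in A}|\langle\chi,a\rangle|<\infty$ for each $\chi\in E'$). Hence $A$ is limited and barrel-bounded, so the $b$-$GP_{(p,q)}$ property gives that $A$ is barrel-precompact, i.e. precompact in $\beta(E,E')$. Since $\tau\subseteq\beta(E,E')$ always, $A$ is $\tau$-precompact. Thus every $(p,q)$-limited set is $\tau$-precompact, which is exactly the $prGP_{(p,q)}$ property.

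For (ii): assume $E$ is barrelled, so $\tau=\beta(E,E')$ and in particular every $\tau$-bounded set is barrel-bounded. If $E$ has $b$-$GP_{(p,q)}$, take a $(p,q)$-limited set $A$; it is bounded, hence barrel-bounded, hence barrel-precompact by hypothesis, hence $\tau$-precompact since $\tau=\beta(E,E')$ — that is the $prGP_{(p,q)}$ property. Conversely, if $E$ has $prGP_{(p,q)}$, let $A$ be a limited barrel-bounded set; then $A$ is $\tau$-bounded (as $\beta(E,E')=\tau$), so $A$ is a $(p,q)$-limited set for $(p,q)=(\infty,\infty)$ — wait, one must be careful: ``limited'' means $(\infty,\infty)$-limited, and by Lemma \ref{l:GPp=EGPp}(ii) (the monotonicity statement, applied in the form that an $(\infty,\infty)$-limited set is $(p,q)$-limited for all $1\le p\le q\le\infty$) $A$ is $(p,q)$-limited; hence by $prGP_{(p,q)}$ it is $\tau$-precompact, i.e. $\beta(E,E')$-precompact, i.e. barrel-precompact. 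This gives $b$-$GP_{(p,q)}$. The equicontinuous versions are identical, replacing ``$(p,q)$-limited'' by ``$(p,q)$-$\EE$-limited'' throughout and using that a limited (hence $\EE$-limited, as every weak$^\ast$ $p$-summable sequence trivially includes equicontinuous ones) set fits the hypothesis of $prEGP_{(p,q)}$.

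For (iii): combine (ii) with the definition of von Neumann completeness. If $E$ is barrelled and von Neumann complete and has $b$-$GP_{(p,q)}$, then by (ii) it has $prGP_{(p,q)}$, so every $(p,q)$-limited set $A$ is $\tau$-precompact; von Neumann completeness upgrades ``precompact'' to ``relatively compact'', giving $GP_{(p,q)}$. Conversely, if $E$ has $GP_{(p,q)}$ then a fortiori it has $prGP_{(p,q)}$ (relatively compact sets are precompact), and then (ii) gives $b$-$GP_{(p,q)}$. The $EGP$ version is the same. I do not anticipate any genuine obstacle here; the only point requiring a little care is keeping straight that ``limited'' $=(\infty,\infty)$-limited and invoking the monotonicity in Lemma \ref{l:GPp=EGPp}(ii) so that the $b$-$GP$ hypothesis (stated for limited sets) feeds correctly into the $prGP_{(p,q)}$/$GP_{(p,q)}$ conclusions (stated for $(p,q)$-limited sets), and conversely that $(p,q)$-limited sets are bounded so that the ``barrel-bounded'' clause in the definition of $b$-$GP_{(p,q)}$ is automatically met once $E$ is barrelled.
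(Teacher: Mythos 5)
Your overall skeleton is the same as the paper's: part (i) rests on the inclusion $\tau\subseteq\beta(E,E')$, and parts (ii) and (iii) on the equality $\tau=\beta(E,E')$ for barrelled spaces together with von Neumann completeness. However, the write-up is built on the wrong definition of the $b$-$GP_{(p,q)}$ property. By Definition \ref{def:property-pGP}(iii) this property says that \emph{every $(p,q)$-limited set is barrel-precompact}; it does not say that every limited barrel-bounded set is barrel-precompact (that is the unsubscripted $b$-Gelfand--Phillips property of Definition \ref{def:b-GP-lcs}, which only motivates the general definition). Because of this confusion you insert two bridging claims that are false. First, in (i) you pass from ``$A$ is $(p,q)$-limited, hence bounded'' to ``hence $A$ is limited'': boundedness does not imply limitedness (the unit ball of an infinite-dimensional Banach space is never limited, by Josefson--Nissenzweig), and $(p,q)$-limitedness does not imply $(\infty,\infty)$-limitedness either, since for $p<\infty$ the defining condition is only tested against the smaller class of weak$^\ast$ $p$-summable sequences. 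Second, in the converse direction of (ii) you claim that a limited, i.e.\ $(\infty,\infty)$-limited, set is $(p,q)$-limited for all $1\le p\le q\le\infty$; the monotonicity behind Lemma \ref{l:GPp=EGPp}(ii) runs the other way --- one may only \emph{decrease} $p$ and \emph{increase} $q$ --- so a limited set is $(p,\infty)$-limited for every $p$ but in general not $(p,q)$-limited when $q<\infty$, because membership of $\big(\|\chi_n\|_A\big)$ in $\ell_q$ is strictly stronger than convergence to $0$.

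Once the correct definition is used, these detours disappear and the lemma reduces to the paper's two-line argument: for (i), barrel-precompact means $\beta(E,E')$-precompact, hence $\tau$-precompact since $\tau\subseteq\beta(E,E')$; for (ii), barrelledness gives $\tau=\beta(E,E')$, so ``barrel-precompact'' and ``$\tau$-precompact'' coincide and the $b$-$GP_{(p,q)}$ and $prGP_{(p,q)}$ properties (resp.\ their equicontinuous versions) quantify over the same sets with the same conclusion; for (iii), von Neumann completeness makes precompact sets relatively compact, so $prGP_{(p,q)}$ and $GP_{(p,q)}$ coincide and (ii) applies. Your handling of (iii) itself is correct; it is the identification of the hypothesis in (i) and (ii) that needs to be repaired.
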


\begin{proof}
(i) follows from the inclusion $\tau\subseteq \beta(E,E')$.

(ii) and (iii) follow from the equality $E=E_\beta$ and the corresponding definitions.\qed
\end{proof}

We select the next proposition which shows that for wide classes of locally convex spaces important for applications in fact there is only a unique Gelfand--Phillips type property (however, in general, these notions are different, see Examples \ref{exa:product-sGP} and \ref{exa:subspace-non-GP} and Theorem \ref{t:Cp-EGPp} below).

\begin{proposition} \label{p:equal-GP-strict-LF}
Let  $1\leq p\leq q\leq\infty$, and let $E$ be an angelic, complete  and barrelled space  {\rm(}for example, $E$ is a strict $(LF)$-space{\rm)}. Then all the properties  $GP_{(p,q)}$,  $EGP_{(p,q)}$, $sGP_{(p,q)}$, $sEGP_{(p,q)}$, $prGP_{(p,q)}$, $prEGP_{(p,q)}$, $spGP_{(p,q)}$, $spEGP_{(p,q)}$ and $b$-$GP_{(p,q)}$ are equivalent.
\end{proposition}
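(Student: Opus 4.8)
The plan is to show that under the hypotheses on $E$ (angelic, complete, barrelled), all of the classes of sets appearing in the definitions of the listed properties collapse into one, so that the properties themselves coincide. There are two pairs of reductions to make: first, to identify the ``$(p,q)$-limited'' and ``$(p,q)$-$\EE$-limited'' sets, and second, to identify ``precompact'', ``relatively compact'', ``relatively sequentially compact'', ``sequentially precompact'' and ``barrel-precompact'' for such sets.

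First I would dispose of the $\EE$ vs.\ non-$\EE$ distinction. Since $E$ is barrelled, it is in particular $c_0$-barrelled, hence $p$-barrelled for every $p$ (a weakly $p$-summable sequence in $E'_{w^\ast}$ is weak$^\ast$ null, so forms a $\sigma(E',E)$-bounded set, hence an equicontinuous set by barrelledness). By Lemma~\ref{l:GPp=EGPp}(i) this already gives that $GP_{(p,q)}\Leftrightarrow EGP_{(p,q)}$ and $sGP_{(p,q)}\Leftrightarrow sEGP_{(p,q)}$; more directly, $\mathsf{L}_{(p,q)}(E)=\mathsf{EL}_{(p,q)}(E)$ for $p$-barrelled $E$, so any statement about one family is a statement about the other, and likewise the $pr$- and $sp$- and $b$-variants pair up. Next, since $E$ is barrelled and complete it is von Neumann complete (a precompact set in a complete space has compact closure), so by Lemma~\ref{l:GPp=EGPp}(v) the $prGP_{(p,q)}$ property coincides with the $GP_{(p,q)}$ property (and similarly for the $\EE$-versions), and by Lemma~\ref{l:GPp=EGPp-3}(iii) (barrelled plus von Neumann complete) the $b$-$GP_{(p,q)}$ property coincides with $GP_{(p,q)}$. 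Because $E$ is barrelled, $E=E_\beta$, so ``barrel-precompact'' just means ``precompact'', consistent with this.

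It remains to fold in the sequential versions. Here angelicity does the work: in an angelic space a set is relatively compact iff it is relatively sequentially compact iff it is relatively countably compact (Pryce's lemma, quoted in the Preliminaries), so by Lemma~\ref{l:GPp=EGPp}(iii) the $GP_{(p,q)}$ property coincides with the $sGP_{(p,q)}$ property and the $EGP_{(p,q)}$ property with the $sEGP_{(p,q)}$ property — this is exactly Lemma~\ref{l:GPp=EGPp}(iv), which applies since $E$ is an angelic $p$-barrelled space. Finally, since $E$ is complete it is sequentially complete, so Lemma~\ref{l:GPp=EGPp}(vi) gives $spGP_{(p,q)}\Leftrightarrow sGP_{(p,q)}$ and $spEGP_{(p,q)}\Leftrightarrow sEGP_{(p,q)}$. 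Chaining these equivalences together shows all nine listed properties coincide. For the parenthetical remark, a strict $(LF)$-space is complete and barrelled (standard), and is angelic because it is the inductive limit of a sequence of Fr\'echet (hence angelic) spaces and strict $(LF)$-spaces are angelic; so the proposition applies.

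I do not expect a genuine obstacle here: the proposition is essentially a bookkeeping corollary assembling parts (i), (iii)--(vi) of Lemma~\ref{l:GPp=EGPp} together with Lemma~\ref{l:GPp=EGPp-3}(iii). The only point needing a word of care is the claim that a complete barrelled space is von Neumann complete — i.e.\ that the closure of a precompact set is complete hence compact — which uses completeness in the form ``closed precompact sets are compact''; and the remark that barrelled $\Rightarrow$ $p$-barrelled, which I would state explicitly. Everything else is a direct citation of the already-established lemmas.
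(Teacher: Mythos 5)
Your proposal is correct and follows essentially the same route as the paper: both assemble the equivalences from Lemma~\ref{l:GPp=EGPp}(iv),(v),(vi), Lemma~\ref{l:GPp=EGPp-3}, and the identification of $(p,q)$-limited with $(p,q)$-$\EE$-limited sets under barrelledness. The only cosmetic differences are that you cite Lemma~\ref{l:GPp=EGPp-3}(iii) where the paper uses part (ii), and you spell out explicitly why barrelled implies $p$-barrelled and why complete implies von Neumann complete, which the paper leaves implicit.
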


\begin{proof}
The properties  $GP_{(p,q)}$,  $EGP_{(p,q)}$, $sGP_{(p,q)}$ and  $sEGP_{(p,q)}$ are equivalent by (iv) of Lemma \ref{l:GPp=EGPp}.
The properties $GP_{(p,q)}$ and  $prGP_{(p,q)}$ are equivalent by (v) of Lemma \ref{l:GPp=EGPp}.
The properties $sGP_{(p,q)}$ and  $spGP_{(p,q)}$ are equivalent by (vi) of Lemma \ref{l:GPp=EGPp}.
The properties $prGP_{(p,q)}$ and  $b$-$GP_{(p,q)}$ are equivalent by (ii) of Lemma \ref{l:GPp=EGPp-3}.

Since $E$ is barrelled, the $(p,q)$-limited sets and the $(p,q)$-$\EE$-limited sets are the same. Therefore, the properties $prGP_{(p,q)}$ and  $prEGP_{(p,q)}$, as well as  the properties $spGP_{(p,q)}$ and  $spEGP_{(p,q)}$   are coincide. \qed
\end{proof}

By Proposition 
4.2 of \cite{Gab-limited}, every $p$-limited subset of $E$ is  coarse $p$-limited.
Although every precompact subset of a $c_0$-barrelled space is limited by Corollary 
3.7 of \cite{Gab-limited}, non-$c_0$-barrelled spaces may contain even convergent sequences which are not limited, see Example 
3.10 in \cite{Gab-limited}. We complement these results in the the following assertion.
Recall that a subset $A$ of a topological space $X$ is {\em functionally bounded} if $f(A)$ is bounded for every $f\in C(X)$; clearly, any compact subset of $X$ is functionally bounded.
\begin{proposition} \label{p:limited-coarse-p-limited}
Let $p\in[1,\infty]$, and let $E$ be a locally convex space. Then:
\begin{enumerate}
\item[{\rm(i)}] each functionally bounded subset  $A$ of $E$ is coarse $p$-limited;
\item[{\rm(ii)}] every limited subset of  $E$ is coarse $p$-limited.
\end{enumerate}
\end{proposition}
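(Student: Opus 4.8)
The plan is to prove both parts by a direct argument, showing that any operator into $\ell_p$ (or $c_0$) sends the given set to a relatively compact set, using Proposition~\ref{p:compact-ell-p} as the criterion. For part (i), fix an operator $T\in\LL(E,\ell_p)$ (the case $T\in\LL(E,c_0)$ when $p=\infty$ is analogous) and let $A$ be functionally bounded in $E$. The key observation is that the coordinate functionals of $\ell_p$, pulled back through $T$, form a sequence in $E'$. Precisely, writing $\{e_n^\ast\}_{n\in\w}$ for the canonical basis of $\ell_{p^\ast}$ (identified with $c_0$ when $p=1$), set $\chi_n:=T^\ast(e_n^\ast)\in E'$, so that the $n$-th coordinate of $T(a)$ is $\langle\chi_n,a\rangle$. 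First I would check that $T(A)$ is bounded in $\ell_p$: since $A$ is functionally bounded and $T$ is continuous, $T(A)$ is functionally bounded in $\ell_p$, but in a normed (hence metrizable) space functionally bounded sets are norm bounded. Then I would invoke Proposition~\ref{p:compact-ell-p} and reduce relative compactness of $T(A)$ to showing the uniform tail condition $\lim_{m\to\infty}\sup\{\sum_{n\ge m}|\langle\chi_n,a\rangle|^p:a\in A\}=0$.

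The heart of the matter is to establish that tail condition, and the natural route is to argue by contradiction via a gliding-hump / disjointification argument. If the tail sums do not go to zero uniformly, one can extract $\varepsilon>0$, points $a_k\in A$, and a strictly increasing sequence $m_1<m_2<\cdots$ such that $\sum_{m_k\le n<m_{k+1}}|\langle\chi_n,a_k\rangle|^p\ge\varepsilon$ for every $k$ (here one uses that $T(a_k)\in\ell_p$ so its own tail is small, allowing the "bump" to be localized to a finite block). Now pick a scalar sequence $(\lambda_n)$ supported appropriately so that the functional $\sum_n \lambda_n\chi_n$ — or rather, a block-summed family of the $\chi_n$ — witnesses unboundedness of some continuous linear (or, via duality with $\ell_{p^\ast}$, some scalar) functional on $A$, contradicting functional boundedness. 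The cleanest packaging: composing $T$ with a suitable coordinate projection or with an element of $\ell_{p^\ast}=(\ell_p)'$ gives a single $f\in E'$ with $f(A)$ unbounded, which already contradicts $A$ being functionally bounded since $f\in C(E)$. I expect the main obstacle to be making the gliding-hump extraction precise while keeping the resulting functional genuinely continuous on $E$ (it is, since it factors through the continuous map $T$ and a bounded functional on $\ell_p$), and handling the $p=\infty$, $T:E\to c_0$ case where the criterion is $\sup$-type rather than $\ell_p$-tail type — there one extracts a single coordinate direction along which $|\langle\chi_{n_k},a_k\rangle|\ge\varepsilon$ and again produces an unbounded functional.

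For part (ii), I would give a short reduction to part (i). Let $A$ be a limited subset of $E$; in particular $A$ is bounded. Fix $T\in\LL(E,\ell_p)$ and, as above, set $\chi_n:=T^\ast(e_n^\ast)\in E'$. The sequence $\{\chi_n\}_{n\in\w}$ is weak$^\ast$ null in $E'$: indeed for each $x\in E$ the scalar $\langle\chi_n,x\rangle$ is the $n$-th coordinate of $T(x)\in\ell_p$ (or $c_0$), hence tends to $0$. Since $A$ is limited, $\|\chi_n\|_A=\sup_{a\in A}|\langle\chi_n,a\rangle|\to0$, i.e. $\sup_{a\in A}|(T(a))_n|\to0$. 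When $p=\infty$ this is exactly the criterion in Proposition~\ref{p:compact-ell-p}(ii) for $T(A)$ to be relatively compact in $c_0$. When $p<\infty$ one needs a little more: from $T(a)\in\ell_p$ uniformly (boundedness of $T(A)$, via Proposition~\ref{p:compact-ell-p} being inapplicable directly) — here I would instead observe that a limited set is weakly sequentially precompact by Theorem~\ref{t:inf-V*-wsc} only if $p\ge2$, so it is better to argue directly: the uniform tail estimate follows because $\sup_{a\in A}|(T(a))_n|\to 0$ together with a diagonal/gliding-hump argument identical to part (i), or simply by noting that any limited set is functionally bounded (being bounded, its image under any $f\in E'\subseteq C(E)$ is bounded) and applying (i). This last remark is the slickest: every bounded set is functionally bounded, so (ii) is an immediate corollary of (i). The only subtlety worth a sentence is that boundedness of $A$ does give functional boundedness for the continuous-linear functionals but one should note $C(E)$ here means all continuous functions; however the definition of functionally bounded only needs to be checked and a bounded convex set's image under an arbitrary continuous function need not be bounded — so I would \emph{not} claim "bounded $\Rightarrow$ functionally bounded" in general, and instead keep the self-contained weak$^\ast$-null argument above for (ii), using limitedness directly. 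That makes (ii) genuinely independent and short.
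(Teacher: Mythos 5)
Your proposal has genuine gaps in both parts. In (i), the planned contradiction cannot be reached: you aim to refute functional boundedness of $A$ by producing a continuous \emph{linear} functional $f=\psi\circ T$ with $\psi\in(\ell_p)'=\ell_{p^\ast}$ that is unbounded on $A$. But $T(A)$ is norm-bounded in $\ell_p$, so $|\psi(T(a))|\leq\|\psi\|\,\sup_{a\in A}\|T(a)\|$ for every $\psi\in\ell_{p^\ast}$ — every such $f$ is bounded on $A$ whether or not $T(A)$ is relatively compact (the canonical basis $\{e_n\}$ of $\ell_p$ is the standard witness: bounded, fails the tail condition of Proposition~\ref{p:compact-ell-p}, yet every continuous linear functional is bounded on it). Functional boundedness is a statement about \emph{all} continuous functions, and the only way to contradict it here is with a non-linear one (e.g.\ bump functions along a uniformly separated sequence in $T(A)$). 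The paper's proof packages exactly this: $T(A)$ is a functionally bounded subset of the metric space $\ell_p$ (continuous images of functionally bounded sets are functionally bounded), and metric spaces are $\mu$-spaces, so $T(A)$ is relatively compact. Your gliding-hump-to-a-linear-functional route cannot be repaired without switching to non-linear test functions.

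In (ii), the ``self-contained weak$^\ast$-null argument'' you settle on (testing $A$ against $\chi_n=T^\ast(e_n^\ast)$, which is indeed weak$^\ast$ null) gives $\sup_{a\in A}|(T(a))_n|\to 0$, and this finishes the case $p=\infty$ via Proposition~\ref{p:compact-ell-p}(ii). For $p<\infty$, however, coordinate-wise uniform decay plus norm-boundedness does \emph{not} imply the uniform tail condition of Proposition~\ref{p:compact-ell-p}(i): the set $\{k^{-1}\sum_{n=k^2}^{k^2+k-1}e_n : k\geq1\}\subseteq\ell_1$ has coordinate suprema tending to $0$ but every tail sum equal to $1$. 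Your fallback (``a gliding-hump argument identical to part (i)'') inherits the flaw above; what would actually close the gap is to test $A$ against the pullbacks of normalized \emph{block} functionals of $\ell_{p^\ast}$ (disjointly supported, hence weak$^\ast$ null on $\ell_p$), chosen to match the humps. The paper avoids all of this: $T(A)$ is a limited subset of the separable Banach space $\ell_p$ (or $c_0$) by the permanence of limitedness under operators, and separable Banach spaces are Gelfand--Phillips by Theorem~\ref{t:Banach-GP}, so $T(A)$ is relatively compact. You were right to retract the claim ``bounded $\Rightarrow$ functionally bounded'' — that reduction of (ii) to (i) is indeed false in any infinite-dimensional normed space — but the argument you kept in its place is complete only for $p=\infty$.
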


\begin{proof}
(i) Let $T:E\to\ell_p$ (or $T:E\to c_0$ if $p=\infty$) be an operator. Then $T(A)$ is a  functionally bounded subset of the metric space $\ell_p$ (or $c_0$), and hence $T(A)$ is relatively compact. Thus $A$ is coarse $p$-limited.

(ii) Let $A$ be a limited subset of $E$, and let $T\in\LL(E,\ell_p)$ (or $T\in\LL(E,c_0)$ if $p=\infty$). Then, by (iv) of Lemma 
3.1 of \cite{Gab-limited}, $T(A)$ is a limited subset of $\ell_p$ (or $c_0$). Since the Banach spaces $\ell_p$ and $c_0$ are separable, they have the Gelfand--Phillips property by Theorem \ref{t:Banach-GP}. Therefore $T(A)$ is relatively compact in $\ell_p$ (or in $c_0$). Thus $A$  is a coarse $p$-limited set in $E$.\qed
\end{proof}

Below we consider the coarse versions of the Gelfand--Phillips property.
\begin{proposition} \label{p:pq-lim-coarse-p-lim}
Let $1\leq p\leq q\leq\infty$, and let $(E,\tau)$ be a  locally convex space.
\begin{enumerate}
\item[{\rm(i)}] If $E$ has the coarse $GP_{p}$ $($resp., coarse $sGP_{p}$ or coarse $spGP_{p}$$)$ property, then $E$ has the coarse $prGP_{p}$ $($resp., coarse $spGP_{p}$ or coarse $prGP_{p}$$)$ property.
\item[{\rm(ii)}] If $E=E_w$, then $\mathsf{CL}_p(E)=\Bo(E)$ and hence $E$ has  the coarse $prGP_{p}$ property.
\item[{\rm(iii)}] If  $E$ has the coarse $GP_p$ {\rm(}resp., coarse $sGP_p$, coarse $prGP_p$ or coarse $spGP_p${\rm)} property, then $E$ has the $GP_{p}$ {\rm(}resp., $sGP_{p}$, $prGP_{p}$ or $spGP_{p}${\rm)} property.
\item[{\rm(iv)}] If  $E$ has the coarse $GP_p$ property, then $E$ has the $GP$ property. The converse is not true in general.
\item[{\rm(v)}] If $E$ is barrelled, then $E$ has the $GP_{(1,\infty)}$  {\rm(}resp., $sGP_{(1,\infty)}$, $prGP_{(1,\infty)}$, or $spGP_{(1,\infty)}${\rm)} property if and only if it has the coarse $GP_{1}$  {\rm(}resp.,  coarse  $sGP_{1}$,  coarse  $prGP_{1}$, or coarse  $spGP_{1}${\rm)} property.
\end{enumerate}
\end{proposition}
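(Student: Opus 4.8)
The plan is to prove each of the five items by reducing the coarse Gelfand--Phillips statements to the already-established definitions and to the basic comparison results for limited sets. First I would dispatch (i): since $\tau\subseteq\beta(E,E')$ is irrelevant here, the point is simply that relative compactness implies precompactness and implies relative sequential compactness (for the $sGP_p$ case), while relative sequential compactness implies sequential precompactness; none of this uses the specific structure of coarse $p$-limited sets, so (i) is immediate from the corresponding definitions, exactly as in Lemma~\ref{l:GPp=EGPp}(iii),(v).

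For (ii), when $E=E_w$ every bounded set is precompact, so it suffices to show $\mathsf{CL}_p(E)=\Bo(E)$. One inclusion is trivial. For the other, take $A\in\Bo(E)$ and $T\in\LL(E,\ell_p)$ (or $T\in\LL(E,c_0)$); since $E$ carries its weak topology, $T$ factors through a finite-dimensional space (the image of $T$ lands in a finite-dimensional subspace because $T$ is $\sigma(E,E')$-to-norm continuous and continuous seminorms on $E_w$ depend on finitely many functionals), hence $T(A)$ is a bounded subset of a finite-dimensional space and therefore relatively compact. Thus $A$ is coarse $p$-limited. Then the coarse $prGP_p$ property follows because all bounded, in particular all coarse $p$-limited, sets are precompact. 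For (iii), I would invoke Proposition~\ref{p:limited-coarse-p-limited}(ii) (or rather the inclusion $\mathsf{L}_p(E)\subseteq\mathsf{CL}_p(E)$ from Proposition~4.2 of~\cite{Gab-limited}): if every coarse $p$-limited set is relatively compact (resp.\ relatively sequentially compact, precompact, sequentially precompact), then a fortiori every $p$-limited set is, since $p$-limited sets are coarse $p$-limited. This is a one-line argument for each of the four variants.

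For (iv), the forward direction: if $E$ has the coarse $GP_p$ property, then by (iii) it has the $GP_p$ property; now I must pass from $GP_p$ to $GP$ ($=GP_{(\infty,\infty)}$), which is Lemma~\ref{l:GPp=EGPp}(ii) with $p'=p\le\infty=p$ and $q'=q=\infty$ — wait, the monotonicity there goes $p'\le p$, $q\le q'$, so to land at $(\infty,\infty)$ from $(p,p)$ one needs $p\le\infty$ and $p\le\infty$, which holds, hence $GP_p\Rightarrow GP_{(p,\infty)}\Rightarrow GP$. Actually one should be careful: the cleanest route is that every limited set is $p$-limited (take $q=\infty$ in the comparison of limited sets), so $GP_p\Rightarrow GP$ directly. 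For the converse failing, I would exhibit or cite a space with the $GP$ property but lacking the coarse $GP_p$ property; a natural candidate is a space whose bounded sets are all limited (so $GP$ is automatic only if they are relatively compact, which fails) — instead, the standard counterexample uses $\ell_p$ itself or $c_0$: these are separable Banach spaces, hence Gelfand--Phillips by Theorem~\ref{t:Banach-GP}(ii), but the canonical basis $\{e_n\}$ of, say, $\ell_p$ is coarse $p$-limited only if every $T\in\LL(\ell_p,\ell_p)$ sends it to a relatively compact set — taking $T=\Id$ shows $\{e_n\}$ (equivalently $B_{\ell_p}$) is not coarse $p$-limited, so this does not directly work; I would instead point to the explicit example from~\cite{GalMir} of a separable Banach space without the coarse $p$-Gelfand--Phillips property, which settles it.

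For (v), the key input is the barrelledness of $E$, which makes $E$ $1$-barrelled (every $\sigma(E',E)$-bounded sequence is equicontinuous, so in particular every weak$^*$ $1$-summable sequence is), and then the bridge is the identification of $(1,\infty)$-limited sets with coarse $1$-limited sets in the barrelled setting. I expect this identification to be the main obstacle: one direction is Proposition~\ref{p:limited-coarse-p-limited}-type ($p$-limited $\Rightarrow$ coarse $p$-limited), but the reverse — that in a barrelled space every coarse $1$-limited set is $(1,\infty)$-limited — requires showing that if $A$ is coarse $1$-limited and $\{\chi_n\}$ is a weak$^*$ null sequence in $E'$, then $\|\chi_n\|_A\to 0$; by barrelledness $\{\chi_n\}$ is equicontinuous, hence $\{\chi_n\}\subseteq U^\circ$ for some $U\in\Nn_0(E)$, and the map $x\mapsto(\chi_n(x))_n$ is then a well-defined operator from $E$ into $c_0$ (this uses that $\chi_n(x)\to 0$ for each $x$, i.e.\ weak$^*$ nullity), so coarse $\infty$-limitedness gives relative compactness of $T(A)$ in $c_0$, and by Proposition~\ref{p:compact-ell-p}(ii) this means $\sup_{a\in A}|\chi_n(a)|\to 0$, i.e.\ $\|\chi_n\|_A\to 0$. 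For the $p=1$ case one argues similarly with $\ell_1$ in place of $c_0$, using weak$^*$ $1$-summability and Proposition~\ref{p:compact-ell-p}(i). Once $\mathsf{L}_{(1,\infty)}(E)=\mathsf{CL}_1(E)$ is established for barrelled $E$, the equivalence of $GP_{(1,\infty)}$ with coarse $GP_1$ (and of the $s$-, $pr$-, $sp$- variants) is immediate since both properties are statements about the same family of sets with the same compactness conclusion.
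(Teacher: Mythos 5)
Items (i)--(iii) and (v) follow the paper's route: (i) is definitional, (ii) rests on the fact that every operator from $E_w$ into a normed space is finite-dimensional (the paper cites Lemma~5.12 of \cite{Gab-limited}, you re-derive it), (iii) is the inclusion of $p$-limited sets into coarse $p$-limited sets, and (v) is exactly the identification of $(1,\infty)$-limited with coarse $1$-limited sets in barrelled spaces, which the paper simply cites as Proposition~3.13 of \cite{Gab-limited}; your sketch of that identification handles the inclusion ``coarse $\Rightarrow$ limited'' plausibly but glosses over the converse inclusion, which is the harder half (relative compactness in $\ell_1$ requires uniform smallness of tails, not just $\|\chi_n\|_A\to 0$).

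The genuine gap is in (iv). Your forward direction routes through (iii) and then asserts that ``every limited set is $p$-limited,'' hence $GP_p\Rightarrow GP$. This is false: by the monotonicity you yourself quote, a limited ($=(\infty,\infty)$-limited) set is only $(p,\infty)$-limited, not $(p,p)$-limited, since the conclusion $\|\chi_n\|_A\to 0$ does not upgrade to $(\|\chi_n\|_A)\in\ell_p$. Concretely, in $\ell_2$ the compact (hence limited) set $\{e_k/\log(k+2)\}_{k\in\w}$ is not $2$-limited, because $(1/\log(n+2))_n\notin\ell_2$ against the weak$^\ast$ $2$-summable sequence $\{e_n^\ast\}$. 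Indeed $GP_p\not\Rightarrow GP$ in general. The correct argument --- the paper's --- bypasses $GP_p$ entirely: by Proposition~\ref{p:limited-coarse-p-limited}(ii) every limited set is \emph{coarse} $p$-limited, so the coarse $GP_p$ property makes every limited set relatively compact, which is the $GP$ property. For the failure of the converse you defer to an unnamed example in \cite{GalMir} after discarding the right candidate: the paper takes $E=c_0$ and $1\leq p<\infty$, which is separable and hence Gelfand--Phillips, while by the Pitt theorem every operator $c_0\to\ell_p$ is compact, so $B_{c_0}$ is a non-compact coarse $p$-limited set and the coarse $GP_p$ property fails. Your worry about the identity operator only rules out $E=\ell_p$, not $E=c_0$.
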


\begin{proof}
(i) is obvious.

(ii) Since $E=E_w$, Lemma 
5.12 of \cite{Gab-limited} implies that each operator from $E$ into any Banach space is finite-dimensional. Therefore any bounded subset of $E$ is coarse $p$-limited. As any bounded subset of $E$ is precomapct it follows that $E$ has  the coarse $prGP_{p}$ property.

(iii) follows from Proposition 4.2 
of \cite{Gab-limited} which states that every $p$-limited set is coarse $p$-limited.

(iv) follows from Proposition \ref{p:limited-coarse-p-limited}. To show that the converse is not true in general, let $E=c_0$ and $1\leq p<\infty$. Since $E$ is separable it has the $GP$ property by Theorem \ref{t:Banach-GP}. However, $E$ does not have the coarse $GP_p$ property because, by the Pitt theorem, $B_E$ is a non-compact coarse $p$-limited set. 

(v) follows from Proposition 
3.13 of \cite{Gab-limited} which states that for a barrelled space $E$, the class of $(1,\infty)$-limited sets coincides with the class of coarse $1$-limited sets.\qed
\end{proof}

It is natural to consider relationships between $V^\ast_{(p,q)}$ type properties of Pe{\l}czy\'{n}ski and Gelfand--Phillips type properties. Following \cite{Gab-DP}, an lcs $E$ has the ({\em weak}) {\em  Glicksberg property} if $E$ and $E_w$ have the same (resp., absolutely convex) compact  sets. Note that the weak Glicksberg property does not imply the Schur property and vise versa.
\begin{proposition} \label{p:V*pq=>GPpq}
Let  $1\leq p\leq q\leq\infty$, and let $(E,\tau)$ be a  locally convex space.
\begin{enumerate}
\item[{\rm(i)}] Let $E$ have the weak  Glicksberg property.  If $E$ has the property $V^\ast_{(p,q)}$ $($resp.,  $EV^\ast_{(p,q)}$$)$, then $E$ has the $GP_{(p,q)}$ $($resp., $EGP_{(p,q)}$$)$ property.
\item[{\rm(ii)}] Let $E$ have the Schur property.  If $E$ has the property $sV^\ast_{(p,q)}$ $($resp., $sEV^\ast_{(p,q)}$$)$, then $E$ has the $sGP_{(p,q)}$ $($resp., $sEGP_{(p,q)}$$)$ property.
\end{enumerate}
\end{proposition}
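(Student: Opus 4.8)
The plan is to reduce the two implications to the definitions, using the key structural fact that a $(p,q)$-$(V^\ast)$ set (resp.\ $(p,q)$-$(EV^\ast)$ set) is automatically a $(p,q)$-limited set (resp.\ $(p,q)$-$\EE$-limited set). Indeed, comparing Definition~\ref{def:p-limit-coarse-set} with Definition~\ref{def:tvs-V*-subset}: a $(p,q)$-limited set tests against weak$^\ast$ $p$-summable sequences in $E'$ (equivalently, weakly $p$-summable sequences in $E'_{w^\ast}$), whereas a $(p,q)$-$(V^\ast)$ set tests against weakly $p$-summable sequences in $E'_\beta$. Since $\beta(E',E)\supseteq\sigma(E',E)$, every weakly $p$-summable sequence in $E'_\beta$ is weakly $p$-summable in $E'_{w^\ast}$; hence the $(V^\ast)$ condition is the stronger one, i.e.\ every $(p,q)$-limited set is a $(p,q)$-$(V^\ast)$ set. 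Wait — that is the wrong direction for our purposes. Let me re-read: a $(V^\ast)$ set is tested against \emph{fewer} sequences, so the $(V^\ast)$ condition is \emph{weaker}, and every $(p,q)$-limited set is a $(p,q)$-$(V^\ast)$ set. That is exactly the inclusion used at the end of Theorem~\ref{t:inf-V*-wsc}. So a space in which every $(p,q)$-$(V^\ast)$ set has a good property will \emph{a fortiori} have that property for the smaller class of $(p,q)$-limited sets.

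For part~(i): let $A$ be a $(p,q)$-limited subset of $E$. Then $A$ is a $(p,q)$-$(V^\ast)$ set, so by the property $V^\ast_{(p,q)}$ the set $A$ is relatively weakly compact, i.e.\ $\overline{A}^{\,\sigma(E,E')}$ is compact in $E_w$. The weak Glicksberg property says $E$ and $E_w$ have the same absolutely convex compact sets; applying this to $\overline{\acx(A)}^{\,\sigma(E,E')}$ (which is weakly compact since a relatively weakly compact set has weakly compact closed absolutely convex hull, at least in the quasi-complete/locally complete setting one should invoke — here I would cite the relevant Krein-type statement from \cite{Gab-Pel} or simply note that $A$ itself, being bounded and weakly relatively compact, has weakly compact closed convex circled hull under the standing completeness hypotheses, or restrict to the absolutely convex case directly) shows this hull is $\tau$-compact, hence $A$ is relatively $\tau$-compact. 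Thus $E$ has the $GP_{(p,q)}$ property. The $EGP_{(p,q)}$ case is identical, replacing ``limited/$(V^\ast)$'' by ``$\EE$-limited/$(EV^\ast)$'' throughout and using that every $(p,q)$-$\EE$-limited set is a $(p,q)$-$(EV^\ast)$ set.

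For part~(ii): let $A$ be a $(p,q)$-limited set. As above $A$ is a $(p,q)$-$(V^\ast)$ set, so by $sV^\ast_{(p,q)}$ every sequence in $A$ has a subsequence weakly converging to a point of $E$. Now the Schur property of $E$ means every weakly convergent sequence is $\tau$-convergent to the same limit (equivalently, every weakly null sequence is null, applied to differences); hence that subsequence is in fact $\tau$-convergent in $E$. Therefore $A$ is relatively sequentially compact, i.e.\ $E$ has the $sGP_{(p,q)}$ property, and the $sEGP_{(p,q)}$ case is verbatim the same with the equicontinuous variants. The only delicate point — and the step I would be most careful about — is the passage from ``relatively weakly compact'' to ``relatively $\tau$-compact'' in part~(i): one must make sure the weak Glicksberg property is being applied to a genuinely absolutely convex weakly compact set, which is why I would phrase the argument so as to pass to $\overline{\acx(A)}$ and quote the appropriate Krein-type result (valid under the completeness hypotheses implicit in the ambient setting, or simply note it is automatic when one only needs the conclusion for the absolutely convex hull of $A$); apart from that, everything is a direct unwinding of the definitions together with the inclusion of set classes already established before Theorem~\ref{t:inf-V*-wsc}.
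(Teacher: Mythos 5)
Your overall strategy coincides with the paper's: reduce to the inclusion ``every $(p,q)$-limited set is a $(p,q)$-$(V^\ast)$ set'' (you eventually get the direction of this inclusion right, and it is exactly the fact the paper quotes), then apply the $V^\ast$-type hypothesis and convert weak compactness into $\tau$-compactness. Part (ii) is correct and essentially identical to the paper's argument: relative weak sequential compactness plus the Schur property yields relative sequential compactness in $\tau$.

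Part (i), however, has a genuine gap at precisely the step you flag. Your main line of argument passes from ``$A$ is relatively weakly compact'' to ``$\overline{\acx(A)}^{\,\sigma(E,E')}$ is weakly compact'' via a Krein-type theorem, but the proposition carries \emph{no} completeness hypothesis (there are no ``standing completeness hypotheses'' to invoke), and Krein's theorem fails in general locally convex spaces without some form of (local or quasi-) completeness. The correct repair, and the route the paper takes, avoids Krein entirely: the class of $(p,q)$-$(V^\ast)$ sets is stable under taking bipolars (Lemma 7.2(iii) of \cite{Gab-Pel}), so $A^{\circ\circ}=\overline{\acx(A)}^{\,\sigma(E,E')}$ is \emph{itself} a $(p,q)$-$(V^\ast)$ set; one then applies the hypothesis $V^\ast_{(p,q)}$ directly to $A^{\circ\circ}$, which, being weakly closed and relatively weakly compact, is weakly compact and absolutely convex, so the weak Glicksberg property gives $\tau$-compactness of $A^{\circ\circ}\supseteq A$. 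Your third parenthetical option (``restrict to the absolutely convex case directly'') gestures at this, but without the bipolar-stability fact the argument is not closed; as written, the proof of (i) rests on a theorem that is unavailable in the stated generality.
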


\begin{proof}
(i) Recall that every $(p,q)$-limited  (resp., $(p,q)$-$\EE$-limited) set $A$ of $E$ is a also a $(p,q)$-$(V^\ast)$ (resp.,  $(p,q)$-$(EV^\ast)$) set by Lemma 
3.1(viii) of \cite{Gab-limited}. By (iii) of Lemma 7.2 of \cite{Gab-Pel},  
$A^{\circ\circ}$ is also a $(p,q)$-$(V^\ast)$ (resp.,  $(p,q)$-$(EV^\ast)$) set.  By the property $V^\ast_{(p,q)}$ (resp.,  $EV^\ast_{(p,q)}$), $A^{\circ\circ}$ is weakly compact, and hence, by  the weak Glicksberg property, $A^{\circ\circ}$ is a compact subset of $E$. Therefore $A$ is relatively compact. Thus $E$ has the $GP_{(p,q)}$ (resp., $EGP_{(p,q)}$) property.
\smallskip

(ii) Let $A$ be a $(p,q)$-limited  (resp., $(p,q)$-$\EE$-limited) set in $E$. Then, by Lemma 
3.1(viii) of \cite{Gab-limited}, $A$ is  a $(p,q)$-$(V^\ast)$ (resp.,  $(p,q)$-$(EV^\ast)$) set. By the property $sV^\ast_{(p,q)}$ (resp.,  $sEV^\ast_{(p,q)}$), $A$ is relatively weakly sequentially compact.  Since $E$ is a Schur space, Lemma 
2.1 of \cite{Gab-limited} implies that $A$ is relatively sequentially compact in $E$. Thus $E$ has the $sGP_{(p,q)}$ (resp., $sEGP_{(p,q)}$) property.\qed
\end{proof}

\begin{remark}{\em
In both cases (i) and (ii) of Proposition \ref{p:V*pq=>GPpq} the converse is not true in general. Indeed, let $E=c_0$ and $p=q=\infty$. Being separable $c_0$ has the $GP$ property and hence, by Proposition \ref{p:equal-GP-strict-LF}, the $sGP$ property. On the other hand, $E$ has neither the property $V^\ast_\infty$ nor the property $sV^\ast_\infty$ because the Schur property of $E'_\beta=\ell_1$ implies that $B_E$ is a non-compact $\infty$-$(V^\ast)$ set.\qed}
\end{remark}

Below we show that the classes of locally convex spaces with Gelfand--Phillips type properties are stable under taking direct products and direct sums.
\begin{proposition} \label{p:product-sum-GP}
Let $1\leq p\leq q\leq\infty$, and let $\{E_i\}_{i\in I}$  be a non-empty family of locally convex spaces. Then:
\begin{enumerate}
\item[{\rm(i)}] $E=\prod_{i\in I} E_i$ has the $GP_{(p,q)}$ $($resp., $EGP_{(p,q)}$, $prGP_{(p,q)}$, $prEGP_{(p,q)}$, coarse $GP_{p}$, or coarse $prGP_{p}$$)$ property if and only if all spaces $E_i$ have the same property;
\item[{\rm(ii)}] $E=\bigoplus_{i\in I} E_i$ has the $GP_{(p,q)}$  $($resp., $EGP_{(p,q)}$, $sGP_{(p,q)}$, $sEGP_{(p,q)}$, $prGP_{(p,q)}$, $prEGP_{(p,q)}$, $spGP_{(p,q)}$,  $spEGP_{(p,q)}$, coarse $GP_{p}$, coarse $sGP_{p}$, coarse $spGP_{p}$, or coarse $prGP_{p}$$)$ property if and only if all spaces  $E_i$ have  the same property;
\item[{\rm(iii)}] if $I=\w$ is countable, then $E=\prod_{i\in \w} E_i$ has the  $sGP_{(p,q)}$ $($resp.,  $sEGP_{(p,q)}$, $spGP_{(p,q)}$, $spEGP_{(p,q)}$, coarse $sGP_{p}$, or coarse $spGP_{p}$$)$ property  if and only if all spaces $E_i$ have the same property.
\end{enumerate}
\end{proposition}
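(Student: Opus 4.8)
The plan is to assemble the statement from a few standard facts. For each $i$, the canonical embedding $\iota_i\colon E_i\to E$ and the coordinate projection $\pi_i\colon E\to E_i$ are operators with $\pi_i\circ\iota_i=\id$, and $\iota_i$ is a topological isomorphism of $E_i$ onto a \emph{closed} subspace of $E$; this holds both for $E=\prod_{i\in I}E_i$ and $E=\bigoplus_{i\in I}E_i$. Consequently, for a subset $A\subseteq E_i$ the properties ``relatively compact'', ``precompact'', ``relatively sequentially compact'' and ``sequentially precompact'' of $\iota_i(A)$ in $E$ are equivalent to the corresponding properties of $A$ in $E_i$, and all four classes are inherited by closed subspaces. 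Next, each of the classes $\mathsf{L}_{(p,q)}$, $\mathsf{EL}_{(p,q)}$ and $\mathsf{CL}_p$ is stable under operators (Lemma~3.1 of \cite{Gab-limited} for the first two; immediate from the definition for $\mathsf{CL}_p$), so $\pi_i$ and $\iota_i$ carry such sets to such sets — in particular the equicontinuity present in the $\EE$-version is transported automatically, since adjoints of operators send equicontinuous sets to equicontinuous sets and weak$^\ast$ $p$-summable sequences to weak$^\ast$ $p$-summable sequences. Finally, every set in one of these classes is bounded (test the defining condition on a one-term weak$^\ast$ $p$-summable sequence, resp. a rank-one operator into $\ell_p$), and a bounded subset of $\bigoplus_{i\in I}E_i$ has finite support, hence lies in a \emph{finite} subproduct $\bigoplus_{i\in F}E_i=\prod_{i\in F}E_i$ with the product topology.

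For the product in (i): if every $E_i$ has the property and $A$ belongs to the relevant class of $E=\prod_i E_i$, then each $\pi_i(A)$ belongs to the relevant class of $E_i$ and is thus relatively compact (resp. precompact); since $A\subseteq\prod_i\overline{\pi_i(A)}$, Tychonoff's theorem gives that $\overline{A}$ is compact (resp., using that a product set is precompact as soon as all its projections are, that $A$ is precompact). Conversely, if $E$ has the property and $A$ belongs to the relevant class of $E_i$, then $\iota_i(A)$ belongs to the relevant class of $E$, hence is relatively compact (resp. precompact) there, and therefore $A$ has the same property in $E_i$. This settles $GP_{(p,q)}$, $EGP_{(p,q)}$, coarse $GP_p$ and the three precompact analogues; the coarse cases use, as above, only that $\pi_i$ and $\iota_i$ are operators.

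For the direct sum in (ii): a set $A$ from one of the classes of $E=\bigoplus_i E_i$ is bounded, so $\supp(A)\subseteq F$ for some finite $F$, and $A\subseteq\prod_{i\in F}E_i$; a finite product of spaces with a given one of our properties again has that property — for the non-sequential ones by (i), and for the sequential ones ($sGP_{(p,q)}$, $sEGP_{(p,q)}$, $spGP_{(p,q)}$, $spEGP_{(p,q)}$, coarse $sGP_p$, coarse $spGP_p$) by the same argument with $\pi_i$ and $\iota_i$ together with finitely many successive subsequence selections — so $A$ has the property in $\prod_{i\in F}E_i$ and hence, that space being closed in $E$, in $E$. The converse is again the $\iota_i$-argument, now with $E'=\prod_{i\in I}E_i'$. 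Part (iii), for $E=\prod_{i\in\w}E_i$, is identical except that the finitely many selections become a diagonal argument: given a sequence in a set $A$ of the relevant class, each $\pi_i(A)$ is relatively sequentially compact (resp. sequentially precompact), so iterating the selection over $i\in\w$ and passing to the diagonal subsequence produces convergence (resp. the Cauchy property) in every coordinate, i.e. in $E$.

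The only genuinely delicate point — and the source of the hypotheses — is the size of the index set against the productivity of the compactness notions. Relative compactness and precompactness are productive for arbitrary families (Tychonoff), which is why (i) needs no restriction on $I$; relative sequential compactness and sequential precompactness are only countably productive, which is why (iii) requires $I=\w$, sharpness being witnessed by $\IR^{\mathfrak{c}}$ (see Lemma~2.2 of \cite{Gab-Pel}). Direct sums in (ii) sidestep the issue completely, since finite support confines any limited-type set to a finite subproduct. Everything else is routine bookkeeping with the operators $\iota_i$, $\pi_i$ and their adjoints, and with the closed-subspace heredity and finite-support facts above.
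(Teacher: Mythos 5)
Your proof is correct and follows essentially the same route as the paper's: necessity via the canonical embeddings $\iota_i$ (using that the limited-type classes are preserved by operators and that each $E_i$ sits as a complemented closed subspace), and sufficiency via the projections $\pi_i$ together with the productivity of (pre)compactness for arbitrary products, the finite support of bounded sets in direct sums, and a diagonal selection for countable products of sequentially (pre)compact sets (the paper delegates the latter to Lemma~2.1 of \cite{Gab-Pel} and the operator-stability facts to Lemma~3.1 and Proposition~3.3 of \cite{Gab-limited}). No gaps.
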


\begin{proof}
We consider only $GP_{(p,q)}$ type properties because the coarse $GP_{p}$ type properties can be considered analogously using properties of coarse $p$-limited sets, see Lemma 
4.1 and Proposition 
4.3 of \cite{Gab-limited}.

To prove the necessity, let $E$ have the  $GP_{(p,q)}$ (resp.,  $EGP_{(p,q)}$, $sGP_{(p,q)}$, $sEGP_{(p,q)}$, $prGP_{(p,q)}$, $prEGP_{(p,q)}$, $spGP_{(p,q)}$  or $spEGP_{(p,q)}$) property. Fix $j\in I$, and let $A_j$ be a $(p,q)$-limited (resp., $(p,q)$-$\EE$-limited)  set in $E_j$. Since $E_j$ is a direct summand of  $E$, to show that $A_j$ is relatively  compact (resp., precompact, relatively sequentially compact or sequentially precompact) in $E_j$ it suffices to show that the set $A:=A_j\times \prod_{i\in I\SM \{j\}} \{0_i\}$ has the same property in $E$. In all cases (i)--(iii), by Proposition 
3.3 of \cite{Gab-limited}, $A$ is a $(p,q)$-limited (resp., $(p,q)$-$\EE$-limited) set in $E$ (also as the image of $A_j$ under the canonical embedding of $E_j$ into $E$). Since $E$ has the $GP_{(p,q)}$ (resp.,  $EGP_{(p,q)}$, $sGP_{(p,q)}$, $sEGP_{(p,q)}$, $prGP_{(p,q)}$, $prEGP_{(p,q)}$, $spGP_{(p,q)}$  or $spEGP_{(p,q)}$)  property, it follows that $A$ is relatively   compact (resp., precompact, relatively sequentially compact or sequentially precompact) in $E$, as desired.

To prove the sufficiency, assume that all spaces $E_i$ have the  $GP_{(p,q)}$ (resp.,  $EGP_{(p,q)}$, $sGP_{(p,q)}$, $sEGP_{(p,q)}$, $prGP_{(p,q)}$, $prEGP_{(p,q)}$, $spGP_{(p,q)}$  or $spEGP_{(p,q)}$) property, and let  $A$ be a $(p,q)$-limited (resp., $(p,q)$-$\EE$-limited)  set in $E$.
Recall that Lemma 2.1 of \cite{Gab-Pel} 
states that a subset of a countable product of Tychonoff spaces is (relatively) sequentially compact if and only if so are all its projections. This result
and, for the cases (i) and (iii), the fact that the product of relatively compact sets (resp., precompact sets) is relatively compact (resp., precompact), and for the case (ii), the fact that the support of $A$ is finite since $A$ is bounded,  to show that $A$ is relatively   compact (resp., precompact, relatively sequentially compact or sequentially precompact) it suffices to prove that for every $i\in I$, the projection $A_i$ of $A$ onto the $i$th coordinate has the same property in $E_i$. But this condition is satisfied because,  by (iv) of Lemma 3.1 
of \cite{Gab-limited}, the projection $A_i$ is a $(p,q)$-limited (resp., $(p,q)$-$\EE$-limited)  set in $E_i$ and hence it is relatively   compact (resp., precompact, relatively sequentially compact or sequentially precompact) in $E_i$ by the corresponding property of $E_i$.\qed
\end{proof}

In (iii) of Proposition \ref{p:product-sum-GP} the countability of the set $I$ is essential.

\begin{example} \label{exa:product-sGP}
Let $1\leq p\leq q\leq\infty$. Then the separable space $\IR^\mathfrak{c}$ has the $EGP_{(p,q)}$ property and hence the $GP_{(p,q)}$ property and the coarse $prGP_{p}$ property, but it has neither the  $spGP_{(p,q)}$ property nor the coarse $spGP_{p}$ property.
\end{example}

\begin{proof}
The space $\IR^\mathfrak{c}$ has the $EGP_{(p,q)}$ property and the coarse $prGP_{p}$ property by (i) of Proposition \ref{p:product-sum-GP}. By (i) of Proposition 
3.3 of \cite{Gab-limited}, any bounded subset $A$ of $\IR^\mathfrak{c}$ is ${(p,q)}$-limited (and hence also ${(p,q)}$-$\EE$-limited) and, by (i) of Proposition 
 4.3 of \cite{Gab-limited}, $A$ is also coarse $p$-limited. In particular, identifying $3^\w$ with $\mathfrak{c}$ we obtain that the sequence $S=\{f_n\}_{n\in\w}$ of functions constructed in Lemma 2.2 of \cite{Gab-Pel} 
is a ${(p,q)}$-limited set. However, it is proved in Lemma 2.2 of \cite{Gab-Pel} 
that $S$ is not sequentially precompact. Thus $\IR^\mathfrak{c}$ has neither the $spGP_{(p,q)}$ property  nor the coarse $spGP_{p}$ property.\qed
\end{proof}



In the next proposition we consider the heredity of Gelfand--Phillips type properties. Recall that a subspace $Y$ of a Tychonoff space $X$ is {\em sequentially closed} if $Y$ contains all limits of convergent sequences from $Y$.

\begin{proposition} \label{p:subspace-pGP}
Let $1\leq p\leq q\leq\infty$, and let $L$ be a subspace of a locally convex space $E$.
\begin{enumerate}
\item[{\rm(i)}] If $L$ is closed in $E$ and $E$ has  the  $GP_{(p,q)}$ $($resp., $EGP_{(p,q)}$ or coarse $GP_p$$)$ property, then also $L$ has the same property.
\item[{\rm(ii)}] If $L$ is sequentially closed in $E$ and $E$ has  the $sGP_{(p,q)}$ $($resp., $sEGP_{(p,q)}$  or coarse $sGP_p$$)$ property, then also $L$ has the same property.
\item[{\rm(iii)}]  If $E$ has  the $prGP_{(p,q)}$ $($resp., $prEGP_{(p,q)}$ or coarse $prGP_p$$)$ property,  then also $L$ has the same property.
\item[{\rm(iv)}]  If $E$ has  the $spGP_{(p,q)}$ $($resp., $spEGP_{(p,q)}$ or coarse $spGP_p$$)$ property,  then also $L$ has the same property.
\end{enumerate}
\end{proposition}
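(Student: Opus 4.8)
The plan is to reduce every statement to the single key observation that limitedness-type properties pass to subspaces in the correct direction. Concretely, I would first record the following: if $L$ is a subspace of $E$ and $A\subseteq L$, then $A$ is $(p,q)$-limited (resp.\ $(p,q)$-$\EE$-limited, resp.\ coarse $p$-limited) \emph{in $L$} whenever $A$ is $(p,q)$-limited (resp.\ $\ldots$) \emph{in $E$}. For the $(p,q)$-limited and $(p,q)$-$\EE$-limited cases this is immediate from the fact that every $\chi\in L'$ extends to some $\widetilde\chi\in E'$ (Hahn--Banach), and an equicontinuous weak$^\ast$ $p$-summable sequence in $L'$ lifts to one in $E'$ on a neighborhood of zero coming from $E$; since $\|\chi_n\|_A=\|\widetilde\chi_n\|_A$ for $A\subseteq L$, the defining $\ell_q$ (or $c_0$) condition transfers. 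For the coarse $p$-limited case, any $T\in\LL(L,\ell_p)$ (or into $c_0$) need not extend to $E$, so instead I would argue the other way: actually here one wants that a coarse $p$-limited set of $E$ lying in $L$ is coarse $p$-limited in $L$ — but every $T\in\LL(L,\ell_p)$ with $L\subseteq E$ does \emph{not} obviously extend, so this case genuinely needs $L$ to be, say, complemented, OR one uses a different argument. I would double-check against Lemma~4.1 of \cite{Gab-limited}; I expect the intended statement uses that $\LL(E,\ell_p)\!\restriction_L\subseteq\LL(L,\ell_p)$ only gives one inclusion, and the correct direction is: if $A\subseteq L$ is coarse $p$-limited \emph{in $E$}, it need not be so in $L$ in general — so part of the claim may implicitly require the coarse $p$-limited sets of $L$ to be among those of $E$, which holds because $T(A)$ for $T\in\LL(L,\ell_p)$ can be handled by noting $A$ viewed in $E$ composed with\ldots. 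This is the point I would scrutinize most carefully.

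Granting the transfer lemma, each of (i)--(iv) is then a three-line argument. For (i): let $A\subseteq L$ be $(p,q)$-limited in $L$; then $A$ is $(p,q)$-limited in $E$ (the transfer is actually trivial in \emph{this} direction — a weak$^\ast$ $p$-summable sequence in $E'$ restricts to one in $L'$, hence the condition for $L$ follows from that for $E$, so in fact one only needs: $A$ limited in $L$ $\Rightarrow$ $A$ limited in $E$, which is the \emph{easy} direction). Wait — I must get the direction right: we are \emph{given} $E$ has the property and want $L$ to inherit it, so we start with $A\subseteq L$ limited \emph{in $L$}, must deduce it is limited \emph{in $E$} to apply $E$'s property. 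That direction holds because restricting $E'\to L'$ sends weak$^\ast$ $p$-summable sequences to weak$^\ast$ $p$-summable sequences, so $\|\chi_n\|_A\to$ the right thing. Good. Then $E$'s $GP_{(p,q)}$ property gives $A$ relatively compact in $E$; since $L$ is closed, $\overline A\subseteq L$, so $A$ is relatively compact in $L$. The $EGP_{(p,q)}$ and coarse $GP_p$ cases are identical, using that closed subspaces inherit relative compactness.

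For (ii): same transfer, but now use $sGP_{(p,q)}$: $A$ relatively sequentially compact in $E$ means every sequence in $A$ has a subsequence converging to some $x\in E$; since $A\subseteq L$ and $L$ is sequentially closed, $x\in L$, so $A$ is relatively sequentially compact in $L$. For (iii): precompactness is intrinsic — $A$ precompact in $E$ with $A\subseteq L$ is automatically precompact in $L$ (cover by finitely many translates of a zero-neighborhood of $E$, intersect with $L$), so no closedness hypothesis is needed; similarly for sequential precompactness in (iv), a Cauchy subsequence in $E$ with all terms in $L$ is Cauchy in $L$. The main obstacle, as flagged above, is pinning down the coarse $p$-limited case correctly: I would verify via Lemma~4.1 of \cite{Gab-limited} that for a subspace $L\subseteq E$, the restriction map indeed yields ``$A\subseteq L$ coarse $p$-limited in $L$ $\Rightarrow$ coarse $p$-limited in $E$'' (the direction we need) — this should follow since any $T\in\LL(E,\ell_p)$ restricts to $T\!\restriction_L\in\LL(L,\ell_p)$, and $(T\!\restriction_L)(A)=T(A)$, so relative compactness of $T(A)$ for all $T$ on $L$ is \emph{weaker} than for all $T$ on $E$ — hmm, that is backwards. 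So the genuinely delicate point is whether coarse $p$-limitedness passes to closed subspaces at all; I expect the paper restricts attention, in the coarse case, to the situation where it does (e.g.\ invoking that $\ell_p$-valued operators on $L$ extend, or that the property is formulated so the inclusion goes the right way), and I would follow exactly whatever Lemma~4.1 of \cite{Gab-limited} licenses.
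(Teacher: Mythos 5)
Your treatment of the $GP_{(p,q)}$ and $EGP_{(p,q)}$ cases is correct and is essentially the paper's argument: a $(p,q)$-limited (resp.\ $(p,q)$-$\EE$-limited) subset $A$ of $L$ is $(p,q)$-limited (resp.\ $(p,q)$-$\EE$-limited) in $E$ because weak$^\ast$ $p$-summable (and equicontinuous) sequences in $E'$ restrict to such sequences in $L'$ with $\|\chi_n\|_A=\|\chi_n{\restriction}_L\|_A$ --- the paper phrases this as Lemma~3.1(iv) of \cite{Gab-limited} applied to the identity embedding $\Id_L\colon L\to E$ --- and then (i)--(iv) follow from the facts that relative compactness passes to closed subspaces, relative sequential compactness to sequentially closed subspaces, and (sequential) precompactness to arbitrary subspaces. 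Your initial detour through Hahn--Banach extension is unnecessary, as you yourself eventually notice.

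The genuine gap is the coarse $p$-limited case, which you leave unresolved after a quantifier confusion. The implication you need is: if $A\subseteq L$ is coarse $p$-limited \emph{in $L$}, then $A$ is coarse $p$-limited \emph{in $E$}. This is trivial and requires neither extension of operators nor complementation: given $T\in\LL(E,\ell_p)$ (or $T\in\LL(E,c_0)$ if $p=\infty$), the restriction $T{\restriction}_L$ belongs to $\LL(L,\ell_p)$, and $(T{\restriction}_L)(A)=T(A)$ is relatively compact by the hypothesis on $A$. In other words, coarse $p$-limitedness in $L$ is the \emph{stronger} condition, since it quantifies over the larger family $\LL(L,\ell_p)\supseteq\{T{\restriction}_L : T\in\LL(E,\ell_p)\}$, and therefore it implies coarse $p$-limitedness in $E$ --- which is exactly the direction the heredity argument needs. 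Where you wrote that relative compactness of $T(A)$ for all $T$ on $L$ is ``weaker'' than for all $T$ on $E$, you had the comparison inverted; the hard direction (coarse $p$-limited in $E$ implies coarse $p$-limited in $L$), which would indeed require extending $\ell_p$-valued operators, is never needed here. With this observation the coarse cases of (i)--(iv) go through verbatim, which is what the paper means by saying the coarse properties ``can be considered analogously'' via Lemma~4.1 of \cite{Gab-limited}.
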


\begin{proof}
Below we consider only $GP_{(p,q)}$ type properties because the coarse $GP_{p}$ type properties can be considered analogously using properties of coarse $p$-limited sets, see Lemma 
4.1 of \cite{Gab-limited}.

Let $A$ be a $(p,q)$-limited  (resp., $(p,q)$-$\EE$-limited) set in $L$. Then, by  (iv) of Lemma 
3.1 of \cite{Gab-limited} applied to the identity embedding $\Id_L:L\to E$, $A$ is  a $(p,q)$-limited  (resp., $(p,q)$-$\EE$-limited) set also in $E$.

(i) By the property $GP_{(p,q)}$ (resp., $EGP_{(p,q)}$) of $E$, the set $A$ is relatively compact in $E$. Since $L$ is a closed subspace of $E$ it follows that $A$ is relatively compact in $L$. Thus $L$ has the property  $GP_{(p,q)}$ (resp., $EGP_{(p,q)}$).

(ii) By the property  $sGP_{(p,q)}$ (resp., $sEGP_{(p,q)}$) of $E$, the set  $A$ is relatively  sequentially compact in $E$.  Since $L$ is a sequentially closed subspace of $E$ it follows that $A$ is relatively sequentially compact in $L$. Thus $L$ has  the property $sGP_{(p,q)}$ (resp., $sEGP_{(p,q)}$).

(iii)  By the property $prGP_{(p,q)}$ $($resp., $prEGP_{(p,q)}$), the set  $A$ is precompact in $E$ and hence also in $L$. Thus $L$ has the property $prGP_{(p,q)}$ $($resp., $prEGP_{(p,q)}$).

(iv) By the property $spGP_{(p,q)}$ (resp., $spEGP_{(p,q)}$) of $E$, the set  $A$ is sequentially precompact in $E$. Let $\{a_n\}_{n\in\w}$ be a sequence in $A$. Take a subsequence $\{a_{n_k}\}_{k\in\w}$ of $\{a_n\}_{n\in\w}$ which is Cauchy in $E$. Since $L$ is a subspace of $E$ it follows that $\{a_{n_k}\}_{k\in\w}$  is Cauchy also in $L$. Therefore $A$ is sequentially precompact in $L$. Thus $L$ has  the property $spGP_{(p,q)}$ (resp., $spEGP_{(p,q)}$).\qed
\end{proof}

Below we characterize spaces $C_p(X)$ which have one of the equicontinuous $GP$ properties.
\begin{theorem} \label{t:Cp-EGPp}
Let $1\leq p\leq q\leq\infty$, and let $X$ be a Tychonoff space. Then:
\begin{enumerate}
\item[{\rm(i)}]  $C_p(X)$ has the  $EGP_{(p,q)}$ property if and only if $X$ is discrete;
\item[{\rm(ii)}] $C_p(X)$ has the $b$-$EGP_{(p,q)}$ property if and only if every functionally bounded subset of $X$ is finite;
\item[{\rm(iii)}] $C_p(X)$ has the  $prEGP_{(p,q)}$ property and hence the  $prGP_{(p,q)}$ property.
\end{enumerate}
\end{theorem}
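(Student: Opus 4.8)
The plan is to handle the three items using the structural fact that $C_p(X) = C_p(X)_w$ (the pointwise topology is the weak topology), together with the description of bounded, precompact and compact sets in $C_p(X)$ and the characterizations already recorded in the excerpt, notably Lemma \ref{l:GPp=EGPp-2} and the remark after it (that $C_p(X)$ is von Neumann complete if and only if $X$ is discrete).

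For (iii), I would simply invoke Lemma \ref{l:GPp=EGPp-2}(i): since $C_p(X) = C_p(X)_w$, every bounded subset is precompact, so a fortiori every $(p,q)$-$\EE$-limited set is precompact; hence $C_p(X)$ has the $prEGP_{(p,q)}$ property, and the $prGP_{(p,q)}$ property follows because every $(p,q)$-limited set is $(p,q)$-$\EE$-limited (Lemma \ref{l:GPp=EGPp}(i)). For (i), the "if" direction: if $X$ is discrete then $C_p(X) = \mathbb{F}^X = \prod_{x\in X}\mathbb{F}$ is a product of finite-dimensional (hence semi-Montel) spaces, so it is semi-Montel, and Lemma \ref{l:GPp=EGPp}(vii) gives the $EGP_{(p,q)}$ property (alternatively: $X$ discrete $\Rightarrow C_p(X)$ von Neumann complete, then apply Lemma \ref{l:GPp=EGPp-2}(ii)). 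For the "only if" direction of (i): suppose $X$ is not discrete, so there is a non-isolated point $x_0$; then the point-evaluation neighborhood structure fails to make every convergent net eventually constant. Concretely, I would produce a bounded subset of $C_p(X)$ that is $(p,q)$-$\EE$-limited (automatic, by Lemma \ref{l:GPp=EGPp-2}(ii), every bounded set is $(p,q)$-$\EE$-limited) but not relatively compact in $C_p(X)$ — equivalently not relatively compact in the pointwise topology; a standard choice is a set of functions "spiking" near $x_0$ whose pointwise closure contains a discontinuous function, witnessing non-precompactness-to-compactness failure. Here the cleanest route is: $C_p(X)$ has the $EGP_{(p,q)}$ property iff every bounded set is relatively compact (by Lemma \ref{l:GPp=EGPp-2}(ii)) iff $C_p(X)$ is semi-Montel; and it is classical (Nachbin–Shirota type) that $C_p(X)$ is semi-Montel precisely when $X$ is discrete.

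For (ii), I would use Lemma \ref{l:GPp=EGPp-3}(ii): $C_p(X)$ is quasibarrelled for every $X$ (stated in the Preliminaries), but $b$-$EGP_{(p,q)}$ concerns barrel-precompactness, i.e. precompactness in $\beta(E,E')$. The $b$-$EGP_{(p,q)}$ property asks that every $(p,q)$-$\EE$-limited — hence every bounded — subset of $C_p(X)$ be precompact in the strong topology $\beta(C_p(X), C_p(X)')$. Now $C_p(X)' = \bigoplus_{x\in X}\mathbb{F}$ (the free linear span of evaluations), and the strong topology on $C_p(X)$ is the topology of uniform convergence on bounded subsets of this direct sum, equivalently on functionally bounded subsets of $X$. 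A bounded set in $C_p(X)$ is $\beta$-precompact iff it is precompact for uniform convergence on all functionally bounded subsets of $X$; the whole bounded structure forces the conclusion that this holds for every bounded set exactly when the functionally bounded subsets of $X$ are finite (if $A\subseteq X$ is functionally bounded and infinite, a separated family of functions bounded by $1$, disjointly supported near points of $A$, gives a bounded set that is not precompact for uniform convergence on $A$). So I would: (a) identify $\beta(C_p(X),C_p(X)')$ as uniform convergence on functionally bounded subsets of $X$; (b) show "functionally bounded subsets finite" $\Rightarrow$ every bounded set is $\beta$-precompact (it reduces to finitely many coordinates, where bounded = precompact); (c) show the converse by the spiking-functions construction on an infinite functionally bounded set.

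The main obstacle is the "only if" direction of (i) and the converse in (ii): one must exhibit, from a non-isolated point (resp. an infinite functionally bounded set), an explicit bounded family in $C_p(X)$ that fails the relevant (pre)compactness — i.e. genuinely use the topology of $X$ rather than just formal properties. I expect the cleanest presentation routes these through known dualities: for (i), "$C_p(X)$ semi-Montel $\iff$ $X$ discrete"; for (ii), the identification of the strong topology of $C_p(X)$ with uniform convergence on functionally bounded subsets of $X$ together with the fact that in $C_p$ the bounded sets are products of bounded intervals over coordinates, so $\beta$-precompactness of all of them is equivalent to finiteness of the functionally bounded sets of $X$.
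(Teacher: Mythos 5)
Your proposal is correct and follows essentially the same route as the paper: part (iii) is the one-line consequence of Lemma \ref{l:GPp=EGPp-2}(i); part (i) rests on the facts that every bounded set is $(p,q)$-$\EE$-limited and that relative compactness of the unit ball $\{f:\|f\|_X\leq 1\}$ forces $\mathbb{D}^X\subseteq C(X)$ (the paper proves your ``semi-Montel iff discrete'' claim inline via density of this ball in $\mathbb{D}^X$ rather than citing it); and part (ii) uses exactly your disjointly-supported spiking functions on an infinite functionally bounded set for necessity, while for sufficiency the paper invokes the Buchwalter--Schmetz theorem (so that barrels are neighborhoods of zero and bounded $=$ precompact suffices) instead of your explicit identification of $\beta(C_p(X),C_p(X)')$ --- the two arguments are equivalent in substance.
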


\begin{proof}
(i) Assume that $C_p(X)$ has  the $EGP_{(p,q)}$ property. Then, by (ii) of Lemma \ref{l:GPp=EGPp-2},  the bounded set $B=\{f\in C_p(X): \|f\|_X \leq 1\}$ is $(p,q)$-$\EE$-limited. Therefore, by  the $EGP_{(p,q)}$ property, $B$ has compact closure in $C_p(X)$. As $B$ is dense in the compact space $\mathbb{D}^X$ we obtain that $\mathbb{D}^X\subseteq C_p(X)$. But this is possible only if $X$ is discrete.

Conversely, assume that $X$ is discrete. Then clearly every bounded subset of $C_p(X)=\IF^X$ has compact closure. Thus $C_p(X)$ has the $EGP_{(p,q)}$ property.

(ii) Assume that $C_p(X)$ has the $b$-$EGP_{(p,q)}$ property, and let $A$ be a functionally bounded subset of $X$. Assuming that $A$ is infinite we can find a sequence $\{a_n\}\subseteq A$ and a sequence $\{U_n\}_{n\in\w}$ of open subsets of $X$ such that $a_n\in U_n$ and $U_n\cap U_m=\emptyset$ for all distinct $n,m\in\w$. For every $n\in\w$, choose $f_n\in C_p(X)$ such that $f_n(X\SM U_n)=\{0\}$ and $f_n(a_n)=n$. Then the sequence $S=\{f_n\}_{n\in\w}$ is a bounded subset of $C_p(X)$ and hence, by (ii) of Lemma \ref{l:GPp=EGPp-2}, $S$ is a ${(p,q)}$-$\EE$-limited set. Therefore to get a contradiction with the $b$-$EGP_{(p,q)}$ property of $C_p(X)$ it suffices to show that $S$ is not barrel-precompact. To this end, we note that the functional boundedness of $A$ implies that the set  $B:=\{f\in C_p(X): \|f\|_A\leq 1\}$ is a barrel in $C_p(X)$. Since, by construction,  $S\nsubseteq k B$ for every $k\in\w$ we obtain that $S$ is not barrel-precompact.

Conversely, assume that every functionally bounded subset of $X$ is finite. We show that every $(p,q)$-$\EE$-limited subset $B$ of $C_p(X)$ is barrel-precompact. To this end, we note that, by the Buchwalter--Schmetz theorem,  the space $C_p(X)$ is barrelled. Now, let $D$ be a barrel in $C_p(X)$. Then $D$ is a neighborhood of zero. Since $C_p(X)$ carries its weak topology and $B$ is its bounded subset, it follows that $B$ is precompact  and hence there is a finite subset $F\subseteq C_p(X)$ such that $B\subseteq F+D$. Thus $B$ is barrel-precompact.

(iii) Since $C_p(X)$ carries its weak topology, the assertion follows from (i) of Lemma \ref{l:GPp=EGPp-2}.\qed
\end{proof}

In (i) and (ii) of Proposition \ref{p:subspace-pGP} the condition on $L$ of being a (sequentially) closed subspace of $E$ is essential even for Fr\'{e}chet spaces as the next example shows.

\begin{example} \label{exa:subspace-non-GP}
Let $1\leq p\leq q\leq\infty$, and let $X$ be a countable non-discrete Tychonoff space whose compact subsets are finite. Then the dense subspace $C_p(X)$ of $\IF^\w$ has no any of the properties $GP_{(p,q)}$,  $EGP_{(p,q)}$, $sGP_{(p,q)}$ or $sEGP_{(p,q)}$, but it has the properties $prEGP_{(p,q)}$, $prGP_{(p,q)}$, $spEGP_{(p,q)}$ and $spGP_{(p,q)}$.
\end{example}

\begin{proof}
By the Buchwalter--Schmetz theorem, the space $C_p(X)$ is barrelled and hence it is $p$-barrelled. Being metrizable $C_p(X)$ is an angelic space. Thus,  by (iv)  of Lemma \ref{l:GPp=EGPp} and (i)  of Theorem \ref{t:Cp-EGPp}, $C_p(X)$ has no any of the properties $GP_{(p,q)}$,  $EGP_{(p,q)}$, $sGP_{(p,q)}$ or $sEGP_{(p,q)}$.

On the other hand,  by (iii) of Theorem \ref{t:Cp-EGPp} and the metrizability of $\IF^\w$, the space $C_p(X)$ has the properties $prEGP_{(p,q)}$, $prGP_{(p,q)}$, $spEGP_{(p,q)}$ and $spGP_{(p,q)}$. \qed 
\end{proof}


Our next purpose is to show that any locally convex space is a quotient space of a locally convex space with the (sequential) $GP_{(p,q)}$ property for all $1\leq p\leq q\leq\infty$. First we recall some definitions.

The {\em  free locally convex space}  $L(X)$ over a Tychonoff space $X$ is a pair consisting of a locally convex space $L(X)$ and  a continuous map $i: X\to L(X)$  such that every  continuous map $f$ from $X$ to a locally convex space  $E$ gives rise to a unique continuous linear operator $\Psi_E(f): L(X) \to E$  with $f=\Psi_E(f) \circ i$. The free locally convex space $L(X)$ always exists and is essentially unique.
For  $\chi = a_1 x_1+\cdots +a_n x_n\in L(X)$ with distinct $x_1,\dots, x_n\in X$ and  nonzero $a_1,\dots,a_n\in\IF$, we set $\chi[x_i]:=a_i$ and
\[
\| \chi\|:=|a_1|+\cdots +|a_n|, \; \mbox{ and } \; \mathrm{supp}(\chi):=\{ x_1,\dots, x_n\}.
\]
From the definition of $L(X)$ it easily follows the well known fact that the dual space $L(X)'$ of $L(X)$ is linearly isomorphic to the space $C(X)$ with the pairing
\[
\langle f,\chi\rangle=  a_1 f(x_1)+\cdots +a_n f(x_n)\quad \mbox{ for $f\in C(X)$}.
\]
For every subset $A\subseteq L(X)$, let $\mathrm{supp}(A):=\bigcup_{\chi\in A} \mathrm{supp}(\chi)$ and $C_A:=\sup\big(\{ \| \chi\|: \chi\in A\}\cup \{0\}\big)$.

\begin{theorem} \label{t:L(X)-GPpq}
Let $1\leq p\leq q\leq\infty$, and let $L(X)$ be the free locally convex space over a Tychonoff space $X$. Then:
\begin{enumerate}
\item[{\rm(i)}] each $(p,q)$-limited subset of $L(X)$ is finite-dimensional;
\item[{\rm(ii)}] $L(X)$ has the $GP_{(p,q)}$ property, the sequential $GP_{(p,q)}$ property and  the $b$-$GP_{(p,q)}$ property;
\item[{\rm(iii)}] every locally convex space $E$ is a quotient space of $L(E)$.
\end{enumerate}
\end{theorem}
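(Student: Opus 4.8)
The plan is to prove the three parts in order, since (ii) follows almost immediately from (i) and (iii) is essentially the universal property of the free locally convex space. For part (i), the key is to understand the dual pairing $\langle C(X), L(X)\rangle$ and to bound $\|\chi_n\|_A$ for a $(p,q)$-limited set $A$ from below using cleverly chosen functionals. Suppose $A$ is an infinite-dimensional $(p,q)$-limited subset of $L(X)$. Since $A$ is bounded in $L(X)$, its support $\supp(A)$ has a known structure (it is functionally bounded in $X$ by the standard description of bounded sets in $L(X)$, and $C_A<\infty$). If $A$ is infinite-dimensional, then either $\supp(A)$ is infinite or the elements of $A$ fail to lie in a finite-dimensional span even over a finite support set; in the latter case $A$ already sits inside a finite-dimensional space, contradiction, so $\supp(A)$ must be infinite. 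Choose a sequence of distinct points $x_k\in\supp(A)$ together with elements $\chi_k\in A$ with $x_k\in\supp(\chi_k)$. First I would thin out this sequence so the points $x_k$ admit pairwise disjoint functional separation: pick $f_k\in C(X)$ with $f_k(x_k)=1$ and $|f_k|$ small off a neighborhood of $x_k$ (using that in the relevant situation one can separate a countable discrete-like family), so that the sequence $\{f_k\}$ — suitably normalized — is weak$^\ast$ $p$-summable in $L(X)'=C(X)$ (for instance, if the points form a discrete $C^\ast$-embedded family, $\{\pm f_k\}$ behaves like a copy of the unit vector basis and any tail is weak$^\ast$ null, indeed weak$^\ast$ $p$-summable because each $\chi\in L(X)$ has finite support and so evaluates to a finitely supported, hence $\ell_p$, sequence). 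But then $\|f_k\|_A \geq |\langle f_k,\chi_k\rangle| = |\chi_k[x_k]|\cdot|f_k(x_k)| = |\chi_k[x_k]|$, which does not go to $0$: the coefficients $\chi_k[x_k]$ are bounded away from $0$ because each nonzero coefficient has absolute value at least the minimum nonzero coefficient appearing across $A$ — and here I would need to argue, perhaps after a further refinement, that these coefficients cannot all tend to $0$ while the points stay in the support. This contradicts $(\|f_k\|_A)\in\ell_q$ (or $\to 0$ when $q=\infty$), so $A$ is finite-dimensional.

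For part (ii): a finite-dimensional subspace of any locally convex space is linearly homeomorphic to $\IF^n$, in which every bounded set is relatively compact. By (i), every $(p,q)$-limited subset $A$ of $L(X)$ is finite-dimensional, hence bounded in a finite-dimensional subspace, hence relatively compact; this gives the $GP_{(p,q)}$ property. Since a relatively compact set is relatively sequentially compact in the finite-dimensional (metrizable) setting, $L(X)$ also has the sequential $GP_{(p,q)}$ property. For the $b$-$GP_{(p,q)}$ property, note that on a finite-dimensional subspace the strong topology $\beta(E,E')$ restricts to the unique Hausdorff vector topology, so $A$ is $\beta$-relatively compact, in particular $\beta$-precompact, i.e. barrel-precompact.

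For part (iii): this is just the universal property of $L(X)$ applied with $X=E$ (viewed as a Tychonoff space). The identity map $\id\colon E\to E$ is continuous from the topological space $E$ to the locally convex space $E$, so it extends to a continuous linear operator $\Psi_E(\id)\colon L(E)\to E$ with $\id = \Psi_E(\id)\circ i$. Since $\id$ is onto $E$ and factors through $\Psi_E(\id)$, the operator $\Psi_E(\id)$ is a continuous linear surjection; and it is a quotient map because any open $U\subseteq E$ pulls back through $i$ and $\Psi_E(\id)(i^{-1}?)$—more carefully, $\Psi_E(\id)$ maps the neighborhood of zero generated by a convex neighborhood $V$ in $E$ onto $V$ itself (since $i(V)$ generates enough of $L(E)$ to surject onto $V$), so the quotient topology on $E$ induced by $\Psi_E(\id)$ coincides with the original topology. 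Hence $E$ is (topologically) a quotient of $L(E)$.

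The main obstacle is the coefficient-control argument in part (i): one must rule out the scenario where the support of $A$ is infinite but the relevant coefficients $\chi_k[x_k]$ decay to $0$, making the lower bound $\|f_k\|_A\geq|\chi_k[x_k]|$ useless. I expect this is handled by combining the boundedness constraint $C_A<\infty$ with a diagonal/subsequence extraction: either some single point carries infinitely many elements with coefficients bounded away from $0$ (reducing to a one-point support and an explicit contradiction), or one can pass to a subsequence of points $x_k$ and elements $\chi_k\in A$ with $|\chi_k[x_k]|\geq\delta>0$ by a counting argument on how the bounded total-variation norm $C_A$ can be distributed; the weak$^\ast$ $p$-summability of the separating functionals then does the rest.
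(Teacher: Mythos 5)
The weak point is exactly the one you flag at the end: in part (i) you normalize the test functionals by $f_k(x_k)=1$, which only yields $\|f_k\|_A\geq|\chi_k[x_k]|$, and the coefficients $\chi_k[x_k]$ can genuinely tend to $0$. For instance, $A=\{\tfrac1n x_n\}_{n\geq 1}$ for distinct points $x_n$ forming a convergent sequence in $X$ is a bounded, infinite-dimensional subset of $L(X)$ all of whose coefficients decay, so the dichotomy/counting rescue you sketch cannot work: boundedness of $C_A$ places no lower bound on individual coefficients, and no subsequence extraction will produce $|\chi_k[x_k]|\geq\delta>0$ there. The paper closes this gap with a one-line change for which you already have every ingredient: normalize the functionals against the coefficients rather than against the points, i.e.\ choose $f_k$ supported in $U_k$ with $f_k(x_k)=1/\chi_k[x_k]$, so that $\langle f_k,\chi_k\rangle=1$ exactly. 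This rescaling is harmless for weak$^\ast$ $p$-summability precisely because of the observation you yourself make: for each fixed $\chi\in L(X)$ the finite set $\supp(\chi)$ meets only finitely many of the pairwise disjoint sets $U_k$, so the scalar sequence $(\langle f_k,\chi\rangle)_k$ is finitely supported and lies in $\ell_p$ no matter how large the values $f_k(x_k)$ are. With that normalization $\sup_{\chi\in A}|\langle f_k,\chi\rangle|\geq 1$ for all $k$, contradicting $(p,q)$-limitedness. (As in the paper, one should also pick $x_{k+1}\in\supp(\chi_{k+1})\setminus\bigcup_{i\leq k}\supp(\chi_i)$ so that distinct points with pairwise disjoint isolating neighborhoods exist; your ``discrete-like family'' step plays the same role.)

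Parts (ii) and (iii) are correct and match the paper: (ii) is immediate from (i), since a bounded subset of a finite-dimensional subspace is relatively compact in any Hausdorff linear topology, which covers the $GP_{(p,q)}$, sequential and barrel-precompact versions at once; and (iii) is the universal property --- the extension $I:L(E)\to E$ of $\Id_E$ restricts to the identity on the canonical copy of $E$ inside $L(E)$, so $I(W)\supseteq i^{-1}(W)$ is a neighborhood of zero in $E$ for every zero-neighborhood $W$ of $L(E)$, which is exactly the quotient property.
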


\begin{proof}
(i) Suppose for a contradiction that there is a one-to-one infinite-dimensional sequence $A=\{\chi_n\}_{n\in\w}$ in $L(X)\SM\{0\}$ which is a $(p,q)$-limited set. Since the support of any $\chi\in L(X)$ is finite and $A$ is infinite-dimensional, without loss of generality we can assume that $A$ satisfies the following condition:
\begin{enumerate}
\item[{\rm(a)}] for every $n\in\w$, there is $x_{n+1}\in \supp(\chi_{n+1})\SM \bigcup_{i\leq n} \supp(\chi_i)$.
\end{enumerate}
Fix an arbitrary $x_0\in\supp(\chi_0)$. Passing to a subsequence of $A$ if needed, we can assume also that there is a sequence $\{U_n\}_{n\in\w}$ of open subsets of $X$ such that
\begin{enumerate}
\item[{\rm(b)}]  $U_{n}\cap \supp(\chi_n)=\{x_{n}\}$ for every $n\in\w$;
\item[{\rm(c)}]  $U_n\cap U_m=\emptyset$ for all distinct $n,m\in\w$.
\end{enumerate}
For every $n\in\w$ and taking into account (b), choose $f_n\in C(X)$ such that
\begin{enumerate}
\item[{\rm(d)}]  $f_n\big(X\SM U_n\big) =\{0\}$;
\item[{\rm(e)}]  $\langle f_n,\chi_n\rangle= f_n(x_n)\cdot \chi_n[x_n]=1$.
\end{enumerate}
Since the support of any $\chi\in L(X)$ is finite, (c) and (d) imply that the sequence  $\{f_n\}_{n\in\w}$ is weak$^\ast$ $p$-summable in $C(X)=L(X)'$ for every $p\in[1,\infty]$. On the other hand, (e) implies
\[
\sup_{\chi\in A} |\langle f_n,\chi\rangle| \geq \sup_{i\in\w}|\langle f_n,\chi_i\rangle|\geq \langle f_n,\chi_n\rangle =1 \not\to 0,
\]
which means that $A$ is not a $(p,q)$-limited set, a contradiction.
\smallskip

(ii) immediately follows from (i).

(iii) By the definition of $L(E)$, the identity map $\Id_E:E\to E$ can be extended to an operator $I:L(E)\to E$. Since $E$ is a closed subspace of $L(E)$, it is clear that $I$ is a quotient map.\qed
\end{proof}

For the coarse $p$-limited subsets of $L(X)$ the situation is antipodal to the $(p,q)$-limited subsets of $L(X)$.
\begin{theorem} \label{t:free-coarse-p}
Let $p\in[1,\infty]$, and let $L(X)$ be the free locally convex space over a Tychonoff space $X$. Then:
\begin{enumerate}
\item[{\rm(i)}] each bounded subset $B$ of $L(X)$ is coarse $p$-limited;
\item[{\rm(ii)}] $L(X)$ has the coarse $GP_{p}$ property if and only if each functionally bounded subset of $X$ is finite.
\end{enumerate}
\end{theorem}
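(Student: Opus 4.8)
\medskip\noindent
The plan is to read off (i) from the known description of bounded sets in $L(X)$, and to prove the two implications in (ii) by converse constructions. Recall first (the standard description of bounded sets in free locally convex spaces) that a subset $B$ of $L(X)$ is bounded if and only if $\supp(B)$ is functionally bounded in $X$ and $C_B<\infty$. For (i), given $T\in\LL(L(X),\ell_p)$ (or $T\in\LL(L(X),c_0)$ if $p=\infty$), the composition $\phi:=T\circ i\colon X\to\ell_p$ is continuous, so $\phi(\supp B)$ is a functionally bounded, hence relatively compact, subset of the complete metric space $\ell_p$ (exactly as in the proof of Proposition~\ref{p:limited-coarse-p-limited}(i)). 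Writing $\chi=\sum_j\chi[x_j]\,i(x_j)$ for $\chi\in B$, one has $T(\chi)=\sum_j\chi[x_j]\,\phi(x_j)\in C_B\cdot\cacx\big(\phi(\supp B)\big)$; since the closed absolutely convex hull of a relatively compact subset of a Banach space is compact, $T(B)$ is relatively compact, i.e. $B$ is coarse $p$-limited.

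For the sufficiency in (ii), assume every functionally bounded subset of $X$ is finite and let $A$ be coarse $p$-limited; it suffices to show that $\supp(A)$ is finite, since then $A$ lies in a finite-dimensional subspace and is relatively compact. If $\supp(A)$ were infinite it would not be functionally bounded, so there is $g\in C(X)$, $g\ge 0$, with $\sup_{x\in\supp(A)}g(x)=\infty$; pick distinct $x_n\in\supp(A)$ with $g(x_{n+1})>g(x_n)+1$, and $\chi_n\in A$ with $x_n\in\supp(\chi_n)$. \emph{Since $g(x_n)\to\infty$ and $g$ is continuous, the set $\{x_n:n\in\w\}$ is closed and discrete in $X$}; using that $\supp(\chi_n)\setminus\{x_n\}$ is finite, complete regularity of $X$ yields $f_n\in C(X)$ vanishing off $g^{-1}(I_n)$ for pairwise disjoint open intervals $I_n\ni g(x_n)$, vanishing on $\supp(\chi_n)\setminus\{x_n\}$, and with $f_n(x_n)=1/\chi_n[x_n]$. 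Disjointness of the $I_n$ (together with $g(x_n)\to\infty$) makes $\phi:=(f_n)_{n\in\w}\colon X\to c_0$ continuous — near any point only one coordinate is active — and the induced operator $T=\Psi_{c_0}(\phi)\colon L(X)\to c_0$ has $n$-th coordinate of $T(\chi_n)$ equal to $\chi_n[x_n]f_n(x_n)=1$ for all $n$; by Proposition~\ref{p:compact-ell-p}(ii), $T(A)$ is not relatively compact, contradicting that $A$ is coarse $p$-limited (for $p<\infty$ use $\ell_p$ and Proposition~\ref{p:compact-ell-p}(i) instead). Hence $\supp(A)$ is finite, $A$ is relatively compact, and $L(X)$ has the coarse $GP_p$ property.

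For the necessity in (ii), assume $L(X)$ has the coarse $GP_p$ property; by (i) every bounded subset of $L(X)$ is relatively compact, so $L(X)$ is semi-Montel. Suppose, towards a contradiction, that $X$ has an infinite functionally bounded subset $K$, and put $\widetilde K:=\overline K^{\beta X}$, an infinite compact space. Regard $L(X)$ as the set of finitely supported functionals inside $C(X)^\ast=(L(X)')^\ast$, and let $\rho\colon C(X)\to C(\widetilde K)$ be the restriction map; being a subalgebra that contains the constants and separates the points of $\widetilde K$, $\rho(C(X))$ is dense, so the adjoint $\rho^\ast\colon C(\widetilde K)^\ast\to C(X)^\ast$ is an injective weak$^\ast$-to-weak$^\ast$ continuous map with $\rho^\ast(\delta_x)=i(x)$ for $x\in K$. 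Let $\Pi_K$ be the set of finitely supported probability measures on $K$ and set $B:=\rho^\ast(\Pi_K)=\big\{\sum_j a_j\,i(x_j)\in L(X):a_j\ge0,\ \sum_j a_j=1,\ x_j\in K\big\}$, a bounded (hence, by (i), coarse $p$-limited) subset of $L(X)$. Since $\Pi_K$ is weak$^\ast$-dense in the weak$^\ast$-compact set $\mathcal P(\widetilde K)$ of all regular Borel probability measures on $\widetilde K$, the weak$^\ast$-closure of $B$ in $C(X)^\ast$ equals $\rho^\ast\big(\mathcal P(\widetilde K)\big)$; as $\widetilde K$ is infinite, this set contains $\rho^\ast(\mu)$ for some non-finitely supported $\mu$, and a short computation with extensions of bounded continuous functions to $\beta X$ shows that $\rho^\ast(\mu)\notin L(X)$. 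Thus the weak$^\ast$-closure of $B$ is not contained in $L(X)$, so $B$ is not $\sigma(L(X),C(X))$-relatively compact and a fortiori not relatively compact in $L(X)$ — contradiction. Hence every functionally bounded subset of $X$ is finite.

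The routine part is (i); the two constructions in (ii) carry the content. I expect the main obstacle in the sufficiency to be verifying that $\phi=(f_n)_n$ is genuinely continuous into $c_0$ (resp.\ $\ell_p$) and that the diagonal coordinate of $T(\chi_n)$ is not perturbed by the remaining points of $\supp(\chi_n)$; both are resolved by the observation that $g(x_n)\to\infty$ forces $\{x_n\}$ to be closed discrete, allowing a separated choice of the bump functions $f_n$. In the necessity, the delicate point is to produce a bounded non-relatively-compact subset of $L(X)$: this is achieved by transporting probability measures from $\overline K^{\beta X}$ into $(L(X)')^\ast$ via $\rho^\ast$ and observing that their weak$^\ast$-limits escape $L(X)$.
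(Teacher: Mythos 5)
Part (i) of your argument coincides with the paper's: both rest on the description of bounded sets in $L(X)$ ($\supp(B)$ functionally bounded and $C_B<\infty$) together with the inclusion $T(B)\subseteq C_B\cdot\cacx\big(T(\supp(B))\big)$, where $T(\supp(B))$ is functionally bounded, hence relatively compact, in the metric space $\ell_p$ (or $c_0$). For part (ii) you take a genuinely different, self-contained route. The paper's sufficiency is one line: if functionally bounded subsets of $X$ are finite, the quoted description of bounded sets makes every bounded subset of $L(X)$ finite-dimensional and bounded, hence relatively compact; its necessity is likewise outsourced to an external theorem stating that quasi-completeness of $L(X)$ (which follows from (i) together with the coarse $GP_p$ property) forces all functionally bounded subsets of $X$ to be finite. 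You instead construct explicit witnesses: for sufficiency, an operator into $c_0$ (or $\ell_p$) built from bump functions supported on disjoint level strips $g^{-1}(I_n)$ of an unbounded $g\in C(X)$; for necessity, the transport of probability measures on $\overline{K}^{\beta X}$ into $C(X)^\ast=(L(X)')^\ast$, whose weak$^\ast$ limits escape $L(X)$. Both constructions are sound and buy independence from the external references, at the price of more verification (continuity of $(f_n)_n$ into $c_0$, which your closed-discreteness observation handles; well-definedness of $\rho$ on possibly unbounded $f\in C(X)$, which requires truncating $f$ at $\sup_K|f|$ before extending to $\beta X$; and the check that $\rho^\ast(\mu)\notin L(X)$ for infinitely supported $\mu$).

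One step in your sufficiency argument is stated too quickly: from ``$\supp(A)$ is finite'' you conclude that $A$ ``lies in a finite-dimensional subspace and is relatively compact''. Finite-dimensionality of $\spn(A)$ alone does not give relative compactness; you also need $A$ to be bounded. This is easily repaired: every coarse $p$-limited set is bounded, because for each $\chi\in L(X)'=C(X)$ the map $x\mapsto \chi(x)e_0$ is an operator into $\ell_p$ (or $c_0$), so $\chi(A)$ is relatively compact in $\IF$, whence $A$ is weakly bounded and therefore bounded. With that remark inserted, your proof is complete.
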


\begin{proof}
(i) Recall that  Proposition  
2.7 of \cite{Gabr-free-lcs} states that a subset $A$ of $L(X)$ is bounded if and only if $\supp(A)$ is functionally bounded in $X$ and $C_A$ is finite.  Let $B$ be a bounded subset of $L(X)$ and let $T\in\LL\big(L(X),\ell_p\big)$ (or $T\in\LL\big(L(X),c_0\big)$ if $p=\infty$). Then $\supp(B)$ is functionally bounded in $X$, and hence $T(\supp(B))$ is functionally bounded in the Banach space $\ell_p$ (or in $c_0$). Since any metric space is a $\mu$-space, the closure $K$ of $T(\supp(B))$ is compact. Now the inclusion $T(B) \subseteq C_B\cdot \cacx(K)$ implies that $T(B)$ is a relatively compact subset of $\ell_p$ (or of $c_0$). Thus $B$ is a coarse $p$-limited subset of $L(X)$.
\smallskip

(ii) Assume that $L(X)$ has the coarse $GP_{p}$ property. Then, by (i), every bounded subset $B$ of $L(X)$ has compact closure. In particular, $L(X)$ is quasi-complete. Therefore, by Theorem 
3.8 of \cite{Gab-limited},  each functionally bounded subset of $X$ is finite. Conversely, if  all functionally bounded subsets of $X$ are finite, then, by Proposition  
2.7 of \cite{Gabr-free-lcs}, any bounded subset of $L(X)$ is finite-dimensional and hence relatively compact. Thus $L(X)$ trivially has  the coarse $GP_{p}$ property.\qed
\end{proof}

Theorems \ref{t:L(X)-GPpq} and \ref{t:free-coarse-p} suggest the following problem.
\begin{problem}
Let $1\leq p\leq q\leq\infty$. Characterize Tychonoff spaces $X$ for which $L(X)$ has one of the $V^\ast_{(p,q)}$ type properties defined in Definition \ref{def:property-Vp*}.
\end{problem}

We know from (ii) of Lemma \ref{l:GPp=EGPp} that the case $(1,\infty)$ is the strongest one in the sense that if $E$ has some $GP_{(1,\infty)}$ type property, then it has the same $GP_{(p,q)}$ type property for all $1\leq p\leq q\leq\infty$. This result and (v)  of Proposition \ref{p:pq-lim-coarse-p-lim} motivate the problem of whether, for example, the $spGP_{(1,\infty)}$ property implies some addition  conditions on the space $E$. We answer this problem in the affirmative in the rest of this section.

\begin{theorem} \label{t:LF-coarse-GP1}
Let $E$ be a strict $(LF)$-space. If $E$ does not contain an isomorphic copy of $\ell_1$ which is complemented in $E$, then the following assertions are equivalent:
\begin{enumerate}
\item[{\rm(i)}] $E$ has the $GP_{(1,\infty)}$ property;
\item[{\rm(ii)}] $E$ has the coarse $GP_{1}$ property;
\item[{\rm(iii)}] $E$ is a Montel space.
\end{enumerate}
If {\rm(i)--(iii)} hold, then $E$ is separable.
\end{theorem}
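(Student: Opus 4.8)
The plan is to close the cycle (iii)$\Rightarrow$(i)$\Rightarrow$(ii)$\Rightarrow$(iii), using the structure theory of strict $(LF)$-spaces and Ruess's extension of Rosenthal's $\ell_1$-theorem. The implication (iii)$\Rightarrow$(i) is immediate: a Montel space is semi-Montel, so by (vii) of Lemma \ref{l:GPp=EGPp} it has the $EGP_{(p,q)}$ property for all $1\leq p\leq q\leq\infty$, hence in particular the $EGP_{(1,\infty)}$ property; since $E$ is barrelled (being a strict $(LF)$-space), (i) of Lemma \ref{l:GPp=EGPp} yields the $GP_{(1,\infty)}$ property. For (i)$\Rightarrow$(ii): $E$ is barrelled, so by (v) of Proposition \ref{p:pq-lim-coarse-p-lim} the $GP_{(1,\infty)}$ property is equivalent to the coarse $GP_1$ property. (Alternatively one routes through Proposition \ref{p:pq-lim-coarse-p-lim}(v) directly.) So the real content is (ii)$\Rightarrow$(iii), i.e.\ showing that under the hypothesis ``no complemented copy of $\ell_1$'', the coarse $GP_1$ property forces $E$ to be Montel.

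For (ii)$\Rightarrow$(iii), since $E$ is already barrelled, it suffices to show $E$ is semi-Montel, i.e.\ every bounded subset $B$ of $E$ is relatively compact. A strict $(LF)$-space is locally complete and every separable bounded subset is metrizable, so by Ruess's theorem $E$ has the Rosenthal property. The first step is to reduce to showing $B$ is relatively weakly compact: a strict $(LF)$-space is an angelic space satisfying the Glicksberg-type property (indeed, by Proposition \ref{p:equal-GP-strict-LF} all these Gelfand--Phillips notions collapse, and for strict $(LF)$-spaces weak compactness of absolutely convex closed bounded sets gives compactness), so relative weak compactness of $B$ will give relative compactness. Now take a sequence $\{x_n\}$ in $B$. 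By the Rosenthal property, after passing to a subsequence, either $\{x_n\}$ is weakly Cauchy, or it is an $\ell_1$-sequence. The plan is to rule out the second alternative: if $\{x_n\}$ is equivalent to the unit basis of $\ell_1$ then, arguing as in the proof of Theorem \ref{t:inf-V*-wsc} (the Odell--Stegall factorization: build an operator $T\in\LL(E,\ell_1)\subseteq\LL(E,\ell_p)$ sending $x_n$ to the canonical basis via extension through an injective $L_\infty(\mu)$), one produces $T\in\LL(E,\ell_1)$ with $T(x_n)=e_n$, so $T(B)$ contains the non-relatively-compact set $\{e_n\}$, contradicting that $B$ is coarse $1$-limited — wait, $B$ being merely bounded need not be coarse $1$-limited. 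Here is where the hypothesis enters: one uses that such an $\ell_1$-sequence inside a strict $(LF)$-space whose span is complemented would contradict the hypothesis; the subtlety is that Rosenthal's dichotomy only gives an isomorphic, not complemented, copy. So the correct route is: the coarse $GP_1$ property together with the absence of a complemented $\ell_1$ forces that no bounded set can ``reproduce'' $\{e_n\}$ under an operator into $\ell_1$, hence every bounded sequence has a weakly Cauchy subsequence; then sequential completeness of $E$ (strict $(LF)$-spaces are complete) upgrades weakly Cauchy to weakly convergent, giving relative weak compactness of $B$, hence relative compactness, hence $E$ is semi-Montel and therefore Montel.

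The main obstacle is exactly the gap between ``isomorphic copy of $\ell_1$'' (all that Rosenthal/Ruess delivers) and ``complemented copy of $\ell_1$'' (what the hypothesis excludes): one must show that the coarse $GP_1$ property promotes an uncomplemented $\ell_1$-sequence witnessing failure of semi-Montelness into a complemented one, presumably by using the operator $T:E\to\ell_1$ with $T(x_n)=e_n$ built by the Odell--Stegall argument to split off a copy of $\ell_1$ (if $T$ restricted to $\cspn\{x_n\}$ is an isomorphism onto a complemented subspace of $\ell_1\cong\ell_1\oplus\ell_1$, one pulls the projection back). The final clause, that (i)--(iii) imply $E$ is separable, then follows because a metrizable-bounded-sets Montel strict $(LF)$-space is a countable inductive limit of separable Fréchet--Montel spaces (each step Fréchet--Montel hence separable), and a countable inductive limit of separable spaces is separable.
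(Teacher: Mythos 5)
Your handling of (iii)$\Rightarrow$(i) (semi-Montel gives $EGP_{(p,q)}$ by Lemma \ref{l:GPp=EGPp}(vii), hence $GP_{(1,\infty)}$), of (i)$\Leftrightarrow$(ii) via barrelledness and Proposition \ref{p:pq-lim-coarse-p-lim}(v), and of the separability claim all agree with the paper. The gap is in (ii)$\Rightarrow$(iii), the only place where the hypothesis on $\ell_1$ is used. The paper settles this in one line by quoting Corollary 5.16 of \cite{Gab-limited}: under the stated hypothesis, the coarse $1$-limited subsets of a strict $(LF)$-space are exactly the bounded subsets, so the coarse $GP_1$ property says verbatim that every bounded set is relatively compact, i.e.\ that $E$ is Montel. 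Your proposal instead routes through relative \emph{weak} compactness of bounded sets, and that route breaks at several points.

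Concretely: (1) the reduction ``bounded and relatively weakly compact $\Rightarrow$ relatively compact in a strict $(LF)$-space'' is false --- $\ell_2$ is a strict $(LF)$-space (a single Fr\'echet step) satisfying the hypothesis, whose unit ball is weakly compact but not compact; Proposition \ref{p:equal-GP-strict-LF} only identifies the various Gelfand--Phillips properties with one another and does not assert any Glicksberg-type property. (2) The Odell--Stegall construction cannot produce $T\in\LL(E,\ell_1)$ with $T(x_n)=e_n$: in Theorem \ref{t:inf-V*-wsc} the extension to all of $E$ works because the inclusion $\ell_1\to\ell_p$ is $2$-summing (hence factors through the injective space $L_\infty(\mu)$), which requires $p\geq 2$; for target $\ell_1$ the relevant map is the identity of $\ell_1$, which is not absolutely summing, and $\ell_1$ is not injective. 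Indeed, the existence of such a $T$ would immediately yield a projection of $E$ onto $\cspn\{x_n\}$, so it is essentially equivalent to the complementation you are trying to produce rather than a tool for producing it. (3) The sentence describing your ``correct route'' merely restates the needed conclusion (no operator into $\ell_1$ maps a bounded set onto a non-precompact set, i.e.\ bounded sets are coarse $1$-limited) without an argument; this is precisely the content of the cited Corollary 5.16, whose proof must convert a bounded set with non-precompact image in $\ell_1$ into a \emph{complemented} copy of $\ell_1$ in $E$ by lifting through the given operator. (4) The step ``sequential completeness upgrades weakly Cauchy to weakly convergent'' is also false in general ($c_0$ is complete but not weakly sequentially complete). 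So the cycle does not close as written; you need the bounded $=$ coarse $1$-limited identification, not a weak-compactness argument.
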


\begin{proof}
Since  strict $(LF)$-spaces are barrelled  by Proposition 11.3.1 of \cite{Jar}, (i) and (ii) are equivalent by (v) of Proposition \ref{p:pq-lim-coarse-p-lim}. Corollary 
5.16 of \cite{Gab-limited} states that a subset of $E$ is bounded if and only if it is a coarse $1$-limited set. Therefore $E$ has the coarse $GP_{1}$ property if and only if every bounded subset of $E$ is relatively compact, i.e. $E$ is a Montel space.

To prove that $E$ is separable, let $E:=\SI E_n$, where all $E_n$ are closed Fr\'{e}chet subspaces of $E$. By Proposition 11.5.4(b) of \cite{Jar} and (iii), all spaces $E_n$ are Montel. Therefore, by Theorem 11.6.2 of \cite{Jar}, all spaces $E_n$ are separable. Thus also $E=\bigcup_{n\in\w} E_n$ is separable.\qed
\end{proof}

\begin{remark} {\em
Let $\Gamma$ be an uncountable set and let $1<p<\infty$. Then,  by Theorem \ref{t:LF-coarse-GP1}, the reflexive non-separable Banach space $\ell_p(\Gamma)$ has neither the $GP_{(1,\infty)}$ property nor the coarse $GP_{1}$ property. However, $\ell_p(\Gamma)$ has the coarse $GP_{p}$ property by Remark 3(2) of \cite{GalMir},  and it has the Gelfand--Phillips property by Proposition \ref{p:pq-lim-coarse-p-lim}.\qed}
\end{remark}

\begin{proposition} \label{p:wsGP-Schur}
Let $E$ be a sequentially complete barrelled locally convex space. If $E$ has the $spGP_{(1,\infty)}$ property, then $E$ has the Schur property.
\end{proposition}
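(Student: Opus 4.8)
The plan is to show the contrapositive at the level of sequences: if $E$ is not Schur, then there is a weakly null sequence in $E$ that fails to be $\tau$-null, and we want to upgrade this to a \emph{limited} weakly null non-null sequence, which would violate the $spGP_{(1,\infty)}$ property (recall that a $(1,\infty)$-limited set is precisely a limited set, and a limited sequence that is weakly null but not $\tau$-null cannot be sequentially precompact by Lemma \ref{l:null-seq}). So suppose $E$ does not have the Schur property and pick a weakly null sequence $S=\{x_n\}_{n\in\w}$ in $E$ which is not $\tau$-null; passing to a subsequence we may assume there is $U\in\Nn_0(E)$ with $x_n\notin U$ for all $n$. The sequence $S$ is bounded (being weakly null), hence, since $E$ is barrelled, it is a $(1,\infty)$-limited set precisely when it is a $(1,\infty)$-$\EE$-limited set — but that is not automatic, and manufacturing limitedness is exactly the obstacle.

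First I would invoke the hypothesis that bounded subsets of $E$ are weakly sequentially precompact together with the $spGP_{(1,\infty)}$ property indirectly: actually the cleaner route is to go through the $V^\ast$ machinery. The key point is that, by the Odell--Stegall-type Theorem \ref{t:inf-V*-wsc} (or rather its consequence together with sequential completeness), one wants to argue that a weakly null sequence that is \emph{not} limited witnesses a weak$^\ast$ null sequence $\{\chi_n\}_{n\in\w}$ in $E'$ with $\|\chi_n\|_S\not\to 0$. Then pass to subsequences of both so that $|\langle\chi_n,x_n\rangle|\geq\delta>0$, and by a standard gliding-hump / Bessaga--Pełczyński-type selection on the biorthogonal-like system $\{(x_n,\chi_n)\}$ extract subsequences realizing a copy of the $c_0$--$\ell_1$ duality inside $(E,E')$; this is where barrelledness and sequential completeness are used, to guarantee the relevant partial sums converge and the weak$^\ast$ null sequence stays equicontinuous. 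The upshot of such a selection is a subsequence of $S$ equivalent to the unit vector basis of some $\ell_p$-type or $c_0$-type structure whose image under an operator into $c_0$ is the canonical basis — contradicting that $S$, being weakly null and bounded in a barrelled space, has all its operator images into $c_0$ relatively compact if it were limited (cf. the proof of Proposition \ref{p:limited-coarse-p-limited}).

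In fact the fastest argument is likely this: assume $E$ fails Schur, so there is a weakly null, non-$\tau$-null sequence $\{x_n\}$. By the hypothesis its range is weakly sequentially precompact, and since $\{x_n\}$ is already weakly convergent (to $0$), every subsequence is weakly Cauchy, so this gives nothing directly — instead, observe that a weakly null sequence is a $(V^\ast)$ set (indeed a $(1,\infty)$-$(V^\ast)$ set, since for any weak$^\ast$ null $\{\chi_n\}$ in $E'_\beta$, which by barrelledness is weak$^\ast$ null in $E'$, weak convergence of $x_n\to0$ forces $\langle\chi_n,x_n\rangle$-type sums... actually one needs the uniform estimate). The honest main obstacle is precisely verifying that a weakly null sequence in a barrelled sequentially complete space whose bounded sets are weakly sequentially precompact is automatically \emph{limited}: this should follow because such a sequence is a $(1,\infty)$-$(V^\ast)$ set, by the $wsV^\ast_{(1,\infty)}$-type property it is weakly sequentially precompact, and then one closes the loop using that in the sequentially complete setting the absence of $\ell_1$-copies (forced by weak sequential precompactness of bounded sets together with Rosenthal's theorem, Ruess \cite{ruess}) lets one pass from ``weakly null $(V^\ast)$ set'' to ``limited set''. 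Once limitedness of $\{x_n\}$ is in hand, $spGP_{(1,\infty)}$ makes $\{x_n\}$ sequentially precompact, hence (being weakly null and $\tau$-precompact along a subsequence) $\tau$-null along that subsequence by Lemma \ref{l:null-seq}, contradicting $x_n\notin U$. Therefore $E$ has the Schur property. I expect the limitedness step to require the most care, and I would isolate it as a lemma of the form ``under these hypotheses every weakly null sequence is limited,'' proved via Theorem \ref{t:inf-V*-wsc} and a Bessaga--Pełczyński selection argument.
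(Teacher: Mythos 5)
Your overall skeleton is the right one and matches the paper's: show every weakly null sequence is $(1,\infty)$-limited, then invoke the $spGP_{(1,\infty)}$ property to get sequential precompactness, and finish with Lemma \ref{l:null-seq}. You also correctly identify where the difficulty lies. But the crucial step --- that a weakly null sequence in a sequentially complete barrelled space is automatically a $(1,\infty)$-limited set --- is never actually established in your proposal; you repeatedly circle it (``manufacturing limitedness is exactly the obstacle'', ``this should follow because\dots'', ``lets one pass from weakly null $(V^\ast)$ set to limited set'') without giving an argument. The paper disposes of it in one line by citing Corollary~3.14 of \cite{Gab-limited}: in a sequentially complete barrelled space every weakly sequentially precompact set is $(1,\infty)$-limited, and a weakly null sequence is trivially weakly sequentially precompact (you dismiss this observation as ``giving nothing directly'', when it is exactly the hypothesis that external result needs). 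The underlying mechanism is roughly: a weak$^\ast$ $1$-summable sequence $\{\chi_n\}$ defines, by barrelledness, an operator $T:E\to\ell_1$, $T(x)=(\langle\chi_n,x\rangle)_n$; the image of a weakly sequentially precompact set is then norm precompact in $\ell_1$ by the Schur property, and Proposition \ref{p:compact-ell-p}(i) yields $\|\chi_n\|_S\to0$. Your gliding-hump sketch points vaguely in this direction but is not carried out.

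Two further problems with the routes you propose. First, Theorem \ref{t:inf-V*-wsc} gives the implication in the wrong direction for your purposes: it says $(p,q)$-$(V^\ast)$ sets (with $p\geq 2$) are weakly sequentially precompact, whereas you need to pass \emph{from} weak sequential precompactness \emph{to} limitedness; invoking it here does not advance the argument. Second, you ``invoke the hypothesis that bounded subsets of $E$ are weakly sequentially precompact'' --- that is a hypothesis of Proposition \ref{p:wsGP-Schur-con}, not of this proposition, and it is not needed: only the single weakly null sequence must be weakly sequentially precompact, which is automatic. As written, the proof has a genuine gap at its central step.
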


\begin{proof}
Let $S=\{x_n\}_{n\in\w}$ be a weakly null sequence in $E$. Then $S$ is weakly sequentially precompact. Therefore, by Corollary 
3.14 of \cite{Gab-limited}, $S$ is a $(1,\infty)$-limited set. Hence, by the $spGP_{(1,\infty)}$ property, $S$ is sequentially precompact. 
Thus, by Lemma \ref{l:null-seq}, $x_n\to 0$ in $E$ and hence $E$ has the Schur property.\qed
\end{proof}

In general, the converse in Proposition \ref{p:wsGP-Schur} is not true as Example \ref{exa:product-sGP} shows. Nevertheless, in some important cases the converse holds true.

\begin{proposition} \label{p:wsGP-Schur-con}
Let $E$ be a sequentially complete barrelled locally convex space whose bounded subsets are weakly sequentially precompact. Then $E$ has the $spGP_{(1,\infty)}$ property if and only if it has the Schur property.
\end{proposition}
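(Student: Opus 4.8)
The plan is to reduce immediately to the nontrivial direction. \emph{Necessity} is exactly Proposition~\ref{p:wsGP-Schur} (which, in fact, uses only that $E$ is sequentially complete and barrelled), so the only work is to prove \emph{sufficiency}: assuming that $E$ has the Schur property, every $(1,\infty)$-limited subset of $E$ is sequentially precompact.

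So let $A$ be a $(1,\infty)$-limited subset of $E$ and let $\{x_n\}_{n\in\w}$ be a sequence in $A$. First I would record that $A$ is bounded: for each $\chi\in E'$ the sequence $(2^{-n}\chi)_{n\in\w}$ is weak$^\ast$ $1$-summable in $E'$, so the $(1,\infty)$-limitedness of $A$ forces $\big(2^{-n}\|\chi\|_A\big)_{n\in\w}\in c_0$, whence $\|\chi\|_A<\infty$; thus $A$ is $\sigma(E,E')$-bounded, hence bounded. By hypothesis the bounded set $A$ is weakly sequentially precompact, so $\{x_n\}_{n\in\w}$ has a weakly Cauchy subsequence $\{x_{n_k}\}_{k\in\w}$.

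The key step is to upgrade ``weakly Cauchy'' to ``Cauchy in $E$'' via the Schur property. Put $y_k:=x_{n_k}$. For any strictly increasing sequences $(i_k)_{k\in\w},(j_k)_{k\in\w}$ in $\w$, the sequence $\{y_{i_k}-y_{j_k}\}_{k\in\w}$ is weakly null, since $\langle\chi,y_k\rangle$ converges for every $\chi\in E'$; hence, by the Schur property, $y_{i_k}-y_{j_k}\to 0$ in $E$. If $\{y_k\}_{k\in\w}$ were not Cauchy in $E$, there would be a balanced $U\in\Nn_0(E)$ and strictly increasing $(i_k)_{k\in\w},(j_k)_{k\in\w}$ with $y_{i_k}-y_{j_k}\notin U$ for all $k$, contradicting the convergence just obtained. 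Hence $\{y_k\}_{k\in\w}=\{x_{n_k}\}_{k\in\w}$ is Cauchy in $E$, so $A$ is sequentially precompact and $E$ has the $spGP_{(1,\infty)}$ property.

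I do not expect a genuine obstacle here: the argument is a routine combination of the hypothesis that bounded sets are weakly sequentially precompact with the Schur property and the standard ``subsequence of differences'' criterion for a Cauchy sequence. Note in particular that neither sequential completeness nor barrelledness of $E$ is actually used in this direction; they enter only through Proposition~\ref{p:wsGP-Schur} for the necessity.
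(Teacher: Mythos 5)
Your proof is correct and takes essentially the same route as the paper: necessity is delegated to Proposition~\ref{p:wsGP-Schur}, and sufficiency runs boundedness $\Rightarrow$ weak sequential precompactness (by hypothesis) $\Rightarrow$ sequential precompactness via the Schur property. The only difference is that you prove inline the step that a weakly Cauchy sequence in a Schur space is Cauchy, which the paper outsources to Lemma~2.2 of \cite{Gab-limited}.
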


\begin{proof}
The necessity follows from Proposition \ref{p:wsGP-Schur}.  To prove the sufficiency, assume that $E$ has the Schur property and let $A$ be a $(1,\infty)$-limited subset of $E$. Since $A$ is bounded it is weakly sequentially precompact. Then, by Lemma 
2.2 of \cite{Gab-limited}, $A$ is sequentially precompact. Thus $E$ has the $spGP_{(1,\infty)}$ property. \qed
\end{proof}

\begin{corollary} \label{c:wsGP-Schur-con}
Let $E$ be a strict $(LF)$-space that does not contain an isomorphic copy of $\ell_1$. Then $E$ has the Schur property if and only if it has one {\rm(}and hence all{\rm)} of the properties  $GP_{(1,\infty)}$,  $EGP_{(1,\infty)}$, $sGP_{(1,\infty)}$, $sEGP_{(1,\infty)}$, $prGP_{(1,\infty)}$, $spGP_{(1,\infty)}$, $prEGP_{(1,\infty)}$, $spEGP_{(1,\infty)}$ and $b$-$GP_{(1,\infty)}$.
\end{corollary}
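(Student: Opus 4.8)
The plan is to obtain this corollary by combining Propositions \ref{p:equal-GP-strict-LF} and \ref{p:wsGP-Schur-con}. First I would recall that a strict $(LF)$-space is angelic, complete and barrelled, so Proposition \ref{p:equal-GP-strict-LF} (applied with $(p,q)=(1,\infty)$) already guarantees that the nine listed Gelfand--Phillips type properties are pairwise equivalent for $E$; this is exactly the ``one (and hence all)'' clause. Consequently it suffices to prove that $E$ has the Schur property if and only if it has, say, the $spGP_{(1,\infty)}$ property.

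Next I would verify that $E$ meets the hypotheses of Proposition \ref{p:wsGP-Schur-con}. Being complete, $E$ is sequentially complete; being a strict $(LF)$-space, it is barrelled by Proposition 11.3.1 of \cite{Jar}. The only remaining point is that every bounded subset of $E$ is weakly sequentially precompact. Here I would invoke Ruess's extension of the Rosenthal $\ell_1$-theorem \cite{ruess}: every strict $(LF)$-space has the Rosenthal property. Thus any bounded sequence $\{x_n\}_{n\in\w}$ in $E$ has a subsequence which is either weakly Cauchy or equivalent to the unit basis of $\ell_1$. In the latter case, setting $A=\{x_n\}$, the subspace $\cspn(A)$ is closed in the complete space $E$, hence complete; the topological isomorphism $R\colon\cspn(A)\to\ell_1$ with $R(x_n)=e_n$ would then carry $\cspn(A)$ onto a complete --- hence closed --- subspace of $\ell_1$ containing $\spn\{e_n:n\in\w\}$, forcing $R(\cspn(A))=\ell_1$ and so producing an isomorphic copy of $\ell_1$ inside $E$, contrary to hypothesis. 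Therefore only the first alternative can occur, and every bounded subset of $E$ is weakly sequentially precompact.

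Having checked all hypotheses, Proposition \ref{p:wsGP-Schur-con} yields that $E$ has the $spGP_{(1,\infty)}$ property if and only if it has the Schur property (the forward implication is already Proposition \ref{p:wsGP-Schur}), and together with the first paragraph this proves the corollary. The only slightly delicate step is the verification that bounded sets are weakly sequentially precompact, and within it the single nontrivial observation is the use of the completeness of $E$ to upgrade ``$\cspn(A)$ embeds into $\ell_1$'' to ``$\cspn(A)\cong\ell_1$'', which is what lets us discard the $\ell_1$-basis branch of the Rosenthal dichotomy; everything else is a straightforward reduction to the cited propositions.
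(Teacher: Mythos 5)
Your proof is correct and follows essentially the same route as the paper: reduce to the $spGP_{(1,\infty)}$ property via Proposition \ref{p:equal-GP-strict-LF}, then verify the hypotheses of Proposition \ref{p:wsGP-Schur-con} using completeness, barrelledness, and the Ruess theorem to rule out the $\ell_1$-basis branch of the Rosenthal dichotomy. Your extra observation --- that completeness of $\cspn(A)$ upgrades the embedding into $\ell_1$ to an isomorphism onto $\ell_1$ (since the image is closed and contains the dense subspace $\spn\{e_n\}$) --- is a legitimate detail that the paper's proof leaves implicit.
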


\begin{proof}
By Proposition \ref{p:equal-GP-strict-LF}, it suffices to consider only the $spGP_{(1,\infty)}$ property. It is well known that $E$ is a complete barrelled space. By the Ruess Theorem 
\cite{ruess} mentioned before Theorem \ref{t:inf-V*-wsc}, $E$ has the Rosenthal property. Since $E$ has no an isomorphic copy of $\ell_1$, it follows that every bounded sequence has a weakly Cauchy subsequence (i.e. every bounded subset is weakly sequentially precompact). Now Proposition \ref{p:wsGP-Schur-con} applies. \qed
\end{proof}


\section{Characterizations  of Gelfand--Phillips types properties} \label{sec:char-GP-property}


In this section we characterize locally convex spaces with Gelfand--Phillips types properties, in particular, in some important partial cases.

Let $p,q\in[1,\infty]$, and let $E$ and $L$ be locally convex spaces.   Generalizing the notions of limited, limited completely continuous and limited $p$-convergent operators between Banach spaces introduced in \cite{BourDies}, \cite{SaMo} and \cite{FZ-p}, respectively, and following \cite{Gab-p-Oper}, a linear map $T:E\to L$ is called  {\em $(p,q)$-limited} if $T(U)$ is a $(p,q)$-limited subset of $L$ for some $U\in\Nn_0(E)$; if $p=q$ or $p=q=\infty$ we shall say that $T$ is {\em $p$-limited} or {\em limited}, respectively.

\begin{theorem} \label{t:GPpq-property-oper}
Let $1\leq p\leq q\leq \infty$, and let $E$ be a locally convex space. Then  the following assertions are equivalent:
\begin{enumerate}
\item[{\rm(i)}] $E$ has the $prGP_{(p,q)}$ {\rm(}resp., $GP_{(p,q)}$, $sGP_{(p,q)}$ or $spGP_{(p,q)}${\rm)} property;
\item[{\rm(ii)}] for every locally convex space $L$, if an operator $T:L\to E$ transforms bounded sets of $L$ to $(p,q)$-limited  sets of $E$, then $T$ transforms bounded sets of $L$ to precompact  {\rm(}resp., relatively compact, relatively sequentially compact or sequentially precompact{\rm)} subsets of $E$;
\item[{\rm (iii)}] every $(p,q)$-limited operator $T:L\to E$ from a locally convex  space $L$ to $E$ is precompact  {\rm(}resp., compact, sequentially compact or sequentially precompact{\rm)};
\item[{\rm (iv)}] every $(p,q)$-limited operator $T:L\to E$ from a normed space $L$ to $E$ is precompact   {\rm(}resp., compact, sequentially compact or sequentially precompact{\rm)}.
\end{enumerate}
If $E$ is sequentially complete, then {\rm(i)--(iv)} are equivalent to the following
\begin{enumerate}
\item[{\rm(v)}] every $(p,q)$-limited operator $T:L\to E$ from a Banach space $L$ to $E$ is precompact   {\rm(}resp., compact, sequentially compact or sequentially precompact{\rm)}.
\end{enumerate}
\end{theorem}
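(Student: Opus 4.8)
The plan is to prove the cycle $(i)\Rightarrow(ii)\Rightarrow(iii)\Rightarrow(iv)\Rightarrow(i)$, and then $(iv)\Leftrightarrow(v)$ under the extra sequential completeness hypothesis. The implications $(ii)\Rightarrow(iii)\Rightarrow(iv)$ are essentially trivial specializations: a $(p,q)$-limited operator $T:L\to E$ is by definition one for which there is $U\in\Nn_0(L)$ with $T(U)$ a $(p,q)$-limited subset of $E$, so $T$ maps the bounded set $U$ (hence every bounded set, being absorbed by a multiple of $U$) to a $(p,q)$-limited set; thus $(ii)$ applied to this $T$ says $T(U)$ is precompact (resp.\ relatively compact, etc.), which is exactly the assertion that $T$ is precompact (resp.\ compact, etc.). And $(iii)\Rightarrow(iv)$ is just restriction to normed $L$.

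For $(i)\Rightarrow(ii)$: suppose $E$ has the $prGP_{(p,q)}$ property (the other three cases are verbatim the same with ``precompact'' replaced by the appropriate notion), let $T:L\to E$ send bounded sets to $(p,q)$-limited sets, and let $B\subseteq L$ be bounded. Then $T(B)$ is a $(p,q)$-limited subset of $E$, hence precompact by hypothesis. This is immediate once we unwind the definitions; the only thing to record is that relative compactness/relative sequential compactness/sequential precompactness of the image behaves correctly, which it does since these are all properties of subsets preserved under the passage from a set to any set containing it only in the ``relatively'' direction — here we directly apply the $GP$-type property to $T(B)$ itself, so no such subtlety arises.

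The substantive implication is $(iv)\Rightarrow(i)$. Given a $(p,q)$-limited subset $A$ of $E$ we must manufacture a normed space $L$ and a $(p,q)$-limited operator $T:L\to E$ whose relevant image is (essentially) $A$, and then read off precompactness (resp.\ the stronger conclusion) of $A$ from precompactness (resp.\ \ldots) of $T$. The natural choice is to take the linear span $E_A=\spn(A)$, normed by the Minkowski functional of the absolutely convex hull $W$ of $A$ (replacing $A$ by $A\cup\{0\}$ and, if necessary, by $A^{\circ\circ}$ to make $W$ a genuine bounded absolutely convex set whose gauge is a norm — one uses here that $A$ is bounded, hence $W$ is bounded and does not contain a line, so the gauge is a norm), and let $T:L:=(E_A,\|\cdot\|_W)\to E$ be the inclusion map, which is continuous because $W$ is bounded. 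The unit ball of $L$ is (the closure of) $W$, so $T(B_L)\supseteq A$ and $T(B_L)\subseteq \overline{W}$; since $W$ is contained in the absolutely convex hull of the $(p,q)$-limited set $A$, one invokes the stability of $(p,q)$-limited sets under absolutely convex hulls (this is the analogue, for $(p,q)$-limited sets, of the fact used in Lemma~\ref{l:GPp=EGPp-2} and recorded in the cited Lemma~3.1 of \cite{Gab-limited}) to conclude $T(B_L)$ is $(p,q)$-limited, i.e.\ $T$ is a $(p,q)$-limited operator. By $(iv)$, $T$ is precompact (resp.\ compact, sequentially compact, sequentially precompact), meaning $T(B_L)$ — and hence its subset $A$ — is precompact (resp.\ relatively compact, relatively sequentially compact, sequentially precompact) in $E$. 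I expect the main obstacle to be exactly this construction: verifying that the gauge of $W$ is a norm and that $W$ (equivalently $A^{\circ\circ}$) is still $(p,q)$-limited, i.e.\ pinning down the precise closure/absolutely-convex-hull stability statement for $(p,q)$-limited sets that makes $T$ genuinely $(p,q)$-limited; once that is in hand everything else is bookkeeping.

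Finally, $(v)\Rightarrow(iv)$ is trivial, and for $(iv)\Rightarrow(v)$ under sequential completeness of $E$ the point is that in the construction above one may complete: given a $(p,q)$-limited operator from a Banach space it already is of the form handled by $(iv)$, but to go the other way one notes that a normed space $L$ can be replaced by its completion $\widehat L$ and the operator $T$ extended to $\widehat T:\widehat L\to E$ provided the extension exists, which it does when $E$ is sequentially complete (the image of a Cauchy sequence is Cauchy, using continuity, and converges in $E$); the extended operator is still $(p,q)$-limited since $\widehat T(B_{\widehat L})$ lies in the closure of $T(B_L)$ and $(p,q)$-limited sets are closed under closure — or, more simply, one checks directly that precompactness/compactness/etc.\ of $\widehat T$ on $B_{\widehat L}$ is equivalent to that of $T$ on the dense ball $B_L$. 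Thus $(v)$ and $(iv)$ coincide, completing the proof.
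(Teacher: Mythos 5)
Your proposal is correct and follows essentially the same route as the paper: the same trivial implications, the same construction for (iv)$\Rightarrow$(i) (norming the span of the absolutely convex hull of a $(p,q)$-limited set by its gauge and applying (iv) to the inclusion operator), and the same extension-to-the-completion argument for the equivalence with (v). The only blemish is that in the final paragraph you transpose the labels of the trivial and nontrivial directions between (iv) and (v) — it is (iv)$\Rightarrow$(v) that is trivial and (v)$\Rightarrow$(iv) that needs the completion — but your descriptions of the two arguments are themselves correct.
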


\begin{proof}
(i)$\Rightarrow$(ii) Let $T:L\to E$ be an operator which transforms bounded sets of $L$ to $(p,q)$-limited  sets of $E$, and let $B$ be a bounded subset of $L$. By assumption $T(B)$ is a  $(p,q)$-limited subset of $E$. Then, by (i), $T(B)$ is a precompact (resp., relatively compact, relatively sequentially compact or sequentially precompact) subset of $E$, as desired.
\smallskip

The implications (ii)$\Ra$(iii)$\Ra$(iv) are trivial.
\smallskip

(iv)$\Rightarrow$(i) Fix a $(p,q)$-limited  subset $B$ of $E$. It is clear that the closed absolutely convex hull $D$ of the set $B$ in $E$ is also $(p,q)$-limited. Let $L$ be the linear hull of $D$. Since $D$ is bounded in $E$, the function $\|\cdot\|:L\to[0,\infty)$, $\|\cdot\|:x\mapsto\inf\{r\ge0:x\in rD\}$ is a well-defined norm on the linear space $L$ and the set $D$ coincides with the closed unit ball $B_L$ of the normed space $(L,\|\cdot\|)$. Since the  identity inclusion $T:(L,\|\cdot\|)\to E$ is continuous and the set $D=T(B_L)$ is $(p,q)$-limited in $E$, the operator $T$ is $(p,q)$-limited. By (iv), the set $D=T(B_L)$ is precompact  (resp., relatively compact, relatively sequentially compact or sequentially precompact) in $E$, and hence so is the set $B\subseteq D$. Thus $E$ has the $prGP_{(p,q)}$ (resp., $GP_{(p,q)}$, $sGP_{(p,q)}$ or $spGP_{(p,q)}$) property.
\smallskip

Assume that $E$ is sequentially complete. Then the implication (iv)$\Ra$(v) is trivial. To prove  the implication (v)$\Ra$(iv), let $T:L\to E$ be a $(p,q)$-limited operator from a normed space $L$ to $E$. Then, by Proposition 
3.7 of \cite{Gab-Pel}, $T$ can be extended to a bounded operator ${\bar T}$ from the completion $\overline{L}$ of $L$ to $E$. Observe that ${\bar T}(B_{\overline{L}}) \subseteq \overline{T(B_L)}$ and hence ${\bar T}$ is also $(p,q)$-limited. Thus, by (v), the operator ${\bar T}$ and hence also $T$ are precompact   (resp., compact, sequentially compact or sequentially precompact). \qed
\end{proof}

Let $1\leq p\leq q\leq \infty$, and let $E$ and $L$ be locally convex spaces.
Following Definition 
16.1  of \cite{Gab-Pel}, a linear map $T:E\to L$ is called {\em $(q,p)$-convergent} if it sends weakly $p$-summable sequences in $E$ to strongly $q$-summable sequences in $L$. Recall (see \S~19.4 of \cite{Jar}) that a sequence $\{x_n\}_{n\in\w}$ in an lcs $L$ is called {\em strongly $p$-summable} if $\big(q_U(x_n)\big)\in\ell_p$ (or $\big(q_U(x_n)\big)\in c_0$ if $p=\infty$) for every $U\in\Nn_0^c(E)$, where as usual $q_U$ denotes the gauge functional of $U$.


\begin{theorem} \label{t:sequential-GPpq-property}
Let $1\leq p\leq q\leq \infty$, and let  $E$ be a locally convex space. Then the following assertions are equivalent:
\begin{enumerate}
\item[{\rm(i)}] if $L$ is a  locally convex space  and $T:L\to E$ is an operator such that $T^\ast: E'_{w^\ast} \to L'_\beta$ is $(q,p)$-convergent, then $T$ transforms bounded sets of $L$ to relatively sequentially compact  {\rm(}resp., sequentially precompact{\rm)} subsets of $E$;
\item[{\rm(ii)}] if $L$ is a normed space and $T:L\to E$ is an operator such that $T^\ast: E'_{w^\ast} \to L'_\beta$ is $(q,p)$-convergent, then $T$ is sequentially compact {\rm(}resp., sequentially precompact{\rm)};
\item[{\rm(iii)}] the same as {\rm(ii)} with $L=\ell_1^0$;
\item[{\rm(iv)}] $E$ has the $sGP_{(p,q)}$ property  {\rm(}resp., the $spGP_{(p,q)}$ property{\rm)}.
\end{enumerate}
Moreover, if $E$ is locally complete, then {\rm(i)-(iv)} are equivalent to
\begin{enumerate}
\item[{\rm(v)}] the same as {\rm(ii)} with $L=\ell_1$.
\end{enumerate}
\end{theorem}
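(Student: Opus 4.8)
The plan is to prove the cycle (i)$\Rightarrow$(ii)$\Rightarrow$(iii)$\Rightarrow$(iv)$\Rightarrow$(i), with the last equivalence with (v) handled separately under local completeness. The implications (i)$\Rightarrow$(ii)$\Rightarrow$(iii) are essentially formal: (i)$\Rightarrow$(ii) follows by taking $L$ a normed space and observing that a sequentially compact (resp.\ sequentially precompact) operator is exactly one carrying the unit ball to a relatively sequentially compact (resp.\ sequentially precompact) set, and (ii)$\Rightarrow$(iii) is just specialization to $L=\ell_1^0$. The substantive work is in (iii)$\Rightarrow$(iv) and (iv)$\Rightarrow$(i).

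For (iii)$\Rightarrow$(iv), I would argue contrapositively in the style of the proof of Theorem~\ref{t:GPpq-property-oper}(iv)$\Rightarrow$(i). Suppose $E$ fails the $sGP_{(p,q)}$ property (resp.\ $spGP_{(p,q)}$), so there is a $(p,q)$-limited set $A\subseteq E$ that is not relatively sequentially compact (resp.\ not sequentially precompact), witnessed by a sequence $\{x_n\}_{n\in\w}\subseteq A$ with no convergent (resp.\ Cauchy) subsequence. Define $T:\ell_1^0\to E$ on the canonical basis by $T(e_n):=x_n$ and extend linearly; since $\{x_n\}$ is bounded, $T$ is continuous, and $T(B_{\ell_1^0})$ is contained in the closed absolutely convex hull of $\{x_n\}$, which is again $(p,q)$-limited. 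The key computation is that $T^\ast:E'_{w^\ast}\to (\ell_1^0)'_\beta=\ell_\infty$ is $(q,p)$-convergent: for a weakly $p$-summable sequence $\{\chi_k\}_{k\in\w}$ in $E'$, one has $T^\ast\chi_k = (\langle\chi_k,x_n\rangle)_{n\in\w}$, and its sup-norm is exactly $\sup_n|\langle\chi_k,x_n\rangle|\le \|\chi_k\|_A$; because $A$ is $(p,q)$-limited, $(\|\chi_k\|_A)_k\in\ell_q$ (or $\to 0$ if $q=\infty$), so $(\|T^\ast\chi_k\|_\infty)_k\in\ell_q$, i.e.\ $T^\ast$ sends weakly $p$-summable sequences to strongly $q$-summable sequences. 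By (iii), $T$ is sequentially compact (resp.\ sequentially precompact), so $T(B_{\ell_1^0})\supseteq\{x_n\}$ has a convergent (resp.\ Cauchy) subsequence, contradicting the choice of $\{x_n\}$.

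For (iv)$\Rightarrow$(i), I would show that any operator $T:L\to E$ with $T^\ast$ $(q,p)$-convergent carries bounded sets to $(p,q)$-limited sets, after which the $sGP_{(p,q)}$ (resp.\ $spGP_{(p,q)}$) property finishes the argument. So let $B\subseteq L$ be bounded and let $\{\chi_n\}_{n\in\w}$ be a weak$^\ast$ $p$-summable sequence in $E'$; then $\{T^\ast\chi_n\}_{n\in\w}$ is weakly $p$-summable in $L'_{w^\ast}$, and since $T^\ast$ is $(q,p)$-convergent, $\{T^\ast\chi_n\}$ is strongly $q$-summable in $L'_\beta$, meaning $(q_{B^\circ}(T^\ast\chi_n))_n\in\ell_q$ (or $\to 0$). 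But $\|\chi_n\|_{T(B)}=\sup_{x\in B}|\langle T^\ast\chi_n,x\rangle|= q_{B^\circ}(T^\ast\chi_n)$ (using $B\subseteq B^{\circ\circ}$, which suffices for the needed inequality), so $(\|\chi_n\|_{T(B)})_n\in\ell_q$, i.e.\ $T(B)$ is $(p,q)$-limited. Then (iv), via the construction already used in Theorem~\ref{t:GPpq-property-oper}, gives that every $(p,q)$-limited set in $E$ is relatively sequentially compact (resp.\ sequentially precompact), hence $T(B)$ is; this is exactly (i). Finally, under the assumption that $E$ is locally complete, the equivalence of (v) with (iv) (equivalently, with (iii)) is obtained by the extension argument of Theorem~\ref{t:GPpq-property-oper}: an operator from $\ell_1^0$ into locally complete $E$ extends to $\ell_1$ (the closed absolutely convex hull of a null sequence being compact, so images of bounded sets have compact-supported closures), and conversely restricting to $\ell_1^0\subseteq\ell_1$ is harmless; one checks that the $(q,p)$-convergence of the adjoint and sequential (pre)compactness of the operator are preserved under this passage.

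\textbf{Main obstacle.} The delicate point is the careful bookkeeping in the two duality identities $\|T^\ast\chi_n\|_\infty=\sup_n|\langle\chi_n,x_n\rangle|$ and $\|\chi_n\|_{T(B)}=q_{B^\circ}(T^\ast\chi_n)$, together with verifying that the strong topology $\beta(L',L)$ on $(\ell_1^0)'$ really is the sup-norm and that ``strongly $q$-summable with respect to all closed absolutely convex $U\in\Nn_0^c$'' reduces, for the relevant $U=B^\circ$, to the single estimate controlling $\|\chi_n\|_{T(B)}$. The equicontinuity subtleties that plague the $\EE$-variants do not intervene here, so the argument should go through cleanly once these gauge-functional computations are pinned down; the local completeness step is routine given the analogous argument already in the paper.
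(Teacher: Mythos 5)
Your proposal is correct and follows essentially the same route as the paper: the same chain of implications, the same key operator $T:\ell_1^0\to E$ (or $\ell_1$ in the locally complete case) with $T(e_n)=x_n$, and the same two duality computations $\|T^\ast\chi\|_{\ell_\infty}=\sup_i|\langle\chi,x_i\rangle|$ and $q_{B^\circ}(T^\ast\chi)=\sup_{x\in B}|\langle\chi,T(x)\rangle|$ identifying $(p,q)$-limitedness of $T(B)$ with $(q,p)$-convergence of $T^\ast$. The only cosmetic differences are that the paper argues (iii)$\Rightarrow$(iv) directly rather than by contradiction and handles (v) by defining $T$ on $\ell_1$ outright (via local completeness) instead of extending from $\ell_1^0$; both variants are equivalent.
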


\begin{proof}
The implications (i)$\Ra$(ii)$\Ra$(iii) and (ii)$\Ra$(v) are trivial.

(iii)$\Ra$(iv) and (v)$\Ra$(iv): Let $A$ be a $(p,q)$-limited subset of $E$. Fix an arbitrary sequence $S=\{x_n\}_{n\in\w}$ in $A$, so $S$ is a bounded subset of $E$. Therefore, by Proposition 
5.9 of \cite{Gab-limited}, the linear map $T:\ell_1^0 \to E$ (or $T:\ell_1 \to E$ if $E$ is locally complete) defined by
\[
T(a_0 e_0+\cdots+a_ne_n):=a_0 x_0+\cdots+ a_n x_n \quad (n\in\w, \; a_0,\dots,a_n\in\IF).
\]
is continuous. For every $n\in\w$ and each $\chi\in E'$, we have
$
\langle T^\ast(\chi),e_n\rangle=\langle \chi,T(e_n)\rangle=\langle\chi,x_n\rangle
$
and hence $T^\ast(\chi)=\big(\langle\chi,x_n\rangle\big)_n\in\ell_\infty$. In particular, $\|T^\ast(\chi)\|_{\ell_\infty}=\sup_{n\in\w} |\langle\chi,x_n\rangle|$.

Let now $\{\chi_n\}_{n\in\w}$ be a weak$^\ast$ $p$-summable sequence in $E'_{w^\ast}$. Since $A$ and hence also $S$ are $(p,q)$-limited sets we obtain $\big(\|T^\ast(\chi_n)\|_{\ell_\infty}\big)=\big(\sup_{i\in\w} |\langle\chi_n,x_i\rangle|\big)\in\ell_q$ (or $\in c_0$ if $q=\infty$). Therefore $T^\ast$ is $(q,p)$-convergent, and hence, by (iii) or (v), the operator $T$ is sequentially compact (resp., sequentially precompact). Therefore $S=\{T(e_n)\}_{n\in\w}$ has a  convergent (resp.,  Cauchy) subsequence in $E$. Hence $A$ is a relatively sequentially compact (resp., sequentially precompact) subset of $E$. Thus $E$ has the $sGP_{(p,q)}$ property  (resp., the $spGP_{(p,q)}$ property).
\smallskip

(iv)$\Ra$(i)  Let $T:L\to E$ be  an operator from a locally convex space $(L,\tau)$ such that $T^\ast: E'_{w^\ast} \to L'_\beta$ is $(q,p)$-convergent. Fix an arbitrary bounded subset $B$ of $L$. Then $U:=B^\circ$ is a neighbourhood of zero of $L'_\beta$. Let $\{\chi_n\}_{n\in\w}$ be a weakly $p$-summable sequence in $E'_{w^\ast}$. By assumption $T^\ast: E'_{w^\ast} \to L'_\beta$ is $(q,p)$-convergent, and hence
\begin{equation} \label{equ:sequential-GPpq-property}
\big( q_U(T^\ast(\chi_n))\big)\in\ell_q \quad \mbox{(or $\in c_0$ if $q=\infty$)},
\end{equation}
where $q_U$ is the gauge functional of $U$. For every $\chi\in E'$, we have
\[
\begin{aligned}
q_U(T^\ast(\chi)) & =\inf\big\{\lambda>0 : T^\ast(\chi)\in \lambda B^\circ\big\}=\inf\big\{\lambda>0 : \sup_{x\in B} |\langle T^\ast(\chi),x\rangle|\leq \lambda\big\}\\
& =\sup_{x\in B} |\langle T^\ast(\chi),x\rangle|=\sup_{x\in B} |\langle \chi,T(x)\rangle|.
\end{aligned}
\]
Therefore, (\ref{equ:sequential-GPpq-property}) means that the set $T(B)$ is a $(p,q)$-limited subset of $E$. By (iv), we obtain that $T(B)$ is a relatively sequentially compact  (resp., sequentially precompact) subsets of $E$. \qed
\end{proof}


Let $p,q,q'\in[1,\infty]$, $q'\leq q$, and let $E$ and $L$ be locally convex spaces. Following \cite{Gab-p-Oper}, a linear map $T:E\to L$ is called {\em $(q',q)$-limited $p$-convergent} if $T(x_n)\to 0$ for every  weakly $p$-summable sequence $\{x_n\}_{n\in\w}$ in $E$ which is a $(q',q)$-limited subset of $E$. 

The next our purpose is to give another characterization of locally convex spaces $E$ with the sequentially precompact $GP_{(q',q)}$ property under an additional assumption that $E$ has the $wCSP$. First we prove the following lemma.

\begin{lemma} \label{l:wsGP-id}
Let $1\leq p\leq q\leq \infty$, and let $E$ be a locally convex space. If  $E$ has  the $spGP_{(p,q)}$ property, then the identity map $\Id_E: E\to E$ is $(p,q)$-limited $\infty$-convergent.
\end{lemma}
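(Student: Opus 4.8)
The plan is to unwind both definitions and reduce everything to Lemma~\ref{l:null-seq}. By the definition of a $(q',q)$-limited $p$-convergent map, applied with $q'=p$ and with convergence order $\infty$, the assertion that $\Id_E$ is $(p,q)$-limited $\infty$-convergent means exactly this: every weakly $\infty$-summable sequence $S=\{x_n\}_{n\in\w}$ in $E$ which is also a $(p,q)$-limited subset of $E$ converges to $0$ in $E$. The first thing I would note is that a sequence is weakly $\infty$-summable precisely when it is weakly null, since for $p=\infty$ the requirement that $(\langle\chi,x_n\rangle)\in c_0$ for every $\chi\in E'$ is just the statement that $x_n\to 0$ in $E_w$.

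So fix a weakly null sequence $S=\{x_n\}_{n\in\w}$ in $E$ that is a $(p,q)$-limited set. First I would invoke the hypothesis: since $E$ has the $spGP_{(p,q)}$ property, $S$ is sequentially precompact, and hence precompact (every sequentially precompact subset of a locally convex space is precompact, as recorded in Section~\ref{sec:pre}). Now $S$ is weakly null and precompact in $E$, so the ``consequently'' part of Lemma~\ref{l:null-seq}, applied with the weak topology $\sigma(E,E')$ contained in the original topology of $E$, yields $x_n\to 0$ in $E$. Since $\Id_E(x_n)=x_n$, this is exactly the statement that $\Id_E$ sends every weakly $\infty$-summable $(p,q)$-limited sequence to a null sequence, i.e.\ that $\Id_E$ is $(p,q)$-limited $\infty$-convergent.

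I do not expect any real obstacle here; the argument is a short chain of implications. The only two points to keep in mind are that ``sequentially precompact'' must be upgraded to ``precompact'' so that Lemma~\ref{l:null-seq} is applicable, and that weakly $\infty$-summable coincides with weakly null. No new constructions or estimates are required.
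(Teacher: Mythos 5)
Your proposal is correct and follows essentially the same route as the paper: the paper's proof also passes from the $spGP_{(p,q)}$ property to sequential precompactness, then to precompactness, and then applies Lemma~\ref{l:null-seq} to the weakly null sequence (the only cosmetic difference being that the paper phrases it as a contradiction argument, whereas you argue directly). Your preliminary unwinding of ``$(p,q)$-limited $\infty$-convergent'' and of ``weakly $\infty$-summable $=$ weakly null'' is accurate, so nothing is missing.
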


\begin{proof}
Fix a $(p,q)$-limited weakly null sequence $S=\{x_n\}_{n\in\w}$  in $E$, and suppose for a contradiction that $x_n\not\to 0$ in $E$. Passing to a subsequence if needed we can assume that $S\cap U=\emptyset$ for some neighborhood $U$ of zero in $E$. Since $S$ is a $(p,q)$-limited set and $E$ has the $spGP_{(p,q)}$ property, $S$ is sequentially precompact hence precompact in $E$. As $S$ is also weakly null, Lemma \ref{l:null-seq}, 
implies that $x_n\to 0$. But this is impossible since $S\cap U=\emptyset$.\qed
\end{proof}




\begin{theorem} \label{t:sGPp-limited-p-conv}
Let $1\leq p\leq q\leq\infty$, and let $E$ be a locally convex space with the $wCSP$. Then $E$ has the $spGP_{(p,q)}$ property if and only if the identity map $\Id_E: E\to E$ is $(p,q)$-limited $\infty$-convergent.
\end{theorem}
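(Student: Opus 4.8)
The statement is an equivalence and one direction is already in hand: if $E$ has the $spGP_{(p,q)}$ property, then Lemma~\ref{l:wsGP-id} gives that $\Id_E$ is $(p,q)$-limited $\infty$-convergent, with no use of the $wCSP$. So the content of the theorem lies entirely in the converse, and the plan is to exploit the $wCSP$ to produce Cauchy subsequences inside an arbitrary $(p,q)$-limited set.

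To prove the converse, assume that $\Id_E$ is $(p,q)$-limited $\infty$-convergent, let $A$ be a $(p,q)$-limited subset of $E$, and fix a sequence $S=\{x_n\}_{n\in\w}$ in $A$. Since $A$ is bounded, the $wCSP$ provides a subsequence $\{x_{n_k}\}_{k\in\w}$ which is weakly $\infty$-Cauchy; that is, for every pair of strictly increasing sequences $(a_i),(b_i)\subseteq\w$ the sequence $(x_{n_{a_i}}-x_{n_{b_i}})_{i\in\w}$ is weakly null. The key observation is that the range $D:=\{x_{n_{a_i}}-x_{n_{b_i}}:i\in\w\}$ of any such difference sequence is again a $(p,q)$-limited set: for an arbitrary weak$^\ast$ $p$-summable sequence $\{\chi_m\}_{m\in\w}$ in $E'$ one has the elementary bound $\|\chi_m\|_D\le 2\|\chi_m\|_A$ for every $m$, and hence $(\|\chi_m\|_D)_m\in\ell_q$ (in $c_0$ if $q=\infty$) because $(\|\chi_m\|_A)_m$ is. Thus $(x_{n_{a_i}}-x_{n_{b_i}})_{i\in\w}$ is a weakly null --- equivalently, weakly $\infty$-summable --- sequence whose range is a $(p,q)$-limited subset of $E$, so the hypothesis that $\Id_E$ is $(p,q)$-limited $\infty$-convergent yields $x_{n_{a_i}}-x_{n_{b_i}}\to 0$ in $E$ for every such pair $(a_i),(b_i)$.

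It then remains to deduce that $\{x_{n_k}\}_{k\in\w}$ is itself Cauchy in $E$, which I would do by contradiction. If it were not Cauchy, pick a balanced $U\in\Nn_0(E)$ witnessing the failure, and construct recursively strictly increasing sequences $(a_i),(b_i)\subseteq\w$ with $a_i<b_i<a_{i+1}$ and $x_{n_{a_i}}-x_{n_{b_i}}\notin U$ for all $i$: at each stage the failure of the Cauchy condition beyond the current indices produces two fresh indices, and balancedness of $U$ allows ordering them as $a_i<b_i$. This contradicts the convergence $x_{n_{a_i}}-x_{n_{b_i}}\to 0$ established in the previous step. Hence $\{x_{n_k}\}_{k\in\w}$ is a Cauchy subsequence of $S$; since $S$ was an arbitrary sequence in $A$, the set $A$ is sequentially precompact, and therefore $E$ has the $spGP_{(p,q)}$ property.

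The one point that genuinely needs care is the passage from ``every interleaved difference sequence tends to $0$'' to ``the subsequence is Cauchy'': here it is essential that the subsequence supplied by the $wCSP$ be weakly $\infty$-Cauchy rather than merely weakly convergent, so that \emph{all} interleaved difference sequences are simultaneously covered by the hypothesis on $\Id_E$. Everything else --- the bound $\|\chi_m\|_D\le 2\|\chi_m\|_A$, the stability of $(p,q)$-limitedness under passing to subsets, and the recursive choice of indices --- is routine, so I do not anticipate any further obstacle.
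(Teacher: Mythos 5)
Your proposal is correct and follows essentially the same route as the paper: necessity via Lemma~\ref{l:wsGP-id}, and for sufficiency, extract a weakly Cauchy subsequence via the $wCSP$, note that difference sequences remain $(p,q)$-limited, apply the $\infty$-convergence of $\Id_E$, and conclude Cauchyness. The only difference is cosmetic — you spell out the interleaving/contradiction step that the paper leaves implicit when passing from "all difference sequences are null" to "the subsequence is Cauchy."
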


\begin{proof}
The necessity immediately follows from Lemma \ref{l:wsGP-id}. To prove the sufficiency assume that the identity map $\Id_E: E\to E$ is $(p,q)$-limited $\infty$-convergent, and let $A$ be a $(p,q)$-limited  subset of $E$. We have to show that any sequence $S=\{x_n\}_{n\in\w}$  in $A$ contains a Cauchy subsequence (in $E$).  Since $E$ has the  $wCSP$, $S$ has a subsequence $S'=\{x_{n_k}\}_{k\in\w}$ which is weakly Cauchy. Let $\{k_i\}_{i\in\w}$ be a strictly increasing sequence in $\w$. Then $\{x_{n_{k_i}}-x_{n_{k_{i+1}}}\}_{k\in\w}$ is weakly null. As $\{x_{n_{k_i}}-x_{n_{k_{i+1}}}\}_{k\in\w}$ is also a $(p,q)$-limited set and $\Id_E$ is $(p,q)$-limited $\infty$-convergent, it follows that $x_{n_{k_i}}-x_{n_{k_{i+1}}}\to 0$ in $E$. Thus $S'$ is Cauchy in $E$.\qed
\end{proof}

We know from Theorem \ref{t:Drew-GP} that a Banach space $E$ has the Gelfand--Phillips property if and only if every limited weakly null sequence in $E$ is norm null. An analogue assertion for the coarse $GP_p$ property (and $2\leq p<\infty$) was obtained in Theorem 3 of \cite{GalMir}. Below we generalize both these results.

\begin{theorem} \label{t:char-coarse-prGPp}
Let $1\leq p\leq q\leq\infty$, and let $(E,\tau)$ be a locally convex space.
\begin{enumerate}
\item[{\rm(i)}] If $E$ has the $prGP_{(p,q)}$ property {\rm(}the $spGP_{(p,q)}$ property{\rm)}, then every weakly null $(p,q)$-limited sequence in $E$ is $\tau$-null.
\item[{\rm(ii)}] If $E$ has the coarse $prGP_{p}$ property {\rm(}the coarse $spGP_{p}$ property{\rm)}, then every weakly null coarse $p$-limited sequence in $E$ is $\tau$-null.
\item[{\rm(iii)}] Assume additionally that $p\geq 2$ and $E$ has the Rosenthal property. Then $E$ has the $prGP_{(p,q)}$ property if and only if every weakly null $(p,q)$-limited sequence in $E$ is $\tau$-null.
\item[{\rm(iv)}] Assume additionally that $p\geq 2$ and $E$ has the Rosenthal property. Then $E$ has the coarse $prGP_{p}$ property if and only if every weakly null coarse $p$-limited  sequence in $E$ is $\tau$-null.
\end{enumerate}
\end{theorem}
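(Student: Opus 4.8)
The plan is to dispatch the ``soft'' implications first --- these need only the definitions and Lemma~\ref{l:null-seq} --- and then to prove the two converses, where the Rosenthal property enters through Theorem~\ref{t:inf-V*-wsc} and its proof. For (i) and (ii) the argument is uniform. Let $S=\{x_n\}_{n\in\w}$ be a weakly null sequence in $E$ which, as a set, is $(p,q)$-limited (resp.\ coarse $p$-limited). If $E$ has the $prGP_{(p,q)}$ property (resp.\ the coarse $prGP_p$ property) then $S$ is precompact; if $E$ has the $spGP_{(p,q)}$ property (resp.\ the coarse $spGP_p$ property) then $S$ is sequentially precompact, hence again precompact. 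In either case $S$ is $\sigma(E,E')$-null and $\tau$-precompact, so Lemma~\ref{l:null-seq} (with $\sigma(E,E')\subseteq\tau$) gives that $S$ is $\tau$-null.

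For (iii), the ``only if'' direction is exactly (i). For the converse assume $p\ge 2$, $E$ has the Rosenthal property, and every weakly null $(p,q)$-limited sequence in $E$ is $\tau$-null; I claim every $(p,q)$-limited set $A$ is precompact. If not, fix a closed absolutely convex $U\in\Nn_0(E)$ for which there is no finite $F\subseteq E$ with $A\subseteq F+U$, and build inductively a sequence $\{x_n\}_{n\in\w}\subseteq A$ with $x_n-x_m\notin U$ for all $n\ne m$. By Theorem~\ref{t:inf-V*-wsc} the $(p,q)$-limited set $A$ is weakly sequentially precompact, so after passing to a subsequence (the property $x_n-x_m\notin U$ is inherited) we may assume $\{x_n\}$ is weakly Cauchy; then $z_k:=x_{2k}-x_{2k-1}$ is a weakly null sequence with $z_k\notin U$ for every $k$. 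Since $\|\chi_n\|_{A-A}\le 2\|\chi_n\|_A$ for every weak$^\ast$ $p$-summable sequence $\{\chi_n\}_{n\in\w}$ in $E'$, the set $A-A$ is again $(p,q)$-limited, so $\{z_k\}\subseteq A-A$ is a weakly null $(p,q)$-limited sequence and hence $\tau$-null by hypothesis --- contradicting $z_k\notin U$.

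For (iv), the ``only if'' direction is (ii), and the converse runs parallel to (iii) after two preparatory observations. First, coarse $p$-limited sets are bounded: if $\sup_A|\chi|=\infty$ for some $\chi\in E'$, then $x\mapsto(\chi(x),0,0,\dots)$ is an operator of $E$ into $\ell_p$ (into $c_0$ if $p=\infty$) whose image on $A$ is unbounded. Second, for $p\ge 2$ and $E$ with the Rosenthal property, coarse $p$-limited sets are weakly sequentially precompact; this is proved by the same Odell--Stegall factorization as in Theorem~\ref{t:inf-V*-wsc}, the point being that a sequence in a coarse $p$-limited set $A$ with no weakly Cauchy subsequence would, by the Rosenthal property, contain an $\ell_1$-sequence $\{x_n\}$, and factoring $\ell_1\hookrightarrow\ell_p$ through an injective $L_\infty(\mu)$ (here $p\ge 2$ is used via $\ell_1\to\ell_2\to\ell_p$) and extending to $E$ would yield $T\in\LL(E,\ell_p)$ with $T(x_n)=e_n$, so that $\{e_n\}\subseteq T(A)$ is not relatively compact --- impossible. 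Also $A-A$ is coarse $p$-limited, since $T(A-A)=T(A)-T(A)$ is relatively compact for every $T\in\LL(E,\ell_p)$ (or $\LL(E,c_0)$). Granting these, the non-precompactness argument of (iii) applies verbatim and produces a weakly null coarse $p$-limited sequence which is not $\tau$-null. The only genuinely substantial step in the whole proof is this last re-running of the injectivity/factorization argument of Theorem~\ref{t:inf-V*-wsc} in the coarse setting, since the statement of that theorem does not directly record that coarse $p$-limited sets are weakly sequentially precompact.
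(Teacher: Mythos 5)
Your proof is correct and follows the paper's argument essentially step for step: (i) and (ii) are the same appeal to Lemma~\ref{l:null-seq}, and the converses in (iii) and (iv) use the identical scheme (a $U$-separated sequence in a non-precompact limited set, weak sequential precompactness via the Rosenthal property, a weakly Cauchy subsequence, and a weakly null difference sequence that stays outside $U$), with your use of $A-A$ and $x_{2k}-x_{2k-1}$ being only a cosmetic variant of the paper's consecutive differences $a_n-a_{n+1}$. The one genuine divergence is in (iv): where the paper simply cites an external result (Theorem 5.21 of \cite{Gab-limited}) for the fact that coarse $p$-limited sets are weakly sequentially precompact when $p\geq 2$ and $E$ has the Rosenthal property, you re-run the Odell--Stegall factorization $\ell_1\to L_\infty(\mu)\to\ell_2\to\ell_p$ from the proof of Theorem~\ref{t:inf-V*-wsc} to derive it directly from the definition of coarse $p$-limited sets; this is a correct and self-contained substitute, and the contradiction there (that $T(A)$ would contain the non-relatively-compact basis $\{e_n\}$) is if anything more immediate than in the $(V^\ast)$ setting.
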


\begin{proof}
(i) and (ii): Let $S=\{x_n\}_{n\in\w}$ be a weakly null $(p,q)$-limited (resp., coarse $p$-limited) sequence in $E$. By the $prGP_{(p,q)}$ property (resp., the $spGP_{(p,q)}$ property, the coarse $prGP_{p}$ property or the coarse $spGP_{p}$ property), $S$ is a precompact subset of $E$. Thus, by Lemma \ref{l:null-seq}, 
$S$ is $\tau$-null.

(iii) and (iv): Assume that $p\geq 2$ and $E$ has the Rosenthal property. The necessity is proved in (i) and (ii). To prove the sufficiency, let every weakly null $(p,q)$-limited (resp., coarse $p$-limited) sequence in $E$ is $\tau$-null. Let $A$ be a  $(p,q)$-limited (resp., coarse $p$-limited) subset of $E$, and suppose for a contradiction that $A$ is not $\tau$-precompact. Then there are a sequence $\{a_n\}_{n\in\w}$ in $A$ and $U\in\Nn_0(E)$ such that $a_n-a_m\not\in U$ for all distinct $n,m\in\w$. By Theorem 
5.21 of \cite{Gab-limited} and Theorem \ref{t:inf-V*-wsc}, we obtain that $A$ is weakly sequentially precompact. Therefore, passing to a subsequence if needed we can assume that $\{a_n\}_{n\in\w}$ is weakly Cauchy. In particular, the sequence $\{a_n-a_{n+1}\}_{n\in\w}$ is weakly null. Since $\{a_n-a_{n+1}\}_{n\in\w}$ is also a  $(p,q)$-limited (resp., coarse $p$-limited)  subset of $E$ (see Lemmas 3.1 and 4.1 of \cite{Gab-limited}), 
it follows that $a_n-a_{n+1} \to 0$ in $E$ which contradicts the condition $a_n-a_{n+1}\not\in U$ for all $n\in\w$. Thus $A$ is $\tau$-precompact and hence $E$ has the  $prGP_{(p,q)}$ property (resp., the coarse  $prGP_{p}$ property).\qed
\end{proof}

Let $D$ be a subset of a locally convex space $E$. Recall that a subset $A$ of $E$ is called {\em $D$-separated} if $a-a'\not\in D$ for all distinct $a,a'\in A$, and $A$ is {\em separated} if $A$ is $U$-separated for some neighborhood $U$ of zero.
We also denote by $(\ell_r)_p$ and $(c_0)_p$ the Banach spaces $\ell_r$ and $c_0$ endowed with the topology induced from $\IF^\w$.

Since any limited set and each $(p,q)$-limited set are $(p,\infty)$-limited, the case $q=\infty$ is of interest. Below we give necessary conditions to have the precompact $GP_{(p,\infty)}$ property.
\begin{proposition} \label{p:GPp-necessary}
Let $p\in[1,\infty]$, and let $E$ be a locally convex space. Consider the following assertions:
\begin{enumerate}
\item[{\rm (i)}] $E$ has the precompact $GP_{(p,\infty)}$ property;
\item[{\rm (ii)}] For every bounded non-precompact set $B\subseteq E$, there is a weak$^\ast$ $p$-summable sequence $\{\chi_n\}_{n\in\w}$ in $E'$ such that $\|\chi_n\|_B \not\to 0$.
\item[{\rm (iii)}] For any infinite bounded  separated subset $D$ of $E$ and every $\delta>0$ there exist a sequence $\{x_n\}_{n\in\w}$ in $D$ and a  weak$^\ast$ $p$-summable sequence $\{f_n\}_{n\in\w}$ in $E'$ such that $|f_n(x_k)|<\delta$ and $|f_n(x_n)|>1+\delta$ for all natural numbers $k<n$.
\item[{\rm (iv)}] For any infinite bounded separated set $D$ in $E$ there exists a continuous operator $T:E\to (\ell_p)_p$ {\rm(}or $T:E\to (c_0)_p$ if $p=\infty${\rm)} such that $T(D)$ is not precompact in the Banach space $\ell_p$ {\rm(}or in $c_0$ if $p=\infty${\rm)}.
\end{enumerate}
Then {\rm(i)$\Ra$(ii)$\Ra$(iii)$\Ra$(iv)}.
\end{proposition}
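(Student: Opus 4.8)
The plan is to establish the three implications in a chain, each time extracting more structure from the failure of precompactness. For \textbf{(i)$\Rightarrow$(ii)}: suppose $B\subseteq E$ is bounded and not precompact. If every weak$^\ast$ $p$-summable sequence $\{\chi_n\}_{n\in\w}$ in $E'$ satisfied $\|\chi_n\|_B\to 0$, then by definition $B$ would be a $(p,\infty)$-limited set, hence precompact by the precompact $GP_{(p,\infty)}$ property — a contradiction. So the contrapositive is immediate; this step is essentially unwinding definitions.

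For \textbf{(ii)$\Rightarrow$(iii)}: given an infinite bounded separated set $D$ (say $U$-separated for some $U\in\Nn_0(E)$) and $\delta>0$, note that $D$ is bounded and non-precompact (an infinite separated set cannot be covered by finitely many translates of a small enough neighborhood), so (ii) yields a weak$^\ast$ $p$-summable sequence $\{\chi_n\}_{n\in\w}$ in $E'$ with $\|\chi_n\|_D\not\to 0$; passing to a subsequence we may assume $\|\chi_n\|_D\geq c>0$ for all $n$, and after rescaling by a fixed constant (which preserves weak$^\ast$ $p$-summability up to a scalar factor absorbed into the $\ell_p$-norm) we may assume $c>1+\delta$. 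Now I would build the two sequences $\{x_n\}$ and $\{f_n\}$ by a diagonal/pruning argument: since $\{\chi_n\}$ is weak$^\ast$ $p$-summable, for each \emph{fixed} finite set of points the values $\langle\chi_n,x\rangle$ tend to zero along $n$ (indeed $(\langle\chi_n,x\rangle)_n\in\ell_p$ or $c_0$), so one can recursively choose, at stage $n$, first a functional $f_n$ from the tail of $\{\chi_n\}$ whose values on the already-chosen $x_0,\dots,x_{n-1}$ are all below $\delta$, and then a point $x_n\in D$ with $|f_n(x_n)|>1+\delta$ (possible since $\|f_n\|_D>1+\delta$). This is the step requiring the most care: the ordering of the quantifiers in (iii) ($|f_n(x_k)|<\delta$ for $k<n$) forces the inductive construction to alternate between picking the functional and picking the point, and one must verify the tail-smallness of $(\langle\chi_n,x\rangle)_n$ uniformly over the finitely many previously selected points — which is fine since it is a finite set.

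For \textbf{(iii)$\Rightarrow$(iv)}: given an infinite bounded separated set $D$, apply (iii) with, say, $\delta=\tfrac12$ to obtain $\{x_n\}_{n\in\w}\subseteq D$ and a weak$^\ast$ $p$-summable $\{f_n\}_{n\in\w}\subseteq E'$ with $|f_n(x_k)|<\tfrac12$ for $k<n$ and $|f_n(x_n)|>\tfrac32$. Define $T:E\to\IF^\w$ by $T(x):=(\langle f_n,x\rangle)_{n\in\w}$. Since $\{f_n\}$ is weak$^\ast$ $p$-summable, $T$ actually maps $E$ into $\ell_p$ (or $c_0$ if $p=\infty$), and $T$ is continuous as a map into $(\ell_p)_p$ (the topology induced from $\IF^\w$) because each coordinate functional $f_n$ is continuous. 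It remains to see that $T(D)$ is not precompact in the Banach space $\ell_p$: the sequence $\{T(x_n)\}_{n\in\w}$ has $n$-th coordinate exceeding $\tfrac32$ in absolute value while, for $m>n$, the $m$-th coordinate of $T(x_n)$, namely $f_m(x_n)$, is below $\tfrac12$ in absolute value. Hence for $n\neq m$ the difference $T(x_n)-T(x_m)$ has a coordinate (the $\max(n,m)$-th one, using $|f_{\max}(x_{\max})|>\tfrac32$ and $|f_{\max}(x_{\min})|<\tfrac12$) of absolute value $>1$, so $\|T(x_n)-T(x_m)\|_{\ell_p}\geq\|T(x_n)-T(x_m)\|_{\ell_\infty}>1$; thus $\{T(x_n)\}$ is $1$-separated in $\ell_p$ and $T(D)$ is not precompact. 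The main obstacle throughout is the careful bookkeeping in step (ii)$\Rightarrow$(iii); the other two steps are definition-chasing and a standard "spreading sequence" estimate.
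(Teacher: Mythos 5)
Your proposal is correct and follows essentially the same route as the paper: (i)$\Rightarrow$(ii) by unwinding the definition of $(p,\infty)$-limited sets, (ii)$\Rightarrow$(iii) by normalizing $\|\chi_n\|_D>1+\delta$ and then alternately selecting a tail functional small on the finitely many previously chosen points and a witness point in $D$, and (iii)$\Rightarrow$(iv) with $\delta=\tfrac12$ and the coordinate map $T(x)=(\langle f_n,x\rangle)_n$ giving a $1$-separated image. No gaps.
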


\begin{proof}
(i)$\Rightarrow$(ii) Fix any bounded non-precompact set $B\subseteq E$. By (i), $E$ has the  precompact  $GP_{(p,\infty)}$ property and hence $B$ is not $(p,\infty)$-limited. Therefore there exists a weak$^\ast$  $p$-summable sequence $\{\chi_n\}_{n\in\w}$ in $E'$ such that $\|\chi_n\|_B\not\to 0$, as desired.
\smallskip

(ii)$\Rightarrow$(iii) Fix any  $\delta>0$ and any infinite bounded  separated set $D$ in  $E$. Choose $U\in\Nn_0(E)$ such that the set $D$ is  $U$-separated. Observe that $D$ is not precompact because $D$ is $U$-separated and infinite. By (ii), there is a weak$^\ast$ $p$-summable sequence $\{g_n\}_{n\in\w}$ in $E'$ such that $\|g_n\|_D \not\to 0$. Passing to a subsequence if needed we can assume that $\|g_n\|_D >a>0$ for all $n\in\w$. Further, multiplying each functional $g_n$ by $(1+\delta)/a$ we can assume that
\begin{equation} \label{equ:GPp-1}
\|g_n\|_D = \sup_{d\in D}|g_n(d)| >1+\delta \quad \mbox{ for every }\; n\in\w.
\end{equation}
Now, choose an arbitrary $x_0\in D$ such that $|g_0(x_0)|>1+\delta$. Assume that, for $k\in\w$, we found $x_0,\dots,x_k\in D$ and a sequence $0=n_0 <n_1<\cdots<n_k$ of natural numbers such that
\[
|g_{n_j}(x_i)|<\delta \; \mbox{ and } \; |g_{n_j}(x_j)| >1+\delta \quad \mbox{ for every } \; 0\leq i<j\le k.
\]
Since $g_n\to 0$ in the weak$^\ast$ topology $\sigma(E',E)$, (\ref{equ:GPp-1}) implies that there are $n_{k+1}>n_k$ and $x_{k+1} \in D\SM \{x_0,\dots,x_k\}$ such that  $|g_{n_{k+1}}(x_{k+1})| >1+\delta$ and $|g_{n_{k+1}}(x_i)|<\delta$ for every $i\le k$. For every $k\in\w$, put $f_k:=g_{n_k}$, and observe that the subsequence $\{f_k\}_{k\in\w}$ of $\{g_n\}_{n\in\w}$ is also  weak$^\ast$ $p$-summable in $E'$ and
\[
|f_k(x_i)|=|g_{n_k}(x_i)|<\delta \; \mbox{ and }\; |f_k(x_k)|=|g_{n_k}(x_k)|>1+\delta
\]
for any numbers $i<k$, as desired.
\smallskip

(iii)$\Rightarrow$(iv)  Let $D$ be any bounded separated set in $E$. By (iii) applied for $\delta=\tfrac{1}{2}$, there exist a sequence $\{x_n\}_{n\in\w}$ in $D$ and a weak$^\ast$ $p$-summable sequence $\{\chi_n\}_{n\in\w}$ in $E'$ such that $|\chi_m(x_n)|<\frac{1}{2}$ and $|\chi_m(x_m)|>\frac{3}{2}$ for all $n<m$. Then $|\chi_m(x_m)-\chi_m(x_n)|\ge |\chi_m(x_m)|-|\chi_m(x_n)|>\frac{3}{2}-\frac{1}{2}=1$ for every $n<m$. It follows that the operator
\[
T: E\to  (\ell_p)_p \;\; (\mbox{or } T:E\to (c_0)_p \mbox{ if $p=\infty$}), \quad T(x):= \big(\chi_n(x)\big)_{n\in\w},
\]
is well-defined and continuous. Since
\[
\|T(x_n)-T(x_m)\|\ge |\chi_m(x_m)-\chi_m(x_n)|>1\;\; \mbox{ for every $n<m$},
\]
the set $T(D)\supseteq\{T(x_n)\}_{n\in\w}$ is not precompact in $\ell_p$ (or in $c_0$  if $p=\infty$).\qed
\end{proof}

In two extreme cases when $p=1$ or $p=\infty$ we can reverse implications in Proposition \ref{p:GPp-necessary}, cf. Theorem 2.2 of \cite{BG-GP-lcs}.

\begin{theorem} \label{t:GPpq-characterization}
Let $p\in\{1,\infty\}$. For a locally convex space $E$ the following assertions are equivalent:
\begin{enumerate}
\item[{\rm (i)}] $E$ has the precompact $GP_{(p,\infty)}$ property; 
\item[{\rm (ii)}] For every bounded non-precompact set $B\subseteq E$, there is a weak$^\ast$ $p$-summable sequence $\{\chi_n\}_{n\in\w}$ in $E'$ such that $\|\chi_n\|_B \not\to 0$.
\item[{\rm (iii)}] For any infinite bounded  separated subset $D$ of $E$ and every $\delta>0$ there exist a sequence $\{x_n\}_{n\in\w}$ in $D$ and a  weak$^\ast$ $p$-summable sequence $\{f_n\}_{n\in\w}$ in $E'$ such that $|f_n(x_k)|<\delta$ and $|f_n(x_n)|>1+\delta$ for all natural numbers $k<n$.
\item[{\rm (iv)}] For any infinite bounded separated set $D$ in $E$ there exists a continuous operator $T:E\to (\ell_1)_p$ {\rm(}or $ T:E\to (c_0)_p$ if $p=\infty${\rm)} such that $T(D)$ is not precompact in the Banach space $\ell_1$ {\rm(}or in $c_0$ if $p=\infty${\rm)}.
\end{enumerate}
\end{theorem}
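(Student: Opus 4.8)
The implications (i)$\Ra$(ii)$\Ra$(iii)$\Ra$(iv) are already contained in Proposition~\ref{p:GPp-necessary}; note that for $p\in\{1,\infty\}$ the target spaces $(\ell_p)_p$ and $(c_0)_p$ appearing there are exactly $(\ell_1)_1$ and $(c_0)_\infty$, hence coincide with those in (iv). So the plan is to prove the single remaining implication (iv)$\Ra$(i). Assuming (iv), I would suppose for a contradiction that some $(p,\infty)$-limited subset $A$ of $E$ is not precompact. Then $A$ is bounded, so a routine induction produces an infinite bounded separated set $D=\{a_n:n\in\w\}\subseteq A$ (fix $U\in\Nn_0(E)$ not covering $A$ by finitely many translates, a balanced $V\in\Nn_0(E)$ with $V\subseteq U$, and choose $a_n\in A\setminus\bigcup_{i<n}(a_i+V)$; the resulting $D$ is $V$-separated, hence separated), and $D$ is still $(p,\infty)$-limited. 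Feeding $D$ into (iv) gives a continuous operator $T:E\to(\ell_1)_p$ (resp.\ $T:E\to(c_0)_p$ when $p=\infty$) with $T(D)$ not precompact in the Banach space $\ell_1$ (resp.\ $c_0$); and since operators send $(p,\infty)$-limited sets to $(p,\infty)$-limited sets (Lemma~3.1(iv) of \cite{Gab-limited}), $T(D)$ is a $(p,\infty)$-limited subset of $(\ell_1)_p$ (resp.\ $(c_0)_p$). Everything then reduces to the following claim, whose proof is the core of the argument: \emph{every $(p,\infty)$-limited subset of $(\ell_1)_p$, resp.\ of $(c_0)_p$, is precompact (equivalently, relatively compact) in the Banach space $\ell_1$, resp.\ $c_0$}.

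To prove the claim I would first identify the dual of $(\ell_1)_p$, resp.\ $(c_0)_p$ (both are subspaces of $\IF^\w$), with $c_{00}$, the space of finitely supported scalar sequences, under the pairing $\langle\xi,x\rangle=\sum_k\xi_kx_k$; then I would show that a subset $C$ which is not relatively compact in the Banach norm fails to be $(p,\infty)$-limited, by exhibiting a weak$^\ast$ $p$-summable sequence $\{\xi_j\}$ in $c_{00}$ (a weak$^\ast$ null one when $p=\infty$) with $\|\xi_j\|_C\not\to0$. If $C$ is not norm-bounded: in the $\ell_1$-case take $c^{(j)}\in C$ with $\|c^{(j)}\|_1\ge j\,2^{j+1}$, pick $\xi_j\in c_{00}$ with $\|\xi_j\|_\infty\le1$ and $\langle\xi_j,c^{(j)}\rangle\ge\|c^{(j)}\|_1/2$, and damp by $2^{-j}$ (then $\sum_j|\langle 2^{-j}\xi_j,x\rangle|\le\|x\|_1\sum_j2^{-j}<\infty$ for all $x\in\ell_1$, while $\|2^{-j}\xi_j\|_C\ge j$); in the $c_0$-case, since $C$ is product-bounded hence coordinatewise bounded, the indices $k_j$ with $|c^{(j)}_{k_j}|>j$ form an unbounded set, so along a subsequence $k_j$ is strictly increasing and $\xi_j:=\overline{\sgn(c^{(j)}_{k_j})}\,e_{k_j}^\ast$ works, as $\langle\xi_j,x\rangle=\pm x_{k_j}\to0$ on $c_0$ and $\|\xi_j\|_C\ge|c^{(j)}_{k_j}|>j$. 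If $C$ is norm-bounded but not norm-relatively-compact, invoke Proposition~\ref{p:compact-ell-p}: in the $c_0$-case it yields $\e>0$, a strictly increasing sequence $(n_j)$ and $c^{(j)}\in C$ with $|c^{(j)}_{n_j}|>\e$, and $\xi_j:=\overline{\sgn(c^{(j)}_{n_j})}\,e_{n_j}^\ast$ is weak$^\ast$ null with $\|\xi_j\|_C\ge\e$; in the $\ell_1$-case a standard gliding-hump argument yields pairwise disjoint consecutive intervals $I_1<I_2<\cdots$ of $\w$, elements $c^{(j)}\in C$ and $\xi_j\in c_{00}$ supported on $I_j$ with $\|\xi_j\|_\infty\le1$ and $\langle\xi_j,c^{(j)}\rangle>\e/2$, where disjointness gives $\sum_j|\langle\xi_j,x\rangle|\le\|x\|_1$ for all $x\in\ell_1$, so $\{\xi_j\}$ is weak$^\ast$ $1$-summable while $\|\xi_j\|_C>\e/2$. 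This proves the claim; applied to $C=T(D)$ it contradicts the non-precompactness of $T(D)$, which establishes (iv)$\Ra$(i) and hence the equivalence of (i)--(iv).

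The hard part is the claim itself, and two features make it delicate. First, $T(D)$ is a priori only bounded in the product topology of $(\ell_1)_p$ (resp.\ $(c_0)_p$), not in the Banach norm, which forces the separate treatment of the unbounded case and the damping by $2^{-j}$ needed to keep the witnessing sequence weak$^\ast$ $1$-summable. Second, in the $\ell_1$-case the gliding-hump block functionals must be genuinely weak$^\ast$ $1$-summable in $c_{00}$, which works only because consecutive blocks are disjoint --- and this is precisely where the hypothesis $p\in\{1,\infty\}$ is used: for $1<p<\infty$ block functionals on long intervals need not be weak$^\ast$ $p$-summable, and the criterion extracted from the coordinate functionals alone is too weak to force norm relative compactness.
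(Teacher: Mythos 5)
Your proposal is correct and follows essentially the same route as the paper: the implications (i)$\Ra$(ii)$\Ra$(iii)$\Ra$(iv) are quoted from Proposition~\ref{p:GPp-necessary}, and (iv)$\Ra$(i) is proved by extracting an infinite bounded separated set from a non-precompact $(p,\infty)$-limited set, applying (iv), and then exhibiting weak$^\ast$ null coordinate functionals (for $p=\infty$) resp.\ weak$^\ast$ $1$-summable disjointly supported block functionals produced by a gliding hump (for $p=1$) that witness non-limitedness. Your only real deviations are presentational: you isolate the key step as a self-contained claim about $(\ell_1)_p$ and $(c_0)_p$ and argue in the target space rather than pulling the functionals back through $T^\ast$, you use sign-matched block coefficients where the paper invokes Rudin's Lemma~6.3, and you explicitly treat the possibility that $T(D)$ is only product-bounded and not norm-bounded --- a point the paper passes over rather casually.
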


\begin{proof}
The implications (i)$\Ra$(ii)$\Ra$(iii)$\Ra$(iv) are proved in Proposition \ref{p:GPp-necessary}.
\smallskip

(iv)$\Rightarrow$(i) To show that $E$ has the  precompact $GP_{(p,\infty)}$ property we show that any non-precompact subset of $E$ is not $(p,\infty)$-limited. Fix a bounded non-precompact subset $P\subseteq E$. Then there exists $U\in\Nn_0(E)$ such that $L\not\subseteq F+U$ for any finite subset $F\subseteq E$. For every $n\in\w$, choose inductively a point $z_n\in P$ so that $z_n\notin \bigcup_{k<n}(z_k+U)$. Observe that the  set  $D:=\{z_n:n\in\w\}$ is infinite, bounded and $U$-separated, so non-precompact. By (iv), there exists a continuous operator $T:E\to (\ell_1)_p$ (or $T:E\to (c_0)_p$ if $p=\infty$) such that the set $T(D)$ is not precompact in the Banach space $\ell_1$ (or in $c_0$ if $p=\infty$). We distinguish between two cases.

{\em Case 1. Assume that $p=\infty$.} We follow the idea of Theorem 2.2 of \cite{BG-GP-lcs}. Since $c_0$ is a Banach space and $T(D)$ is not precompact, there exist a sequence $\{ a_n\}_{n\in\w}$ in $D$ and $\delta>0$ such that  $\| T(a_n)-T(a_m)\|_{c_0}\geq \delta$ for all distinct $n,m\in\w$ (see also (ii) of Proposition \ref{p:compact-ell-p}).
Observe that the sequence $\{ T(a_n)\}_{n\in\w}$ is bounded in the Banach space $c_0$. Therefore there are two sequences $0\leq n_0<n_1<\cdots$ and $0\leq m_0< m_1<\cdots$ of natural numbers such that
\[
\big| \langle e'_{m_k},T(a_{n_k})\rangle\big| >\tfrac{\delta}{2} \quad \mbox{ for every }\; k\in\w,
\]
where $e'_n: (c_0)_p\to \IF$ is the $n$th coordinate functional. For every $k\in\w$, set  $f_k :=e'_{m_k} \circ T$. It follows that $\{f_k\}_{k\in\w}$ is a weak$^\ast$ null in $E'$ and
\[
\|f_k\|_P =\sup_{x\in P} |f_k(x)| \geq |f_k (a_{n_k})|>\tfrac{\delta}{2}
\]
for every $ k\in\w$, witnessing that the set $P$ is not limited.
\smallskip

{\em Case 2. Assume that $p=1$.} Recall that, by (i) of Proposition \ref{p:compact-ell-p},  a  bounded subset $A$ of $\ell_1$ is precompact if and only if
\begin{equation} \label{equ:GPp-2}
\lim_{m\to\infty} \sup\Big\{ \sum_{m\leq n} |x_n| : x=(x_n)\in A\Big\} =0.
\end{equation}
For every $k\in\w$, set $y_k:=T(z_k)=(a_{n,k})_{n\in\w}\in\ell_1$. Since $T(D)$ is non-precompact, (\ref{equ:GPp-2}) implies that there is $\e>0$ such that
\begin{equation} \label{equ:GPp-3}
\sup\Big\{ \sum_{m\leq n} |a_{n,k}| : y_k=(a_{n,k})\in T(D)\Big\} >10\e \quad \mbox{ for every $m\in\w$}.
\end{equation}
Let $i=0$. Set $m_0:=0$. By (\ref{equ:GPp-3}), choose $k_0,r_0\in\w$ such that $m_0 < r_0$ and
\begin{equation} \label{equ:GPp-4}
\sum_{n=m_0}^{r_0} |a_{n,k_0}| > 8\e.
\end{equation}
Choose $m_1>r_0$ such that
\begin{equation} \label{equ:GPp-5}
\sum_{n\geq m_1} |a_{n,k}| < \e \quad \mbox{ for every }\; k\leq k_0.
\end{equation}
For $i=1$, (\ref{equ:GPp-3})--(\ref{equ:GPp-5})  imply that there are $k_1,r_1\in\w$ such that $k_1>k_0$, $m_1 < r_1$ and
\begin{equation} \label{equ:GPp-6}
\sum_{n=m_1}^{r_1} |a_{n,k_1}| > 8\e.
\end{equation}
Choose $m_2>r_1$ such that
\[
\sum_{n\geq m_2} |a_{n,k}| < \e  \quad \mbox{ for every }\; k\leq k_1.
\]
Continuing this process we find  sequences $0=m_0<r_0<m_1<r_1<\cdots$ and $k_0<k_1<\cdots$ such that for every $i\in\w$, we have
\begin{equation} \label{equ:GPp-7}
\sum_{n=m_i}^{r_i} |a_{n,k_i}| > 8\e \;\; \mbox{ and }\;\; \sum_{n\geq m_{i+1}} |a_{n,k_i}| < \e.
\end{equation}

For every $i\in\w$, by Lemma 6.3 of \cite{Rudin},  there is a subset $F_i$ of $[m_i,r_i]$ such that
\begin{equation} \label{equ:GPp-8}
\Big|\sum_{n\in F_i} a_{n,k_i}\Big| > 2\e.
\end{equation}
For every $n\in\w$, let $e^\ast_n$ be the $n$th coordinate functional of the dual space $(\ell_{1})_p'$ of $(\ell_{1})_p$. For every $i\in\w$, set
\[
\chi_i:= \sum_{n\in F_i} e^\ast_n.
\]
Observe that the sequence  $\{\chi_i\}_{i\in\w}$ is weak$^\ast$ $1$-summable in $(\ell_{1})_p'$ because if $y=(a_n)\in\ell_1$, then the inequalities $m_i<r_i<m_{i+1}$ imply that all $F_i$ are pairwise disjoint and hence
\[
\sum_{i\in\w} |\langle \chi_i ,y\rangle|=\sum_{i\in\w} \Big|\sum_{n\in F_i} a_{n}\Big| \leq\sum_{i\in\w} \sum_{n\in F_i} |a_n| \leq \sum_{n\in\w} |a_n|=\|y\|_{\ell_1} <\infty.
\]
Therefore the sequence $\{T^\ast(\chi_i)\}_{i\in\w}$ is weak$^\ast$ $1$-summable in $E'$. For every $i\in\w$, the inequality  (\ref{equ:GPp-8}) implies
\[
\sup_{x\in P} |\langle T^\ast(\chi_i),x\rangle| \geq |\langle\chi_i,T(z_{k_i})\rangle|=|\langle\chi_i,y_{k_i}\rangle|= \Big|\sum_{n\in F_i} a_{n,k_i}\Big| > 2\e,
\]
which means that the set $P$ is not $(1,\infty)$-limited.\qed
\end{proof}

The next theorem immediately follows from Corollary 
3.18 of \cite{Gab-limited} and, under some conditions, it characterizes the (precompact) $GP_{(p,\infty)}$ property.
\begin{theorem} \label{t:wGP-Mackey-c0}
Let $E$ be a $p$-barrelled Mackey space such that $E'_{w^\ast}$ is a weakly $p$-angelic space. Then $E$ has the precompact $GP_{(p,\infty)}$ property. If in addition $E$ is von Neumann complete, then $E$ has the $GP_{(p,\infty)}$ property.
\end{theorem}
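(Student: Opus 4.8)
The plan is to prove directly that, under the stated hypotheses, every $(p,\infty)$-limited subset of $E$ is precompact; this is essentially Corollary~3.18 of \cite{Gab-limited} and could equally well be invoked verbatim. I would begin with a reduction: since $E$ is $p$-barrelled, every weakly $p$-summable sequence in $E'_{w^\ast}$ is equicontinuous, so a subset of $E$ is $(p,\infty)$-limited if and only if it is $(p,\infty)$-$\EE$-limited; hence it is enough to fix a bounded $(p,\infty)$-limited set $A\subseteq E$ and show that $A$ is precompact.

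Next I would use that $E$ is a Mackey space, so that its topology coincides with the topology of uniform convergence on the family $\KK$ of all absolutely convex $\sigma(E',E)$-compact subsets of $E'$. It therefore suffices to fix $K\in\KK$ and prove that $A$ is totally bounded for the continuous seminorm $\|\cdot\|_K$, that is, that the image of $A$ under the evaluation map $J\colon E\to C(K)$, $J(x)(\chi):=\langle\chi,x\rangle$, is relatively compact in the Banach space $C(K)$ of continuous functions on the compact space $(K,\sigma(E',E))$. As $J(A)$ is bounded, the Arzel\`a--Ascoli theorem reduces this to the equicontinuity of the family $\{J(a):a\in A\}$ on $(K,\sigma(E',E))$. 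At this point I would invoke the hypothesis that $E'_{w^\ast}$ is weakly $p$-angelic in two ways: the $\sigma(E',E)$-compact set $K$ is Fr\'{e}chet--Urysohn, and it is relatively weakly sequentially $p$-compact.

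For the equicontinuity I would argue by contradiction. If $\{J(a):a\in A\}$ fails to be equicontinuous at some $\chi_0\in K$, then there is $\e>0$ such that the set $S:=\{\chi\in K:\ |\langle\chi-\chi_0,a\rangle|\ge\e\ \text{for some }a\in A\}$ has $\chi_0$ in its $\sigma(E',E)$-closure; by Fr\'{e}chet--Urysohnness of $K$ we obtain a sequence $(\chi_k)_{k\in\w}$ in $S$ with $\chi_k\to\chi_0$ in $\sigma(E',E)$, and for each $k$ we choose $a_k\in A$ with $|\langle\chi_k-\chi_0,a_k\rangle|\ge\e$. Since $K$ is relatively weakly sequentially $p$-compact, there is a subsequence $(\chi_{k_j})_{j\in\w}$ that converges weakly $p$ in $E'_{w^\ast}$; because weak $p$-convergence implies $\sigma(E',E)$-convergence and the latter topology is Hausdorff, its limit equals $\chi_0$, so $\{\chi_{k_j}-\chi_0\}_{j\in\w}$ is weak$^\ast$ $p$-summable. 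The $(p,\infty)$-limitedness of $A$ then forces $\|\chi_{k_j}-\chi_0\|_A\to0$, contradicting $|\langle\chi_{k_j}-\chi_0,a_{k_j}\rangle|\ge\e$. Hence $A$ is precompact, so $E$ has the precompact $GP_{(p,\infty)}$ property; and if $E$ is moreover von Neumann complete, this precompact set is relatively compact, which yields the $GP_{(p,\infty)}$ property.

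The step I expect to be the main obstacle is the two-stage passage to a sequence in the third paragraph: one must first descend from a net witnessing the failure of equicontinuity to a $\sigma(E',E)$-convergent sequence (this is precisely where Fr\'{e}chet--Urysohnness of the weak$^\ast$-compact absolutely convex set $K$ is used), and then thin that sequence so that its difference with the limit becomes weak$^\ast$ $p$-summable rather than merely weak$^\ast$ null (this is where relative weak sequential $p$-compactness of $K$ enters), all while keeping the associated ``bad'' points $a_k\in A$ attached; identifying the limit of the thinned subsequence as $\chi_0$ is the small but essential bridge between the two stages. Everything else --- the Mackey description of the topology, Arzel\`a--Ascoli, and the von Neumann-completeness addendum --- is routine, and indeed the entire argument is already packaged as Corollary~3.18 of \cite{Gab-limited}.
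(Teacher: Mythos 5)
Your proof is correct. Note, however, that the paper does not actually prove this theorem at all: it states that it ``immediately follows from Corollary 3.18 of \cite{Gab-limited}'', so your argument is a self-contained reconstruction rather than a parallel of anything in the text. The reconstruction is sound: the Mackey hypothesis correctly reduces precompactness of a $(p,\infty)$-limited set $A$ to total boundedness of $J(A)$ in $C(K)$ for each absolutely convex $\sigma(E',E)$-compact $K\subseteq E'$ (and $J(A)$ is uniformly bounded because such a $K$ is equicontinuous in a Mackey space); Arzel\`a--Ascoli reduces this to equicontinuity of $J(A)$ on $K$; and the two-stage extraction --- Fr\'{e}chet--Urysohnness of $K$ to get a $\sigma(E',E)$-convergent sequence of ``bad'' functionals, then relative weak sequential $p$-compactness of $K$ (both supplied by the weak $p$-angelicity of $E'_{w^\ast}$, whose weak topology is again $\sigma(E',E)$) to upgrade the thinned differences $\chi_{k_j}-\chi_0$ to a weak$^\ast$ $p$-summable sequence --- correctly produces the contradiction with $(p,\infty)$-limitedness of $A$. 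One small remark: the $p$-barrelledness of $E$ is never used in your argument; your opening reduction to $(p,\infty)$-$\EE$-limited sets plays no role afterwards, so that hypothesis is only relevant for the converse/characterization direction alluded to in the paper, not for the sufficiency you prove. The von Neumann completeness addendum is handled correctly.
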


Since the case $p=\infty$ is of independent interest, we select the next corollary which follows from Corollary 
3.19 of \cite{Gab-limited} (and in fact from Theorem \ref{t:wGP-Mackey-c0}).

\begin{corollary} \label{c:precompact-GP}
Let a locally convex space $E$ satisfy one of the following conditions:
\begin{enumerate}
\item[{\rm(i)}] $E$ is a $c_0$-barrelled Mackey space such that $E'_{w^\ast}$ is a weakly angelic space;
\item[{\rm(ii)}] $E$ is a reflexive space  such that $E'_{\beta}$ is a weakly angelic space;
\item[{\rm(iii)}] $E$ is a separable $c_0$-barrelled Mackey space.
\end{enumerate}
Then $E$ has the precompact $GP$ property. If in addition $E$ is von Neumann complete, then $E$ has the $GP$ property.
\end{corollary}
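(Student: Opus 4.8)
The plan is to read off the corollary from Theorem~\ref{t:wGP-Mackey-c0} in the case $p=\infty$. For $p=\infty$ a sequence in $E'_{w^\ast}$ is weakly $\infty$-summable exactly when it is weak$^\ast$ null, so being ``$\infty$-barrelled'' is being ``$c_0$-barrelled''; and since the weak topology of the locally convex space $E'_{w^\ast}$ is $\sigma\big(E',(E'_{w^\ast})'\big)=\sigma(E',E)$, the property ``$E'_{w^\ast}$ is weakly $\infty$-angelic'' is just ``$E'_{w^\ast}$ is weakly angelic'', i.e.\ $(E',\sigma(E',E))$ is an angelic topological space. Thus hypothesis (i) is literally the hypothesis of Theorem~\ref{t:wGP-Mackey-c0} for $p=\infty$, and that theorem delivers the precompact $GP_{(\infty,\infty)}$ property, namely the precompact $GP$ property, together with the $GP$ property once $E$ is von~Neumann complete. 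So it only remains to show that (ii)$\Rightarrow$(i) and (iii)$\Rightarrow$(i).

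For (ii): a reflexive locally convex space is barrelled, hence $c_0$-barrelled, and, polars of absolutely convex weak$^\ast$-compact sets being barrels, it is a Mackey space by the Mackey--Arens theorem. Reflexivity gives $(E'_\beta)'=E$, so the weak topology of $E'_\beta$ is $\sigma(E',E)$; therefore ``$E'_\beta$ is weakly angelic'' and ``$E'_{w^\ast}$ is weakly angelic'' are the same statement, and (i) applies. (Incidentally, a semi-reflexive space is already von~Neumann complete, since a closed bounded subset of it is complete while a precompact subset is bounded with complete precompact closure, so in case (ii) the last clause of the corollary is automatic.)

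For (iii): the assumptions provide $c_0$-barrelledness and the Mackey property, so by Theorem~\ref{t:wGP-Mackey-c0} for $p=\infty$ it suffices to prove that $E'_{w^\ast}$ is weakly angelic, equivalently that $(E',\sigma(E',E))$ is angelic. Separability of $E$ gives a countable dense set $\{x_k\}_{k\in\w}\subseteq E$, and the evaluation $\chi\mapsto\big(\langle\chi,x_k\rangle\big)_{k\in\w}$ is an injective continuous map of $E'_{w^\ast}$ into the metrizable space $\IF^{\w}$; it is therefore a homeomorphism on each weak$^\ast$-compact subset of $E'$, so those subsets are metrizable, hence Fr\'{e}chet--Urysohn. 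It remains to see that relatively countably (equivalently, sequentially) compact subsets of $E'_{w^\ast}$ are relatively compact: such a set is weak$^\ast$-bounded (a weak$^\ast$ cluster point of a sequence takes a finite value at each $x\in E$, so an unbounded sequence has no cluster point), and one concludes, via the dense set $\{x_k\}$ together with $c_0$-barrelledness, that its weak$^\ast$-closure is compact. \textbf{This last step is the one I expect to be the main obstacle}: since only $c_0$-barrelledness is available (strictly weaker than barrelledness), one cannot simply invoke the Alaoglu--Bourbaki theorem to make weak$^\ast$-bounded sets relatively weak$^\ast$-compact, and the argument must genuinely exploit the separability of $E$; the remaining pieces are routine bookkeeping around Theorem~\ref{t:wGP-Mackey-c0}.
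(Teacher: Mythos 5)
Your reduction of the corollary to Theorem~\ref{t:wGP-Mackey-c0} with $p=\infty$ is exactly the route the paper takes (it derives the corollary from Corollary~3.19 of \cite{Gab-limited} ``and in fact from Theorem~\ref{t:wGP-Mackey-c0}'', offering no further details), and your treatment of (i) and (ii) is correct: for $p=\infty$, ``$\infty$-barrelled'' means ``$c_0$-barrelled'' and ``weakly $\infty$-angelic'' means ``weakly angelic''; a reflexive space is barrelled, hence $c_0$-barrelled and Mackey, and semi-reflexivity identifies $(E'_\beta)'$ with $E$, so the weak topologies of $E'_\beta$ and of $E'_{w^\ast}$ are both $\sigma(E',E)$ and (ii) collapses to (i).

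The gap you flag in (iii) is genuine, and your proposed repair points at the wrong tool. What has to be shown is that for separable $E$ every relatively $\sigma(E',E)$-sequentially (equivalently, countably) compact subset $A$ of $E'$ is relatively $\sigma(E',E)$-compact. $c_0$-barrelledness cannot deliver this: the obstruction is not boundedness but continuity of cluster points. Indeed, the closure of $A$ in the algebraic dual $E^\ast$ with the topology of pointwise convergence is compact by Tychonoff, and the whole difficulty is to show that every $\xi$ in that closure is continuous. A diagonal argument over the countable dense set $\{x_k\}$ only produces some $\eta\in E'$ agreeing with $\xi$ on $\{x_k\}$, and a discontinuous linear functional can agree with a continuous one on a dense \emph{set} (as opposed to a dense linear subspace), so this does not close the argument. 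The missing ingredient is the classical fact that $(E',\sigma(E',E))$ is angelic whenever $E$ is separable --- a special case of Orihuela's angelic lemma: $C_p(X)$ is angelic for every web-compact (in particular, separable) $X$, and $E'_{w^\ast}$ sits inside $C_p(E)$ as the set of continuous \emph{linear} functions, a set stable under pointwise limits within $C(E)$; the proof is a Grothendieck-type double-limit argument, not a barrelledness argument. Once this is quoted or proved, your metrizability observation handles the Fr\'{e}chet--Urysohn half and the implication ``relatively compact $\Rightarrow$ relatively sequentially compact'', and (iii) reduces to (i) as you intend; note that separability alone yields the angelicity, the $c_0$-barrelledness in (iii) being needed only for the $\infty$-barrelledness hypothesis of Theorem~\ref{t:wGP-Mackey-c0}.
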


Let $p\in[1,\infty]$, and let $E$ and $L$ be locally convex spaces. Following \cite{Gab-p-Oper}, an operator $T\in \LL(E,L)$ is called {\em  coarse $p$-limited}  if there is $U\in \Nn_0(E)$ such that $T(U)$ is a coarse $p$-limited  subset of $L$.

Banach spaces with the coarse $p$-Gelfand--Phillips property are characterized in Proposition 13 of \cite{GalMir}. The next theorem generalizes that result (we use in the proof the easy fact that a subset $A$ of $E$ is precompact if and only if each sequence in $A$ is precompact).

\begin{theorem} \label{t:coarse-GPp-char}
For $p\in[1,\infty]$ and a locally convex space $E$, the following assertions are equivalent:
\begin{enumerate}
\item[{\rm(i)}] $E$ has the coarse $sGP_p$ property  {\rm(}resp., the coarse $spGP_p$ property or the coarse $prGP_p$ property{\rm)};
\item[{\rm(ii)}] for every locally convex space $Y$, if an operator $T:Y\to E$ transforms bounded sets of $Y$ to coarse $p$-limited sets of $E$, then $T$ transforms bounded sets of $Y$ to relatively sequentially compact  {\rm(}resp., sequentially precompact or precompact{\rm)} subsets of $E$;
\item[{\rm(iii)}] for every normed space $Y$, each coarse $p$-limited operator $T:Y\to E$ is sequentially compact  {\rm(}resp., sequentially precompact or precompact{\rm)};
\item[{\rm(iv)}] each coarse $p$-limited operator $T:\ell_1^0\to E$ is sequentially compact  {\rm(}resp., sequentially precompact or precompact{\rm)}.
\end{enumerate}
If in addition $E$ is locally complete, then {\rm(i)-(iv)} are equivalent to
\begin{enumerate}
\item[{\rm(v)}] each coarse $p$-limited operator $T:\ell_1\to E$ is sequentially compact  {\rm(}resp., sequentially precompact or precompact{\rm)}.
\end{enumerate}
\end{theorem}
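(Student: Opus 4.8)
The plan is to follow the proof of Theorem~\ref{t:GPpq-property-oper} almost verbatim, replacing $(p,q)$-limited sets and operators throughout by their coarse $p$-analogues and invoking the stability properties of coarse $p$-limited sets (Lemma~4.1 of \cite{Gab-limited}) wherever the earlier argument uses Lemma~3.1 of \cite{Gab-limited}. The implications \emph{(i)}$\Rightarrow$\emph{(ii)}$\Rightarrow$\emph{(iii)}$\Rightarrow$\emph{(iv)} are then routine. For \emph{(i)}$\Rightarrow$\emph{(ii)}: if $T:Y\to E$ sends bounded sets to coarse $p$-limited sets and $B\subseteq Y$ is bounded, then $T(B)$ is coarse $p$-limited, hence relatively sequentially compact (resp.\ sequentially precompact, precompact) by \emph{(i)}. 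For \emph{(ii)}$\Rightarrow$\emph{(iii)}: a coarse $p$-limited operator from a normed space $Y$ sends $B_Y$, and hence every bounded subset of $Y$, to a coarse $p$-limited set (using stability of coarse $p$-limited sets under scalar multiples and subsets), so \emph{(ii)} applies and $T$ is sequentially compact (resp.\ sequentially precompact, precompact). Finally \emph{(iii)}$\Rightarrow$\emph{(iv)} holds since $\ell_1^0$ is normed, and, when $E$ is locally complete, \emph{(iii)}$\Rightarrow$\emph{(v)} holds since $\ell_1$ is normed.

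The substantive step is \emph{(iv)}$\Rightarrow$\emph{(i)} --- and, in the locally complete case, \emph{(v)}$\Rightarrow$\emph{(i)}, carried out identically with $\ell_1^0$ replaced by $\ell_1$. Let $A\subseteq E$ be coarse $p$-limited and fix an arbitrary sequence $S=\{x_n\}_{n\in\w}$ in $A$; it is bounded, so by Proposition~5.9 of \cite{Gab-limited} the linear map $T:\ell_1^0\to E$ (resp.\ $T:\ell_1\to E$ when $E$ is locally complete) with $T(e_n)=x_n$ is continuous. Now $T(B_{\ell_1^0})$ is contained in the closed absolutely convex hull $D$ of $A$, and $D$ is again coarse $p$-limited: for every $R\in\LL(E,\ell_p)$ (or $R\in\LL(E,c_0)$ if $p=\infty$), the set $R(D)$ lies in the closed absolutely convex hull of the relatively compact set $R(A)$, which is compact because $\ell_p$ (resp.\ $c_0$) is complete. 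Hence $T$ is coarse $p$-limited, so by \emph{(iv)} (resp.\ \emph{(v)}) it is sequentially compact (resp.\ sequentially precompact, precompact), and therefore $T(B_{\ell_1^0})$, and with it the set $S\subseteq T(B_{\ell_1^0})$, is relatively sequentially compact (resp.\ sequentially precompact, precompact). Since $S$ was arbitrary, $A$ is relatively sequentially compact (resp.\ sequentially precompact, precompact); in the precompact case this uses the easy fact, quoted just before the statement, that a set is precompact once each sequence in it is precompact. Thus $E$ has the coarse $sGP_p$ (resp.\ coarse $spGP_p$, coarse $prGP_p$) property.

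I expect the only point needing genuine care to be the verification that the closed absolutely convex hull of a coarse $p$-limited set is again coarse $p$-limited (equivalently, that $T(B_{\ell_1^0})$ is coarse $p$-limited); this relies on the completeness of $\ell_p$ and $c_0$ through the fact that the closed convex hull of a relatively compact subset of a Banach space is relatively compact, and it is precisely here that one exploits the target space being $\ell_p$ (a Banach space) rather than $(\ell_p)_p$. Everything else is bookkeeping parallel to Theorems~\ref{t:GPpq-property-oper} and \ref{t:sequential-GPpq-property}.
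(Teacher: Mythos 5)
Your proposal is correct and follows essentially the same route as the paper: the forward implications are trivial, and (iv)$\Rightarrow$(i) (resp.\ (v)$\Rightarrow$(i)) is obtained by sending an arbitrary sequence of a coarse $p$-limited set $A$ into $E$ via the canonical operator on $\ell_1^0$ (resp.\ $\ell_1$), whose coarse $p$-limitedness comes from the closed absolutely convex hull of $A$ being coarse $p$-limited. The only difference is cosmetic: you verify that last hull-stability fact directly (via compactness of closed convex hulls in $\ell_p$ and $c_0$), whereas the paper simply cites Lemma~4.1(ii) of \cite{Gab-limited}.
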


\begin{proof}
The implications (i)$\Ra$(ii)$\Ra$(iii)$\Ra$(iv) and (iii)$\Ra$(v) are trivial.

(iv)$\Ra$(i) and (v)$\Ra$(i): Let $A$ be a coarse $p$-limited subset of $E$. To show that $A$ is relatively sequentially compact (resp., sequentially precompact or precompact), let $\{x_n\}_{n\in\w}$ be a sequence in $A$. Then, by Proposition 
14.9 of \cite{Gab-Pel}, the linear map $T:\ell_1^0 \to E$ (or $T:\ell_1 \to E$ if $E$ is locally complete) defined by
\[
T(a_0 e_0+\cdots+a_ne_n):=a_0 x_0+\cdots+ a_n x_n \quad (n\in\w, \; a_0,\dots,a_n\in\IF).
\]
is continuous. It is clear that $T(B_{\ell_1^0})\subseteq \cacx\big(\{x_n\}_{n\in\w}\big)$ (or $T(B_{\ell_1})\subseteq \cacx\big(\{x_n\}_{n\in\w}\big)$ if $E$ is locally complete). Since $\cacx\big(\{x_n\}_{n\in\w}\big)\subseteq \cacx(A)$ is a coarse $p$-limited set (see (ii) of Lemma 
4.1 of \cite{Gab-limited}), it follows that  the operator $T$ is coarse $p$-limited. Therefore, by (iv) or (v), the sequence  $\{x_n\}_{n\in\w}\subseteq T(B_{\ell_1^0})$ has a convergent subsequence (resp., a Cauchy subsequence or $T\big(\{e_n\}_{n\in\w}\big)=\{x_n\}_{n\in\w}$ is precompact). Thus $A$ is relatively sequentially compact (resp., sequentially precompact or precompact), as desired.\qed
\end{proof}


\section{$p$-Gelfand--Phillips sequentially compact property of order $(q',q)$} \label{sec:GPscP}


We start this section with the next assertion. Recall (see Definition \ref{def:p-GP-q}) that an lcs $(E,\tau)$ is said to have the $p$-$GPscP_{(q',q)}$ if every weakly $p$-summable sequence in $E$ which is also a $(q',q)$-limited set is $\tau$-null.

\begin{proposition} \label{p:GPpq=>p-GPsc}
Let $1\leq q'\leq q\leq\infty$, and let $E$ be a locally convex space.
\begin{enumerate}
\item[{\rm(i)}] If $E$ has the $prGP_{(q',q)}$, then it has the $p$-$GPscP_{(q',q)}$ for every $p\in[1,\infty]$.
\item[{\rm(ii)}] If $q'\geq 2$ and $E$ has the Rosenthal property, then $E$ has the $prGP_{(q',q)}$ if and only if it has the $GPscP_{(q',q)}$.
\end{enumerate}
\end{proposition}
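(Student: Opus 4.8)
The plan is to reduce both assertions to Theorem~\ref{t:char-coarse-prGPp}, so almost all of the work has already been done there; what remains is essentially to match the definitions.

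For (i), I would start from the elementary observation that a weakly $p$-summable sequence $\{x_n\}_{n\in\w}$ in $E$ is in particular weakly null: indeed $(\langle\chi,x_n\rangle)_n$ lies in $\ell_p$ (or in $c_0$ if $p=\infty$) for every $\chi\in E'$, so $\langle\chi,x_n\rangle\to 0$. Hence, if such a sequence is in addition a $(q',q)$-limited set, then it is a weakly null $(q',q)$-limited sequence, and Theorem~\ref{t:char-coarse-prGPp}(i) applied with first index $q'$ (note $q'\leq q$, so the theorem is applicable) yields that it is $\tau$-null. Since $p\in[1,\infty]$ was arbitrary, $E$ has the $p$-$GPscP_{(q',q)}$ for every $p$. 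Alternatively one can argue directly without quoting the earlier theorem: by the $prGP_{(q',q)}$ property the sequence is precompact, and a weakly null precompact sequence is $\tau$-null by Lemma~\ref{l:null-seq}.

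For (ii), the forward implication is simply the special case $p=\infty$ of part (i), since $GPscP_{(q',q)}$ is by definition $\infty$-$GPscP_{(q',q)}$. For the converse, the key point is that a weakly $\infty$-summable sequence is exactly a weakly null sequence, so the statement ``$E$ has the $GPscP_{(q',q)}$'' is literally the statement ``every weakly null $(q',q)$-limited sequence in $E$ is $\tau$-null''. Under the standing hypotheses of (ii), namely $q'\geq 2$ and $E$ having the Rosenthal property, Theorem~\ref{t:char-coarse-prGPp}(iii) applied with first index $q'$ asserts precisely that this latter condition is equivalent to $E$ having the $prGP_{(q',q)}$ property. Combining the two implications completes the proof.

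The argument is short and consists mostly of bookkeeping; the only place requiring a moment of care is this identification step --- verifying that weak $p$-summability implies weak nullity, that the $p=\infty$ instance of $p$-$GPscP_{(q',q)}$ is exactly the hypothesis appearing in Theorem~\ref{t:char-coarse-prGPp}(iii), and that the condition $q'\geq 2$ in (ii) is precisely what lets us invoke that theorem with $q'$ playing the role of the first index. I do not anticipate any genuine obstacle beyond this matching of definitions.
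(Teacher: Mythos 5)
Your proposal is correct and follows exactly the paper's route: the paper proves (i) by citing Theorem~\ref{t:char-coarse-prGPp}(i) and (ii) by citing Theorem~\ref{t:char-coarse-prGPp}(iii), which is precisely your reduction. The extra bookkeeping you supply (weak $p$-summability implies weak nullity, and $GPscP_{(q',q)}$ being the $p=\infty$ case) is accurate and just makes explicit what the paper leaves implicit.
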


\begin{proof}
(i) follows from (i) of Theorem \ref{t:char-coarse-prGPp}, and (ii) follows from (iii) of Theorem \ref{t:char-coarse-prGPp}.\qed
\end{proof}

\begin{remark} \label{rem:GPsc-not-prGP} {\em
The converse in (i) of Proposition \ref{p:GPpq=>p-GPsc} is not true in general. Indeed, let $1<r,p<2$ and $E=\ell_r$. By Corollary \ref{c:wsGP-Schur-con}, $\ell_r$ does not have the $prGP_{(1,\infty)}$. On the other hand, let $\{x_n\}_{n\in\w}$ be a weakly $p$-summable sequence in $E$ which is also a $(q',q)$-limited set, and suppose for a contradiction that $x_n\not\to 0$. Passing to a subsequence we can assume that $1/c \leq \|x_n\| \leq c$ for some $c>1$ and all $n\in\w$. Dividing $x_n$ by its norm we can assume also that $\{x_n\}_{n\in\w}$ is a normalized sequence. Taking into account that  $\{x_n\}_{n\in\w}$ is weakly null,  Proposition 2.1.3 of \cite{Al-Kal} implies that there is a subsequence $\{x_{n_k}\}_{k\in\w}$ which is a basic sequence equivalent to the canonical basis of $\ell_r$ and such that $\cspn\big(\{x_{n_k}\}_{k\in\w}\big)$ is complemented in $\ell_r$. It follows that  the canonical basis  $\{e_n\}_{n\in\w}$ of $\ell_r$ is weakly $p$-summable that contradicts Example 
 4.4 of \cite{Gab-Pel} since $p<r^\ast$. Thus $x_n\to 0$ and $\ell_r$ has the $p$-$GPscP_{(q',q)}$ for all $1\leq q'\leq q\leq\infty$.\qed}
\end{remark}

Let $p,q\in[1,\infty]$. Following \cite{Gab-DP}, a locally convex space $E$ is said to have the {\em sequential Dunford--Pettis property of order $(p,q)$} (the {\em sequential $DP_{(p,q)}$ property}) if $\lim_{n\to \infty} \langle\chi_n,x_n\rangle=0$ for every weakly $p$-summable sequence $\{ x_n\}_{n\in\w}$ in $E$ and each weakly $q$-summable sequence $\{ \chi_n\}_{n\in\w}$ in  $E'_\beta$. Following \cite{Gab-p-Oper}, a linear map $T$ from $E$ to a locally convex space $L$ is called {\em weakly $(p,q)$-convergent}  if $\lim_{n\to\infty} \langle\eta_n, T(x_n)\rangle=0$ for every weakly $q$-summable sequence $\{\eta_n\}_{n\in\w}$ in $L'_\beta$ and each weakly $p$-summable sequence $\{x_n\}_{n\in\w}$ in $E$.
The following proposition, which will be used below, shows that the sequential $DP_{(p,q)}$ property of the range  implies some strong additional properties of operators.
\begin{proposition} \label{p:weak-pq-conver-L}
Let $p,q\in[1,\infty]$, and let $E$ and $L$ be  locally convex spaces. If $L$ has the sequential $DP_{(p,q)}$ property {\rm(}for instance, $L$ is a quasibarrelled locally complete space with the Dunford--Pettis property, e.g.  $L=\ell_\infty${\rm)}, then each operator $T\in\LL(E,L)$ is weakly $(p,q)$-convergent.
\end{proposition}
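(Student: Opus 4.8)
The plan is to unwind the definitions and reduce everything to the hypothesis that $L$ has the sequential $DP_{(p,q)}$ property. Fix an operator $T\in\LL(E,L)$, a weakly $p$-summable sequence $\{x_n\}_{n\in\w}$ in $E$, and a weakly $q$-summable sequence $\{\eta_n\}_{n\in\w}$ in $L'_\beta$. I must show $\lim_{n\to\infty}\langle\eta_n,T(x_n)\rangle=0$. The first step is to observe that, since $T$ is continuous and linear, its transpose $T^\ast$ maps $L'$ into $E'$ and is continuous from $L'_{w^\ast}$ to $E'_{w^\ast}$; moreover $T^\ast$ is continuous from $L'_\beta$ to $E'_\beta$ because $T$ sends bounded sets to bounded sets, so $T^\ast$ carries the strong topology to the strong topology. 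Consequently the sequence $\{T^\ast(\eta_n)\}_{n\in\w}$ is weakly $q$-summable in $E'_\beta$: indeed, for any $\zeta\in (E'_\beta)'$ one pushes $\zeta$ through $T^\ast$ to a continuous functional on $L'_\beta$, and weak $q$-summability of $\{\eta_n\}$ in $L'_\beta$ gives the required $\ell_q$ (or $c_0$) behaviour. Here one uses that a linear map which is continuous for the strong topologies is also continuous for the associated weak topologies, so it preserves weak $q$-summability.

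Next I apply the hypothesis. The sequence $\{x_n\}_{n\in\w}$ is weakly $p$-summable in $E$ and the sequence $\{T^\ast(\eta_n)\}_{n\in\w}$ is weakly $q$-summable in $E'_\beta$, so the sequential $DP_{(p,q)}$ property of $E$ would give $\lim_n\langle T^\ast(\eta_n),x_n\rangle=0$. But we are assuming the sequential $DP_{(p,q)}$ property of $L$, not of $E$; so instead I run the argument on the $L$-side: the sequence $\{T(x_n)\}_{n\in\w}$ is weakly $p$-summable in $L$ (again because $T$, being continuous and linear, is weak-to-weak continuous and hence preserves weak $p$-summability), and $\{\eta_n\}_{n\in\w}$ is weakly $q$-summable in $L'_\beta$ by assumption. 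Applying the sequential $DP_{(p,q)}$ property of $L$ to these two sequences yields exactly
\[
\lim_{n\to\infty}\langle\eta_n,T(x_n)\rangle=0,
\]
which is the definition of $T$ being weakly $(p,q)$-convergent.

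Finally I should justify the parenthetical remark. If $L$ is a quasibarrelled locally complete space with the Dunford--Pettis property (in particular $L=\ell_\infty$, which is a Banach space with the Dunford--Pettis property), then every weakly $p$-summable sequence in $L$ is in particular weakly null, and every weakly $q$-summable sequence in $L'_\beta$ is weakly$^\ast$ null and, by the quasibarrelledness, equicontinuous, hence weakly Cauchy on the relevant weakly compact sets; the Dunford--Pettis property then forces $\langle\eta_n,x_n\rangle\to0$, which is precisely the sequential $DP_{(p,q)}$ property. The only mildly delicate point in the whole argument is the preservation of weak $p$- and weak $q$-summability under $T$ and $T^\ast$ for the appropriate topologies; but this is routine, since a continuous linear map between locally convex spaces is automatically continuous for the weak topologies, and the transpose of a strongly continuous map is strongly continuous, hence weakly continuous on the duals. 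There is no real obstacle here; the content of the proposition is the bookkeeping of which summability passes through $T$ versus $T^\ast$, together with the quoted fact that $\ell_\infty$ (more generally a quasibarrelled locally complete Dunford--Pettis space) has the sequential $DP_{(p,q)}$ property.
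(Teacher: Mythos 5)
Your proof is correct and is essentially the paper's own argument: $T$ preserves weak $p$-summability, so $\{T(x_n)\}_{n\in\w}$ is weakly $p$-summable in $L$, and the sequential $DP_{(p,q)}$ property of $L$ applied to $\{T(x_n)\}_{n\in\w}$ and $\{\eta_n\}_{n\in\w}$ gives $\langle\eta_n,T(x_n)\rangle\to 0$. The opening detour through $T^\ast$ is superfluous (you correctly abandon it), and for the parenthetical claim the paper simply cites Corollary 5.13 of \cite{Gab-DP} rather than sketching an argument as you attempt in your final paragraph.
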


\begin{proof}
Let $\{\eta_n\}_{n\in\w}$ be a  weakly $q$-summable sequence in $L'_\beta$,  and let $\{x_n\}_{n\in\w}$ be a  weakly $p$-summable sequence  in $E$. Then $\{T(x_n)\}_{n\in\w}$ is a  weakly $p$-summable sequence  in $L$. Now the sequential $DP_{(p,q)}$ property of $L$ implies $\langle\eta_n,T(x_n)\rangle\to 0$. Thus $T$  is weakly $(p,q)$-convergent.

If $L$ is a quasibarrelled locally complete space with the Dunford--Pettis property, Corollary 
5.13 of \cite{Gab-DP} implies that $L$ has the sequential $DP_{(p,q)}$ property.\qed
\end{proof}

The following theorem characterizes locally convex spaces $E$ with the $p$-$GPscP_{(q',q)}$ and explains our usage of ``sequentially compact'' in the notion of the $p$-$GPscP_{(q',q)}$.
\begin{theorem} \label{t:char-p-GP}
Let $p,q,q'\in[1,\infty]$,  $q'\leq q$. Then for a locally convex space $(E,\tau)$, the following assertions are equivalent:
\begin{enumerate}
\item[{\rm(i)}] $E$ has the $p$-$GPscP_{(q',q)}$;
\item[{\rm(ii)}] each operator $T\in\LL(E,\ell_\infty)$ is $(q',q)$-limited $p$-convergent;
\item[{\rm(iii)}] each weakly $(p,q)$-convergent operator $T\in\LL(E,\ell_\infty)$ is $(q',q)$-limited $p$-convergent;
\item[{\rm(iv)}] each weakly sequentially $p$-compact subset $A$ of $E$ which is a $(q',q)$-limited set is relatively sequentially compact in $E$;
\item[{\rm(v)}] each weakly sequentially $p$-precompact subset $A$ of $E$ which is a $(q',q)$-limited set is sequentially precompact in $E$.
\end{enumerate}
\end{theorem}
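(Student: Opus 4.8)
The plan is to prove the cycle $\mathrm{(i)}\Rightarrow\mathrm{(ii)}\Leftrightarrow\mathrm{(iii)}\Rightarrow\mathrm{(i)}$ and then the two equivalences $\mathrm{(i)}\Leftrightarrow\mathrm{(iv)}$ and $\mathrm{(i)}\Leftrightarrow\mathrm{(v)}$, in each case reducing matters to $\mathrm{(i)}$. The implication $\mathrm{(i)}\Rightarrow\mathrm{(ii)}$ is immediate: if $\{x_n\}_{n\in\w}$ is weakly $p$-summable and a $(q',q)$-limited set, then $\mathrm{(i)}$ gives $x_n\to 0$ in $\tau$, hence $T(x_n)\to 0$ for every $T\in\LL(E,\ell_\infty)$ by continuity. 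The equivalence $\mathrm{(ii)}\Leftrightarrow\mathrm{(iii)}$ follows at once from Proposition~\ref{p:weak-pq-conver-L}: since $\ell_\infty$ has the sequential $DP_{(p,q)}$ property, every operator $T\in\LL(E,\ell_\infty)$ is automatically weakly $(p,q)$-convergent, so $\mathrm{(ii)}$ and $\mathrm{(iii)}$ quantify over the same class of operators. The core step is $\mathrm{(ii)}\Rightarrow\mathrm{(i)}$, which mimics the proof of Theorem~\ref{t:GPpq-characterization}: given a weakly $p$-summable $(q',q)$-limited sequence $\{x_n\}_{n\in\w}$ with $x_n\not\to 0$ in $\tau$, pass to a subsequence (still weakly $p$-summable and $(q',q)$-limited) and pick a closed absolutely convex $U\in\Nn_0(E)$ with $q_U(x_n)>1$ for every $n$, where $q_U$ is the gauge of $U$; use the Hahn--Banach theorem to choose $\chi_n\in U^\circ$ with $\langle\chi_n,x_n\rangle=q_U(x_n)>1$, and define $T\colon E\to\ell_\infty$ by $T(x):=(\langle\chi_n,x\rangle)_{n\in\w}$. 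This $T$ is a well-defined operator because $\sup_n|\langle\chi_n,x\rangle|\le q_U(x)$ for all $x\in E$; by $\mathrm{(ii)}$ we would get $T(x_n)\to 0$, contradicting $\|T(x_n)\|_{\ell_\infty}\ge\langle\chi_n,x_n\rangle>1$.

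For $\mathrm{(i)}\Leftrightarrow\mathrm{(iv)}$: assuming $\mathrm{(i)}$, let $A$ be a weakly sequentially $p$-compact $(q',q)$-limited set and $\{x_n\}\subseteq A$; extract a subsequence $\{x_{n_k}\}$ weakly $p$-convergent to some $x\in A$. Then $\{x_{n_k}-x\}_{k\in\w}$ is weakly $p$-summable and again a $(q',q)$-limited set, since for every weak$^\ast$ $q'$-summable sequence $\{\chi_m\}_{m\in\w}$ in $E'$ one has $(\langle\chi_m,x\rangle)_m\in\ell_{q'}\subseteq\ell_q$, whence $\sup_k|\langle\chi_m,x_{n_k}-x\rangle|\le\|\chi_m\|_A+|\langle\chi_m,x\rangle|$ gives an $\ell_q$- (or $c_0$-) sequence in $m$; by $\mathrm{(i)}$, $x_{n_k}\to x$ in $\tau$, so $A$ is relatively sequentially compact. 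Conversely, assuming $\mathrm{(iv)}$ and given a weakly $p$-summable $(q',q)$-limited $\{x_n\}_{n\in\w}$, the set $A:=\{x_n:n\in\w\}\cup\{0\}$ is weakly sequentially $p$-compact and $(q',q)$-limited, so $\mathrm{(iv)}$ makes it relatively sequentially compact; thus every subsequence of $\{x_n\}$ has a $\tau$-convergent further subsequence, whose limit is $0$ because the subsequence is also weakly null and $E$ is Hausdorff, and hence $x_n\to 0$ in $\tau$.

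For $\mathrm{(i)}\Leftrightarrow\mathrm{(v)}$: assuming $\mathrm{(i)}$, let $A$ be a weakly sequentially $p$-precompact $(q',q)$-limited set and $\{x_n\}\subseteq A$; extract a weakly $p$-Cauchy subsequence $\{x_{n_k}\}$ and claim it is $\tau$-Cauchy. If not, there are strictly increasing $(a_j)_{j\in\w},(b_j)_{j\in\w}$ in $\w$ and $U\in\Nn_0(E)$ with $x_{n_{a_j}}-x_{n_{b_j}}\notin U$ for all $j$; but $(x_{n_{a_j}}-x_{n_{b_j}})_{j\in\w}$ is weakly $p$-summable by the definition of a weakly $p$-Cauchy sequence and is a $(q',q)$-limited set because $\sup_j|\langle\chi_m,x_{n_{a_j}}-x_{n_{b_j}}\rangle|\le 2\|\chi_m\|_A$, so $\mathrm{(i)}$ forces it to $\tau$-converge to $0$, a contradiction. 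Hence $A$ is sequentially precompact. Conversely, assuming $\mathrm{(v)}$ and given a weakly $p$-summable $(q',q)$-limited $\{x_n\}_{n\in\w}$, the set $A:=\{x_n:n\in\w\}\cup\{0\}$ is weakly sequentially $p$-precompact and $(q',q)$-limited, so by $\mathrm{(v)}$ every subsequence of $\{x_n\}$ has a $\tau$-Cauchy further subsequence; such a subsequence is $\tau$-precompact and weakly null, so by Lemma~\ref{l:null-seq} it is $\tau$-null, and running over all subsequences yields $x_n\to 0$ in $\tau$.

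I expect the only real (and still mild) obstacle to be the bookkeeping needed twice over: verifying that translating a $(q',q)$-limited set by a fixed point, and forming difference sequences of subsequences of such a set, preserves $(q',q)$-limitedness --- all handled uniformly by the domination estimates $\le\|\chi_m\|_A+|\langle\chi_m,x\rangle|$ and $\le 2\|\chi_m\|_A$ together with the inclusion $\ell_{q'}\subseteq\ell_q$ --- and correctly passing from ``every subsequence has a further subsequence $\tau$-converging to $0$'' to ``$x_n\to 0$ in $\tau$'' via Hausdorffness and uniqueness of weak limits. The genuinely new ingredient, the operator $T\colon E\to\ell_\infty$ built from the gauge functional of a zero-neighbourhood, is precisely the construction already used for Theorem~\ref{t:GPpq-characterization}.
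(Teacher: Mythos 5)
Your proposal is correct and follows essentially the same route as the paper: (i)$\Rightarrow$(ii) by continuity, (ii)$\Rightarrow$(i) via the operator $T(x)=(\langle\chi_n,x\rangle)_n$ with $\chi_n\in U^\circ$, (ii)$\Leftrightarrow$(iii) from Proposition~\ref{p:weak-pq-conver-L}, and the equivalences with (iv) and (v) by passing to weakly $p$-convergent (resp.\ weakly $p$-Cauchy) subsequences and applying (i) to the resulting weakly $p$-summable difference sequences. Your explicit verifications that translates and difference sequences remain $(q',q)$-limited, and your subsequence-of-subsequence argument in place of a direct appeal to Lemma~\ref{l:null-seq}, are only cosmetic deviations.
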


\begin{proof}
(i)$\Ra$(ii) Assume that $E$ has the $p$-$GPscP_{(q',q)}$, and let $\{x_n\}_{n\in\w}$ be weakly $p$-summable sequence in $E$ which is a $(q',q)$-limited set. By the $p$-$GPscP_{(q',q)}$, we have $x_n\to 0$ in $E$. Therefore also $T(x_n)\to 0$ in $\ell_\infty$. Thus $T$ is a $(q',q)$-limited $p$-convergent operator.
\smallskip

(ii)$\Ra$(i) Assume that each operator $T\in\LL(E,\ell_\infty)$ is $(q',q)$-limited $p$-convergent. To show that $E$ has  the $p$-$GPscP_{(q',q)}$, let $\{x_n\}_{n\in\w}$ be a weakly $p$-summable sequence in $E$ which is a $(q',q)$-limited set. Assuming that $x_n\not\to 0$ in $E$ and  passing to a subsequence if needed we can assume that there is $U\in\Nn_0^c(E)$ such that $x_n\not\in U$ for all $n\in\w$. For every  $n\in\w$, choose $\chi_n\in U^\circ$ such that $\langle\chi_n,x_n\rangle>1$. Define a linear map $T:E\to \ell_\infty$ by
\[
T(x):= \big(\langle\chi_n,x\rangle\big)_n \quad (x\in E).
\]
Since $T(U)\subseteq B_{\ell_\infty}$, the map $T$ is continuous. As $\|T(x_n)\|_{\ell_\infty} \geq \langle\chi_n,x_n\rangle>1 \not\to 0$, it follows that $T$ is not $(q',q)$-limited $p$-convergent. This is a contradiction.
\smallskip

The equivalence (ii)$\Leftrightarrow$(iii) immediately follows from Proposition \ref{p:weak-pq-conver-L}.
\smallskip

(i)$\Ra$(iv) To show that $A$ is relatively sequentially compact in $E$, take an arbitrary sequence $S=\{a_n\}_{n\in\w}$ in $A$. Since $A$ is weakly sequentially $p$-compact, passing to a subsequence we can assume that $S$ weakly $p$-converges to some point $a\in A$. So the sequence $\{a_n-a\}_{n\in\w}$ is weakly $p$-summable and also a $(q',q)$-limited set. Therefore, by the $p$-$GPscP_{(q',q)}$, we have $a_{n} -a\to 0$. Thus $A$ is a  relatively sequentially compact subset of $E$.
\smallskip

(iv)$\Ra$(i) If $S=\{a_n\}_{n\in\w}$ is a weakly $p$-summable sequence in $E$ which is a  $(q',q)$-limited set, then $S$ is relatively sequentially compact in $E$. Being also weakly null $S$ is a null sequence in $E$ by Lemma \ref{l:null-seq}.
\smallskip

(i)$\Ra$(v) To show that $A$ is sequentially precompact in $E$, take an arbitrary sequence $S=\{a_n\}_{n\in\w}$ in $A$. Since $A$ is weakly sequentially $p$-precompact, we can assume that $S$ is weakly $p$-Cauchy. For every strictly increasing sequence $(n_k)$ in $\w$, it follows that $\{a_{n_k} -a_{n_{k+1}}\}_{k\in\w}$ is weakly $p$-summable and also a $(q',q)$-limited set. Therefore, by the $p$-$GPscP_{(q',q)}$, we have $a_{n_k} -a_{n_{k+1}}\to 0$ and hence $S$ is Cauchy in $E$. Thus $A$ is a sequentially precompact subset of $E$.
\smallskip

(v)$\Ra$(i) If $S=\{a_n\}_{n\in\w}$ is a weakly $p$-summable sequence in $E$ which is a  $(q',q)$-limited set, then $S$ is sequentially precompact in $E$. As $S$ is also weakly null, we apply Lemma \ref{l:null-seq} to get that $S$ is a null sequence in $E$.\qed
\end{proof}

Theorem \ref{t:char-p-GP} motivates the problem to characterize $(q',q)$-limited $p$-convergent operators. Under some additional restrictions this is done in the next assertion. If $E$ and $L$ are Banach spaces and $q'=q=\infty$, the following theorem is proved in Theorem 2.1 of \cite{SaMo} and Theorem 1.1 of \cite{WenChen}.
\begin{theorem} \label{t:char-q-lim-p-operator}
Let $2\leq q'\leq q\leq\infty$, and let $E$ be a locally convex space with the Rosenthal property.  Then for an operator $T$ from $E$ to a locally convex space $L$, the following assertions are equivalent:
\begin{enumerate}
\item[{\rm(i)}] $T$ is $(q',q)$-limited $\infty$-convergent;
\item[{\rm(ii)}]  for each  $(q',q)$-limited set $A\subseteq E$, the image $T(A)$ is sequentially precompact in $L$;
\item[{\rm(iii)}] for every locally convex {\rm(}the same, normed{\rm)} space  $H$ and each $(q',q)$-limited operator $R:H\to E$, the operator $T\circ R$ is sequentially precompact;
\item[{\rm(iv)}] for each $(q',q)$-limited operator $R:\ell_1^0\to E$, the operator $T\circ R$ is sequentially precompact.
\end{enumerate}
If in addition $E$ is locally complete, then {\rm(i)--(iv)} are equivalent to the following:
\begin{enumerate}
\item[{\rm(v)}] for each $(q',q)$-limited operator $R:\ell_1\to E$, the operator $T\circ R$ is sequentially precompact.
\end{enumerate}
\end{theorem}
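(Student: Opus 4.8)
The plan is to establish the cycle of implications (i)$\Ra$(ii)$\Ra$(iii)$\Ra$(iv)$\Ra$(i), with the extra equivalence (iv)$\Leftrightarrow$(v) under local completeness handled by the standard extension argument. First I would observe that (ii)$\Ra$(iii)$\Ra$(iv) and (iii)$\Ra$(v) are immediate: the identity embedding $\Id_E\colon E\to E$ is $(q',q)$-limited on no neighborhood in general, so instead one passes from (ii) to (iii) by noting that if $R\colon H\to E$ is $(q',q)$-limited with $R(U)\subseteq A$ for some $U\in\Nn_0(H)$ and $A$ a $(q',q)$-limited set, then by (ii) $T(A)$ is sequentially precompact, whence $(T\circ R)(U)\subseteq T(A)$ is sequentially precompact, i.e. $T\circ R$ is sequentially precompact; restricting $H$ to normed spaces and then to $\ell_1^0$ gives (iii)$\Ra$(iv) and (since $\ell_1^0$ is normed) (iii)$\Ra$(v). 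The extension step (iv)$\Leftrightarrow$(v) when $E$ is locally complete uses Proposition 14.9 of \cite{Gab-Pel} (or the analogue used in the proof of Theorem \ref{t:coarse-GPp-char}): a $(q',q)$-limited operator $R\colon\ell_1^0\to E$ extends to a continuous $\bar R\colon\ell_1\to E$ with $\bar R(B_{\ell_1})\subseteq\cacx(R(B_{\ell_1^0}))$, which remains $(q',q)$-limited by the hull-stability of $(q',q)$-limited sets (Lemma 3.1 of \cite{Gab-limited}), and sequential precompactness of $T\circ\bar R$ forces that of $T\circ R$.

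The substantive implication is (iv)$\Ra$(i). Here I would argue contrapositively: suppose $T$ is not $(q',q)$-limited $\infty$-convergent, so there is a $(q',q)$-limited weakly null sequence $S=\{x_n\}_{n\in\w}$ in $E$ with $T(x_n)\not\to 0$ in $L$; passing to a subsequence, $T(x_n)\notin V$ for some $V\in\Nn_0(L)$ and all $n$. The goal is to realize $S$ (up to subsequence) as the image of the canonical basis of $\ell_1^0$ under a $(q',q)$-limited operator $R$. Since $E$ has the Rosenthal property, the bounded sequence $S$ has a subsequence that is either weakly Cauchy or equivalent to the $\ell_1$-basis. In the first case, the differences $\{x_{n_k}-x_{n_{k+1}}\}$ are weakly null and still $(q',q)$-limited (Lemma 3.1 of \cite{Gab-limited}), and one can apply Proposition 5.9 of \cite{Gab-limited} to get a continuous $R\colon\ell_1^0\to E$ sending $e_k\mapsto x_{n_k}$ (bounded sequences in $E$ induce such operators); then $R(B_{\ell_1^0})\subseteq\cacx(S)$ is $(q',q)$-limited, so $R$ is $(q',q)$-limited, and $T\circ R$ sequentially precompact by (iv) would force a Cauchy — hence, being weakly null, null — subsequence of the differences $T(x_{n_k})-T(x_{n_{k+1}})$, not yet the contradiction we want. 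So I would instead work directly with $S$: the operator $R\colon\ell_1^0\to E$, $R(e_n)=x_n$, is continuous because $S$ is bounded (Proposition 5.9 of \cite{Gab-limited}), $R(B_{\ell_1^0})\subseteq\cacx(S)$ is a $(q',q)$-limited set, hence $R$ is $(q',q)$-limited; by (iv), $T\circ R$ is sequentially precompact, so $\{T(x_n)\}=\{(T\circ R)(e_n)\}$ has a Cauchy subsequence $\{T(x_{n_k})\}$. Then $T(x_{n_k})-T(x_{n_{k+1}})\to 0$, yet this is $T$ applied to a weakly null $(q',q)$-limited sequence — to close the loop I need to upgrade "Cauchy" to "null," which follows from weak nullity of $\{x_{n_k}-x_{n_{k+1}}\}$ together with the fact that its $T$-image is Cauchy, hence (being also weakly null, as $T$ is weakly continuous) convergent to $0$ by Lemma \ref{l:null-seq}. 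This contradicts $T(x_n)\notin V$ for all $n$, completing (iv)$\Ra$(i).

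For (i)$\Ra$(ii), take a $(q',q)$-limited set $A\subseteq E$; I must show $T(A)$ is sequentially precompact in $L$. Given a sequence $\{a_n\}\subseteq A$: since $2\leq q'$ and $A$ is $(q',q)$-limited, Theorem \ref{t:inf-V*-wsc} (via the Rosenthal property of $E$) gives that $A$ is weakly sequentially precompact, so after passing to a subsequence $\{a_n\}$ is weakly Cauchy; then for any strictly increasing $(n_k)$, $\{a_{n_k}-a_{n_{k+1}}\}$ is weakly null and, by Lemma 3.1 of \cite{Gab-limited}, still a $(q',q)$-limited set. By (i) (which is precisely that $\Id_E$ is $(q',q)$-limited $\infty$-convergent), $a_{n_k}-a_{n_{k+1}}\to 0$ in $E$, so $T(a_{n_k})-T(a_{n_{k+1}})\to 0$ in $L$, i.e. $\{T(a_{n_k})\}$ is Cauchy; thus $T(A)$ is sequentially precompact. \textbf{The main obstacle} is the (iv)$\Ra$(i) direction, specifically ensuring that the Rosenthal dichotomy is exploited correctly: in the $\ell_1$-subsequence case one needs that a $(q',q)$-limited set cannot contain an $\ell_1$-sequence when $q'\geq 2$ — which is exactly the content underlying Theorem \ref{t:inf-V*-wsc} (any $(q',q)$-limited set is a $(q',q)$-$(V^\ast)$ set, and the argument there rules out $\ell_1$-sequences for $p\geq 2$) — so in fact, as in the proof of Theorem \ref{t:inf-V*-wsc}, that case simply does not arise and $S$ automatically has a weakly Cauchy subsequence, which is the only scenario we need to handle. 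Making this reduction explicit (rather than carrying the $\ell_1$-case along) is the cleanest route, and it is the step I would take most care with.
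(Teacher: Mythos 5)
Your overall architecture matches the paper's: the cycle (i)$\Rightarrow$(ii)$\Rightarrow$(iii)$\Rightarrow$(iv)$\Rightarrow$(i), Theorem \ref{t:inf-V*-wsc} to get weak sequential precompactness of $(q',q)$-limited sets in (i)$\Rightarrow$(ii), and the operator $R\colon\ell_1^0\to E$, $R(e_n)=x_n$, with $R(B_{\ell_1^0})\subseteq\cacx(S)$ in (iv)$\Rightarrow$(i). Two points need attention, one of which is a genuine break in the argument.

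First, the minor one. In (i)$\Rightarrow$(ii) you write that (i) ``is precisely that $\Id_E$ is $(q',q)$-limited $\infty$-convergent'' and conclude $a_{n_k}-a_{n_{k+1}}\to 0$ \emph{in $E$}. Assertion (i) is about $T$, not $\Id_E$, so neither claim follows; what does follow, directly from (i) applied to the weakly null $(q',q)$-limited sequence of differences, is $T(a_{n_k})-T(a_{n_{k+1}})\to 0$ in $L$, which is all you need. This is a misstatement rather than a gap. Similarly, in (iv)$\Rightarrow$(i) the appeal to the Rosenthal dichotomy for $S$ is superfluous: $S$ is weakly null, hence already weakly Cauchy, and the Rosenthal property plays no role in that implication (the paper uses it only via Theorem \ref{t:inf-V*-wsc} in (i)$\Rightarrow$(ii)).

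The genuine problem is the last step of (iv)$\Rightarrow$(i). Having obtained from (iv) that $\{T(x_n)\}$ has a Cauchy subsequence $\{T(x_{n_k})\}$, you conclude a contradiction with $T(x_n)\notin V$ from the fact that $T(x_{n_k})-T(x_{n_{k+1}})\to 0$. That deduction fails: consecutive differences of a Cauchy sequence always tend to zero, and a Cauchy sequence can perfectly well remain outside a fixed neighbourhood of the origin (it need not converge to $0$, nor converge at all if $L$ is not sequentially complete). Applying Lemma \ref{l:null-seq} to the differences, as you propose, only reproduces the Cauchy condition. The correct closing move --- and the one the paper makes --- is to apply Lemma \ref{l:null-seq} to the sequence $T(S)=\{T(x_n)\}_{n\in\w}$ itself: it is weakly null (since $S$ is weakly null and $T$ is weakly continuous) and precompact (being contained in the sequentially precompact, hence precompact, set $(T\circ R)(B_{\ell_1^0})$), so $T(x_n)\to 0$ in $L$, contradicting $T(x_n)\notin V$. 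With that substitution, and no contrapositive or subsequence extraction needed, your proof coincides with the paper's.
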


\begin{proof}
(i)$\Ra$(ii) Assume that $T\in\LL(E,L)$ is $(q',q)$-limited $\infty$-convergent, and let $A$ be a $(q',q)$-limited subset of $E$. To show that $T(A)$ is sequentially precompact in $L$, let $\{a_n\}_{n\in\w}$ be a sequence in $A$. Since, by Theorem \ref{t:inf-V*-wsc}, $A$ is weakly sequentially precompact we can assume that $\{a_n\}_{n\in\w}$ is weakly Cauchy. Therefore, for every strictly increasing sequence $\{n_k\}_{k\in\w}$ in $\w$, the sequence $\{a_{n_{k+1}} -a_{n_k}\}_{k\in\w}$ is weakly null and also a $(q',q)$-limited set. Since $T$ is $(q',q)$-limited $\infty$-convergent it follows that $T(a_{n_{k+1}}) -T(a_{n_k})\to 0$ in $L$, and hence the sequence  $\{T(a_{n_k})\}_{k\in\w}$ is Cauchy in $L$. Thus $T(A)$ is sequentially precompact in $E$.


(ii)$\Ra$(iii) Take $U\in\Nn_0(H)$ such that $R(U)$ is a $(q',q)$-limited subset of $E$. By (ii), we have $T(R(U))$ is  sequentially precompact in $L$. Thus $T\circ R$ is a sequentially precompact operator.

(iii)$\Ra$(iv) and (iii)$\Ra$(v) are obvious.

(iv)$\Ra$(i) and (v)$\Ra$(i): Let $S=\{x_n\}_{n\in\w}$  be a weakly null sequence in $E$ which is a $(q',q)$-limited set. Since $S$ is bounded,
 Proposition 
14.9 of \cite{Gab-Pel} implies that the linear map $R:\ell_1^0 \to E$ (or $R:\ell_1 \to E$ if $E$ is locally complete) defined by
\[
R(a_0 e_0+\cdots+a_ne_n):=a_0 x_0+\cdots+ a_n x_n \quad (n\in\w, \; a_0,\dots,a_n\in\IF).
\]
is continuous.  It is clear that $R(B_{\ell_1^0})\subseteq \cacx\big(S\big)$ (or $R(B_{\ell_1})\subseteq \cacx\big(S\big)$ if $E$ is locally complete). Observe that $\cacx\big(S\big)$ is also a $(q',q)$-limited set. Therefore the operator $R$ is $(q',q)$-limited, and hence, by (iv) or (v), $T\circ R$ is sequentially precompact. In particular, the weakly null sequence $T(S)$ is (sequentially) precompact in $L$. Therefore, by Lemma \ref{l:null-seq}, 
$T(x_n)\to 0$ in $L$. Thus $T$ is $(q',q)$-limited $\infty$-convergent.\qed
\end{proof}

Below we obtain a sufficient condition on locally convex spaces to have the $p$-$GPscP_{(q,\infty)}$.
\begin{proposition} \label{p:char-p-GPscPp}
Let $p,q\in[1,\infty]$, and let $E$ be a locally convex space. Then $E$ has the $p$-$GPscP_{(q,\infty)}$ if one of the the following conditions holds:
\begin{enumerate}
\item[{\rm(i)}] for every locally convex space $X$ and for each $T\in\LL(X,E)$  whose adjoint $T^\ast:E'_{w^\ast}\to X'_\beta$ is $q$-convergent,  the operator $T$ transforms the bounded subsets of $X$ into sequentially precompact subsets of $E$;
\item[{\rm(ii)}] for every normed space $X$ and for each $T\in\LL(X,E)$ whose adjoint $T^\ast:E'_{w^\ast}\to X'_\beta$ is $q$-convergent,  the operator $T$ is  sequentially precompact;
\item[{\rm(iii)}] the same as {\rm(ii)} with $X=\ell_1^0$;
\item[{\rm(iv)}] if $E$ is locally complete, the same as {\rm(ii)} with $X$ a Banach space;
\item[{\rm(v)}]  if $E$ is locally complete, the same as {\rm(ii)} with $X=\ell_1$;
\end{enumerate}
\end{proposition}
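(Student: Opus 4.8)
The plan is to show that the five conditions form a decreasing chain in strength, so that the whole proposition reduces to verifying two cases. First I would record that the implications (i)$\Rightarrow$(ii)$\Rightarrow$(iii) and (iv)$\Rightarrow$(v) are immediate: passing from (i) to (ii)--(v) only restricts the class of domain spaces $X$, and when $X$ is a normed space the closed unit ball $B_X$ is a bounded neighbourhood of zero, so ``$T(B_X)$ sequentially precompact'' (provided by (i) for bounded sets of $X$, or assumed directly in (ii)) already says that $T$ is a sequentially precompact operator. In particular $\ell_1^0$ and $\ell_1$ are normed spaces, so (ii)$\Rightarrow$(iii) and (iv)$\Rightarrow$(v). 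Hence it suffices to prove that (iii) implies the $p$-$GPscP_{(q,\infty)}$, and that (v) implies it under the standing local completeness hypothesis.

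For (iii)$\Rightarrow$ $p$-$GPscP_{(q,\infty)}$ I would argue exactly as in the proofs of Theorems \ref{t:sequential-GPpq-property} and \ref{t:char-q-lim-p-operator}. Let $S=\{x_n\}_{n\in\w}$ be a weakly $p$-summable sequence in $E$ that is a $(q,\infty)$-limited set; then $S$ is bounded, so by Proposition 5.9 of \cite{Gab-limited} (equivalently Proposition 14.9 of \cite{Gab-Pel}) the linear map $T:\ell_1^0\to E$ with $T(e_n)=x_n$ is continuous. Since the bounded subsets of $\ell_1^0$ are precisely the norm-bounded ones, $(\ell_1^0)'_\beta=\ell_\infty$, and for each $\chi\in E'$ one has $T^\ast(\chi)=(\langle\chi,x_n\rangle)_n\in\ell_\infty$ with $\|T^\ast(\chi)\|_{\ell_\infty}=\sup_n|\langle\chi,x_n\rangle|=\|\chi\|_S$. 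Consequently, for every weak$^\ast$ $q$-summable sequence $\{\chi_m\}_{m\in\w}$ in $E'$, the hypothesis that $S$ is $(q,\infty)$-limited yields $\|T^\ast(\chi_m)\|_{\ell_\infty}=\|\chi_m\|_S\to0$, i.e.\ $T^\ast:E'_{w^\ast}\to(\ell_1^0)'_\beta$ is $q$-convergent. Now (iii) forces $T$, hence $T(B_{\ell_1^0})$, to be sequentially precompact; since $S=\{T(e_n)\}_{n\in\w}\subseteq T(B_{\ell_1^0})$, the set $S$ is precompact in $E$, and being also weakly null it is $\tau$-null by Lemma \ref{l:null-seq}. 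That is exactly the $p$-$GPscP_{(q,\infty)}$.

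The implication (v)$\Rightarrow$ $p$-$GPscP_{(q,\infty)}$ (with $E$ locally complete) runs verbatim with $\ell_1^0$ replaced by $\ell_1$: local completeness is precisely what Proposition 5.9 of \cite{Gab-limited} needs in order to produce the continuous extension $T:\ell_1\to E$, $T(e_n)=x_n$; one has $(\ell_1)'_\beta=\ell_\infty$ again, the same norm identity $\|T^\ast(\chi)\|_{\ell_\infty}=\|\chi\|_S$ shows $T^\ast$ is $q$-convergent, (v) makes $T(B_{\ell_1})$ sequentially precompact, and Lemma \ref{l:null-seq} finishes the argument.

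I do not anticipate a genuine obstacle; the proof is a routine reuse of the ``factor through $\ell_1^0$'' device already exploited in Section \ref{sec:char-GP-property}. The one point that truly matters — and which I would state explicitly — is the norm identity $\|T^\ast(\chi)\|_{\ell_\infty}=\|\chi\|_S$: it is exactly this that translates the assumption ``$S$ is a $(q,\infty)$-limited set'' into the assumption ``$T^\ast$ is $q$-convergent'' needed to invoke (iii) or (v). Everything else (continuity of $T$, the identification of the strong duals of $\ell_1^0$ and $\ell_1$ with $\ell_\infty$, and the weakly-null-plus-precompact conclusion) is standard.
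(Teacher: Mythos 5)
Your proposal is correct and follows essentially the same route as the paper: reduce to showing that (iii) and (v) imply the $p$-$GPscP_{(q,\infty)}$, build the operator $T:\ell_1^0\to E$ (or $T:\ell_1\to E$) with $T(e_n)=x_n$ via Proposition 5.9 of \cite{Gab-limited}, note that $(q,\infty)$-limitedness of $S$ is exactly $q$-convergence of $T^\ast$, and conclude with Lemma \ref{l:null-seq}. The only cosmetic differences are that the paper argues by contradiction and cites Proposition 5.9 of \cite{Gab-limited} for the $q$-convergence of $T^\ast$ as well, whereas you verify the norm identity $\|T^\ast(\chi)\|_{\ell_\infty}=\|\chi\|_S$ directly.
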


\begin{proof}
Since the implications (i)$\Ra$(ii)$\Ra$(iii) and (ii)$\Ra$(iv)$\Ra$(v) are obvious, we shall prove that (iii) and (v) imply the $p$-$GPscP_{(q,\infty)}$. Suppose for a contradiction that $E$ has no the $p$-$GPscP_{(q,\infty)}$. Then there is a weakly $p$-summable sequence $S=\{x_n\}_{n\in\w}$ in $E$ which is a $(q,\infty)$-limited set such that $x_n\not\to 0$ in $E$. Without loss of generality we assume that $x_n \not\in U$ for some $U\in\Nn_0(E)$ and all $n\in\w$.

Since $S$ is a $(q,\infty)$-limited  set, Proposition 
5.9 of \cite{Gab-limited} implies that the linear map  $T:\ell_1^0 \to E$  (or  $T:\ell_1 \to E$ if $E$ is locally complete) defined by
\[
T(a_0 e_0+\cdots+a_ne_n):=a_0 x_0+\cdots+ a_n x_n \quad (n\in\w, \; a_0,\dots,a_n\in\IF),
\]
is continuous and its adjoint $T^\ast: E'_{w^\ast}\to\ell_\infty$ is $q$-convergent. Therefore, by (iii) or (v), the operator $T$ is  sequentially  precompact. In particular, the set $T(\{e_n\}_{n\in\w})=S$ is  (sequentially) precompact in $E$. Since $S$ is also weakly null, Lemma \ref{l:null-seq}
 implies that  $x_n\to 0$ in $E$. But this contradicts the choice of $S$.\qed
\end{proof}

For an important case which covers all strict $(LF)$ spaces we have the following characterization the $GPscP_{(q,\infty)}$.

\begin{theorem} \label{t:char-p-GPsc}
Let $2\leq q\leq\infty$. Then for a locally convex space $E$ with the Rosenthal property, the following assertions are equivalent:
\begin{enumerate}
\item[{\rm(i)}] $E$ has the $GPscP_{(q,\infty)}$;
\item[{\rm(ii)}] for every locally convex space $X$ and for each $T\in\LL(X,E)$  whose adjoint $T^\ast:E'_{w^\ast}\to X'_\beta$ is $q$-convergent,  the operator $T$ transforms the bounded subsets of $X$ into sequentially precompact subsets of $E$;
\item[{\rm(iii)}] for every normed space $X$ and for each $T\in\LL(X,E)$ whose adjoint $T^\ast:E'_{w^\ast}\to X'_\beta$ is $q$-convergent,  the operator $T$ is  sequentially precompact;
\item[{\rm(iv)}] the same as {\rm(ii)} with $X=\ell_1^0$;
\end{enumerate}
If in addition $E$ is locally complete, then {\rm(i)--(iv)} are equivalent to
\begin{enumerate}
\item[{\rm(v)}] the same as {\rm(iii)} with $X$ a Banach space;
\item[{\rm(vi)}] the same as {\rm(iii)} with $X=\ell_1$;
\end{enumerate}
\end{theorem}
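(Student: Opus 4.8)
The plan is to observe that almost all of the work is already in place and to isolate the single genuinely new implication, namely (i)$\Ra$(ii). Indeed, by Proposition~\ref{p:char-p-GPscPp} applied with $p=\infty$, each of the conditions (ii)--(vi) already implies that $E$ has the $GPscP_{(q,\infty)}$, i.e.\ (i); and the implications (ii)$\Ra$(iii)$\Ra$(iv) and (iii)$\Ra$(v)$\Ra$(vi) are trivial restrictions of the class of allowed test spaces. Here I would only remark that for a \emph{normed} space $X$ an operator $T\colon X\to E$ carries every bounded set to a sequentially precompact set precisely when $T$ is sequentially precompact, since every bounded subset of $X$ lies in a scalar multiple of $B_X$ and sequential precompactness passes to subsets and to scalar multiples. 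So the whole theorem follows once (i)$\Ra$(ii) is proved.

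For (i)$\Ra$(ii), suppose $E$ has the $GPscP_{(q,\infty)}$, fix a locally convex space $X$, an operator $T\in\LL(X,E)$ whose adjoint $T^\ast\colon E'_{w^\ast}\to X'_\beta$ is $q$-convergent, and a bounded set $B\subseteq X$. The first step is to show that $T(B)$ is a $(q,\infty)$-limited subset of $E$. With $U:=B^\circ\in\Nn_0(X'_\beta)$ one has, for $\chi\in E'$, the chain of identities
\[
\|\chi\|_{T(B)}=\sup_{x\in B}|\langle\chi,Tx\rangle|=\sup_{x\in B}|\langle T^\ast(\chi),x\rangle|=q_U\big(T^\ast(\chi)\big),
\]
exactly as in the proof of Theorem~\ref{t:sequential-GPpq-property}; hence, if $\{\chi_n\}_{n\in\w}$ is weak$^\ast$ $q$-summable in $E'_{w^\ast}$, the $q$-convergence of $T^\ast$ forces $q_U(T^\ast(\chi_n))\to 0$, i.e.\ $\|\chi_n\|_{T(B)}\to 0$, so $T(B)\in\mathsf{L}_{(q,\infty)}(E)$.

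The second step --- where the hypotheses $2\le q$ and the Rosenthal property enter --- is to show that every $(q,\infty)$-limited set $A\subseteq E$ is sequentially precompact; applying this to $A=T(B)$ will finish the proof. Given a sequence $\{a_n\}_{n\in\w}$ in $A$, Theorem~\ref{t:inf-V*-wsc} (valid since $2\le q\le\infty$ and $E$ has the Rosenthal property) says $A$ is weakly sequentially precompact, so after passing to a subsequence I may assume $\{a_n\}_{n\in\w}$ is weakly Cauchy. I would then argue that $\{a_n\}_{n\in\w}$ is $\tau$-Cauchy: for any two strictly increasing sequences $(k_i),(j_i)$ in $\w$, the sequence $\{a_{k_i}-a_{j_i}\}_{i\in\w}$ is weakly null and is contained in the $(q,\infty)$-limited set $A-A$ (stability of $(q,\infty)$-limited sets under differences, cf.\ Lemma~3.1 of \cite{Gab-limited}, or directly from $\|\chi\|_{A-A}\le 2\|\chi\|_A$), hence by the $GPscP_{(q,\infty)}$ it is $\tau$-null; were $\{a_n\}_{n\in\w}$ not $\tau$-Cauchy, a routine inductive extraction (using an absolutely convex $U\in\Nn_0(E)$) would produce strictly increasing $(k_i),(j_i)$ with $a_{k_i}-a_{j_i}\notin U$ for all $i$, contradicting $a_{k_i}-a_{j_i}\to 0$. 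Thus $A$, and in particular $T(B)$, is sequentially precompact, which is (ii); and under local completeness the equivalences with (v) and (vi) are obtained in the same way from Proposition~\ref{p:char-p-GPscPp}.

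I expect the main (and rather mild) obstacle to be that second step: correctly invoking Theorem~\ref{t:inf-V*-wsc} to get weak sequential precompactness, keeping the relevant difference sequences inside a $(q,\infty)$-limited set, and the bookkeeping of the double extraction of index sequences. Everything else --- the reductions among (ii)--(vi) and the gauge-functional identification in the first step --- is formal and borrows directly from Theorem~\ref{t:sequential-GPpq-property} and Proposition~\ref{p:char-p-GPscPp}.
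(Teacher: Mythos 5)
Your proposal is correct and follows essentially the same route as the paper: reduce everything to (i)$\Rightarrow$(ii) via Proposition~\ref{p:char-p-GPscPp}, show $T(B)$ is $(q,\infty)$-limited from the $q$-convergence of $T^\ast$ and the fact that $B^\circ$ is a neighborhood of zero in $X'_\beta$, then use Theorem~\ref{t:inf-V*-wsc} (with $q\ge 2$ and the Rosenthal property) to pass to a weakly Cauchy subsequence and apply the $GPscP_{(q,\infty)}$ to the weakly null difference sequences. Your extra care in handling arbitrary pairs of index sequences (rather than only consecutive differences) when concluding Cauchyness is a harmless refinement of the same argument.
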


\begin{proof}
By (the proof of)  Proposition \ref{p:char-p-GPscPp}, we have to prove only the implication (i)$\Ra$(ii).
So, let $X$ be a locally convex space, and let $T\in\LL(X,E)$ be such that the adjoint $T^\ast:E'_{w^\ast}\to X'_\beta$ is $q$-convergent. We have to show that for every bounded subset $B$ of $X$,  the set $T(B)$ is  sequentially precompact in $E$. To this end, let $\{\chi_n\}_{n\in\w}$ be a weak$^\ast$ $q$-summable sequence in $E'$. As $T^\ast$  is $q$-convergent, we have $ T^\ast(\chi_n)\to 0$ in $X'_\beta$. Since $B$ is bounded, the set $B^\circ$ is a neighborhood of zero in $X'_\beta$. Therefore, for every $\e>0$ there exists $N\in\w$ such that $T^\ast(\chi_n)\in \e B^\circ$ for all $n\geq N$. Now for every $y\in B$, we have
\[
|\langle\chi_n, T(y)\rangle|=|\langle T^\ast(\chi_n), y\rangle|\leq \e \;\; \mbox{ for all $n\geq N$}.
\]
Therefore $T(B)$ is a $(q,\infty)$-limited set in $E$. Since $q\geq 2$ and $E$ has the Rosenthal property, Theorem \ref{t:inf-V*-wsc} implies that $T(B)$ is weakly sequentially precompact. To show that $T(B)$ is sequentially precompact in $E$ it suffices to prove that for any sequence $S=\{x_n\}_{n\in\w}$ in $B$, the image $T(S)$ has a Cauchy subsequence. To this end, taking into account that $T(B)$ is weakly sequentially precompact, we can assume that $T(S)$ is weakly Cauchy. For any strictly increasing sequence $\{n_k\}_{k\in\w}$ in $\w$, the sequence $\{T(x_{n_k})-T(x_{n_{k+1}})\}_{k\in\w}$ is weakly null and a $(q,\infty)$-limited set. Therefore, by the $GPscP_{(q,\infty)}$ of $E$, we obtain $T(x_{n_k})-T(x_{n_{k+1}})\to 0$ in $E$. So $\{T(x_{n_k})\}_{k\in\w}$ is Cauchy in $E$.\qed
\end{proof}

Now we show that the class of spaces with the $p$-$GPscP_{(q',q)}$ has nice stability properties.

\begin{proposition} \label{p:subspace-pGPscP}
Let $p,q,q'\in[1,\infty]$,  $q'\leq q$, and let $H$ be a subspace of a locally convex space $E$. If $E$ has the $p$-$GPscP_{(q',q)}$, then also $H$ has  the $p$-$GPscP_{(q',q)}$.
\end{proposition}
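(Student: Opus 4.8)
The plan is to deduce the $p$-$GPscP_{(q',q)}$ of $H$ directly from that of $E$, by checking that the two conditions in Definition \ref{def:p-GP-q} placed on a sequence --- being weakly $p$-summable and being a $(q',q)$-limited set --- are both preserved when a sequence in the subspace $H$ is regarded as a sequence in the ambient space $E$.

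Concretely, I would fix a sequence $S=\{x_n\}_{n\in\w}$ in $H$ which is weakly $p$-summable in $H$ and is a $(q',q)$-limited subset of $H$; the goal is to show $x_n\to 0$ in $H$, and since $H$ carries the topology induced from $E$ it suffices to prove $x_n\to 0$ in $E$. First, $S$ is weakly $p$-summable in $E$: for every $\chi\in E'$ the restriction $\chi{\restriction}_H$ lies in $H'$, so $(\langle\chi,x_n\rangle)_n=(\langle\chi{\restriction}_H,x_n\rangle)_n\in\ell_p$ (or $\in c_0$ if $p=\infty$) by the weak $p$-summability of $S$ in $H$. Second, $S$ is a $(q',q)$-limited subset of $E$; this is the identity-embedding case of (iv) of Lemma 3.1 of \cite{Gab-limited} (cf. the opening paragraph of the proof of Proposition \ref{p:subspace-pGP}), and it can also be verified directly: given a weak$^\ast$ $q'$-summable sequence $\{\psi_n\}_{n\in\w}$ in $E'$, the restrictions $\psi_n{\restriction}_H$ form a weak$^\ast$ $q'$-summable sequence in $H'$ (pair against points of $H$, exactly as above), hence $\big(\|\psi_n{\restriction}_H\|_S\big)_n\in\ell_q$ (or $\to 0$ if $q=\infty$) by the $(q',q)$-limitedness of $S$ in $H$; and $\|\psi_n{\restriction}_H\|_S=\|\psi_n\|_S$ since $S\subseteq H$. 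Thus $\big(\|\psi_n\|_S\big)_n\in\ell_q$ (or $\to 0$), as required. Combining the two facts, $S$ is a weakly $p$-summable sequence in $E$ which is also a $(q',q)$-limited subset of $E$, so the $p$-$GPscP_{(q',q)}$ of $E$ forces $x_n\to 0$ in $E$, and therefore in $H$.

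I do not expect any genuine obstacle here; the argument is routine restriction/corestriction bookkeeping. The only point worth flagging is that it relies essentially on the non-equicontinuous formulation ($(q',q)$-limited rather than $(q',q)$-$\EE$-limited), because an arbitrary weak$^\ast$ $q'$-summable sequence in $E'$ need only restrict to a weak$^\ast$ $q'$-summable, not necessarily equicontinuous, sequence in $H'$. Since, however, restrictions of equicontinuous families are equicontinuous (if $B\subseteq U^\circ$ with $U\in\Nn_0(E)$, then $\{\psi{\restriction}_H:\psi\in B\}\subseteq (U\cap H)^\circ$ in $H'$), the very same computation would also establish the analogous hereditary statement for any $\EE$-version of the property, should one wish to record it.
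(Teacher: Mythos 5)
Your argument is correct and is essentially the paper's proof: the paper likewise transfers the two hypotheses from $H$ to $E$ (citing Lemma 4.6(vi) of \cite{Gab-Pel} for weak $p$-summability and Lemma 3.1(iv) of \cite{Gab-limited} for $(q',q)$-limitedness, where you instead verify both by the same direct restriction computations) and then applies the $p$-$GPscP_{(q',q)}$ of $E$ to get convergence in $E$, hence in $H$.
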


\begin{proof}
Let $S=\{h_n\}_{n\in\w}$ be a weakly $p$-summable sequence in $H$ which is also a $(q',q)$-limited set. Then, by (vi) of Lemma 
4.6 of \cite{Gab-Pel}, $S$ is  weakly $p$-summable also in $E$, and, by (iv) of Lemma 
3.1 of \cite{Gab-limited}, $S$ is a $(q',q)$-limited subset of $E$. By the $p$-$GPscP_{(q',q)}$ of $E$, we have $h_n\to 0$ in $E$, and hence $h_n\to 0$ also in $H$. Thus $H$ has the $p$-$GPscP_{(q',q)}$.\qed
\end{proof}

\begin{proposition} \label{p:product-pGPscP}
Let $p,q,q'\in[1,\infty]$,  $q'\leq q$, and let $\{E_i\}_{i\in I}$ be a mom-empty family of locally convex spaces. Then the following assertions are equivalent:
\begin{enumerate}
\item[{\rm(i)}] the space $E=\prod_{i\in I} E_i$ has the $p$-$GPscP_{(q',q)}$;
\item[{\rm(ii)}] the space $L=\bigoplus_{i\in I} E_i$ has the $p$-$GPscP_{(q',q)}$;
\item[{\rm(iii)}] for every $i\in I$, the space $E_i$ has the $p$-$GPscP_{(q',q)}$.
\end{enumerate}
\end{proposition}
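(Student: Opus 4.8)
The plan is to prove the cycle of implications (i)$\Ra$(iii)$\Ra$(ii)$\Ra$(i), or rather to reduce everything to the equivalence of (i) and (iii) and of (ii) and (iii), since the hard direction in each case is that the coordinate spaces inherit the property from the product/sum, and the converse. I would first establish the easy implications (i)$\Ra$(iii) and (ii)$\Ra$(iii): each $E_i$ is simultaneously a (closed, complemented) subspace of $E=\prod_{i\in I}E_i$ via the canonical embedding $\pi_i$, and of $L=\bigoplus_{i\in I}E_i$. By Proposition \ref{p:subspace-pGPscP}, the $p$-$GPscP_{(q',q)}$ passes to arbitrary subspaces, so (i)$\Ra$(iii) and (ii)$\Ra$(iii) are immediate. (One must only check that $\pi_i$ is a topological embedding, which is standard for both products and locally convex direct sums.)

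For (iii)$\Ra$(ii): let $S=\{x_n\}_{n\in\w}$ be a weakly $p$-summable sequence in $L=\bigoplus_{i\in I}E_i$ which is also a $(q',q)$-limited set; I must show $x_n\to 0$ in $L$. Since $S$ is bounded in $L$, its support $J:=\supp(S)=\bigcup_n\supp(x_n)$ is finite — this is the crucial structural fact about locally convex direct sums (a bounded subset of $\bigoplus_{i\in I}E_i$ has finite support, used already in the proofs of Propositions \ref{p:sum-wCSP} and \ref{p:product-sum-GP}). Hence $S$ lies in the finite subsum $\bigoplus_{i\in J}E_i=\prod_{i\in J}E_i$, which is a closed subspace of $L$ carrying the induced topology. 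On this finite product the projections $p_i(S)=\{x_{n,i}\}_{n\in\w}$ are weakly $p$-summable (image under the continuous projection) and $(q',q)$-limited (by Lemma 3.1(iv) of \cite{Gab-limited} applied to the coordinate projection, or alternatively since a projection of a $(q',q)$-limited set is $(q',q)$-limited); so by (iii) each $p_i(S)\to 0$ in $E_i$ for $i\in J$. Since $J$ is finite, $x_n\to 0$ coordinatewise on a finite index set, which is exactly convergence to $0$ in $\bigoplus_{i\in J}E_i$, hence in $L$. Thus $L$ has the $p$-$GPscP_{(q',q)}$.

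For (iii)$\Ra$(i): let $S=\{x_n=(x_{n,i})_{i\in I}\}_{n\in\w}$ be a weakly $p$-summable, $(q',q)$-limited sequence in $E=\prod_{i\in I}E_i$; I must show $x_n\to 0$, i.e. that for every $i\in I$ the coordinate sequence $\{x_{n,i}\}_{n\in\w}\to 0$ in $E_i$. For fixed $i$, the coordinate projection $\pr_i:E\to E_i$ is continuous, so $\pr_i(S)=\{x_{n,i}\}_{n\in\w}$ is weakly $p$-summable in $E_i$, and it is $(q',q)$-limited in $E_i$ by Lemma 3.1(iv) of \cite{Gab-limited} (projection under an operator preserves $(q',q)$-limitedness). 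By (iii), $x_{n,i}\to 0$ in $E_i$. Since this holds for every $i\in I$ and the product topology is the topology of coordinatewise convergence, $x_n\to 0$ in $E$. Hence $E$ has the $p$-$GPscP_{(q',q)}$.

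I expect no genuine obstacle here: the argument is essentially the same bookkeeping as in Proposition \ref{p:product-sum-GP}, but substantially simpler because the $p$-$GPscP_{(q',q)}$ is a statement about a single sequence converging to $0$, so no subsequence-extraction or "relatively compact vs. precompact" subtleties arise, and in particular the index set $I$ need not be countable even in the product case. The only point requiring a little care is the direction (iii)$\Ra$(ii), where one must invoke the finiteness of the support of a bounded set in a locally convex direct sum to reduce to a finite product — but this is a standard fact already used elsewhere in the paper. I would therefore write the proof compactly, citing Proposition \ref{p:subspace-pGPscP} and Lemma 3.1(iv) of \cite{Gab-limited} for the permanence of $(q',q)$-limitedness under operators, and the finite-support property of bounded sets in direct sums.
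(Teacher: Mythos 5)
Your proposal is correct and follows essentially the same route as the paper: the easy directions via Proposition \ref{p:subspace-pGPscP}, the direct-sum case via the finite-support property of bounded sets, and the product case via coordinatewise projections preserving weak $p$-summability and $(q',q)$-limitedness. The only cosmetic difference is the choice of which implications in the cycle to prove directly (the paper does (i)$\Rightarrow$(ii)$\Rightarrow$(iii)$\Rightarrow$(i), handling the direct sum by viewing the finite subproduct as a subspace of $E$), but the ingredients and the substance are identical.
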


\begin{proof}
(i)$\Ra$(ii) Let $S=\{x_n\}_{n\in\w}$ be  a weakly $p$-summable sequence in $L$ which is also a $(q',q)$-limited set. Since $S$ is bounded, there is a finite subset $F$ of $I$ such that $S\subseteq \prod_{i\in F} E_i \times \prod_{i\in I\SM F} \{0_i\}$. Since $\prod_{i\in F} E_i$ is a subspace of $E$, Proposition \ref{p:subspace-pGPscP} implies that $x_n\to 0$ in $L$. Thus $L$ has the $p$-$GPscP_{(q',q)}$.
\smallskip

(ii)$\Ra$(iii) follows from Proposition \ref{p:subspace-pGPscP} because any $E_i$ can be considered as a subspace of $L$.
\smallskip

(iii)$\Ra$(i) Let $S=\{x_n=(x_{i,n})\}_{n\in\w}$ be  a weakly $p$-summable sequence in $E$ which is also a $(q',q)$-limited set. Observe that $x_n\to 0$ in $E$ if and only if $x_{i,n}\to 0_i$ for every $i\in I$. For every $i\in I$, (ii) of Lemma 
4.25 of \cite{Gab-Pel} and (i) of Proposition 
3.3 of \cite{Gab-limited} imply that the sequence $\{x_{i,n}\}_{n\in\w}$ is weakly $p$-summable in $E_i$ which is also a $(q',q)$-limited set. Therefore, by  the $p$-$GPscP_{(q',q)}$ of $E_i$, we obtain $x_{i,n}\to 0_i$ in $E_i$, as desired.\qed
\end{proof}


\section{Strong versions of the $V^\ast$ type properties of Pe{\l}czy\'{n}ski} \label{sec:strong-V*}


If in Definition \ref{def:property-Vp*} of $V^\ast$ type sets to ask that the $(p,q)$-$(V^\ast)$ sets are relatively compact in the {\em original} topology instead of the {\em weak} topology we obtain ``strong'' versions of $V^\ast_{(p,q)}$ properties.

\begin{definition} \label{def:V*-limited} {\em
Let $1\leq p\leq q\leq \infty$. A locally convex space $(E,\tau)$ is said to have
\begin{enumerate}
\item[$\bullet$] the {\em strong $V^\ast_{(p,q)}$ property} (resp., {\em strong $EV^\ast_{(p,q)}$ property}) if every $(p,q)$-$(V^\ast)$ (resp., $(p,q)$-$(EV^\ast)$) subset  of $E$ is relatively compact;
\item[$\bullet$] the {\em strong sequential $V^\ast_{(p,q)}$ property}  (resp., {\em strong  sequential $EV^\ast_{(p,q)}$ property}) if every $(p,q)$-$(V^\ast)$ (resp., $(p,q)$-$(EV^\ast)$) subset of $E$ is relatively sequentially compact; for short, the {\em strong  $sV^\ast_{(p,q)}$ property} or the {\em strong  $sEV^\ast_{(p,q)}$ property};
\item[$\bullet$] the {\em strong precompact $V^\ast_{(p,q)}$ property} (resp., {\em strong precompact $EV^\ast_{(p,q)}$ property})  if every $(p,q)$-$(V^\ast)$ (resp.,  $(p,q)$-$(EV^\ast)$)  subset of $E$ is $\tau$-precompact; for short, the {\em strong $prV^\ast_{(p,q)}$ property} or the {\em strong $prEV^\ast_{(p,q)}$ property};
\item[$\bullet$] the {\em strong sequentially precompact $V^\ast_{(p,q)}$ property} (resp., {\em strong sequentially precompact $EV^\ast_{(p,q)}$ property})  if every $(p,q)$-$(V^\ast)$ (resp.,  $(p,q)$-$(EV^\ast)$)  subset of $E$ is sequentially precompact in $\tau$; for short, the {\em strong $spV^\ast_{(p,q)}$ property} or the {\em strong $spEV^\ast_{(p,q)}$ property}.
\end{enumerate}
If $q=\infty$, we shall say that $E$ has the strong (resp., strong sequential, precompact or sequentially precompact) $V^\ast_{p}$ property. If $p=1$ and $q=\infty$, then we say that $E$ has the strong  (resp., strong sequential, precompact or sequentially precompact) $V^\ast$ property. \qed}
\end{definition}

The next lemma follows from the corresponding definitions and the simple fact that every $(p,q)$-limited set is also a $(p,q)$-$(V^\ast)$ set.
\begin{lemma} \label{l:strong-V*-property}
Let $1\leq p\leq q\leq \infty$, and let $E$ be a locally convex space. Then:
\begin{enumerate}
\item[{\rm(i)}] if $E$ has the strong $V^\ast_{(p,q)}$ property, then it has the $GP_{(p,q)}$ property;
\item[{\rm(ii)}] if $E$ has the strong $sV^\ast_{(p,q)}$ property, then it has the $sGP_{(p,q)}$ property;
\item[{\rm(iii)}] if $E$ has the strong $prV^\ast_{(p,q)}$ property, then it has the $prGP_{(p,q)}$ property;
\item[{\rm(iv)}] if $E=E_w$, then $E$  has the strong $prV^\ast_{(p,q)}$ property.
\end{enumerate}
\end{lemma}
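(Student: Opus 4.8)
The plan is to show that all four implications follow from comparing the classes of ``$(p,q)$-limited'' sets with the classes of ``$(p,q)$-$(V^\ast)$'' sets, together with the comparison of the target compactness conditions under the inclusion of topologies $\sigma(E,E')\subseteq\tau$. The single structural fact powering everything is: every $(p,q)$-limited subset of $E$ is a $(p,q)$-$(V^\ast)$ subset of $E$ (this is noted already in the excerpt as Lemma~3.1(viii) of \cite{Gab-limited}, and is also exactly the last sentence of the statement of Theorem~\ref{t:inf-V*-wsc}). Once we have this inclusion $\mathsf{L}_{(p,q)}(E)\subseteq$ (the family of $(p,q)$-$(V^\ast)$ sets), items (i)--(iii) are immediate: if a space forces every $(p,q)$-$(V^\ast)$ set to be relatively compact (resp.\ relatively sequentially compact, resp.\ $\tau$-precompact), then in particular it forces every $(p,q)$-limited set to have that same property, which is precisely the $GP_{(p,q)}$ (resp.\ $sGP_{(p,q)}$, resp.\ $prGP_{(p,q)}$) property as defined in Definition~\ref{def:property-pGP}.

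For (iv), the plan is to observe that when $E=E_w$ every bounded subset of $E$ is precompact (this is the standard fact that the weak topology of any lcs is precompact on bounded sets, already used in the proof of Lemma~\ref{l:GPp=EGPp-2}(i)), and every $(p,q)$-$(V^\ast)$ set is by definition bounded. Hence every $(p,q)$-$(V^\ast)$ subset of $E$ is $\tau$-precompact, which is exactly the strong $prV^\ast_{(p,q)}$ property. Concretely I would write: let $A$ be a $(p,q)$-$(V^\ast)$ set; then $A$ is bounded, and since $E$ carries its weak topology, $A$ is precompact in $\tau=\sigma(E,E')$; therefore $E$ has the strong $prV^\ast_{(p,q)}$ property.

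There is essentially no obstacle here — the lemma is a bookkeeping lemma assembling definitions, and the only nontrivial input is the already-cited inclusion of $(p,q)$-limited sets into $(p,q)$-$(V^\ast)$ sets. The one thing to be careful about is that the statement of Definition~\ref{def:V*-limited} phrases the ``strong'' properties in terms of $(p,q)$-$(V^\ast)$ sets being relatively compact, etc., in the \emph{original} topology $\tau$, while the $GP$-type properties of Definition~\ref{def:property-pGP} are also stated with respect to $\tau$; so no topology-switching is needed at all, and each implication is a direct ``smaller class of hypotheses'' $\Rightarrow$ ``same conclusion'' argument. I would therefore present the proof in four short sentences, one per item, citing ``every $(p,q)$-limited set is a $(p,q)$-$(V^\ast)$ set'' for (i)--(iii) and ``every bounded set in $E_w$ is precompact'' for (iv).

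\begin{proof}
Recall that every $(p,q)$-limited subset of $E$ is a $(p,q)$-$(V^\ast)$ set (see Lemma~3.1(viii) of \cite{Gab-limited}, cf. also Theorem~\ref{t:inf-V*-wsc}).

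(i) Assume $E$ has the strong $V^\ast_{(p,q)}$ property, and let $A$ be a $(p,q)$-limited subset of $E$. Then $A$ is a $(p,q)$-$(V^\ast)$ set and hence, by the strong $V^\ast_{(p,q)}$ property, $A$ is relatively compact in $E$. Thus $E$ has the $GP_{(p,q)}$ property.

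(ii) Assume $E$ has the strong $sV^\ast_{(p,q)}$ property, and let $A$ be a $(p,q)$-limited subset of $E$. Then $A$ is a $(p,q)$-$(V^\ast)$ set and hence, by the strong $sV^\ast_{(p,q)}$ property, $A$ is relatively sequentially compact in $E$. Thus $E$ has the $sGP_{(p,q)}$ property.

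(iii) Assume $E$ has the strong $prV^\ast_{(p,q)}$ property, and let $A$ be a $(p,q)$-limited subset of $E$. Then $A$ is a $(p,q)$-$(V^\ast)$ set and hence, by the strong $prV^\ast_{(p,q)}$ property, $A$ is $\tau$-precompact. Thus $E$ has the $prGP_{(p,q)}$ property.

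(iv) Assume $E=E_w$, and let $A$ be a $(p,q)$-$(V^\ast)$ subset of $E$. Then $A$ is bounded, and since $E$ carries its weak topology, $A$ is precompact in $\tau=\sigma(E,E')$. Thus $E$ has the strong $prV^\ast_{(p,q)}$ property.\qed
\end{proof}
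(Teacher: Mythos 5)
Your proof is correct and matches the paper's approach exactly: the paper dispatches (i)--(iii) with the one-line remark that every $(p,q)$-limited set is a $(p,q)$-$(V^\ast)$ set (Lemma~3.1(viii) of \cite{Gab-limited}), and (iv) with the fact that bounded sets are precompact in the weak topology while $(p,q)$-$(V^\ast)$ sets are bounded. Your write-up simply makes these routine deductions explicit.
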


\begin{remark} {\em
The converse in Lemma \ref{l:strong-V*-property} is not true in general even for Banach spaces. Indeed, let $p=q=\infty$. Then, by Theorem \ref{t:Banach-GP}, $c_0$ has the Gelfand--Phillips property. However, $c_0$ does not have the strong $V^\ast_\infty$ property because $B_{c_0}$ is not compact but it is an $\infty$-$(V^\ast)$ set since any weakly null sequence in $\ell_1=(c_0)'_\beta$, by the Schur property, is norm null.\qed}
\end{remark}

The next proposition shows that for a wide class of locally convex spaces including all strict $(LF)$-spaces all strong $V^\ast$ type properties coincide.
\begin{proposition} \label{p:LF-strong-V*}
Let $1\leq p\leq q\leq \infty$, and let $E$ be a von Neumann complete, $p$-quasibarrelled angelic space {\rm(}for example, $E$ is a strict $(LF)$-space{\rm)}. Then all the properties of being strong $V^\ast_{(p,q)}$, strong $EV^\ast_{(p,q)}$, strong $sV^\ast_{(p,q)}$, strong  $sEV^\ast_{(p,q)}$, strong $prV^\ast_{(p,q)}$, strong $prEV^\ast_{(p,q)}$, strong  $spV^\ast_{(p,q)}$ or  strong  $spEV^\ast_{(p,q)}$ are equivalent.
\end{proposition}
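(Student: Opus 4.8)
The plan is to reduce all eight strong $V^\ast$ type properties to a single one by a chain of pairwise equivalences, in the same spirit as the proof of Proposition \ref{p:equal-GP-strict-LF}. First I would recall that the $(p,q)$-$(V^\ast)$ sets and the $(p,q)$-$(EV^\ast)$ sets coincide for $E$ because $E$ is $p$-quasibarrelled: every weakly $p$-summable sequence in $E'_\beta$ is equicontinuous, so the ``$E$-versions'' of the properties collapse onto the non-$E$ versions. This immediately identifies strong $V^\ast_{(p,q)}$ with strong $EV^\ast_{(p,q)}$, strong $sV^\ast_{(p,q)}$ with strong $sEV^\ast_{(p,q)}$, and so on for the $pr$- and $sp$- variants. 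Hence it suffices to show that the four properties strong $V^\ast_{(p,q)}$, strong $sV^\ast_{(p,q)}$, strong $prV^\ast_{(p,q)}$ and strong $spV^\ast_{(p,q)}$ are equivalent.

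Next I would run through these four. Since $E$ is angelic, a subset of $E$ is relatively compact if and only if it is relatively sequentially compact (by Lemma 0.3 of \cite{Pryce}, as recalled in the preliminaries); this gives strong $V^\ast_{(p,q)}$ $\Leftrightarrow$ strong $sV^\ast_{(p,q)}$. Since $E$ is von Neumann complete, every precompact subset of $E$ is relatively compact, so a $(p,q)$-$(V^\ast)$ set is precompact if and only if it is relatively compact; this gives strong $V^\ast_{(p,q)}$ $\Leftrightarrow$ strong $prV^\ast_{(p,q)}$. Finally, for the sequentially precompact version: a sequentially precompact set is precompact, so strong $prV^\ast_{(p,q)}$ trivially implies... wait, the implication goes the other way, so I would argue that if every $(p,q)$-$(V^\ast)$ set is relatively sequentially compact (strong $sV^\ast_{(p,q)}$) then in particular every sequence in such a set has a subsequence converging in $E$, hence a Cauchy subsequence, so the set is sequentially precompact; conversely a sequentially precompact $(p,q)$-$(V^\ast)$ set is precompact, hence relatively compact by von Neumann completeness, hence relatively sequentially compact by angelicity. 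This closes the loop strong $sV^\ast_{(p,q)}$ $\Leftrightarrow$ strong $spV^\ast_{(p,q)}$.

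Putting the pieces together: the $E$-versions equal the plain versions by $p$-quasibarrelledness, and among the plain versions, angelicity collapses compact/sequentially compact and von Neumann completeness collapses precompact/compact and sequentially-precompact/compact, so all eight properties coincide. For the parenthetical remark that strict $(LF)$-spaces qualify, I would note that a strict $(LF)$-space is complete (hence von Neumann complete), barrelled (hence $p$-quasibarrelled for every $p$), and angelic — the latter because it is a strict inductive limit of Fréchet spaces, which are angelic, and the angelicity passes to such inductive limits.

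I do not expect a serious obstacle here: the statement is purely a bookkeeping consequence of three structural hypotheses, each of which controls exactly one of the three ways the relevant compactness-type notions can differ. The only point that needs a little care is making sure the ``strong $sp$'' property is genuinely squeezed between the others rather than being a strictly weaker tail condition — but since in an angelic von Neumann complete space ``sequentially precompact'' forces ``precompact'' forces ``relatively compact'' forces ``relatively sequentially compact'', the circle does close. The verification of angelicity, completeness and barrelledness for strict $(LF)$-spaces is standard and can be cited from \cite{Jar}.
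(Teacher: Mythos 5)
Your proposal is correct and follows essentially the same route as the paper: $p$-quasibarrelledness collapses the $(p,q)$-$(EV^\ast)$ sets onto the $(p,q)$-$(V^\ast)$ sets, von Neumann completeness identifies precompact with relatively compact, and angelicity identifies the sequential and non-sequential notions (the paper handles the $pr$/$sp$ step by noting that the closure of a $(p,q)$-$(V^\ast)$ set is again such a set, while you close that loop by chaining sequentially precompact $\Rightarrow$ precompact $\Rightarrow$ relatively compact $\Rightarrow$ relatively sequentially compact, which amounts to the same bookkeeping).
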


\begin{proof}
Since $E$ is $p$-quasibarrelled, Lemma 
7.2 of \cite{Gab-Pel} implies that the family of $(p,q)$-$(V^\ast)$ sets coincides with the family of $(p,q)$-$(EV^\ast)$ sets. Therefore all equicontinuous $V^\ast$ type properties coincide with the corresponding $V^\ast$ type property. Since $E$ is von Neumann complete, it is clear that the strong $prV^\ast_{(p,q)}$ property coincides with the strong $V^\ast_{(p,q)}$ property. Finally, the angelicity of $E$ implies that the (relatively) compact subsets of $E$ are exactly the (relatively) sequentially compact sets. Taking into account that the closure of a $(p,q)$-$(V^\ast)$ set is a $(p,q)$-$(V^\ast)$ set, it follows that the strong $V^\ast_{(p,q)}$ (resp., $prV^\ast_{(p,q)}$) property coincides with  the strong $sV^\ast_{(p,q)}$ (resp., strong $spV^\ast_{(p,q)}$) property. \qed
\end{proof}

The following two propositions show that the classes of locally convex spaces with strong $V^\ast_{(p,q)}$ type properties are stable under taking direct products, direct sums and closed subspaces. We omit their proofs because they can be obtained from the proofs of Propositions \ref{p:product-sum-GP} and \ref{p:subspace-pGP}, respectively, just replacing ``$(p,q)$-limited'' by ``$(p,q)$-$(V^\ast)$'' and from Proposition 
7.4 of \cite{Gab-Pel} which describes $(p,q)$-$(V^\ast)$ sets in direct products and direct sums.
\begin{proposition} \label{p:product-sum-V*-strong}
Let $1\leq p\leq q\leq\infty$, and let $\{E_i\}_{i\in I}$  be a non-empty family of locally convex spaces. Then:
\begin{enumerate}
\item[{\rm(i)}] $E=\prod_{i\in I} E_i$ has the strong $V^\ast_{(p,q)}$ $($resp., strong $EV^\ast_{(p,q)}$, strong $prV^\ast_{(p,q)}$ or strong  $prEV^\ast_{(p,q)}$$)$ property if and only if all spaces $E_i$ have the same property;
\item[{\rm(ii)}] $E=\bigoplus_{i\in I} E_i$ has the strong $V^\ast_{(p,q)}$ $($resp., strong $EV^\ast_{(p,q)}$, strong $sV^\ast_{(p,q)}$, strong $sEV^\ast_{(p,q)}$,  strong $prV^\ast_{(p,q)}$, strong $prEV^\ast_{(p,q)}$, strong $spV^\ast_{(p,q)}$ or strong $spEV^\ast_{(p,q)}$$)$ property if and only if all spaces  $E_i$ have  the same property;
\item[{\rm(iii)}] if $I=\w$ is countable, then $E=\prod_{i\in \w} E_i$ has the  strong $sV^\ast_{(p,q)}$ $($resp.,   strong $sEV^\ast_{(p,q)}$, strong $spV^\ast_{(p,q)}$ or strong $spEV^\ast_{(p,q)}$$)$ property  if and only if all spaces $E_i$ have the same property.
\end{enumerate}
\end{proposition}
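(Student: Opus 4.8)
The plan is to transcribe, essentially line by line, the proofs of Propositions \ref{p:product-sum-GP} and \ref{p:subspace-pGP}, replacing everywhere the structural facts about $(p,q)$-limited sets (Proposition 3.3 and Lemma 3.1(iv) of \cite{Gab-limited}) by the corresponding structural facts about $(p,q)$-$(V^\ast)$ sets, which are exactly what Proposition 7.4 of \cite{Gab-Pel} supplies. Concretely, I will use two consequences of that proposition: (a) if $A_j$ is a $(p,q)$-$(V^\ast)$ (resp. $(p,q)$-$(EV^\ast)$) subset of a factor $E_j$, then its canonical image $A_j\times\prod_{i\neq j}\{0_i\}$ is a $(p,q)$-$(V^\ast)$ (resp. $(p,q)$-$(EV^\ast)$) subset of both $\prod_i E_i$ and $\bigoplus_i E_i$; and (b) every coordinate projection of a $(p,q)$-$(V^\ast)$ (resp. $(p,q)$-$(EV^\ast)$) subset $A$ of $\prod_i E_i$ or $\bigoplus_i E_i$ is a $(p,q)$-$(V^\ast)$ (resp. $(p,q)$-$(EV^\ast)$) subset of the corresponding $E_i$, and $A$ is bounded (so that in the direct-sum case $\supp(A)$ is finite). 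Since (a) and (b) treat the $(EV^\ast)$ variants on the same footing, the equicontinuous versions of all the properties in the statement will be covered with no extra work.

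For the necessity in all of (i)--(iii): fix $j\in I$ and a $(p,q)$-$(V^\ast)$ set $A_j$ in $E_j$. By (a), the embedded copy $A:=A_j\times\prod_{i\neq j}\{0_i\}$ is a $(p,q)$-$(V^\ast)$ set in $E$, hence relatively compact (resp. precompact, relatively sequentially compact, sequentially precompact) in $E$ by the assumed strong $V^\ast_{(p,q)}$-type property; as $E_j$ is topologically isomorphic to the subspace $E_j\times\prod_{i\neq j}\{0_i\}$ of $E$, the set $A_j$ inherits the same property. This is the uniform ``only if'' argument, word for word as in the proof of Proposition \ref{p:product-sum-GP}.

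For the sufficiency: given a $(p,q)$-$(V^\ast)$ set $A$ in $E$, use (b) to see that each projection $A_i$ is a $(p,q)$-$(V^\ast)$ set in $E_i$, hence has the property assumed for $E_i$. In case (i) the product of relatively compact sets is relatively compact and the product of precompact sets is precompact, so $\overline{A}\subseteq\prod_i\overline{A_i}$ is compact, resp. $A$ is precompact, in $E$. In case (ii), since $A$ is bounded its support is finite, so $A$ lies inside a finite subproduct $\prod_{i\in F}E_i$, where the product-of-compact, product-of-precompact, and finite-product-of-(relatively-)sequentially-compact arguments all apply; one concludes the corresponding property of $A$. In case (iii), for the sequential notions one invokes Lemma 2.1 of \cite{Gab-Pel} (a subset of a countable product of Tychonoff spaces is (relatively) sequentially compact iff all its projections are) together with the elementary fact that a set is sequentially precompact iff each of its sequences is, thereby reducing the claim to the coordinate spaces $E_i$.

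The only genuinely delicate point is (b), i.e. that $(p,q)$-$(V^\ast)$ sets pass correctly through the canonical maps to and from products and sums and, in particular, are bounded: here $(\prod_i E_i)'=\bigoplus_i E_i'$ and $(\bigoplus_i E_i)'=\prod_i E_i'$, and one must track how the strong dual topologies and weak $p$-summability of functionals interact across these identifications. I expect this to be the main obstacle, but it is precisely the bookkeeping carried out in Proposition 7.4 of \cite{Gab-Pel}; once it is granted, the rest is a routine transcription of the earlier proofs and no new idea is needed.
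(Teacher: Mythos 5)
Your proposal follows exactly the route the paper intends: the paper omits the proof, stating that it is obtained from the proofs of Propositions \ref{p:product-sum-GP} and \ref{p:subspace-pGP} by replacing ``$(p,q)$-limited'' with ``$(p,q)$-$(V^\ast)$'' and invoking Proposition 7.4 of \cite{Gab-Pel} for the behaviour of $(p,q)$-$(V^\ast)$ sets in products and sums. Your transcription, including the embedding/projection bookkeeping and the finite-support reduction in the direct-sum case, matches this intended argument.
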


\begin{proposition} \label{p:subspace-V*-strong}
Let $1\leq p\leq q\leq\infty$, and let $L$ be a subspace of a locally convex space $E$.
\begin{enumerate}
\item[{\rm(i)}] If $L$ is closed in $E$ and $E$ has  the  the strong $V^\ast_{(p,q)}$ $($resp., strong $EV^\ast_{(p,q)}$, strong $prV^\ast_{(p,q)}$ or strong $prEV^\ast_{(p,q)}$$)$ property, then also $L$ has the same property.
\item[{\rm(ii)}] If $L$ is sequentially closed in $E$ and $E$ has the  strong $sV^\ast_{(p,q)}$ $($resp.,   strong $sEV^\ast_{(p,q)}$$)$ property, then also $L$ has the same property.
\item[{\rm(iii)}]  If $E$ has  the strong $prV^\ast_{(p,q)}$ $($resp., strong $prEV^\ast_{(p,q)}$, strong $spV^\ast_{(p,q)}$ or strong $spEV^\ast_{(p,q)}$$)$ property,  then also $L$ has the same property.
\end{enumerate}
\end{proposition}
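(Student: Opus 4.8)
The plan is to run the argument of Proposition~\ref{p:subspace-pGP} almost word for word, replacing throughout the class of $(p,q)$-limited sets by the class of $(p,q)$-$(V^\ast)$ sets and, in the equicontinuous versions, by the class of $(p,q)$-$(EV^\ast)$ sets. The only preliminary point to settle is the following localization fact: \emph{if $A$ is a $(p,q)$-$(V^\ast)$ subset (resp., a $(p,q)$-$(EV^\ast)$ subset) of the subspace $L$, then $A$ is a $(p,q)$-$(V^\ast)$ subset (resp., a $(p,q)$-$(EV^\ast)$ subset) of $E$.} This is the $(V^\ast)$-analogue of Lemma~3.1(iv) of \cite{Gab-limited} and is contained in Lemma~7.2 of \cite{Gab-Pel}; a direct proof runs as follows. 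The restriction map $r\colon E'_\beta\to L'_\beta$, $r(\chi):=\chi|_L$, is continuous, since for each $B\in\Bo(L)$ one has $B\in\Bo(E)$ and $\|r(\chi)\|_B=\|\chi\|_B$, so $\chi\mapsto\|r(\chi)\|_B$ is a defining seminorm of $\beta(E',E)$. Consequently $r$ carries weakly $p$-summable sequences of $E'_\beta$ to weakly $p$-summable sequences of $L'_\beta$, and it plainly carries equicontinuous subsets of $E'$ to equicontinuous subsets of $L'$. Since $\sup_{a\in A}|\langle\chi_n,a\rangle|=\sup_{a\in A}|\langle r(\chi_n),a\rangle|$ for every $A\subseteq L$ and every $\chi_n\in E'$, the defining condition of a $(p,q)$-$(V^\ast)$ set (resp., $(p,q)$-$(EV^\ast)$ set) for $A$ in $E$ is inherited from the same condition for $A$ in $L$ (with $\ell_q$ replaced by $c_0$ when $q=\infty$).

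Granting this, fix a $(p,q)$-$(V^\ast)$ (resp., $(p,q)$-$(EV^\ast)$) set $A$ in $L$; by the previous paragraph $A$ is of the same type in $E$, so one may apply to $A$ the strong $V^\ast$-type hypothesis assumed on $E$. For (i): if $L$ is closed in $E$ and $E$ has the strong $V^\ast_{(p,q)}$ (resp., strong $EV^\ast_{(p,q)}$) property, then $A$ is relatively compact in $E$; as $\overline{A}^{E}\subseteq\overline{L}^{E}=L$, the set $\overline{A}^{E}$ is a compact subset of $L$, whence $A$ is relatively compact in $L$. For (ii): if $L$ is sequentially closed in $E$ and $E$ has the strong $sV^\ast_{(p,q)}$ (resp., strong $sEV^\ast_{(p,q)}$) property, then any sequence in $A$ has a subsequence converging in $E$ to a point which, $L$ being sequentially closed, belongs to $L$; hence $A$ is relatively sequentially compact in $L$.

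For (iii) one uses that precompactness and sequential precompactness of a subset of $L$ do not depend on whether they are computed in the subspace topology of $L$ or in $E$: a zero neighbourhood of $L$ is of the form $U\cap L$ with $U\in\Nn_0(E)$, for points of $L$ the difference lies in $U$ if and only if it lies in $U\cap L$, and a finite $U'$-net (with $U'+U'\subseteq U$) of a subset of $L$ can always have its centres moved into $L$. Therefore, if $E$ has the strong $prV^\ast_{(p,q)}$ (resp., strong $prEV^\ast_{(p,q)}$) property, then $A$ is precompact in $E$ and hence in $L$; and if $E$ has the strong $spV^\ast_{(p,q)}$ (resp., strong $spEV^\ast_{(p,q)}$) property, then every sequence in $A$ has a subsequence Cauchy in $E$, hence Cauchy in $L$, so $A$ is sequentially precompact in $L$.

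None of these steps is genuinely difficult; the one place where care is needed is to keep the equicontinuous and the non-equicontinuous cases apart when invoking the localization fact, which is exactly why the restriction-map computation above was arranged so as to preserve simultaneously weak $p$-summability and equicontinuity of sequences in the duals.
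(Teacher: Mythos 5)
Your proposal is correct and follows essentially the same route as the paper, which derives the proposition from the proof of Proposition~\ref{p:subspace-pGP} by replacing ``$(p,q)$-limited'' with ``$(p,q)$-$(V^\ast)$'' (resp.\ ``$(p,q)$-$(EV^\ast)$'') throughout. The only addition is that you prove the localization fact (a $(p,q)$-$(V^\ast)$ set in $L$ is one in $E$) directly via the restriction map rather than citing it, and that argument is sound.
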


Below we characterize locally convex spaces with the strong precompact $V^\ast_p$ property and the strong sequentially precompact $V^\ast_p$ property.
\begin{theorem} \label{t:char-strong-V*p}
Let $p\in[1,\infty]$, and let $E$ be a locally convex space. Then the following assertions are equivalent:
\begin{enumerate}
\item[{\rm(i)}] $E$ has the strong {\rm(}resp., sequentially{\rm)} precompact $V^\ast_p$ property;
\item[{\rm(ii)}] for every locally convex space $X$, each $T\in\LL(X,E)$, whose adjoint $T^\ast:E'_\beta\to X'_\beta$ is $p$-convergent, transforms bounded sets of $X$ into {\rm(}resp., sequentially{\rm)} precompact subsets of $E$;
\item[{\rm(iii)}] for every normed space $X$ and each $T\in\LL(X,E)$, if the adjoint $T^\ast:E'_\beta\to X'_\beta$ is $p$-convergent then $T$ is  {\rm(}resp., sequentially{\rm)} precompact;
\item[{\rm(iv)}] the same as {\rm(iii)} with $X=\ell_1^0$.
\end{enumerate}
If in addition $E$ is locally complete, then {\rm(i)--(iv)} are equivalent to
\begin{enumerate}
\item[{\rm(v)}] the same as {\rm(iii)} with $X$ a Banach space;
\item[{\rm(vi)}] the same as {\rm(iii)} with $X=\ell_1$.
\end{enumerate}
\end{theorem}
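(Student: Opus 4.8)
**Proof proposal for Theorem \ref{t:char-strong-V*p}.**

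The plan is to follow exactly the template of the operator characterizations already established in this paper (Theorem \ref{t:GPpq-property-oper}, Theorem \ref{t:sequential-GPpq-property}, Theorem \ref{t:coarse-GPp-char} and Proposition \ref{p:char-p-GPscPp}), replacing the role of ``$(p,q)$-limited set'' by ``$p$-$(V^\ast)$ set'' and the role of ``weakly $p$-summable sequence in $E'_{w^\ast}$'' by ``weakly $p$-summable sequence in $E'_\beta$'' (which is why the adjoint hypothesis is $T^\ast\colon E'_\beta\to X'_\beta$ here rather than $T^\ast\colon E'_{w^\ast}\to X'_\beta$). The implications (ii)$\Rightarrow$(iii)$\Rightarrow$(iv), (iii)$\Rightarrow$(v)$\Rightarrow$(vi) and (iii)$\Rightarrow$(v) are trivial (restriction of the class of domains), and (iv)$\Rightarrow$(v) under local completeness is the standard completion argument via Proposition 14.9 of \cite{Gab-Pel}. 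So the real content is the loop (i)$\Rightarrow$(ii), (iv)$\Rightarrow$(i) and (vi)$\Rightarrow$(i).

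First I would prove (i)$\Rightarrow$(ii). Let $T\in\LL(X,E)$ with $T^\ast\colon E'_\beta\to X'_\beta$ $p$-convergent, and let $B\subseteq X$ be bounded, so $U:=B^\circ$ is a neighbourhood of zero in $X'_\beta$. For a weakly $p$-summable sequence $\{\chi_n\}$ in $E'_\beta$, $p$-convergence of $T^\ast$ gives $\big(q_U(T^\ast\chi_n)\big)\in\ell_p$ (or $c_0$ if $p=\infty$); and the computation $q_U(T^\ast\chi)=\sup_{x\in B}|\langle\chi,T(x)\rangle|$ (verbatim as in the proof of Theorem \ref{t:sequential-GPpq-property}) shows $\big(\sup_{y\in T(B)}|\langle\chi_n,y\rangle|\big)\in\ell_p$, i.e. $T(B)$ is a $p$-$(V^\ast)$ subset of $E$. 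The strong (resp.\ sequentially) precompact $V^\ast_p$ property then makes $T(B)$ $\tau$-precompact (resp.\ sequentially precompact). For (iv)$\Rightarrow$(i) (and (vi)$\Rightarrow$(i) when $E$ is locally complete): given a $p$-$(V^\ast)$ set $A\subseteq E$ and a sequence $S=\{x_n\}$ in $A$, since $S$ is bounded, Proposition 14.9 of \cite{Gab-Pel} (or Proposition 5.9 of \cite{Gab-limited} if one prefers the $\ell_1^0$ version) gives a continuous map $T\colon\ell_1^0\to E$ (resp.\ $T\colon\ell_1\to E$) with $T(e_n)=x_n$. As in the proof of Theorem \ref{t:sequential-GPpq-property}, $\langle T^\ast(\chi),e_n\rangle=\langle\chi,x_n\rangle$, so $\|T^\ast(\chi)\|_{\ell_\infty}=\sup_n|\langle\chi,x_n\rangle|$; hence for a weakly $p$-summable $\{\chi_n\}$ in $E'_\beta$ the $p$-$(V^\ast)$ condition on $S$ forces $\big(\|T^\ast(\chi_n)\|_{\ell_\infty}\big)\in\ell_p$ (or $c_0$), i.e. $T^\ast\colon E'_\beta\to(\ell_1^0)'_\beta=\ell_\infty$ is $p$-convergent. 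By (iv) (resp.\ (vi)) $T$ is (sequentially) precompact, so $S=T(\{e_n\})$ has a Cauchy subsequence (and in the non-sequential case $\{x_n\}$ itself is precompact); since every sequence in $A$ is thereby (sequentially) precompact, $A$ is $\tau$-precompact (resp.\ sequentially precompact), giving (i).

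The only genuinely delicate point is the identification, in the (iv)$\Rightarrow$(i) step, of the target seminorm: one must check that $p$-convergence of $T^\ast$ is precisely equivalent to $\big(\sup_n|\langle\chi_n,x_n\rangle|\big)$-type summability, which hinges on the fact that the strong topology of $(\ell_1^0)'=\ell_\infty$ is the sup-norm and that $B_{\ell_1^0}^\circ=B_{\ell_\infty}$, so $q_{B_{\ell_1^0}^\circ}=\|\cdot\|_{\ell_\infty}$; this is routine but is the hinge of the argument. A second mild subtlety is that in the completion step (iv)$\Rightarrow$(v), one needs $\overline{T}(B_{\overline{L}})\subseteq\overline{T(B_L)}$ together with the fact that the closure of a $p$-$(V^\ast)$ set is again a $p$-$(V^\ast)$ set (this is (iii) of Lemma 7.2 of \cite{Gab-Pel}, already invoked in the proof of Proposition \ref{p:V*pq=>GPpq}), so that $\overline{T}$ is again of the required type and local completeness of $E$ is exactly what lets $T$ extend to the completion of a normed $L$ via Proposition 3.7 of \cite{Gab-Pel}. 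No step should require any machinery beyond what has already been used for the analogous Gelfand--Phillips characterizations.
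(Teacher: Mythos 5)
Your proposal follows essentially the same route as the paper's proof: (i)$\Rightarrow$(ii) by computing $q_{B^\circ}(T^\ast\chi)=\sup_{x\in B}|\langle\chi,T(x)\rangle|$ to see that $T(B)$ is a $p$-$(V^\ast)$ set, the trivial restrictions (ii)$\Rightarrow$(iii)$\Rightarrow$(iv) and (iii)$\Rightarrow$(v)$\Rightarrow$(vi), and (iv)$\Rightarrow$(i), (vi)$\Rightarrow$(i) via the operator $T\colon\ell_1^0\to E$ (resp.\ $T\colon\ell_1\to E$) built from a sequence in a $p$-$(V^\ast)$ set using Proposition 14.9 of \cite{Gab-Pel}. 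The only slip is your claim that $p$-convergence of $T^\ast$ yields $\big(q_U(T^\ast\chi_n)\big)\in\ell_p$: by definition it yields only $T^\ast(\chi_n)\to 0$ in $X'_\beta$, i.e.\ the gauge sequence is null, but this is harmless because a $p$-$(V^\ast)$ set is a $(p,\infty)$-$(V^\ast)$ set, so nullness of $\big(\sup_{y\in T(B)}|\langle\chi_n,y\rangle|\big)$ is exactly what is required (the same remark applies to the ``$\in\ell_p$'' alternative in your (iv)$\Rightarrow$(i) step, where again only the $c_0$ option is both true and needed).
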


\begin{proof}
(i)$\Ra$(ii) Let $X$ be a locally convex space, and let $T\in\LL(X,E)$ be such that the adjoint $T^\ast:E'_\beta\to X'_\beta$ is $p$-convergent. We have to show that the set $T(B)$ is  (sequentially) precompact in $E$ for every bounded subset $B$ of $X$. To this end, let $\{\chi_n\}_{n\in\w}$ be a weakly $p$-summable sequence in $E'_\beta$.  As $T^\ast$  is $p$-convergent, we have $ T^\ast(\chi_n)\to 0$ in $X'_\beta$.  Since $B$ is bounded, the set $B^\circ$ is a neighborhood of zero in $X'_\beta$. Therefore, for every $\e>0$ there exists $N\in\w$ such that $T^\ast(\chi_n)\in \e B^\circ$ for all $n\geq N$. Now for every $y\in B$, we have
\[
|\langle\chi_n, T(y)\rangle|=|\langle T^\ast(\chi_n), y\rangle|\leq \e \;\; \mbox{ for all $n\geq N$}.
\]
Therefore $T(B)$ is a $p$-$(V^\ast)$ set in $E$. By the strong (sequentially)  precompact $V^\ast_p$ property it follows that $T(B)$ is (sequentially) precompact in $E$, as desired.
\smallskip

(ii)$\Ra$(iii)$\Ra$(iv) and (iii)$\Ra$(v)$\Ra$(vi) are obvious.
\smallskip

(iv)$\Ra$(i) and (vi)$\Ra$(i): To show that $E$ has the strong  (sequentially) precompact $V^\ast_p$ property, we assume for a contradiction that there is a $p$-$(V^\ast)$ set $A$ in $E$ which is not  (resp., sequentially) precompact. Then there are $U\in\Nn_0(E)$ and a $U$-separated sequence $S=\{x_n\}_{n\in\w}$ in $A$ (resp., a sequence $S=\{x_n\}_{n\in\w}$ in $A$ which does not have a Cauchy subsequence). Since $S$ is also a $p$-$(V^\ast)$ set, Proposition 
14.9 of \cite{Gab-Pel} implies that the linear map  $T:\ell_1^0 \to E$  (or  $T:\ell_1 \to E$ if $E$ is locally complete) defined by
\[
T(a_0 e_0+\cdots+a_ne_n):=a_0 x_0+\cdots+ a_n x_n \quad (n\in\w, \; a_0,\dots,a_n\in\IF),
\]
is continuous and its adjoint $T^\ast: E'_\beta\to\ell_\infty$ is $p$-convergent. Therefore, by (iv) or (vi), the operator $T$ is  (resp., sequentially)  precompact. In particular, the set $T(\{e_n\}_{n\in\w})=S$ is  (sequentially) precompact in $E$ which contradicts the choice of $S$.\qed
\end{proof}

\bibliographystyle{amsplain}

\begin{thebibliography}{10}


\bibitem{Al-Kal}
F. Albiac, N.J. Kalton, {\em Topics in Banach Space Theory},  Grad. Texts in Math., vol. \textbf{233}, Springer, New York, 2006.



\bibitem{Alonso}
C. Alonso,  Limited sets in Fr\'{e}chet spaces. Rev. Real Acad. Cienc. Exact. Fs. Natur. Madrid. \textbf{89}  (1995), 53--60.






\bibitem{BG-GP-Banach}
T. Banakh, S. Gabriyelyan, Banach spaces with the (strong) Gelfand--Phillips property, Banach J. Math. Anal. \textbf{16} (2022), art. 24. 14pp.

\bibitem{BG-JNP}
T. Banakh, S. Gabriyelyan, The Josefson--Nissenzweig property for locally convex spaces, Filomat \textbf{37}:8 (2023),  2517--2529.



\bibitem{BG-GP-lcs}
T. Banakh, S. Gabriyelyan, The $b$-Gelfand--Phillips property for locally convex spaces, Collectanea Math. accepted.




\bibitem{BourDies}
J. Bourgain, J. Diestel, Limited operators and strict cosingularity, Math. Nachr. \textbf{119} (1984), 55--58.



\bibitem{CG}
J.M.F. Castillo, M. Gonz\'{a}lez,  \emph{Three-space Problems in Banach Space Theory}, in: Springer Lecture Notes in Math. v. \textbf{1667}, 1997.

\bibitem{CGP}
J.M.F. Castillo, M. Gonz\'{a}lez, P.L. Papini, On weak$^\ast$-extensible Banach spaces, Nonlinear Analysis, \textbf{75} (2012), 4936--4941.


\bibitem{CS}
J.M.F. Castillo, F. Sanchez, Dunford--Pettis-like properties of continuous vector function spaces, Revista Mat. Complut. \textbf{6}:1 (1993), 43--59.

\bibitem{CCDL}
D. Chen, J.A. Ch\'{a}vez-Dom\'{\i}nguez, L. Li, $p$-Converging operators and Dunford--Pettis property of order $p$, J. Math. Anal. Appl. \textbf{461} (2018), 1053--1066.

\bibitem{Diestel}
J. Diestel, \emph{Sequences and Series in Banach Spaces}, GTM \textbf{92}, Springer, 1984.

\bibitem{DMD}
M.B. Dehghani, S.M. Moshtaghioun, M.Dehghani, On the limited $p$-Schur property of some operator spaces, Internet. J. Anal. Appl. \textbf{16}:1 (2018), 50--61.

\bibitem{DM}
M.B. Dehghani, S.M. Moshtaghioun, On the $p$-Schur property of Banach spaces, Ann. Funct. Anal. \textbf{9} (2018), 123--136.

\bibitem{DJT}
J. Diestel, H. Jarchow, A. Tonge, \emph{Absolutely summing operators}, Cambridge Stud. Adv. Math. 43, Cambridge University Press, 1995.


\bibitem{Drewnowski}
L. Drewnowski, On Banach spaces with the Gelfand--Phillips property, Math. Z. \textbf{193} (1986), 405--411.

\bibitem{DrewEm}
L. Drewnowski, G. Emmanuele, On Banach spaces with the Gelfand--Phillips property, II, Rend. Circ. Mat. Palermo \textbf{38} (1989), 377--391.












\bibitem{FZ-16}
J.H. Fourie, E.D. Zeekoei, Classes of sequentially limited operators, Glasgow Math. J. \textbf{58} (2016), 573--586.


\bibitem{FZ-p}
J.H. Fourie, E.D. Zeekoei, On weak-star $p$-convergent operators, Quest. Math. \textbf{40} (2017), 563--579.

\bibitem{FZ-pL}
J.H. Fourie, E.D. Zeekoei, On $p$-convergent operators on Banach lattices, Acta Math. Sinica  \textbf{34}:5 (2018), 873--890.




\bibitem{Gabr-free-lcs}
S. Gabriyelyan, Locally convex properties of free locally convex spaces,  J. Math. Anal. Appl. 480 (2019) 123453.

\bibitem{Gabr-free-resp}
S. Gabriyelyan, Locally convex spaces and Schur type properties,  Ann. Acad. Sci. Fenn. Math., \textbf{44} (2019), 363--378.



\bibitem{Gab-Pel}
S. Gabriyelyan, Pe{\l}czy\'{n}ski's type sets and Pe{\l}czy\'{n}ski's geometrical properties of locally convex spaces, submitted  ({\tt arxiv.org/abs/2402.08860}).

\bibitem{Gab-DP}
S. Gabriyelyan, Dunford--Pettis type  properties of locally convex spaces,  Ann. Funct. Anal. \textbf{15}:55 (2024), 38 pp.


\bibitem{Gab-limited}
S. Gabriyelyan, Limited type subsets of locally convex spaces, submitted  ({\tt arxiv.org/abs/2403.02016}).

\bibitem{Gab-p-Oper}
S. Gabriyelyan, Weakly and weak$^\ast$ $p$-convergent operators, submitted  ({\tt arxiv.org/abs/2405.11518}).



\bibitem{GKKLP}
S. Gabriyelyan, J.~K{\c{a}}kol, A. Kubzdela, M. Lopez Pellicer,  On topological properties of Fr\'{e}chet locally convex spaces with the weak topology, Topology Appl.  \textbf{192} (2015), 123--137.

\bibitem{GalMir}
P. Galindo, V.C.C. Miranda, A class of sets in a Banach space coarser than limited sets, Bull. Braz. Math. Soc., New series \textbf{53} (2022), 941--955.

\bibitem{Gelfand}
I. Gelfand, \emph{Abstrakte Funktionen und lineare Operatoren}, Mat. Sbornik \textbf{4} (1938), 235--284.



\bibitem{Ghenciu-pGP}
I. Ghenciu, The $p$-Gelfand--Phillips property in spaces of operators and Dunford--Pettis like sets, Acta Math. Hungar. \textbf{155}:2 (2018), 439--457.




\bibitem{Ghenciu-20}
I. Ghenciu, On some classes of Dunford--Pettis-like operators, Rend. Circ. Mat. Palermo  \textbf{69} (2020), 1149--1163.


\bibitem{Ghenciu-23}
I. Ghenciu, A note on $p$-limited sets in dual Banach spaces, Monatshefte Math. \textbf{200} (2023), 255--270.











\bibitem{Jar}
H.~Jarchow, \emph{Locally Convex Spaces}, B.G. Teubner, Stuttgart, 1981.


\bibitem{KarnSinha}
A.K. Karn, D.P. Sinha, An operator summability in Banach spaces, Glasgow Math. J. \textbf{56} (2014), 427--437.



\bibitem{Lin-Schl-lim}
M. Lindstr\"{o}m, Th. Schlumprecht,  On limitedness in locally convex spaces, Arch. Math. \textbf{53} (1989), 65--74.











\bibitem{Phillips}
R.S. Phillips, \emph{On linear transformations}, Trans. Amer. math. Soc. \textbf{48} (1940), 516--541.


\bibitem{Pryce}
J.D.~Pryce, A device of R.J.~Whitley's applied to pointwise compactness in spaces of continuous functions, Proc. London Math. Soc. \textbf{23} (1971), 532--546.

\bibitem{Rosen-94}
H.P. Rosenthal, A characterization of Banach spaces containing $c_0$, J. Amer. Math. Soc. \textbf{7} (1994), 707--747.

\bibitem{Rudin}
W. Rudin, {\em Real and complex analysis}, 3rd. edition, McGraw-Hill, 1987.

\bibitem{Ruess-F}
W. Ruess, An operator space characterization of Fr\'{e}chet spaces not containing $\ell_1$. Funct. Approx. Comment. Mat. \textbf{44}:2  (2011), 259--264.

\bibitem{ruess}
W. Ruess, Locally convex spaces not containing $\ell_{1}$, Funct. Approx. Comment. Math. \textbf{50} (2014), 351--358.

\bibitem{SaMo}
M. Salimi, S.M. Moshtaghioun, The Gelfand--Phillips property in closed subspaces of some operator spaces. Banach J. Math. Anal. \textbf{5}  (2011), 84--92.




\bibitem{Schlumprecht-Ph}
T. Schlumprecht, \emph{Limited sets in Banach spaces}, Dissertation, Univ. Munich, 1987.

\bibitem{Schlumprecht-C}
T. Schlumprecht, \emph{ Limited sets in $C(K)$-spaces and examples concerning the Gelfand--Phillips property}, Math. Nachr. \textbf{157} (1992), 51--64.

\bibitem{SinhaArora}
D.P. Sinha,  K.K. Arora,  On the Gelfand--Phillips property in Banach spaces with PRI, Collectanea Math. \textbf{48}:3  (1997), 347--354.

\bibitem{Talagrand}
M. Talagrand, Pettis integral and Measure Theory, Mem. Amer. Math. Soc. \textbf{307} 1984.

\bibitem{WenChen}
Y. Wen, J. Chen, Characterizations of Banach spaces with relatively compact Dunford--Pettis sets, Adv. Mathematics China \textbf{45}:1 (2016), 122--132.



\end{thebibliography}

\end{document}